
\documentclass[a4paper,11pt]{amsart}
\usepackage[body=16cm,top=4cm,bottom=4cm]{geometry} 
\usepackage[english,frenchb]{babel} 
\usepackage[utf8]{inputenc} 
\usepackage[T1]{fontenc} 
\usepackage{amsmath,amsfonts,amssymb,amscd,stmaryrd,bbm,amsthm,enumerate,mathrsfs,bbm,amsaddr}
\usepackage{enumerate} 
\usepackage[colorlinks=true,linkcolor=MidnightBlue,citecolor=OliveGreen]{hyperref}
\usepackage[dvipsnames]{xcolor} 


\usepackage[titletoc]{appendix}

\usepackage{pgf,tikz}
\usetikzlibrary{shapes,snakes,patterns,arrows,automata,positioning,patterns}

\frenchbsetup{StandardLists=true}



\newcommand{\cC}{\mathcal{C}}

\newcommand{\cF}{\mathcal{F}}


\newcommand{\BB}{\mathbbm{B}}

\newcommand{\EE}{\mathbbm{E}}

\newcommand{\PP}{\mathbbm{P}}

\newcommand{\RR}{\mathbbm{R}}


\newcommand{\bB}{\mathbf{B}}

\newcommand{\bX}{\mathbf{X}}


\newcommand{\eps}{\varepsilon}
\newcommand{\dd}{\mathrm{d}}

\theoremstyle{plain}
\newtheorem{fr-theorem}{Th\'eor\`eme}[section]
\newtheorem{fr-corollary}[fr-theorem]{Corollaire}
\newtheorem{fr-lemma}[fr-theorem]{Lemme}
\newtheorem{fr-proposition}[fr-theorem]{Proposition}

\newtheorem{theorem}{Theorem}[section]
\newtheorem{corollary}[theorem]{Corollary}
\newtheorem{lemma}[theorem]{Lemma}
\newtheorem{proposition}[theorem]{Proposition}


\theoremstyle{definition}
\newtheorem{fr-remark}[fr-theorem]{Remarque}
\newtheorem{fr-definition}[fr-theorem]{Définition}
\newtheorem{fr-notation}[fr-theorem]{Notation}

\newtheorem{remark}[theorem]{Remark}
\newtheorem{definition}[theorem]{Definition}

\newcommand{\assign}{:=}

\newcommand{\nobracket}{}
\newcommand{\nocomma}{}
\newcommand{\noplus}{}
\newcommand{\nosymbol}{}
\newcommand{\tmem}[1]{{\em #1\/}}

\newcommand{\tmop}[1]{\ensuremath{\operatorname{#1}}}
\newcommand{\tmscript}[1]{\text{\scriptsize{$#1$}}}

\newcommand{\tmtextit}[1]{{\itshape{#1}}}

\renewcommand{\div}{\mathrm{div}\,}

\begin{document}
\selectlanguage{english} 
\author{R\'emi Catellier} 
\address{Universit\'e de Rennes 1 - IRMAR - Centre Henri Lebesgue}
\date{\today}
\title{Rough linear transport equation with an irregular drift}

\begin{abstract}
 We study the linear transport equation
  \[ \frac{\partial}{\partial t} u ( t,x ) +b ( t,x ) \cdot \nabla u ( t,x ) +
     \nabla u ( t,x ) \cdot \frac{\partial}{\partial t} X ( t ) =0,
     \hspace{2em} u ( 0,x ) =u_{0} ( x ) \]
  where $b$ is a vectorfield of limited regularity and $X$ a vector-valued
  H{\"o}lder continuous driving term. Using the theory of controlled rough
  paths we give a meaning to the weak formulation of the PDE and solve that
  equation for smooth vectorfields $b$. In the case of the fractional Brownian
  motion a phenomenon of regularization by noise is displayed.
\end{abstract}

\maketitle

\tableofcontents

\section{Introduction}
In this work we study the driven linear transport equation
\begin{equation}
  \frac{\partial}{\partial t} u ( t,x ) +b ( t,x ) \cdot \nabla u ( t,x ) +
  \nabla u ( t,x ) \cdot \frac{\partial}{\partial t} X ( t ) =0, \hspace{2em}
  u ( 0,x ) =u_{0} ( x ) \label{eq:ur-transport}
\end{equation}
where $b:\mathbbm{R}^{d} \rightarrow \mathbbm{R}^{d}$ is a sufficiently smooth
vectorfield and $X: [0,T] \rightarrow \mathbbm{R}^{d}$ a vector-valued driving
term (which we always take such that $X_{0} =0$). We consider solutions $u$
and initial conditions $u_{0}$ which are only bounded functions of space so
that the transport equation has to be understood in the weak sense with
respect to the spatial variable.

When $X=0$, Di Perna and Lions~{\cite{diperna_ordinary_1989}} showed that
when $b \in L^{1} ([0,T];W^{1,1}_{\tmop{loc}} ( \mathbbm{R}^{d} ) )$ with
linear growth condition and $\tmop{div}  b \in L^{1} ([0,T] \times
\mathbbm{R}^{d} )$, a unique $L^{\infty}$ weak solution exists. Furthermore a
wide range of results follows this one, a partial survey can be found
in~{\cite{ambrosio_existence_2008}}.

Under weaker condition on regularity of the vectorfield $b$, the equation is
known to be ill-posed. Despite of that, Flandoli, Gubinelli and
Priola~{\cite{flandoli_well-posedness_2010,flandoli_remarks_2013}} showed that
when $b \in C ([0,T];\mathcal{C}^{\alpha}_{b} (\mathbbm{R}^{d} ))$ and
$\tmop{div}  b \in L^{p} ([0,T] \times \mathbbm{R}^{d} )$ for $p \geqslant 2$,
adding a Brownian perturbation $X$ allows to maintain uniqueness of
$L^{\infty}$ solutions which are strong in the probabilistic sense. In that
case the driving term in eq.~{\eqref{eq:ur-transport}} has to be understood as
a Stratonovitch integral against the Brownian motion $X$. Their result is
based on the regularization effect of the Brownian perturbation on the flow of
the characteristic equation
\begin{equation}
  \Phi_{t} ( x ) =x+ \int_{0}^{t} b_{q} ( \Phi_{q} ( x ) ) \dd q+X_{t} ,
  \hspace{2em} x \in \mathbbm{R}^{d} ,t \geqslant 0. \label{eq:caracteristics}
\end{equation}
The key estimates on this regularization effect depends quite heavily on
stochastic calculus techniques and so breaks down easily if we try to apply
the same approach to more general perturbations $X$, for example a fractional
Brownian motion of given Hurst parameter. Note that in a recent paper, Beck,
Flandoli Gubinelli and Maurelli {\cite{beck_stochastic_2014}} proved directly
regularization properties of the Brownian motion on the transport equation
with more general vector fields without relying on the flow of caracteristics,
but they still have to rely on stochastic calculus tools, in particular the
It{\^o} formula for Brownian semi--martingales.

While stochastic calculus is not available for the fractional Brownian motion
(fBm), in a recent paper, Catellier and Gubinelli
{\cite{catellier_averaging_2012}} proved that, at the level of the
characteristic equation~{\eqref{eq:caracteristics}}, the same phenomenon of
regularization by noise appears for arbitrary Hurst parameter $H \in (0,1)$ of
the fractional Brownian motion $X$. In particular, the phenomenon of
regularization gets stronger as $H$ gets smaller.

In the perspective of this last result it is then interesting to investigate
the regularization by noise phenomenon at the level of the transport
equation~{\eqref{eq:ur-transport}}. In order to do so we need first to give an
appropriate meaning to the transport equation with non-differentiable driver
$X$.

In a more general setting, Lions, Perthame and Souganidis
{\cite{lions_scalar_2014}} use the entropy solutions and the kinetic
formulation of scalar conservation laws to overcome the difficulty given by
the rough drivers. On that setting the authors use a one-dimensional irregular
path only. For a multidimensional noise, Caruana, Friz and Oberhauser
{\cite{caruana_rough_2011}} use Lyons' theory of rough
paths~{\cite{lyons_differential_1998}} and the notion of viscosity solutions.
These two approaches are based on a common idea. The irregular signal $X$ is
approximaded by a family $( X^{\varepsilon} )_{\varepsilon >0}$ of smooth
functions, and a solution $u^{\varepsilon}$ of the approximate equation
constructed. Then, using suitable a priori bounds, the authors show that the
solutions converge to a function $u$ which is then \tmtextit{defined} to be
the solution of the equation. In these approaches the equation is replaced by
a limiting procedure and no attempts are made to investigate the equation
satisfied by the limiting object.

This way of proceeding is not very useful in the context of the regularization
by noise phenomenon we would like to study. Indeed, as soon as $X$ is replaced
by a more regular signal $X^{\varepsilon}$ the regularization phenomenon is
lost and there is no hope that the approximate equations have unique
solutions. So while the existence problem is easier, the uniqueness gets out
of reach. In the limit where the regularization is removed one expects to
regain uniqueness but then it is the meaning of the equation which is not
clear.

In order to have a well--defined setting in which to discuss the existence,
uniqueness and regularization effect for transport equations we will follow
the recent work of Gubinelli, Tindel and Toricella
{\cite{gubinelli_controlled_2014}} where they use the theory of controlled
rough paths introduced by Gubinelli in {\cite{gubinelli_controlling_2004}} to
define controlled viscosity solutions of fully non--linear PDEs with driving
signals given by a general (step--2) rough path, thus filling the conceptual
gap left behind in the approach of Caruana, Friz and Oberhauser
{\cite{caruana_rough_2011}}. Their approach shows the versatility of the
controlled approach to deal with various problems in rough PDE theory.

In order to do so we need a geometric rough path $\mathbf{X}$ and we will
define a certain class of solutions which are controlled, in a weak sense, by
the rough path $\mathbf{X}$. \ Finally, we will show that this notion of
solution is an extension of the classical notion of solutions, which enjoys
uniqueness in a natural class of vectorfields $b$ and that the regularization
properties of $X$ can be used to extend the class of vectorfields for which
uniqueness holds, proving for the first time the regularization by noise
result for this class of rough PDEs.

Let us remark that Gubinelli and Jara {\cite{gubinelli_regularization_2012}}
have already used the controlled path approach in a more probabilistic setting
in order to prove regularization by noise results for the
Kardar--Parisi--Zhang equation. In a recent paper Diehl, Friz and Stannat \cite{diehl_stochastic_2014} come with quite similar techniques. Nevertheless they seems to be unable to apply it to regularization by noise.

Denoting with $u_{t} ( \varphi ) = \langle u(t, \cdot ), \varphi ( \cdot )
\rangle$ the pairing of $u$ with a smooth test function $\varphi$ (depending
only on the space variable) we can reformulate the PDE as the infinite set of
integral equations
\begin{equation}
  u_{t} ( \varphi ) =u_{0} ( \varphi ) + \int_{0}^{t} u_{s} ( \tmop{div}
  (b_{s} \varphi )) \dd s+ \int_{0}^{t} u_{s} ( \nabla \varphi ) \cdot
  \dd \mathbf{X}_{s} \label{eq:rough-transport}
\end{equation}
for all $t \geqslant 0$ and test functions $\varphi$ in a suitable class. The
last integral in the r.h.s. will be understood as a rough integral for the
\tmtextit{controlled path} $s \mapsto u_{s} ( \nabla \varphi )$ w.r.t. the
rough path $\mathbf{X}$.

In the controlled path theory, the idea is to ask the integrand to ``look
like'' the driving term, at least at a first order level. We will ask the
pairing of the solutions against test functions to have this property, and
this will lead to the following definition.

\begin{definition}[Definition \ref{definition_WCS} below]
  Let $X \in \mathcal{C}^{\gamma} ( [0,T] )$ and $b \in L^{\infty} ( [0,T];
  \tmop{Lin} ( \mathbbm{R}^{d} ) )$ with $\tmop{div}  b \in L^{\infty} ( [0,T]
  \times \mathbbm{R}^{d} )$. Here $\mathrm{Lin}$ denote the space of functions with linear growth, as in definition \ref{def:lin} below. A function $u \in L^{\infty} ([0,T] \times
  \mathbbm{R}^{d} )$ is a Weak controlled solution to equation
  {\eqref{eq:rough-transport}} with initial condition $u_{0} \in L^{\infty}
  (\mathbbm{R}^{d} )$ and driving term $X$ if for all $\varphi \in
  C^{\infty}_{c} (\mathbbm{R}^{d} )$ the pairing $u ( \varphi )$ is controlled
  by $X$ in the sense of Definition \ref{definition:controlled_path} and if
  furthermore Equation {\eqref{eq:rough-transport}} is fulfilled, where the
  term $\int_{0}^{t} u_{s} ( \nabla \varphi ) \cdot \dd X_{s}$ is
  understood as the controlled rough integral of $u ( \nabla \varphi )$
  against the rough path $\mathbf{X}$.
\end{definition}

As it will be shown, this definition is an extension of the classical notion
of weak solutions, in the case where $X$ is smooth. Besides we will show that
the same phenomenon of regularization by noise appears in the case of the
fractional Brownian motion, and we will mostly retrieve the results of
Flandoli, Gubinelli and Priola but for a larger class of noises. Moreover as
our theory is completely deterministic we will be able to handle random
vectorfield $b$. In fact, for the fractional Brownian motion define on $(\Omega,\cF, \PP)$, if we add to the last definition that the solutiond $u$ lie in $L^\infty(\Omega\times[0,T];\mathrm{Lin}(\RR^d))$, 
one have the following existence and uniqueness result. This is a melting pot of Theorems \ref{theorem:existence_SWCS} and \ref{theorem:unique} and Corollary \ref{theorem:strong_uniqueness}.

\begin{theorem}
 Let $(\Omega,\cF,\PP)$ a probability space, $H\in(1/3,1)$ and $B^H$ a $d$-dimensional fractional Brownian motion on that space. Let $\bB^H=(B^H,\BB^H)$ a lift of the fractional Brownian  motion into the geometric rough paths of order $\gamma\in(1/3,H)$ and $u_0\in L^\infty(\Omega\times\RR^d;\RR)$.
 	\begin{enumerate}
\item (Theorem \ref{theorem:existence_SWCS})
When $b\in L^\infty([0,T]\times \RR^d)$ and $\div b \in L^{\infty}([0,T]\times\RR^d)$ there exists a stochastic weak controlled solution.

\item (Corollary \ref{theorem:regu_uniqueness}) Let $\alpha+3/2>0$ and $\alpha>-1/2H$. When $b, \div b\in L^\infty(\Omega\times\RR^d)$ such that almost surely $k\to \hat b(k) (1+|k|)^\alpha \in L^1$ and $k\to \hat {\div b}(k) (1+|k|)^\alpha \in L^1$ there exists a unique stochastic weak controlled solution with initial condition $u_0$.

\item (Theorem \ref{theorem:unique}) When $u_0\in L^\infty (\RR^d)$ and $b, \div b \in \cC_b^{\alpha+1}(\RR^d)$ with $\alpha>\max(-1,-1/2H)$ there exists a unique stochastic weak controlled solution with initial condition $u_0$
\end{enumerate}
 \end{theorem}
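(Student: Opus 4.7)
The plan is to establish the three items in turn, using a common regularization--compactness--averaging framework built on results already stated earlier in the paper. Throughout, I fix the geometric rough path $\bB^H = (B^H,\BB^H)$ of order $\gamma\in(1/3,H)$ as the driver of the weak controlled integral, and $u_0\in L^\infty(\Omega\times\RR^d)$ as the initial datum.

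For item (1) (existence), I would argue by regularization and compactness. First, mollify $b$ in space to $b^\varepsilon$ and replace $B^H$ by its time-$\varepsilon$ smoothing $X^\varepsilon$, whose canonical lift converges to $\bB^H$ in the rough-path topology. The classical transport equation with drift $b^\varepsilon + \dot X^\varepsilon$ has a unique smooth solution $u^\varepsilon$, representable along the characteristic flow, with $\|u^\varepsilon\|_\infty\le \|u_0\|_\infty\exp(T\|\div b\|_\infty)$ uniformly in $\varepsilon$. Testing against $\varphi\in C_c^\infty(\RR^d)$, an integration by parts shows that $s\mapsto u^\varepsilon_s(\varphi)$ is controlled by $X^\varepsilon$ with Gubinelli derivative $u^\varepsilon_s(\nabla\varphi)$, and standard rough-integral estimates give uniform bounds on the controlled-path seminorms. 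Weak-$*$ compactness in $L^\infty$ together with the continuity of the controlled rough integral in the rough-path and controlled-path data then produces a subsequential limit $u$ satisfying Equation~\eqref{eq:rough-transport}, and joint measurability in $\omega$ is preserved in the limit.

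For items (2) and (3) (uniqueness), the strategy is the averaging argument of Catellier--Gubinelli: the fluctuations of $B^H$ regularize $b$, with a quantitative gain of $1/(2H)$ derivatives, which is the source of the critical thresholds $\alpha>-1/(2H)$. Given two weak controlled solutions $u_1,u_2$ with the same initial datum, I would study the difference $v=u_1-u_2$, which is itself a weak controlled solution of the homogeneous equation. Testing against a family of test functions transported along an approximate dual flow converts the convective contribution into a nonlinear Young integral against an averaged operator of the form $t\mapsto \int_0^t b_s(B^H_s+\cdummy)\,\dd s$, whose regularity is controlled by the fBm occupation estimates established in the paper. The Fourier decay hypothesis of (2) places $b$ and $\div b$ in the class to which those estimates apply after a Littlewood--Paley decomposition, with $\alpha+3/2>0$ providing the integrability needed to sum the frequency blocks; the pointwise Hölder hypothesis of (3) falls directly within the same framework. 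A Gronwall-type estimate on a controlled-path seminorm of $v$ then forces $v\equiv 0$.

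The main obstacle is reconciling the averaging argument, naturally formulated pointwise on the characteristic flow $\Phi_t(x)=x+\int_0^t b_s(\Phi_s(x))\,\dd s + B^H_t$, with the purely dualized weak formulation of Definition~\ref{definition_WCS}. The regularity gained on $b$ via averaging must be transferred to the bilinear form $s\mapsto u_s(\div(b_s\varphi))$ appearing in the equation, and the technical heart of the uniqueness proof will be to control the commutator between the operator $\varphi\mapsto \div(b_s\varphi)$ and the rough integration, showing that it vanishes quantitatively once one localizes the test function along the dual flow. Once this commutator estimate is in place the Gronwall closure and the passage between the statements of Theorems~\ref{theorem:existence_SWCS}, \ref{theorem:unique} and Corollary~\ref{theorem:regu_uniqueness} become essentially bookkeeping.
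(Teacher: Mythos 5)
Your existence argument for item (1) follows the paper's route (mollify $b$ and $X$, represent the classical solutions along characteristics, uniform $L^\infty$ and controlled-norm bounds from Lemma \ref{lemma:smooth_control}, weak-$*$ compactness, continuity of the rough integral), but the one sentence ``joint measurability in $\omega$ is preserved in the limit'' is precisely where the paper's proof of Theorem \ref{theorem:existence_SWCS} spends its effort: a pathwise Arzel\`a--Ascoli extraction destroys measurability, so the paper instead takes weak limits along the dyadic times, proves moment bounds $\EE[(\delta \tilde u_{s,t}(\varphi))^{2p}]\lesssim |t-s|^{2\gamma p}$, invokes Kolmogorov's continuity theorem to get a measurable H\"older modification, and identifies the limits of $\int u^\varepsilon(\nabla\varphi)\,\dd\mathbf{X}^\varepsilon$ through Riemann-sum approximations weakly in $L^p(\Omega)$. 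As written, your step (1) is a plausible sketch of the deterministic Theorem \ref{theorem:existence_WCS}, not of the stochastic statement.

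The genuine gap is in items (2)--(3). You reduce to the difference $v=u_1-u_2$, defer ``the commutator between $\varphi\mapsto\div(b_s\varphi)$ and the rough integration'' to an unproven estimate, and then invoke ``a Gronwall-type estimate on a controlled-path seminorm of $v$''. But $v$ is only an $L^\infty$ function; the weak formulation gives you access to $v_t(\varphi)$ only through that very commutator, so there is no norm of $v$ on which a Gronwall inequality can close, and the nonlinear-Young/averaged-field formulation you mention lives at the level of the characteristics ODE, not of the dual pairing. The paper's mechanism is different and contains no Gronwall step: by linearity take $u_0=0$, mollify $b$ into $b^\varepsilon$, and test $u$ against the \emph{backward strong controlled solution} of the rough continuity equation, $\psi^\varepsilon_t(x)=\varphi_0(\Phi^\varepsilon_{t_0-t}(x))\exp\bigl(\int_t^{t_0}(\div b^\varepsilon_q)(\Phi^\varepsilon_{q-t}(x))\,\dd q\bigr)$, whose existence is Theorem \ref{theorem:strong_controlled_solutions}. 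Expanding the increment of $t\mapsto u_t(\psi^\varepsilon_t)$, the rough terms cancel exactly (because $u(\nabla\varphi)'=u(\nabla^2\varphi)$ and $\psi^\varepsilon$ is strongly controlled), the leftover two-parameter remainder is additive and $O(|t-s|^{3\gamma})$, hence zero by the sewing map (Proposition \ref{th:sewing_map}), and one obtains the exact identity $u_{t_0}(\varphi_0)=\int_0^{t_0}u_q(\div[(b_q-b^\varepsilon_q)\psi^\varepsilon_q])\,\dd q$ of Lemma \ref{lemma:uniqueness}. Uniqueness then follows because regularization by noise enters exactly here: Theorem \ref{theorem:rho_irregular_flow} and Proposition \ref{prop:rho_irregular_flow} (pathwise, $\rho$-irregularity with $\rho<1/2H$, $\alpha>-\rho$) for item (2), and Theorem \ref{theorem:fBm_flow} and Proposition \ref{prop:fBm_flow} (Girsanov, with null sets depending on $b,x$, integrated in $x$ via Fernique) for item (3), give bounds on $\sup_{\varepsilon}\sup_t|D\Phi^\varepsilon_t(x)|$ and on $\nabla\int\div b^\varepsilon(\Phi^\varepsilon)$ uniform in $\varepsilon$, while $\|b-b^\varepsilon\|_\infty+\|\div b-\div b^\varepsilon\|_\infty\to0$ since $\alpha+3/2>0$ (resp.\ $\alpha+1>0$); no Littlewood--Paley decomposition is needed in (2), the Fourier--Lebesgue hypothesis is used as such. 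Without supplying the duality identity (or a substitute for it), your uniqueness argument does not go through.
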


Note that the point (2) in the last theorem allows us to consider random vector fields and random initial conditions, which is a huge improvement of the results of \cite{flandoli_well-posedness_2010}.

This paper is structured as follows. Section~\ref{section:preliminary} is devoted
to preliminary results: we recall some notation for the involved function
spaces and we give a short overview of the theory of controlled rough paths and then we recall some of the results of~{\cite{catellier_averaging_2012}} for
the regularization by non Brownian noise Section~\ref{section:existence}
is devoted to the definition of weak controlled solutions and to the proof of
their existence in general case. Finally in Section \ref{section:uniqueness},
thanks to a duality method we prove uniqueness for
equation~{\eqref{eq:rough-transport}}.
\subsubsection*{Acknowledgments}
The author benefited from deep discussion on that subject with his PhD advisor Massimiliano Gubinelli. Nicolas Perkowski proofread this work. The author is supported by the Center Henri Lebesgue. Most of this work have been done under the support of a PhD thesis at University Paris Dauphine.
\section{Preliminaries}\label{section:preliminary}
	\subsection{Notations and functional spaces}
  In all the following, the notation $D ^{n} f$ is for the $n$--th
  Differential of a function $f$. When $f:\mathbbm{R}^{d} \to\mathbbm{R}$,
  $\nabla f= ( \partial_{1} f, \ldots , \partial_{d} f )$ is the gradient, and
  $\nabla^{2} f= ( \partial_{i} \partial j f )_{1 \leqslant i,j \leqslant d}$
  is the Hessien of $f$. Furthermore when $f:\mathbbm{R}^{d} \rightarrow
  \mathbbm{R}^{d}$ we denote by $\tmop{Jac}  f= \det   ( D f )$ the Jacobien
  determinant of $f$.
  
    For a function $u$ of $[0,T]$, we define $\delta u_{s,t} =u_{t} -u_{s}$ the
  increment of $u$.
Whenever it make sense,  we denote with $u ( \varphi ) = \langle u ( \cdot ) , \varphi ( \cdot )
  \rangle$ the pairing of $u$ with a smooth test function $\varphi$.

Finally for $a,b \in \mathbbm{R}$ we write $a \lesssim b$ if there exists a
  constant $C>0$ independent of $a$ and $b$ such that $a \leqslant C b$. When
  $a \lesssim b$ and $b \lesssim a$ we write $a \sim b$. Furthermore, the
  notation $a \lesssim_{c} b$ specifies that the constant $C>0$ depends on
  $c$.

\begin{definition}
  Let $( E,d_{E} )$ a complete metric linear space and $( F, \| \|_{F} )$ a
  Banach space. For $n \in \mathbbm{N}$ and \ $0< \alpha \leqslant 1$ we
  define the space of $\alpha$-H{\"o}lder-continuous functions from $E$ to $F$
  by
  \[ \mathcal{C}^{n+ \alpha} =\mathcal{C}^{n+ \alpha} ( E,F ) = \left\{ f \in
     C^{n} ( E,F )  : \llbracket f \rrbracket_{n+ \alpha} =  \sup_{x \neq y}
     \frac{\| D^{n} f ( x ) -D^{n} f ( y ) \|_{F}}{d_{E} ( x,y )^{\alpha}} <+
     \infty \right\} . \]
  The quantity $\llbracket f \rrbracket_{n+ \alpha}$ is only a semi-norm for
  the space $\mathcal{C}^{n+ \alpha}$. For $x_{0} \in E$ the following
  quantity is a norm such that the space $\mathcal{C}^{\alpha}$ is complete
  \[ \| f \|_{x_{0} ,n+ \alpha ;F} = \llbracket f \rrbracket_{n+ \alpha} +
     \sum_{k=0}^{n} \| D^{k} f ( x_{0} ) \|_{F} . \]
  When $x_{0} =0$ we only write $\| f \|_{n+ \alpha}$.
To avoid confusion with the space of continuously
  differentiable functions, we will write $\cC^1(E;F) = \tmop{Lip} ( E;F )$. Furthermore
  When it is not specified, $F$ is always assumed to be the space
  $\mathbbm{R}^{d}$ and $| . |_{F} = | . |$ the usual Euclidean norm.
\end{definition}

\begin{definition}
  For $E$ and $F$ as before and $n< \alpha \leqslant n+1$, $f \in
  \mathcal{C}^{\alpha}_{b} ( E,F )$ if $f \in \mathcal{C}^{\alpha} (
  \mathbbm{R}^{d } )$ and if for all $k \in \{ 0, \ldots ,n \}$ $\| D^{k} f
  \|_{\infty} \assign \sup_{x \in E} | D^{k} f ( y ) | <+ \infty$. On this
  space we consider the norm
  \[ \| f \|_{\mathcal{C}^{\alpha}_{b}} = \llbracket f \rrbracket_{\alpha} +
     \sum_{k=0}^{n} \| D^{k} f \|_{\infty} . \]
When $E$ is bounded, the norms on $\cC^\alpha = \cC^\alpha_b$ and the norms are equivalents, we identify it in that case.
\end{definition}

\begin{remark}
  There is a way to extend the space $\mathcal{C}^{\alpha}_{b}$ to nonpositive
  value of $\alpha$. This is via the Besov spaces $B^{\alpha}_{\infty ,
  \infty}$, as it can be found in {\cite{bahouri_fourier_2011}} or
  {\cite{triebel_theory_2010}}. When $\alpha <0$, the results of part
  \ref{subsec:regularization} about the flow of the caracteristic equation are
  still true, see {\cite{catellier_averaging_2012}}. Nevertheless, the
  definition of the transport equation for such irregular vectorfields seems
  to be tricky in that case. The method of Chouk and Gubinelli
  {\cite{chouk_nonlinear_2013}} and {\cite{chouk_nonlinear_2014}} does not
  apply and another definition has to be found. We postpone the analysis of
  this situation to a further publication. 
\end{remark}

\begin{definition}
  Let $\nu \in ( 0,1 ]$, $(E,d_{E} )$ a complete metric space and $(F, \|
  \|_{F} )$ a Banach space. We define the space of $\nu$-H{\"o}lder-continuous
  functions from $E^{2}$ to $F$ by
  \[ \mathcal{C}^{\nu}_{2} (E^{2} ,F) = \left\{ f \in C(E^{2} ,F) :f(x,x)=0 
     \tmop{and} \| f \|_{\nu} =  \sup_{x \neq y} \frac{| f(x,y) |_{F}}{d_{E}
     (x,y)^{\nu}} <+ \infty \right\} . \]
\end{definition}

Unlike the case of the space $\mathcal{C}^{\alpha} ( E,F )$, $\| . \|_{\nu}$
is a norm on $\mathcal{C}^{\nu}_{2}$. Finally, we introduce some notations for
the usual $L^{p}$ spaces with image in Banach spaces.

\begin{definition}
  Let $p \geqslant 1$ and $F$ a Banach space and $T>0$. We define
  \[ L^{p} ( [0,T];F ) = \left\{ b : [ 0,T ] \rightarrow F : \| b \|_{p;F}
     \assign \left( \int_{0}^{T} \| b_{u} \|_{F}^{p} \dd u \right)^{1/p} <+
     \infty \right\} \]
  with the usual modification for $p=+ \infty$.
\end{definition}

In order to have existence of global weak solution for the transport equation
in the classical case, the vectorfield must have at most linear growth in the
space variable. In order to quantify that, let us define a space of function
with linear growth.

\begin{definition}\label{def:lin}
  Let $d \geqslant 1$, the space of functions with linear growth is defined as
  follows
  \[ \tmop{Lin} ( F ) = \left\{ b  \tmop{measurable}   \tmop{from} 
     \mathbbm{R}^{d}   \tmop{to}  \mathbbm{R}^{d} :  \| b \|_{\tmop{Lin}} =
     \left\| \frac{f ( . )}{1+ | . |} \right\|_{\infty} <+ \infty \right\} .
  \]
  Furthermore $( \tmop{Lin} ( F ) , \| \|_{\tmop{Lin}} )$ is a Banach space.
\end{definition}

The approach for the transport equation developed here mostly relies on the method of characteristics (see Appendix \ref{appendix:characteristics} for more details). In particular, we aim to consider vector field with linear growth in space, and we will need some a priori bound of the flows for associated to such vector fields thanks to the following differential equation
\begin{equation}\label{eq:differential}
\Phi_t(x) = x +\int_0^t b(r,\Phi_r(x)) \dd r + X_t
\end{equation}
 The method for the uniqueness developed in Section \ref{section:uniqueness} strongly relies on comparison between flows associated to the dynamics driven by different $X$. The two following lemmas gives the a priori bounds needed, the proof of those lemmas, and some additional material in the case of ODE with linear growth vector fields can be found in Appendix \ref{appendix:standard_flow}.
 
 \begin{lemma}
  \label{lemma:holder_norm_flow}Let $b \in L^{\infty} ( [0,T], \tmop{Lin} (
  \mathbbm{R}^{d} ) )$ such that the flow $\Phi$ of the equation \eqref{eq:differential}
   exists for all $t \in [ 0,T ]$. There exists a constant
  $K ( T, \| b \|_{\infty ; \tmop{Lin}} ) >0$ such that for all $x \in
  \mathbbm{R}^{d}$, $\Phi ( x ) \in \mathcal{C}^{\gamma} ( [0,T] )$ and
  \[ \| \Phi ( x ) \|_{\gamma} \leqslant K ( 1+ | x | ) ( 1+ \| X \|_{\gamma}
     ) . \]
\end{lemma}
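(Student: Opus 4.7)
The plan is to prove the statement via a two-step Gronwall-type argument: first control the sup norm of $\Phi(x)$ on $[0,T]$, then bootstrap to the Hölder norm using the defining equation \eqref{eq:differential}.

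First, I would obtain a pointwise a priori bound on $|\Phi_t(x)|$. Since $b$ has linear growth, $|b(r,y)| \leq \|b\|_{\infty;\tmop{Lin}}(1+|y|)$ for almost every $r$. Noting that $X_0 = 0$ implies $|X_t| = |X_t - X_0| \leq \|X\|_\gamma t^\gamma \leq T^\gamma \|X\|_\gamma$, the equation gives
\[
|\Phi_t(x)| \leq |x| + T^\gamma \|X\|_\gamma + \|b\|_{\infty;\tmop{Lin}} T + \|b\|_{\infty;\tmop{Lin}} \int_0^t |\Phi_r(x)|\,\dd r.
\]
Gronwall's lemma then yields a bound of the form $\sup_{t\in[0,T]} |\Phi_t(x)| \leq C(T,\|b\|_{\infty;\tmop{Lin}})\,(1+|x|)(1+\|X\|_\gamma)$ after absorbing additive constants into the product form $(1+|x|)(1+\|X\|_\gamma)$.

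Second, I would use this sup bound to estimate the increment $\delta \Phi_{s,t}(x) = \Phi_t(x) - \Phi_s(x)$. From \eqref{eq:differential},
\[
|\delta \Phi_{s,t}(x)| \leq \int_s^t \|b\|_{\infty;\tmop{Lin}} (1+|\Phi_r(x)|)\,\dd r + \|X\|_\gamma (t-s)^\gamma.
\]
Inserting the sup bound obtained in the previous step and using the elementary inequality $(t-s) \leq T^{1-\gamma}(t-s)^\gamma$ valid since $\gamma \leq 1$, we find
\[
|\delta \Phi_{s,t}(x)| \leq (t-s)^\gamma\left[\|b\|_{\infty;\tmop{Lin}} T^{1-\gamma}\bigl(1+C(T,\|b\|_{\infty;\tmop{Lin}})(1+|x|)(1+\|X\|_\gamma)\bigr) + \|X\|_\gamma\right].
\]
Dividing by $(t-s)^\gamma$ and taking the supremum over $s\neq t$ in $[0,T]$, this bound collapses into the required form $\|\Phi(x)\|_\gamma \leq K(1+|x|)(1+\|X\|_\gamma)$ for a constant $K$ depending only on $T$ and $\|b\|_{\infty;\tmop{Lin}}$.

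There is no real obstacle here; the argument is essentially a Gronwall estimate, and the main point is simply to keep track of how the factors $(1+|x|)$ and $(1+\|X\|_\gamma)$ appear multiplicatively rather than additively in the final constant. The existence of the flow is assumed in the hypothesis, so we do not need to worry about local existence issues; we only manipulate the integral identity \eqref{eq:differential}.
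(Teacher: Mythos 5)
Your proof is correct and follows essentially the same route as the paper: a Gronwall-type a priori estimate on the integral identity, combining a sup-norm bound on $\Phi(x)$ with an increment bound in which the time integral is converted to a factor $(t-s)^\gamma$ via $(t-s)\leqslant T^{1-\gamma}(t-s)^\gamma$. The only difference is organizational: you establish $\|\Phi(x)\|_\infty$ first and then bound the increments in one pass, whereas the paper first applies Gronwall to the H\"older quotient $t\mapsto |\Phi_t(x)-\Phi_s(x)|/|t-s|^\gamma$ and only afterwards bounds the sup norm; your ordering avoids that second (slightly more delicate) Gronwall step and yields the same constant structure $K(T,\|b\|_{\infty;\mathrm{Lin}})(1+|x|)(1+\|X\|_\gamma)$.
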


\begin{remark}
  The constant $K$ can be chosen as $K ( T, \| b \|_{\infty ; \tmop{Lin}} )
  =K_{T}  g ( T \| b \|_{\infty ; \tmop{Lin}} )$ with $g ( x ) = ( ( x^{2} +x
  ) e^{x} +x+1 ) e^{x}$.
\end{remark}

\begin{lemma}
  \label{lemma:flow_convergence_X_smooth}Let $b \in L^{\infty} (
  [0,T];C^{1}_{b} ( \mathbbm{R}^{d } ) )$ and $X,Y \in \mathcal{C}^{\gamma}$.
  Then
  \[ \| \Phi^{X} ( x ) - \Phi^{Y} ( x ) \|_{\gamma , [ 0,T ]} \leqslant C (
     T, \| D b \|_{\infty} ) \| X-Y \|_{\gamma , [ 0,T ]} \]
  where $C$ is independent of $x$ and nondecreasing in $T$ and $\| D b
  \|_{\infty}$. Furthermore, when $b \in L^{\infty} ( [0,T];C^{2}_{b} (
  \mathbbm{R}^{d } ) )$, we also have
  \[ \| D \Phi^{X} ( x ) -D \Phi^{Y} ( x ) \|_{\tmop{Lip}} \leqslant C ( T, \|
     D b \|_{\infty} , \| D^{2}  b \|_{\infty} ) \| X-Y \|_{\gamma} . \]
\end{lemma}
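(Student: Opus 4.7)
The plan is to establish both bounds by standard Gronwall-type arguments applied to the difference of the two integral equations defining $\Phi^{X}$ and $\Phi^{Y}$.

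For the first bound, I would subtract the flow equations to obtain
\[
\Phi^{X}_{t}(x) - \Phi^{Y}_{t}(x) = \int_{0}^{t} \bigl[ b_{r}(\Phi^{X}_{r}(x)) - b_{r}(\Phi^{Y}_{r}(x)) \bigr]\, \dd r + (X_{t} - Y_{t}).
\]
Using $\|Db\|_{\infty} < +\infty$, the integrand is Lipschitz in the flow variable, so Gronwall's inequality immediately gives
$\sup_{t \in [0,T]} |\Phi^{X}_{t}(x) - \Phi^{Y}_{t}(x)| \leq e^{T \|Db\|_{\infty}} \|X - Y\|_{\infty}$. To upgrade this to a $\gamma$-Hölder estimate, I would form the increment $\delta(\Phi^{X}- \Phi^{Y})_{s,t}$: the integral part is controlled by $\|Db\|_{\infty}(t-s) \sup_{r} |\Phi^{X}_{r}(x) - \Phi^{Y}_{r}(x)|$, while the driving part gives $(t-s)^{\gamma} \|X - Y\|_{\gamma}$. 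Combining, together with the trivial bound $\|X-Y\|_{\infty} \leq T^{\gamma} \|X-Y\|_{\gamma}$, yields the required Hölder bound, with a constant that is nondecreasing in $T$ and in $\|Db\|_{\infty}$ and independent of $x$.

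For the second bound, the $C^{1}$-regularity of $b$ guarantees that $\Phi^{X}$ is $C^{1}$ in the initial condition and the Jacobian satisfies the linearised equation
\[
D\Phi^{X}_{t}(x) = I + \int_{0}^{t} Db_{r}(\Phi^{X}_{r}(x))\, D\Phi^{X}_{r}(x)\, \dd r,
\]
and similarly for $Y$. Subtracting and using the algebraic splitting
\[
Db_{r}(\Phi^{X}_{r}) D\Phi^{X}_{r} - Db_{r}(\Phi^{Y}_{r}) D\Phi^{Y}_{r} = Db_{r}(\Phi^{X}_{r}) [D\Phi^{X}_{r} - D\Phi^{Y}_{r}] + [Db_{r}(\Phi^{X}_{r}) - Db_{r}(\Phi^{Y}_{r})] D\Phi^{Y}_{r},
\]
I would invoke $\|D^{2}b\|_{\infty}$ together with the first part of the lemma to control the second cross term, and use Gronwall (after noting that $D\Phi^{Y}$ is a priori bounded thanks to the linearised equation for $Y$) to close the sup-norm estimate for $D\Phi^{X} - D\Phi^{Y}$.

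The main obstacle is the Lipschitz-in-$x$ upgrade: one must estimate the double difference $D\Phi^{X}_{t}(x_{1}) - D\Phi^{Y}_{t}(x_{1}) - D\Phi^{X}_{t}(x_{2}) + D\Phi^{Y}_{t}(x_{2})$. Expanding leads to several cross terms involving products of $Db(\Phi^{X}) - Db(\Phi^{Y})$, $Db(\Phi^{X}(x_{1})) - Db(\Phi^{X}(x_{2}))$, and the corresponding differences of Jacobians; each is controlled either by $\|D^{2}b\|_{\infty}$ together with Lemma \ref{lemma:holder_norm_flow} and the first bound, or by a Gronwall loop on the quantity itself. The delicate point is to organise these estimates in the right order so that at every step only previously-controlled quantities appear on the right-hand side, producing the final constant depending on $T$, $\|Db\|_{\infty}$ and $\|D^{2}b\|_{\infty}$.
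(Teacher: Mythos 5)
Your treatment of the first bound and of the sup-norm estimate for $D\Phi^{X}-D\Phi^{Y}$ is correct and is exactly the paper's argument: subtract the integral equations, Gronwall for the sup norm, then the increment $\delta(\Phi^{X}-\Phi^{Y})_{s,t}$ for the H\"older bound; and for the Jacobians, the splitting $Db(\Phi^{X})D\Phi^{X}-Db(\Phi^{Y})D\Phi^{Y}=Db(\Phi^{X})[D\Phi^{X}-D\Phi^{Y}]+[Db(\Phi^{X})-Db(\Phi^{Y})]D\Phi^{Y}$ together with $\|D\Phi^{Y}\|_{\infty}\leqslant e^{T\|Db\|_{\infty}}$, $\|D^{2}b\|_{\infty}$, the first part, and Gronwall.

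However, the last step of your proposal attacks the wrong statement. In $\|D\Phi^{X}(x)-D\Phi^{Y}(x)\|_{\tmop{Lip}}$ the point $x$ is fixed and the norm is the Lipschitz (i.e.\ $\mathcal{C}^{1}$-H\"older) norm in the \emph{time} variable of $t\mapsto D\Phi^{X}_{t}(x)-D\Phi^{Y}_{t}(x)$ — this is how the lemma is used later (e.g.\ in Lemma \ref{lemma:convergence_jacobien_smooth} and in the proof of Theorem \ref{theorem:strong_controlled_solutions}, where $t\mapsto D(\Phi^{\eta})^{-1}_{t}(x)\in\tmop{Lip}([0,T])$), and it is what the paper proves. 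So the "Lipschitz-in-$x$ upgrade" via the double difference $D\Phi^{X}_{t}(x_{1})-D\Phi^{Y}_{t}(x_{1})-D\Phi^{X}_{t}(x_{2})+D\Phi^{Y}_{t}(x_{2})$ is not required; it is also not achievable by the sketch you give, since expanding it produces terms whose control needs either a spatial modulus of continuity of $D\Phi^{X}-D\Phi^{Y}$ or a bound on $D^{3}b$, neither of which is available from $b\in L^{\infty}([0,T];C^{2}_{b})$ and the first part alone. What is actually missing from your proof is the easy concluding estimate: from the linearised equation,
\[
\delta D(\Phi^{X}-\Phi^{Y})_{s,t}(x)=\int_{s}^{t}\Bigl( [Db_{q}(\Phi^{X}_{q}(x))-Db_{q}(\Phi^{Y}_{q}(x))]\,D\Phi^{X}_{q}(x)+Db_{q}(\Phi^{Y}_{q}(x))\,D(\Phi^{X}-\Phi^{Y})_{q}(x)\Bigr)\dd q ,
\]
and the sup bounds you have already established ($\|\Phi^{X}(x)-\Phi^{Y}(x)\|_{\infty}\lesssim\|X-Y\|_{\gamma}$, $\|D\Phi^{X}(x)\|_{\infty}\leqslant e^{T\|Db\|_{\infty}}$, $\|D(\Phi^{X}-\Phi^{Y})(x)\|_{\infty}\lesssim\|X-Y\|_{\gamma}$) give $|\delta D(\Phi^{X}-\Phi^{Y})_{s,t}(x)|\lesssim |t-s|\,\|X-Y\|_{\gamma}$ directly (or after one more Gronwall, as in the paper), which is the claimed Lipschitz-in-time bound.
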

	\subsection{Controlled rough path theory in a nutshell}

	Rough path theory is a way to describe the effects of irregular signals on
certain non-linear systems. It has been first developed by Lyons and his
coauthors, see for example
{\cite{lyons_differential_1998,lyons_differential_2007}} and the book by Friz
and Victoir~{\cite{friz_multidimensional_2010}}. In order to use this theory
to define integrals againts irregular signals we will use the notion of
{\tmem{controlled paths}} developed by Gubinelli in
{\cite{gubinelli_controlling_2004}}. An enjoyable exposition of this theory
can also be found in {\cite{friz_course_????}}. When the path is not of finite
variation, there is not enough informations to define an integral against its
(weak)-derivative. The theory of controlled rough paths overcomes this
problem and gives a general setting for the theory of integration against
irregular paths.
\subsubsection{Controlled integral against rough path}

One of the first quantities we would like to define is the integral of the
path against itself. The idea of rough path integration is to presuppose the
existence of a first order iterated integral, and to construct a theory of
integration related to that enhanced path (the path and its iterated
integral). This idea leads us to the following definition

\begin{definition}
  Let $1/3< \gamma \leqslant 1/2$. The pair $\mathbf{X}= ( X,\mathbbm{X}^{2}
  )$ \ is a rough path of order $\gamma$ if $X \in \mathcal{C}^{\gamma}
  ([0,T],\mathbbm{R}^{d} )$, $\mathbbm{X} \in \mathcal{C}^{2 \gamma}_{2}
  ([0,T]^{2} ;\mathcal{M}_{d} (\mathbbm{R}))$ and for $0 \leqslant s \leqslant
  u \leqslant t \leqslant T \text{}$
  \[ \mathbbm{X}_{s,t} -\mathbbm{X}_{s,u} -\mathbbm{X}_{u,t} = ( X_{u} -X_{s}
     ) \otimes ( X_{t} -X_{u} )_{} = ( ( X_{u}^{i} -X_{s}^{i} ) ( X_{t}^{j}
     -X_{u}^{j} ) )_{0 \leqslant i,j \leqslant d} . \]
  Furthermore, we define $\| \mathbf{X} \|_{\mathcal{R}^{\gamma}} = \| X
  \|_{\gamma} + \| \mathbbm{X} \|_{2 \gamma}$ and for two $\gamma$-rough paths
  $\mathbf{X}$ and $\mathbf{Y}$ we define
  \[ \| \mathbf{X}-\mathbf{Y} \|_{\mathcal{R}^{\gamma}}
     =d_{\mathcal{R}^{\gamma}} ( \mathbf{X},\mathbf{Y} ) = \| X-Y
     \|_{\gamma} + \| \mathbbm{X}-\mathbbm{Y} \|_{2 \gamma} . \]
\end{definition}

The term $\mathbbm{X}$ can be understood as the iterated integral of $X$
against itself. Formally we have
\[ \mathbbm{X}_{s,t} = \int_{s}^{t} ( X_{r} -X_{s} ) \otimes \dd X_{r} . \]
In fact, in the last equality, the left hand side is a definition for the
right hand side. On the other hand, when $X$ is a smooth path, for example $X
\in C^{1} ( [0,T] )$, we can always define a natural lift $\mathbf{X}$ to
$X$ by
\[ \mathbf{X}= ( X,\mathbbm{X} )   \tmop{where}  \mathbbm{X}_{s,t} =
   \int_{s}^{t} ( X_{r} -X_{s} ) \dot{X}_{r} \dd r. \]
In order to approximate irregular rough paths by smooth paths, we define the
space of geometric rough paths as the closure of smooth rough paths for the
rough path distance. This leads to the following definition

\begin{definition}
  Let $1/3< \gamma \leqslant 1/2$, a $\gamma$-rough path $\mathbf{X}$ is a
  geometric rough path, and we write $\mathbf{X} \in \mathcal{R}^{\gamma}$
  if there exists a sequence $X^{\varepsilon} \in C^{1} ([0,T];\mathbbm{R}^{d}
  )$ such that
  \[ \| \mathbf{X}-\mathbf{X}^{\varepsilon} \|_{\mathcal{R}^{\gamma}}
     \rightarrow_{\varepsilon \rightarrow 0} 0 \]
  where $\mathbf{X}^{\varepsilon}$ is the natural lift of $X^{\varepsilon}$
  as a $\gamma$-rough path.
\end{definition}

In all the following we will consider only geometric rough paths. A general
discussion about rough paths and geometric rough paths can be found in
{\cite{hairer_geometric_2012}}.

As in the stochastic calculus setting, where we can integrate progressively
measurable processes only, one has to give a structure to the paths we can
integrate. In fact, as the integral of $X$ against $\dd X$ is already
defined by the definition of the rough path $\mathbf{X}$, the idea is to
consider functions which locally up to the first order look like $X$. Such
functions are called controlled by $X$ and they are defined in the following
definition.

\begin{definition}
  \label{definition:controlled_path}Let $1/3< \gamma \leqslant 1/2$ and $X \in
  \mathcal{C}^{\gamma} ( [0,T] )$. A function $y \in \mathcal{C}^{\gamma} (
  [0,T] )$ is $\gamma$-controlled by $X$, and we write $y \in
  \mathcal{D}^{\gamma}_{X} ( [0,T] )$ if
  \[ y_{t} -y_{s} =y'_{s} ( X_{t} -X_{s} ) +y^{\#}_{s,t} \]
  with $y' \in \mathcal{C}^{\gamma}$ and $y^{\#} \in \mathcal{C}^{2
  \gamma}_{2}$. Furthermore, we define the controlled norm of $y$ by
  \[ \| y \|_{\mathcal{D}^{\gamma}_{X}} = \| y \|_{\gamma} + \| y' \|_{\gamma}
     + \| y^{\#} \|_{2 \gamma} . \]
  When there is no ambiguity, we will omit the $\gamma$ and say that $y$ is
  controlled by $X$.
\end{definition}

The space of controlled paths has a rich algebraic structure. In particular,
it is stable by products. Indeed the following estimate holds.

\begin{lemma}[Gubinelli {\cite{gubinelli_controlling_2004}}]
  \label{lemma:product_controlled_path}Let $a,b \in \mathcal{D}^{\gamma}_{X}$
  and $\tilde{a} , \tilde{b} \in \mathcal{D}^{\gamma}_{Y}$. Then $a b \in
  \mathcal{D}^{\gamma}_{X}$ and $\tilde{a}   \tilde{b} \in
  \mathcal{D}^{\gamma}_{Y}$ and
  \[ \|a b- \tilde{a}   \tilde{b} \|_{\mathcal{C}^{\gamma}} \leqslant \|a-
     \tilde{a} \|_{\gamma} (\|b\|_{\mathcal{D}^{\gamma}_{X}} + \| \tilde{b}
     \|_{\mathcal{D}^{\gamma}_{Y}} ) + \| b- \tilde{b} \|_{\gamma} (
     \|a\|_{\mathcal{D}^{\gamma}_{X}} + \| \tilde{a}
     \|_{\mathcal{D}^{\gamma}_{Y}} ) \]
  \begin{eqnarray*}
    \| ( a b )' - ( \tilde{a}   \tilde{b} )' \|_{\gamma} & \leqslant & ( \| a
    \|_{\mathcal{D}^{\gamma}_{X}} + \| \tilde{a} \|_{\mathcal{D}^{\gamma}_{Y}}
    + \| b \|_{\mathcal{D}^{\gamma}_{X}} +\| \tilde{b}
    \|_{\mathcal{D}^{\gamma}_{Y}} )\\
    &  & \times ( \| a- \tilde{a} \|_{\gamma} + \| a' - \tilde{a}'
    \|_{\gamma} +\|b- \tilde{b} \|_{\gamma} + \| b' - \tilde{b}' \|_{\gamma} )
  \end{eqnarray*}
  and
  \begin{eqnarray*}
    \| ( a b )^{\#} - ( \tilde{a}   \tilde{b} )^{\#} \|_{\mathcal{C}^{2
    \gamma}_2} & \leqslant & ( \| b \|_{\mathcal{D}^{\gamma}_{X}} + \| \tilde{b}
    \|_{\mathcal{D}^{\gamma}_{Y}} ) ( \| a- \tilde{a} \|_{\gamma} + \| a^{\#}
    - \tilde{a}^{\#} \|_{\gamma} )\\
    &  & + ( \| a \|_{\mathcal{D}^{\gamma}_{X}} + \| \tilde{a}
    \|_{\mathcal{D}^{\gamma}_{Y}} ) ( \| b- \tilde{b} \|_{\gamma} + \| b^{\#}
    - \tilde{b}^{\#} \|_{\gamma} )
  \end{eqnarray*}
\end{lemma}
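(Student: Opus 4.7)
The argument is a Leibniz-type stability computation followed by a telescoping trick. First I verify that $ab \in \mathcal{D}^\gamma_X$ by direct expansion of the increment: using the identity $\delta (ab)_{s,t} = \delta a_{s,t}\, b_s + a_s\, \delta b_{s,t} + \delta a_{s,t}\, \delta b_{s,t}$ and substituting the controlled decompositions $\delta a_{s,t} = a'_s\, \delta X_{s,t} + a^\#_{s,t}$ and $\delta b_{s,t} = b'_s\, \delta X_{s,t} + b^\#_{s,t}$, I read off
\[ (ab)'_s = a'_s b_s + a_s b'_s, \qquad (ab)^\#_{s,t} = a^\#_{s,t} b_s + a_s b^\#_{s,t} + \delta a_{s,t}\, \delta b_{s,t}. \]
The product $(ab)'$ lies in $\mathcal{C}^\gamma$ thanks to the elementary bilinear bound $\|fg\|_\gamma \lesssim \|f\|_\infty \|g\|_\gamma + \|g\|_\infty \|f\|_\gamma$, while $(ab)^\#$ lies in $\mathcal{C}^{2\gamma}_2$ because the extra term $\delta a_{s,t}\, \delta b_{s,t}$ satisfies $|\delta a_{s,t}\, \delta b_{s,t}| \leq \|a\|_\gamma \|b\|_\gamma (t-s)^{2\gamma}$. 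The analogous statements hold for $\tilde a \tilde b \in \mathcal{D}^\gamma_Y$.

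For the three difference bounds the idea is to telescope each product. To estimate $\|ab - \tilde a \tilde b\|_\gamma$ I write $ab - \tilde a\tilde b = (a - \tilde a)\, b + \tilde a\, (b - \tilde b)$ and apply the bilinear bound above, pairing each difference factor with the controlled norm of the un-differenced path. For the bound on $(ab)' - (\tilde a\tilde b)'$ I expand
\[ (ab)' - (\tilde a\tilde b)' = (a' - \tilde a')\, b + \tilde a'\, (b - \tilde b) + (a - \tilde a)\, b' + \tilde a\, (b' - \tilde b') \]
and estimate each summand in $\mathcal{C}^\gamma$ by the same bilinear device. Finally, for $(ab)^\# - (\tilde a\tilde b)^\#$ I split each of the three pieces in the formula for $(ab)^\#$ into a telescoping pair, e.g.\ $a^\# b - \tilde a^\# \tilde b = (a^\# - \tilde a^\#)\, b + \tilde a^\#\, (b - \tilde b)$, and analogously for $a b^\# - \tilde a \tilde b^\#$ and for the quadratic part $\delta a\, \delta b - \delta \tilde a\, \delta \tilde b$. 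Measured in $\mathcal{C}^{2\gamma}_2$ and regrouped, these telescopings produce exactly the asymmetric right-hand sides stated in the lemma.

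The proof presents no conceptual obstacle; the only delicate point is organizational. The three estimates are deliberately asymmetric --- only a subset of the possible differences (such as $\|a - \tilde a\|_\gamma$, $\|a' - \tilde a'\|_\gamma$, $\|a^\# - \tilde a^\#\|_{2\gamma}$) appears on the right-hand side. One therefore has to be careful, when performing each telescoping split, to pair the "small" factor (the difference) with the correct "bounded" factor (a controlled norm of one of $a, \tilde a, b, \tilde b$) so that only the permitted differences appear. Matching the exact shape of the bounds requires some bookkeeping but no new idea beyond Leibniz's rule and the triangle inequality.
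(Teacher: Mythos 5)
Your proposal is correct, and it is in fact the only proof available to compare against: the paper does not prove this lemma but cites it from Gubinelli's controlling-rough-paths article, where the argument is exactly your Leibniz expansion $\delta(ab)_{s,t}=\delta a_{s,t}b_s+a_s\delta b_{s,t}+\delta a_{s,t}\delta b_{s,t}$ giving $(ab)'=a'b+ab'$, $(ab)^{\#}_{s,t}=a^{\#}_{s,t}b_s+a_sb^{\#}_{s,t}+\delta a_{s,t}\delta b_{s,t}$, followed by the telescoping of differences. The only caveats are cosmetic: the sup norms appearing in the bilinear bounds are controlled by the $\mathcal{C}^{\gamma}$ norms only up to a constant depending on $T$ (so the stated inequalities should be read with an implicit constant, as the paper does elsewhere), and the quantity $\|a^{\#}-\tilde a^{\#}\|_{\gamma}$ in the statement is to be read as the $\mathcal{C}^{2\gamma}_{2}$ norm, which is what your telescoping naturally produces.
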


Whenever a path is uniformly locally controlled by $X$, it is globally
controlled by $X$ as stated in the following lemma.

\begin{lemma}
  \label{remark:global_holder_norm}Suppose that $a: [0,T] \rightarrow
  \mathbbm{R}^{d}$ is uniformly locally controlled by $X$, that is there
  exists $a'$ and $a^{\#}$ such that for all $s,t \in [0,T]$
  \[ a_{t} -a_{s} =a'_{s} ( X_{t} -X_{s} ) +a^{\#}_{s,t} \]
  and there exists $\varepsilon >0$ such that for all $s,t \in [0,T]$ with $|
  s-t | \leqslant \varepsilon$ we have
  \[ \llbracket a \rrbracket_{\mathcal{D}^{\gamma}_{X, \varepsilon}} \assign
     \sup_{s \neq t, | t-s | < \varepsilon} | a_{t} -a_{s} | / | t-s
     |^{\gamma} + | a'_{t} -a'_{s} | / | t-s |^{\gamma} + | a^{\#}_{s,t} | / |
     t-s |^{2 \gamma} <+ \infty \]
  Then $a \in \mathcal{D}^{\gamma}_{X}$ and we have
  \[ \| a \|_{\mathcal{D}^{\gamma}_{X}} \lesssim ( T/ \varepsilon )^{1-
     \gamma} \llbracket a \rrbracket_{\mathcal{D}^{\gamma}_{X, \varepsilon}} (
     1+ \| X \|_{\gamma} ) + | a_{0} | + | a'_{0} | . \]
\end{lemma}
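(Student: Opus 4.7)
The strategy is to reduce the global estimate to the local assumption by chopping any interval $[s,t] \subset [0,T]$ into $n = \lceil (t-s)/\varepsilon \rceil$ equal subintervals of length $h = (t-s)/n \leqslant \varepsilon$, so that the local seminorm $M = \llbracket a \rrbracket_{\mathcal{D}^{\gamma}_{X, \varepsilon}}$ can be applied on each piece. Since $n \leqslant T/\varepsilon + 1$, accumulating over the $n$ pieces produces at worst a factor $(T/\varepsilon)^{1-\gamma}$, which is exactly the prefactor in the statement.

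The Hölder norms of $a$ and $a'$ are the easy part: these are subadditive in the sense that $\delta a_{s,t} = \sum_{i=0}^{n-1} \delta a_{t_i,t_{i+1}}$ along the partition $s=t_0<\dots<t_n=t$, so
\[
|\delta a_{s,t}| \leqslant n M h^{\gamma} = M n^{1-\gamma} (t-s)^{\gamma} \leqslant M (T/\varepsilon)^{1-\gamma} (t-s)^{\gamma},
\]
and the same argument works for $a'$. Together with $|a_0|$ and $|a'_0|$ this gives the $\|a\|_\gamma + \|a'\|_\gamma$ contribution to the norm.

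The main obstacle, and the heart of the proof, is to control $\|a^\#\|_{2\gamma}$, because $a^\#$ is not additive. The first step is to derive the Chen-type relation: applying the identity $\delta a_{s,t} = \delta a_{s,u} + \delta a_{u,t}$ to the decomposition $\delta a = a' \delta X + a^\#$ yields
\[
a^\#_{s,t} = a^\#_{s,u} + a^\#_{u,t} + (a'_u - a'_s)\, \delta X_{u,t}.
\]
Telescoping this identity along the uniform partition gives the explicit formula
\[
a^\#_{s,t} = \sum_{i=0}^{n-1} a^\#_{t_i, t_{i+1}} + \sum_{i=0}^{n-1} (a'_{t_i} - a'_s)\, \delta X_{t_i, t_{i+1}}.
\]
On each subinterval one has $|a^\#_{t_i,t_{i+1}}| \leqslant M h^{2\gamma}$ and $|a'_{t_i} - a'_s| \leqslant M (i h)^{\gamma}$ (again by splitting $[s,t_i]$ into at most $i$ pieces of length $\leqslant \varepsilon$ and using the local bound on $a'$), while $|\delta X_{t_i,t_{i+1}}| \leqslant \|X\|_\gamma h^\gamma$.

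Summing the two pieces, using $\sum_{i=0}^{n-1} i^\gamma \lesssim n^{1+\gamma}$, and the relations $nh^{2\gamma} = n^{1-2\gamma}(t-s)^{2\gamma}$ and $n^{1+\gamma} h^{2\gamma} = n^{1-\gamma}(t-s)^{2\gamma}$, one finds
\[
\frac{|a^\#_{s,t}|}{(t-s)^{2\gamma}} \lesssim M\, n^{1-2\gamma} + M \|X\|_\gamma\, n^{1-\gamma} \lesssim M (T/\varepsilon)^{1-\gamma} (1 + \|X\|_\gamma),
\]
where we used $\gamma \leqslant 1/2$ so $n^{1-2\gamma} \leqslant n^{1-\gamma}$. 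Combining the three estimates yields the claimed bound
\[
\|a\|_{\mathcal{D}^{\gamma}_X} \lesssim (T/\varepsilon)^{1-\gamma} \llbracket a \rrbracket_{\mathcal{D}^{\gamma}_{X, \varepsilon}} (1 + \|X\|_\gamma) + |a_0| + |a'_0|.
\]
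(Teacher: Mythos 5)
Your overall strategy (chaining over a uniform partition of mesh $h\leqslant\varepsilon$, and the Chen-type relation $a^{\#}_{s,t}=a^{\#}_{s,u}+a^{\#}_{u,t}+(a'_u-a'_s)\delta X_{u,t}$ together with its telescoped form) is the natural one, and your estimates for $\llbracket a\rrbracket_{\gamma}$ and $\llbracket a'\rrbracket_{\gamma}$ are correct; note the paper states this lemma without proof, so there is no argument to compare against. The gap is in the cross term: the bound $|a'_{t_i}-a'_s|\leqslant M(ih)^{\gamma}$ does \emph{not} follow from ``splitting $[s,t_i]$ into $i$ pieces of length $\leqslant\varepsilon$''; chaining only gives $|a'_{t_i}-a'_s|\leqslant i\,Mh^{\gamma}=i^{1-\gamma}M(ih)^{\gamma}$, and the stronger bound is false in general. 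For instance $a'_t=M\varepsilon^{\gamma-1}t$ satisfies $|a'_t-a'_s|\leqslant M|t-s|^{\gamma}$ whenever $|t-s|\leqslant\varepsilon$, yet $|a'_{t_i}-a'_s|=M\varepsilon^{\gamma-1}ih=(ih/\varepsilon)^{1-\gamma}M(ih)^{\gamma}$. With the correct chaining bound, your termwise estimate of $\sum_i(a'_{t_i}-a'_s)\delta X_{t_i,t_{i+1}}$ gives $M\|X\|_{\gamma}h^{2\gamma}\sum_i i\sim M\|X\|_{\gamma}n^{2-2\gamma}(t-s)^{2\gamma}$, i.e.\ a factor $(T/\varepsilon)^{2(1-\gamma)}$, strictly weaker than the claimed $(T/\varepsilon)^{1-\gamma}$ when $\gamma<1$.

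The stated bound is nevertheless true, and your decomposition can be repaired by not taking absolute values term by term. Writing $S_j=\delta X_{s,t_j}$, so that $|S_j|\leqslant\|X\|_{\gamma}(t-s)^{\gamma}$ for every $j$, summation by parts gives
\[ \sum_{i=0}^{n-1}(a'_{t_i}-a'_s)\,\delta X_{t_i,t_{i+1}}=(a'_{t_{n-1}}-a'_s)\,S_n-\sum_{j=1}^{n-1}(a'_{t_j}-a'_{t_{j-1}})\,S_j , \]
and since $|a'_{t_{n-1}}-a'_s|\leqslant nMh^{\gamma}$ and $|a'_{t_j}-a'_{t_{j-1}}|\leqslant Mh^{\gamma}$, each of the two terms is bounded by $M\|X\|_{\gamma}\,nh^{\gamma}(t-s)^{\gamma}=M\|X\|_{\gamma}n^{1-\gamma}(t-s)^{2\gamma}$. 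Combined with $\sum_i|a^{\#}_{t_i,t_{i+1}}|\leqslant Mn^{1-2\gamma}(t-s)^{2\gamma}$ and $n\lesssim T/\varepsilon$, this yields $\|a^{\#}\|_{2\gamma}\lesssim(T/\varepsilon)^{1-\gamma}M(1+\|X\|_{\gamma})$, as claimed. (Alternatively, a dyadic refinement of $[s,t]$, summing the corrections $(a'_v-a'_u)\delta X_{v,w}$ over scales, produces a geometric series and the same power.) With this correction the rest of your argument goes through unchanged.
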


\begin{remark}
  This lemma also apply when $a$ is only locally H{\"o}lder continuous, we have
  \[ \llbracket a \rrbracket_{\gamma} \lesssim ( T/ \varepsilon ) \sup_{t \neq
     s, | t-s | \leqslant \varepsilon} | a_{t} -a_{s} | / | t-s |^{\gamma} .
  \]
\end{remark}

The definition of controlled paths and the definition of rough paths allow us
to construct a controlled rough integral as the limit of the Riemann sum. This
construction and the properties are a byproduct of the existence of the
{\tmem{sewing map}} (Proposition 1 in {\cite{gubinelli_controlling_2004}}).

\begin{theorem}[Controlled Rough Integral, Gubinelli
{\cite{gubinelli_controlling_2004}}]
  \label{th:controlled_integral}Let $1/3< \gamma \leqslant 1/2$, $\mathbf{X}
  \in \mathcal{R}^{\gamma} ([0,T])$ and let $a \in \mathcal{D}^{\gamma}_{X}
  ([0,T])$. For all $0 \leqslant s \leqslant t \leqslant T$, the following
  limit of Riemann sums exists
  \[ \int_{s}^{t} a_{r} \dd \mathbf{X}_{r} \assign
     \lim_{\tmscript{\begin{array}{c}
       \mathcal{P}  \tmop{partition}   \tmop{of}   [ s,t ]\\
       | \mathcal{P} | \rightarrow 0
     \end{array}}} \sum_{[ t_{i} ,t_{i+1} ] \in \mathcal{P}} ( a_{t_{i}}
     \delta X_{t_{i} ,t_{i+1}} +a'_{t_{i}} \mathbbm{X}_{t_{i} ,t_{i+1}} ) \]
  and does not depends on the partition. Furthermore,
  \[ \left| \int_{s}^{t} a_{r} \dd \mathbf{X}_{r} -a_{s} \delta X_{s,t}
     -a'_{s} \mathbbm{X}_{s,t} \right| \lesssim | t-s |^{3 \gamma} \| X
     \|_{\mathcal{R}^{\gamma}} \| a \|_{\mathcal{D}^{\gamma}_{X}} \]
  and the map from $\mathcal{D}^{\gamma}_{X}$ to $\mathcal{D}^{\gamma}_{X}$
  given by
  \[ a \rightarrow \int_{0}^{.} a_{r} \dd \mathbf{X}_{r} \]
  is linear and continuous and we have
  \[ \left\| \int_{0}^{.} a_{r} \dd \mathbf{X}_{r}
     \right\|_{\mathcal{D}^{\gamma}_{X}} \lesssim ( 1+ \| \mathbf{X}
     \|_{\mathcal{R}^{\gamma}} ) \| X \|_{D^{\gamma}_{X}} . \]
\end{theorem}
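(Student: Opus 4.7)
The standard strategy is to invoke the \emph{sewing lemma} (Proposition 1 of \cite{gubinelli_controlling_2004}): given a germ $\Xi\in\mathcal{C}^{\alpha}_2$ with $\delta\Xi\in\mathcal{C}^{\beta}_3$ for some $\beta>1$, there is a unique element $\mathcal{I}(\Xi)\in\mathcal{C}^{\alpha}_2$ such that $|\mathcal{I}(\Xi)_{s,t}-\Xi_{s,t}|\lesssim |t-s|^{\beta}\|\delta\Xi\|_{\beta}$, and this $\mathcal{I}(\Xi)_{s,t}$ is the limit of the Riemann sums $\sum_i\Xi_{t_i,t_{i+1}}$ over partitions of $[s,t]$. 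The natural germ here is
\[
\Xi_{s,t} \;\assign\; a_s\,\delta X_{s,t} + a'_s\,\mathbbm{X}_{s,t},
\]
so that the announced Riemann sums coincide with $\sum_i\Xi_{t_i,t_{i+1}}$. Everything reduces to verifying the sewing hypothesis with exponent $3\gamma>1$.

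For the main computation I would expand $\delta\Xi_{s,u,t}=\Xi_{s,t}-\Xi_{s,u}-\Xi_{u,t}$. Using the Chen relation $\mathbbm{X}_{s,t}-\mathbbm{X}_{s,u}-\mathbbm{X}_{u,t}=\delta X_{s,u}\otimes\delta X_{u,t}$ together with the controlled expansion $\delta a_{s,u}=a'_s\delta X_{s,u}+a^{\#}_{s,u}$, a direct algebraic simplification yields
\[
\delta\Xi_{s,u,t} \;=\; -\,a^{\#}_{s,u}\,\delta X_{u,t} \;-\; \delta a'_{s,u}\,\mathbbm{X}_{u,t}.
\]
The first term is bounded by $\|a^{\#}\|_{2\gamma}\|X\|_{\gamma}|u-s|^{2\gamma}|t-u|^{\gamma}$, the second by $\|a'\|_{\gamma}\|\mathbbm{X}\|_{2\gamma}|u-s|^{\gamma}|t-u|^{2\gamma}$, giving $|\delta\Xi_{s,u,t}|\lesssim \|\mathbf{X}\|_{\mathcal{R}^{\gamma}}\|a\|_{\mathcal{D}^{\gamma}_X}|t-s|^{3\gamma}$. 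Since $3\gamma>1$, the sewing lemma applies and produces $\int_s^t a_r\,\dd\mathbf{X}_r$ together with the announced local estimate $|\int_s^t a_r\,\dd\mathbf{X}_r - a_s\delta X_{s,t}-a'_s\mathbbm{X}_{s,t}|\lesssim |t-s|^{3\gamma}\|\mathbf{X}\|_{\mathcal{R}^{\gamma}}\|a\|_{\mathcal{D}^{\gamma}_X}$.

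To obtain the controlled path structure of $y_t\assign\int_0^t a_r\,\dd\mathbf{X}_r$, I would set $y'_s=a_s$ and read off from the local estimate
\[
\delta y_{s,t} \;=\; a_s\,\delta X_{s,t} + y^{\#}_{s,t}, \qquad y^{\#}_{s,t}=a'_s\mathbbm{X}_{s,t}+\bigl(\delta y_{s,t}-a_s\delta X_{s,t}-a'_s\mathbbm{X}_{s,t}\bigr).
\]
The first piece is in $\mathcal{C}^{2\gamma}_2$ with norm $\lesssim \|a'\|_{\infty}\|\mathbbm{X}\|_{2\gamma}$, and the second is even in $\mathcal{C}^{3\gamma}_2\subset\mathcal{C}^{2\gamma}_2$ by the sewing estimate. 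Bounding $\|a'\|_{\infty}\leqslant |a'_0|+T^{\gamma}\|a'\|_{\gamma}$ and collecting all contributions yields $\|y\|_{\mathcal{D}^{\gamma}_X}\lesssim (1+\|\mathbf{X}\|_{\mathcal{R}^{\gamma}})\|a\|_{\mathcal{D}^{\gamma}_X}$, which is the continuity statement; linearity is immediate from the linearity of the Riemann sums.

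The only genuinely subtle point is the cancellation leading to the clean identity for $\delta\Xi_{s,u,t}$: the two obstructions coming from the roughness of $a$ (the remainder $a^{\#}$) and from the roughness of $X$ (Chen's relation) must exactly conspire, which is precisely why the Gubinelli derivative $a'$ has to couple to $\mathbbm{X}$ in the definition of $\Xi$. Once this identity is written down the rest is a routine application of sewing; no further technical obstacle is expected.
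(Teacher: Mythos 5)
Your proposal is correct: the algebraic identity $\delta\Xi_{s,u,t}=-a^{\#}_{s,u}\delta X_{u,t}-\delta a'_{s,u}\mathbbm{X}_{u,t}$ follows from Chen's relation exactly as you write, the exponent $3\gamma>1$ makes the sewing lemma applicable, and the controlled-path estimate for $t\mapsto\int_0^t a_r\,\dd\mathbf{X}_r$ is obtained as you indicate. The paper does not prove this theorem but imports it from \cite{gubinelli_controlling_2004} (via the sewing map, of which Proposition \ref{th:sewing_map} is the uniqueness part), and your argument is essentially that standard proof.
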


Let us also give the equivalent for a function $[0,T]^{2} \rightarrow F$ of
the classical result which says that when a continuous function $f$ from
$[0,T]$ to $F$ is such that there exists $\varepsilon >0$ with $| f_{t} -f_{s}
| \lesssim | t-s |^{1+ \varepsilon}$ then $f \equiv 0$.

\begin{proposition}[Gubinelli {\cite{gubinelli_controlling_2004}}]
  \label{th:sewing_map}Let $\mu >1$. Let $h \in \mathcal{C}^{\mu}_{2}$ such
  that for all $0 \leqslant s<u<t \leqslant T$
  \[ h_{s,t} -h_{u,t} -h_{s,u} =0 \]
  then $h \equiv 0$.
\end{proposition}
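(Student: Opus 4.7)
The plan is to use the three-point cocycle relation $h_{s,t} = h_{s,u} + h_{u,t}$ to decompose $h_{s,t}$ as a sum over arbitrarily fine partitions of $[s,t]$, and then exploit the fact that $\mu > 1$ to kill the sum in the limit as the mesh goes to zero. This is the classical argument behind the sewing lemma: a cocycle whose increments are $o(|t-s|)$ on small scales is forced to vanish.

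More concretely, I would fix $0 \leqslant s < t \leqslant T$ and consider the uniform partition $t_i^n = s + i(t-s)/n$ for $i = 0, \ldots, n$. A straightforward induction on $n$, iterating the hypothesis $h_{a,c} - h_{a,b} - h_{b,c} = 0$, gives the telescoping identity
\[
h_{s,t} = \sum_{i=0}^{n-1} h_{t_i^n, t_{i+1}^n}.
\]
Combining this with the Hölder bound built into the definition of $\mathcal{C}^\mu_2$, namely $|h_{a,b}| \leqslant \|h\|_\mu |b-a|^\mu$, one gets
\[
|h_{s,t}| \leqslant \|h\|_\mu \sum_{i=0}^{n-1} \left(\frac{t-s}{n}\right)^\mu = \|h\|_\mu |t-s|^\mu \, n^{1-\mu}.
\]
Since $\mu > 1$, letting $n \to \infty$ forces $h_{s,t} = 0$. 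As $s,t$ were arbitrary and $h_{s,s} = 0$ by definition of $\mathcal{C}^\mu_2$, this gives $h \equiv 0$.

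There is essentially no obstacle in the argument: the only thing to be a bit careful about is the inductive derivation of the telescoping identity from the three-point cocycle relation, but this is immediate by induction on the number of subintervals and does not require any regularity assumption beyond what is already in the statement. The crux of the proof is simply the interplay between the bound $n \cdot (|t-s|/n)^\mu$ and the hypothesis $\mu > 1$, which is exactly the mechanism that will later be used to make the Riemann sums defining the rough integral in Theorem \ref{th:controlled_integral} convergent.
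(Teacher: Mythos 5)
Your proof is correct and is exactly the classical argument: the paper itself gives no proof of Proposition \ref{th:sewing_map}, citing Gubinelli \cite{gubinelli_controlling_2004}, and the standard proof there is precisely your partition argument (additivity gives the telescoping identity over a uniform partition, the $\mathcal{C}^\mu_2$ bound gives $|h_{s,t}|\leqslant \|h\|_\mu |t-s|^\mu n^{1-\mu}$, and $\mu>1$ kills it as $n\to\infty$). No gap; the only implicit convention, consistent with how the result is used in the paper, is that $h$ is considered on ordered pairs $s\leqslant t$, the case $s=t$ being covered by the definition of $\mathcal{C}^\mu_2$.
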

\subsubsection{Fractional Brownian motion as rough path}

Finally in order for this theory to be useful, we need to be able to lift to
the space of rough path the class of signals $X$ we will consider. The following
theorem gives a whole set of stochastic processes with such a property. It can
be found in {\cite{friz_course_????}} and {\cite{friz_multidimensional_2010}}.
Note that the first result for the lift of the fractional Brownian motion is
due to Coutin and Qian {\cite{coutin_stochastic_2002}}. Let us first remind the definition of a the fractional Brownian motion.

\begin{definition}\label{def:fbm}
 Let $(\Omega,\cF,\PP) $ a probability space, and $H\in(0,1)$. The fractional Brownian motion of Hurst parameter $H$ define is the only continuous centered Gaussian process $(B^H_t)_{t\in[0,T]}$ on $[0,T]$, starting at $0$ and with covariance
 \[ \EE [B^H_t B^H_s] = \frac12 \left(|t|^{2H}+|s|^{2H}- |t-s|^{2H}\right). \]
 A $d$-dimensional fractional Brownian motion of Hurst parameter $H$ is a stochastic process $B^H=(B^{H,1},\dots,B^{H,d})$ where the $B^{H,i}$ are i.i.d. standard fractional Brownian motion of Hurst parameter $H$.
 \end{definition}

The fractional Brownian motion is an extension of the Brownian motion, in the sense that when $H=\frac12$, the $B^H$ is a standard Brownian motion. Furthermore, the fractional Brownian motion have some nice properties, in particular it has stationary increment,  it is self-similar of order $H$, and for all $H>\eps>0$ it is almost surely $H-\eps$ Hölder continuous. Nevertheless, when $H\neq\frac12$ it is neither a semimartingale, nor a Markov process. Hopefully, one can use controlled rough path theory with the fractional Brownian motion.

\begin{theorem}
  Let $H \in ( 1/3,1/2 ]$ and $B^{H}$ a $d$-dimensional fractional Brownian motion of Hurst
  parameter $H$. For any $1/3< \gamma< H $, almost surely $B^{H}$ can be
  lifted as a $\gamma$-rough path $\mathbf{B}^{H} = (B^{H}
  ,\mathbbm{B}^{H} )$. Furthermore for every $+ \infty >p \geqslant 1$,
  \[ \mathbbm{E} [ \| \mathbf{B}^{H} \|^{p}_{\mathcal{R}^{\gamma} ( [0,T]
     )} ] <+ \infty \]
  and there exists a smooth measurable approximation $B^{H, \varepsilon}$ of
  $B^{H}$ such that $ \mathbf{B}^{H, \varepsilon} \rightarrow^{\mathcal{R}_\gamma}\mathbf{B}^{H}
  $ almost surely and for
  all $1 \leqslant p<+ \infty$
  \[ \mathbbm{E} [ \| \mathbf{B}^{H, \varepsilon} -\mathbf{B}^{H}
     \|_{\mathcal{R}^{\gamma} ( [0,T] )}^{p} ] \rightarrow 0, \]
  where $\mathbf{B}^{H, \varepsilon} = ( B^{H, \varepsilon} ,\mathbbm{B}^{H,
  \varepsilon} )$ with $\mathbbm{B}^{H, \varepsilon}_{s,t} = \int_{s}^{t} (
  B^{H, \varepsilon}_{r} -B^{H, \varepsilon}_{s} ) \dot{B}^{H,
  \varepsilon}_{r} \dd r$.
\end{theorem}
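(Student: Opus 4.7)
The plan is to construct $\mathbb{B}^H$ as an $L^p$-limit of the iterated integrals of a smooth approximation, and to obtain all stated regularity by combining Gaussian hypercontractivity in the second Wiener chaos with Kolmogorov's continuity criterion. First I would fix a convenient smooth approximation, either $B^{H,\varepsilon} = B^H \ast \rho_\varepsilon$ for a spatial mollifier or the piecewise linear interpolation along a partition of mesh $\varepsilon$. Either choice is jointly measurable, and by construction each $B^{H,\varepsilon}$ is $C^1$, so $\mathbb{B}^{H,\varepsilon}_{s,t} := \int_s^t (B^{H,\varepsilon}_r - B^{H,\varepsilon}_s) \otimes \dot{B}^{H,\varepsilon}_r \, \mathrm{d}r$ is a classical Riemann integral for which Chen's relation and the pathwise bound are trivial.

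The heart of the proof is the second-moment bound
\[
\mathbb{E}\bigl[\bigl|\mathbb{B}^{H,\varepsilon}_{s,t}-\mathbb{B}^{H,\varepsilon'}_{s,t}\bigr|^2\bigr] \lesssim \eta(\varepsilon,\varepsilon')\,|t-s|^{4H},
\]
with $\eta(\varepsilon,\varepsilon')\to 0$ as $\varepsilon,\varepsilon'\to 0$, uniformly in $(s,t)$ in the sense of the Hölder scaling exponent $4H$. For the diagonal components $i=j$ this is immediate because $\mathbb{B}^{H,\varepsilon,ii}_{s,t} = \tfrac12 (\delta B^{H,\varepsilon}_{s,t})^i)^2$, which converges to $\tfrac12 (\delta B^H_{s,t})^i)^2$, and for fBm $\mathbb{E}[(\delta B^H_{s,t})^4] \lesssim |t-s|^{4H}$. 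For off-diagonal components $i\neq j$, I would expand using the independence of coordinates and the covariance $R(u,v) = \tfrac12(|u|^{2H}+|v|^{2H}-|u-v|^{2H})$ and integrate by parts in the stochastic integral representation (or directly in the Wiener chaos) to obtain double integrals that can be bounded using the self-similarity and stationarity of increments of $B^H$. Passing to the limit produces an element $\mathbb{B}^H_{s,t}$ in the second Wiener chaos with $\mathbb{E}[|\mathbb{B}^H_{s,t}|^2] \lesssim |t-s|^{4H}$.

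Next I would invoke Gaussian hypercontractivity: all $L^p$ norms are equivalent on a fixed Wiener chaos, so
\[
\mathbb{E}\bigl[\bigl|\mathbb{B}^{H,\varepsilon}_{s,t}-\mathbb{B}^H_{s,t}\bigr|^p\bigr]^{1/p}\lesssim_p \eta(\varepsilon,0)^{1/2}\,|t-s|^{2H},\qquad \mathbb{E}[|\mathbb{B}^H_{s,t}|^p]^{1/p}\lesssim_p |t-s|^{2H},
\]
for every finite $p$. A two-parameter Kolmogorov (or Besov-embedding) criterion then converts these into almost-sure Hölder bounds: for any $\gamma<H$, picking $p$ with $2H-2/p>2\gamma$ gives $\mathbb{B}^H\in \mathcal{C}^{2\gamma}_2$ a.s.\ with $\mathbb{E}[\|\mathbb{B}^H\|_{2\gamma}^p]<\infty$. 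The same criterion applied to $B^H$ itself gives $B^H\in\mathcal{C}^\gamma$ with moments of all orders, and applied to the differences yields $\mathbb{E}[\|\mathbf{B}^{H,\varepsilon}-\mathbf{B}^H\|_{\mathcal{R}^\gamma}^p]\to 0$. Chen's relation and the algebraic constraint $\mathbb{B}^{H,\varepsilon}_{s,t}-\mathbb{B}^{H,\varepsilon}_{s,u}-\mathbb{B}^{H,\varepsilon}_{u,t}=\delta B^{H,\varepsilon}_{s,u}\otimes \delta B^{H,\varepsilon}_{u,t}$ pass to the almost-sure limit, so $\mathbf{B}^H$ is a genuine rough path; almost-sure convergence in $\mathcal{R}^\gamma$ follows from Borel--Cantelli along $\varepsilon_n=2^{-n}$.

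The main obstacle is the off-diagonal second-moment estimate: one must work inside the second chaos (where there is no Itô isometry available as such since $H\neq 1/2$) and exploit cancellations coming from the structure of fractional Gaussian covariances. The restriction $H>1/3$ (equivalently, the requirement that $2\gamma>2/3$ in the Kolmogorov step with $\gamma<H$) is precisely what keeps these integrals convergent and makes the $|t-s|^{4H}$ scaling strong enough to give $2\gamma$-Hölder regularity in $\mathcal{C}^{2\gamma}_2$.
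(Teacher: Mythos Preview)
The paper does not supply its own proof of this theorem: it is quoted as a known result, with references to Friz--Hairer, Friz--Victoir, and the original article of Coutin and Qian. Your outline is precisely the standard argument one finds in those sources---smooth approximation, second-moment estimates on the iterated integrals via the explicit fBm covariance, equivalence of $L^p$ norms in fixed Wiener chaos, and a two-parameter Kolmogorov criterion---so there is nothing to compare against and your sketch is faithful to the literature the paper cites.
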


When $H=\frac12$ one can specify a bit this last theorem. Indeed, it that setting, $B=B^{1/2}$ is a Brownian motion, and both Ito and Stratonovitch calculus are available.  In particular, one can show \cite{friz_course_????, friz_multidimensional_2010} that for all $\gamma\in(1/3,1/2)$, the pair $\bB=(B,\BB^{strato})$ is almost surely a $\gamma$-geometric rough path, where $(\BB^{strato}_{s,t}) = (\BB^{strato,i,j}_{s,t})_{1\le i,j\le d}$ and
\[\BB^{strato,i,j}_{s,t} = \int_s^t (B^i_r-B^i_s) \circ \dd B_r^j\]
where the last integral is the Stratonavitch integral. This Stratonovitch Brownian rough path is exaclty the rough path of the last theorem. Furthermore, in that setting, the definition of the controlled rough integral and the definition of the Stratonovitch integral coincide almost surely. This result can be found in the article of Gubinelli \cite{gubinelli_controlling_2004} about rough integrals, and the formulation here comes from Friz and Hairer \cite{friz_course_????}.

\begin{proposition}\label{proposition:integral_strato_rough}
 Assume that almost surely $Y$ is $\gamma$-controlled by $B$. Then the rough integral of $Y$ against  $\bB=(B,\BB^{strato})$ exists. Moreover if $Y$ and $Y'$ are adapted, the quadratic covariation of $Y$ and $B$ exists and, almost surely,
 \[\int_0^T Y \dd \bB = \int_0^T Y\circ \dd B.\]
 \end{proposition}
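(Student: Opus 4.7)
The existence of $\int_0^T Y \dd \bB$ is an immediate consequence of Theorem \ref{th:controlled_integral}: since $\bB = (B, \BB^{strato})$ is almost surely a $\gamma$-geometric rough path and $Y$ is $\gamma$-controlled by $B$, the controlled rough integral is well-defined pathwise. The remaining task is to identify this pathwise object with the Stratonovich integral when $Y$ and $Y'$ are adapted.

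The plan is to pass through It{\^o} data using the identity $\BB^{strato,ij}_{s,t} = \BB^{Ito,ij}_{s,t} + \tfrac12 \delta_{ij}(t-s)$, where $\BB^{Ito,ij}_{s,t} := \int_s^t (B^i_r - B^i_s)\,\dd B^j_r$. Substituting this into the compensated Riemann sum provided by Theorem \ref{th:controlled_integral} decomposes the rough integral as the limit along partitions $\mathcal{P}$ of
\[
\sum_i \left( Y_{t_i} \cdot \delta B_{t_i,t_{i+1}} + Y'_{t_i} : \BB^{Ito}_{t_i,t_{i+1}} \right) + \tfrac{1}{2} \sum_i \operatorname{tr}(Y'_{t_i})(t_{i+1}-t_i).
\]
For the first sum I insert the controlled expansion $Y_r = Y_{t_i} + Y'_{t_i}(B_r - B_{t_i}) + Y^\#_{t_i,r}$ inside each subinterval; using adaptedness of $Y$ and $Y'$, the sum telescopes into $\int_0^T Y\,\dd B - \sum_i \int_{t_i}^{t_{i+1}} Y^\#_{t_i,r}\,\dd B_r$. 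The pointwise bound $|Y^\#_{s,t}| \leq \|Y^\#\|_{2\gamma}|t-s|^{2\gamma}$, orthogonality of the increments and It{\^o} isometry (after the standard localisation in $\|Y^\#\|_{2\gamma}$) bound the remainder in $L^2$ by $\|Y^\#\|_{2\gamma}(\sum_i |t_{i+1}-t_i|^{4\gamma+1})^{1/2} \lesssim |\mathcal{P}|^{2\gamma}$, which vanishes as the mesh tends to $0$. The second sum converges almost surely to $\tfrac12 \int_0^T \operatorname{tr}(Y'_r)\,\dd r$ by continuity of $Y'$.

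It remains to identify $\int_0^T \operatorname{tr}(Y'_r)\,\dd r$ with the quadratic covariation $[Y,B]_T$, which is precisely the It{\^o}-Stratonovich correction. Inserting the controlled expansion into the partition sum for the covariation yields
\[
\sum_i \delta Y_{t_i,t_{i+1}} \otimes \delta B_{t_i,t_{i+1}} = \sum_i Y'_{t_i}\,(\delta B_{t_i,t_{i+1}})^{\otimes 2} + \sum_i Y^\#_{t_i,t_{i+1}} \otimes \delta B_{t_i,t_{i+1}}.
\]
The second sum is bounded by $\|Y^\#\|_{2\gamma}\|B\|_{\gamma}\sum_i |t_{i+1}-t_i|^{3\gamma}$ and vanishes since $3\gamma>1$, while the first converges almost surely to $\int_0^T Y'_r\,\dd r$ by the tensor quadratic variation of Brownian motion; taking traces gives the identification. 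Combining everything, the rough limit equals $\int_0^T Y\,\dd B + \tfrac12 [Y,B]_T = \int_0^T Y \circ \dd B$. The main technical obstacle is the vanishing of the It{\^o}-type remainder in the middle step: without adaptedness of $Y$ and $Y'$ the processes $r \mapsto Y^\#_{t_i,r}$ would not be integrable against $\dd B$, so the hypothesis cannot be weakened if one wants to reach the Stratonovich integral on the right-hand side.
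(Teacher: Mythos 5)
Your argument is correct, and it is worth noting that the paper itself does not prove this proposition: it is quoted from Gubinelli \cite{gubinelli_controlling_2004} and Friz--Hairer \cite{friz_course_????}, so there is no internal proof to compare against. What you wrote is essentially the standard argument from those references, carried out in full: split $\BB^{strato}_{s,t}=\BB^{Ito}_{s,t}+\tfrac12\,\mathrm{id}\,(t-s)$, show via the controlled expansion and the It\^o isometry that the It\^o part of the compensated Riemann sums converges to $\int_0^T Y\,\dd B$, and identify the drift part $\tfrac12\int_0^T \operatorname{tr}(Y'_r)\,\dd r$ with half the bracket $[Y,B]_T$ by expanding the covariation sums and killing the $Y^{\#}\otimes\delta B$ term using $3\gamma>1$. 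Two small technical points you should state more carefully, though neither affects the conclusion: (i) since $\|Y^{\#}\|_{2\gamma}$ is not adapted, the ``standard localisation'' must be done through stopping times of the form $\tau_N=\inf\{t:\sup_{s<u\leqslant t}|Y^{\#}_{s,u}|/|u-s|^{2\gamma}\geqslant N\}$ rather than by conditioning on the size of the global H\"older norm; (ii) the weighted quadratic-variation sums $\sum_i Y'_{t_i}(\delta B_{t_i,t_{i+1}})^{\otimes 2}$ converge in probability (almost sure convergence holds only along special partition sequences), but convergence in probability is enough here because the rough-integral limit already exists almost surely, so the two limits agree a.s. Your closing remark correctly identifies adaptedness as the hypothesis that makes the It\^o integrals of $r\mapsto Y^{\#}_{t_i,r}$ meaningful, which is exactly why the identification with the Stratonovich integral requires it while the existence of the rough integral does not.
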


	
	\subsection{Irregular paths and regularizations properties of fractional Brownian motion}\label{subsec:regularization}
	
	Classical solutions of the transport equation can be construct thank to the method of characteristics (see Appendix \ref{appendix:characteristics} ). The method strongly relies on the regularity of the flow of the characteristics ordinary differential equations associated to the transport equation. Namely when all the functions are regular enough, the unique solution of the Cauchy problem
	\[\partial_t u(t,x) + (b(t,x)+\dot{X}_t)\cdot \nabla u(t,x) =0,\quad u(0,x)=u_0(x)\]
	is $u(t,x)=u_0(\Phi^{-1}_t(x))$ where $\Phi$ is the flow of the equation
\[\Phi_t(x) = x + \int_0^t b(r,\Phi_r(x))\dd r + X_t.\]
In a recent article, Catellier and Gubinelli \cite{catellier_averaging_2012} have shown that, among others processes, the fractional Brownian motion have good regularization property for the last ordinary differential equation, and allows to have a regular flow for $b$ with really poor regularity. We give here the few needed results, and refer to \cite{catellier_averaging_2012} for more details and proofs.

\begin{definition}
  Let $X: [0,T] \rightarrow \mathbbm{R}^{d}$ and $\rho >0$. We say that the
  function $X$ is $\rho$-irregular if there exists $\gamma >1/2$ such that
  \[ \| \phi^{X} \|_{\mathcal{W}^{\rho , \gamma}_{T}} \assign \sup_{s \neq t
     \in [0,T]} \sup_{\xi \in \mathbbm{R}^{d}} ( 1+ | \xi | )^{\rho} | t-s
     |^{- \gamma} \left| \int_{s}^{t} e^{i \xi X_{q}} \dd q \right| <+
     \infty . \]
\end{definition}

This definition is not empty: almost every path of the fractional Brownian is
$\rho$-irregular. Furthermore, nondegenerate $\alpha$-stable L{\'e}vy
processes also have this property.

\begin{theorem}
  Let $B^{H}$ be a $d$-dimensional fractional Brownian motion of Hurst
  parameter $H \in ( 0,1 )$, then for all $\rho <1/2H$ there exists $\gamma>\frac12$ such that almost surely $B^{H}$
  is $\rho$-irregular. Furthermore there exists
  $\lambda >0$ such that
  \[ \mathbbm{E} [ \exp ( \lambda \| \phi^{B^{H}} \|_{\mathcal{W}^{\rho ,
     \gamma}_{T}}^{2} ) ] <+ \infty . \]
\end{theorem}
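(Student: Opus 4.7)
The plan is to bound moments of $\phi^{B^H}_{s,t}(\xi) := \int_s^t e^{i\xi \cdot B^H_q} \dd q$ using the Gaussian structure of $B^H$ and then transfer these pointwise moment estimates to an almost sure uniform bound via a multiparameter Garsia--Rodemich--Rumsey (GRR) continuity argument. The starting point is the $L^2$ computation: since $\xi \cdot B^H$ is a centered Gaussian with $\EE[e^{i\xi \cdot (B^H_p - B^H_q)}] = e^{-|\xi|^2 |p-q|^{2H}/2}$,
\[
\EE\bigl[|\phi^{B^H}_{s,t}(\xi)|^2\bigr] = \int_s^t\!\int_s^t e^{-|\xi|^2 |p-q|^{2H}/2}\, \dd p\, \dd q \lesssim (t-s) \min\bigl\{(t-s),\, |\xi|^{-1/H}\bigr\},
\]
the $|\xi|^{-1/H}$ factor being obtained by the change of variables $v = |\xi|^{1/H} u$ in the Gaussian integral. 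This is already the critical decay rate that makes the threshold $\rho < 1/(2H)$ natural.

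For higher moments, the crux is a sharper Gaussian integral estimate of the form
\[
\EE\bigl[|\phi^{B^H}_{s,t}(\xi)|^{2n}\bigr] = \int_{[s,t]^{2n}} \exp\Bigl(-\tfrac{|\xi|^2}{2}\, \mathrm{Var}\Bigl(\textstyle\sum_i (B^H_{p_i} - B^H_{q_i})\Bigr)\Bigr) \dd p\, \dd q \lesssim c_n \bigl((t-s) |\xi|^{-1/H}\bigr)^n,
\]
valid for $|\xi|(t-s)^H \ge 1$, which reflects the heuristic that $\phi^{B^H}_{s,t}(\xi)$ behaves like a Gaussian of variance $(t-s)|\xi|^{-1/H}$. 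This $|\xi|^{-n/H}$ decay in $L^{2n}$ is what makes $\rho < 1/(2H)$ admissible: for $\rho$ in that range and $n$ large, the weighted moment $(1+|\xi|)^{2n\rho}\, \EE[|\phi|^{2n}]$ decays in $|\xi|$. Coupled with matching increment estimates---in $(s,t)$ via the identity $\phi^{B^H}_{s,t}(\xi) - \phi^{B^H}_{s,t'}(\xi) = \phi^{B^H}_{t',t}(\xi)$, and in $\xi$ via $|e^{i\xi \cdot B^H_q} - e^{i\xi' \cdot B^H_q}| \le |\xi - \xi'|\, |B^H_q|$ combined with Gaussian moments of $\sup_q |B^H_q|$---a multiparameter GRR inequality applied to $(\xi, s, t) \mapsto \phi^{B^H}_{s,t}(\xi)$ on bounded $\xi$-balls, then extended to $\xi \in \RR^d$ by a dyadic covering, yields almost sure finiteness of $\|\phi^{B^H}\|_{\mathcal{W}^{\rho,\gamma}_T}$ for any $\rho < 1/(2H)$ and some $\gamma > 1/2$. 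The condition $\gamma > 1/2$ is met by choosing $n$ large so that Kolmogorov's effective Hölder exponent in $(s,t)$ exceeds $1/2$ (it tends to $1$ as $n \to \infty$).

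For the exponential moment, the polynomial-in-$n$ growth of the resulting moments $\EE[\|\phi^{B^H}\|^{2n}_{\mathcal{W}^{\rho,\gamma}_T}] \le C^n n^{cn}$ makes the series $\sum_n \lambda^n \EE[\|\phi^{B^H}\|^{2n}_{\mathcal{W}^{\rho,\gamma}_T}]/n!$ converge for sufficiently small $\lambda > 0$. A cleaner alternative is to view $\|\phi^{B^H}\|_{\mathcal{W}^{\rho,\gamma}_T}$ as a measurable seminorm of the Gaussian process $B^H$ and apply the Borell-TIS / Fernique inequality, which delivers the exponential-squared integrability essentially for free.

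The main obstacle is the supremum over the unbounded parameter $\xi \in \RR^d$ at the critical threshold $\rho < 1/(2H)$: the polynomial weight $(1+|\xi|)^\rho$ only just beats the Gaussian decay of $\phi^{B^H}$ in $|\xi|$, so the argument requires the sharp $L^{2n}$ bound for every $n$ (and not merely the direct $L^2$ one) together with a careful dyadic covering of $\RR^d$ before a GRR/chaining argument can control the supremum.
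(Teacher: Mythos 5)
This theorem is not proved in the paper at all: it is quoted from Catellier--Gubinelli \cite{catellier_averaging_2012}, so the comparison is with that reference. Your overall scheme (moment estimates for $\phi^{B^H}_{s,t}(\xi)$, a Kolmogorov/GRR argument jointly in $(s,t,\xi)$ with a dyadic covering in $\xi$, then exponential integrability from the growth of the moments) is indeed the scheme of the cited proof, and your $L^2$ computation and the $\rho$--$\gamma$ trade-off on the regime $|\xi|(t-s)^H\ge 1$ are correct. But the proposal has a genuine gap exactly where the theorem is hard: the estimate
\[
\EE\bigl[|\phi^{B^H}_{s,t}(\xi)|^{2n}\bigr]\;=\;\int_{[s,t]^{2n}}\exp\Bigl(-\tfrac{|\xi|^{2}}{2}\,\mathrm{Var}\Bigl(\sum_i (B^H_{p_i}-B^H_{q_i})\Bigr)\Bigr)\dd p\,\dd q\;\lesssim\;c_n\bigl((t-s)|\xi|^{-1/H}\bigr)^{n}
\]
is asserted, not proved, and it does not follow from the $L^2$ bound or from the "Gaussian heuristic''. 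The variance of $\sum_i(B^H_{p_i}-B^H_{q_i})$ degenerates on large portions of $[s,t]^{2n}$ (near-cancellations when the $p_i$'s and $q_j$'s pair up), so one needs a structural input: the local nondeterminism of fractional Brownian motion, i.e.\ a lower bound of the conditional variance by $c\,|t_k-t_{k-1}|^{2H}$ after ordering the $2n$ time points, followed by an iterated conditioning/integration over the simplex. This is the heart of the argument, and it is also what produces the precise growth $c_n\le C^n n!$ of the combinatorial constant; without tracking $c_n$ you cannot close the last step, since the exponential moment $\EE[\exp(\lambda\|\phi^{B^H}\|^2_{\mathcal{W}^{\rho,\gamma}_T})]<\infty$ requires $\EE[\|\phi^{B^H}\|^{2n}]\lesssim C^n n!$, not merely "some $c_n$''.

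A second, smaller but real, flaw is the proposed shortcut via Borell--TIS/Fernique: $\|\phi^{X}\|_{\mathcal{W}^{\rho,\gamma}_T}$ is \emph{not} a measurable seminorm of the Gaussian process, because $X\mapsto\phi^{X}_{s,t}(\xi)=\int_s^t e^{i\xi\cdot X_q}\dd q$ is nonlinear (it is in fact bounded in $X$ pointwise), so Gaussian concentration for norms does not apply and the exponential square-integrability must be obtained through the quantitative moment growth described above (equivalently, by showing the norm has Gaussian-type tails via Chebyshev on the $2n$-th moments with $n$ optimized). With the local-nondeterminism moment lemma supplied and the $n!$ bookkeeping carried through the GRR/dyadic argument, your outline becomes the proof of \cite{catellier_averaging_2012}; as written, the decisive estimate is assumed rather than established.
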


The regularization properties of $\rho$-irregular paths will occur in
Fourier--Lebesgue spaces. The oscillations of the function $\phi^{X}$ are
enough to regularize the differential system in that setting.

\begin{definition}
  The space of Fourier-Lebesgue function of order $\alpha >0$ is defined as
  \[ \mathcal{F}L^{\alpha} = \left\{ f \in \mathcal{S}' ( \mathbbm{R}^{d} ) |
     \| f \|_{\mathcal{F}L^{\alpha}} \assign \int_{\mathbbm{R}^{d}} \dd k |
     \hat{f} ( k ) | ( 1+ | k | )^{\alpha} <+ \infty \right\} \]
\end{definition}
and one of the main theorem of that article. 
\begin{remark}
  Note that for $\alpha >0$, $\mathcal{F}L^{\alpha} \subset
  \mathcal{C}^{\alpha}_{b}$ the space of bounded H{\"o}lder continuous
  functions of index $\alpha$.
\end{remark}

When the vector-field $b$ lies in a Fourier Lebesgue space and $X$ is
$\rho$-irregular, one has a regularization property for the perturbed
differential equation. When $b$ is regular enough, the flow has good spatial
regularity properties and furthermore there are good ways of approaching this flow by a
regularized one.

\begin{theorem}
  \label{theorem:rho_irregular_flow}Let $\rho >0$, let $X$ a $\rho$-irregular
  path. Let $\alpha >- \rho$ and $\alpha +3/2>0$. Let $b \in
  \mathcal{F}L^{\alpha +3/2}$. Then for all $x \in \mathbbm{R}^{d}$ there
  exists a unique solution $\Phi ( x )$ to the equation
  \[ \Phi_{t} ( x ) =x+ \int_{0}^{t} b ( \Phi_{q} ( x ) ) \dd q+X_{t} . \]
  Furthermore, $\theta_{t} ( x ) = \Phi_{t} ( x ) -X_{t}$ is Lipschitz
  continuous in time. The function $\theta$ is also locally Lipschitz in
  space, uniformly in time. Moreover, for any mollification $b^{\varepsilon}$
  of $b$ we write $\Phi^{\varepsilon}$ the flow of the approximate equation
  \[ \Phi^{\varepsilon}_{t} ( x ) =x+ \int_{0}^{t} b^{\varepsilon} (
     \Phi_{q}^{\varepsilon} ( x ) ) \dd q+X_{t} \]
  the following bound holds
  \[ \| \Phi^{\varepsilon} ( x ) - \Phi ( x ) \|_{\infty} \leqslant K ( | x |
     ) \| b-b^{\varepsilon} \|_{\mathcal{F}L^{\alpha +3/2}} \]
  where the constant $K$ is increasing in $| x |$ and independent of
  $\varepsilon$. The approximate flow $\Phi^{\varepsilon}$ is also
  differentiable in space and
  \[ \sup_{\varepsilon >0 } \sup_{t \in [0,T]} | D \Phi^{\varepsilon}_{t} ( x
     ) | \leqslant K ( | x | ) <+ \infty . \]
\end{theorem}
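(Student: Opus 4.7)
\medskip

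\noindent\textbf{Proof plan.} The natural approach is the \emph{nonlinear Young / averaging} strategy: one removes the irregular component $X$ by setting $\theta_t(x) = \Phi_t(x) - X_t$, which formally yields
\[
\theta_t(x) = x + \int_0^t b(\theta_q(x) + X_q)\,\dd q.
\]
One cannot read this integral in a pointwise sense when $b$ has negative regularity; instead, I introduce the averaging field
\[
T^X_{s,t} b(x) \;=\; \int_s^t b(x + X_q)\,\dd q
\;=\; \int_{\mathbb{R}^d} \hat b(k)\, e^{i k\cdot x}\!\!\int_s^t e^{i k\cdot X_q}\dd q\,\dd k.
\]
The $\rho$-irregularity of $X$ bounds $|\int_s^t e^{i k\cdot X_q}\dd q|$ by $|t-s|^{\gamma}(1+|k|)^{-\rho}\|\phi^X\|_{\mathcal{W}^{\rho,\gamma}_T}$, so $(s,t)\mapsto T^X_{s,t}b$ belongs to $\mathcal{C}^{\gamma}_2([0,T]^2;\mathcal{F}L^{\alpha+3/2+\rho})$ with $\gamma>1/2$. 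Since $\alpha>-\rho$ and $\alpha+3/2>0$, the target space embeds continuously into $\mathcal{C}^{1+\eta}_b$ for some $\eta>0$, so $x\mapsto T^X_{s,t}b(x)$ is at least Lipschitz with Hölder derivative, uniformly in $s,t$.

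\medskip

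\noindent Next I would rewrite the equation as a \emph{nonlinear Young equation}
\[
\theta_t(x) \;=\; x + \int_0^t (T^X_{\dd q} b)(\theta_q(x)),
\]
to be interpreted, via the Sewing Lemma (Proposition~\ref{th:sewing_map}), as the limit of Riemann-type sums $\sum_i T^X_{t_i,t_{i+1}}b(\theta_{t_i}(x))$. Because the averaging field gains $\rho$ derivatives while $\theta$ itself is Lipschitz in time (which one bootstraps from the fact that $T^X b$ is bounded), the composition is $(1+\eta)\gamma$-Hölder in $(s,t)$ with exponent strictly larger than $1$, and the sewing gives existence and uniqueness of the integral. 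Existence of $\theta$ is then obtained by a Picard iteration in $\mathcal{C}^{\mathrm{Lip}}([0,T];\mathbb{R}^d)$, using the Lipschitz-in-space estimate on $T^X_{s,t}b$ to contract on a small interval and then piecing solutions together; uniqueness follows from Gronwall applied to the Lipschitz bound of the integrand.

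\medskip

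\noindent For the mollification statement, let $b^{\varepsilon}$ be a standard mollification so that $b^{\varepsilon}\to b$ in $\mathcal{F}L^{\alpha+3/2}$; then the averaged field $T^X b^{\varepsilon}$ converges to $T^X b$ in $\mathcal{C}^\gamma_2([0,T]^2;\mathcal{F}L^{\alpha+3/2+\rho})$ at the same rate. A stability estimate for nonlinear Young equations (derived again through sewing applied to the difference $T^X b - T^X b^{\varepsilon}$ and Gronwall) yields
\[
\|\theta - \theta^{\varepsilon}\|_\infty \;\lesssim\; K(|x|)\, \|b-b^{\varepsilon}\|_{\mathcal{F}L^{\alpha+3/2}},
\]
which is exactly the convergence of $\Phi^{\varepsilon}$ to $\Phi$ since $\Phi^{\varepsilon}-\Phi=\theta^{\varepsilon}-\theta$. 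Finally, differentiability of $\Phi^{\varepsilon}$ in space (for smooth $b^\varepsilon$) is classical; the crucial point is that $D\Phi^{\varepsilon}$ solves the linearised equation whose right-hand side depends on $Db^{\varepsilon}\circ\Phi^{\varepsilon}$ composed with $X$, so after differentiating the integral form and reinterpreting via the Jacobian of $T^X b^\varepsilon$, one obtains a Gronwall-type bound controlled by $\|T^X b^{\varepsilon}\|_{\mathrm{Lip}}$, which by the averaging estimate is bounded uniformly in $\varepsilon$ in terms of $\|b\|_{\mathcal{F}L^{\alpha+3/2}}$ and $\|\phi^X\|_{\mathcal{W}^{\rho,\gamma}_T}$.

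\medskip

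\noindent The hard step is the averaging lemma and the subsequent definition of the nonlinear Young integral $\int_0^t T^X_{\dd q}b(\theta_q)$: one must show that $\theta$, which a priori is only continuous, can be composed inside the averaged field in a way consistent with the sewing, and that the resulting fixed-point map is a contraction on the right space. All other ingredients (stability, uniform Lipschitz bound on $D\Phi^{\varepsilon}$) are essentially corollaries of that construction.
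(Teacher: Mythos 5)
This statement is not proved in the paper at all: it is one of the results recalled from Catellier--Gubinelli \cite{catellier_averaging_2012}, and the paper explicitly defers all proofs to that reference. Your outline is precisely the strategy of that cited work: introduce the averaging operator $T^X_{s,t}b(x)=\int_s^t b(x+X_q)\,\dd q$, use $\rho$-irregularity to show it gains $\rho$ derivatives (so $T^Xb$ takes values in $\mathcal{F}L^{\alpha+\rho+3/2}\subset\mathcal{C}^{1+\eta}_b$), recast the equation for $\theta=\Phi-X$ as a nonlinear Young equation solved via the sewing lemma, and then obtain uniqueness, stability under mollification, and uniform bounds on $D\Phi^{\varepsilon}$ from the Lipschitz/H\"older control of the averaged field. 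So at the level of approach your proposal matches the intended (cited) proof; it is of course a plan rather than a complete argument. One small point worth tightening: the coherence exponent you quote, $(1+\eta)\gamma>1$, is not guaranteed when $\gamma$ is only slightly above $1/2$ and $\eta$ only slightly above $1/2$; the correct bookkeeping uses the additivity $T^X_{s,t}=T^X_{s,u}+T^X_{u,t}$ together with the Lipschitz-in-time regularity of $\theta$ (which you do invoke), giving a defect of order $|t-s|^{\gamma+1}$, so the sewing applies without any constraint linking $\eta$ and $\gamma$; the extra spatial regularity encoded in the exponent $\alpha+3/2$ is what is actually needed for the uniqueness, the stability estimate in $\|b-b^{\varepsilon}\|_{\mathcal{F}L^{\alpha+3/2}}$, and the uniform bound on $D\Phi^{\varepsilon}$.
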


Even though the flow of perturbed differential equation does not have the same
regularization properties as $\rho$-irregular path, one can look at their
averaging properties. This is the purpose of the following Lemma, and it will
be really important in the following.

\begin{proposition}
  \label{prop:rho_irregular_flow}Let $f \in C^{1} ( \mathbbm{R}^{d} ) \cap
  \mathcal{F}L^{\alpha +3/2}$, $b$, $b^{\varepsilon}$ and $\Phi^{\varepsilon}$
  as in the previous theorem. For all $t \in [0,T]$ let us define the function
  \[ F_{t}^{\varepsilon} ( x ) = \int_{0}^{t} f ( \Phi^{\varepsilon}_{q} ( x
     ) ) \dd q. \]
  Then $F^{\varepsilon}_{t}$ is differentiable in space and there exists a
  constant $K ( | x | )$ at most with linear growth in $| x |$ and independent
  of $\varepsilon >0$ such that
  \[ \sup_{t \in [0,T]} | D F^{\varepsilon}_{t} ( x ) | \leqslant K ( | x | )
     \| f \|_{\mathcal{F}L^{\alpha +3/2}} . \]
\end{proposition}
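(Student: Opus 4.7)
The plan is to combine a Fourier decomposition of $f$ with a sewing argument that exploits the oscillations of $e^{ik\cdot X_q}$ together with the decomposition $\Phi^\varepsilon_q(x) = \theta^\varepsilon_q(x) + X_q$ supplied by Theorem~\ref{theorem:rho_irregular_flow}. Smoothness of $b^\varepsilon$ makes $\Phi^\varepsilon_q(\cdot)$ of class $C^1$, so $F^\varepsilon_t$ is classically differentiable and, by differentiation under the integral,
\[ DF^\varepsilon_t(x) = \int_{\mathbbm{R}^{d}} \hat f(k)\,(ik)\,J^\varepsilon_t(k,x)\,dk, \qquad J^\varepsilon_t(k,x) \assign \int_0^t D\Phi^\varepsilon_q(x)\,e^{ik\cdot \Phi^\varepsilon_q(x)}\,dq. \]
The whole task thus reduces to the $\varepsilon$-uniform bound $|J^\varepsilon_t(k,x)| \leq K(|x|)(1+|k|)^{-\rho}$: integrating in $k$ and using $\alpha > -\rho$, which in particular forces $1-\rho < \alpha+3/2$, would then give
\[ |DF^\varepsilon_t(x)| \lesssim K(|x|)\int_{\mathbbm{R}^{d}} |\hat f(k)|\,(1+|k|)^{1-\rho}\,dk \leq K(|x|)\,\|f\|_{\mathcal{F}L^{\alpha+3/2}}. \]

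For the core estimate on $J^\varepsilon_t$, I freeze $k$ and $x$, set $g^{k,\varepsilon}_q = D\Phi^\varepsilon_q(x)\,e^{ik\cdot \theta^\varepsilon_q(x)}$, and consider the germ $A_{s,t} = g^{k,\varepsilon}_s\,\phi^X_{s,t}(k)$. Additivity of $\phi^X$ in time produces the sewing defect
\[ \delta A_{s,u,t} = (g^{k,\varepsilon}_s - g^{k,\varepsilon}_u)\,\phi^X_{u,t}(k), \]
while $\rho$-irregularity supplies the decay $|\phi^X_{u,t}(k)| \leq \|\phi^X\|_{\mathcal{W}^{\rho,\gamma}_T}\,(1+|k|)^{-\rho}(t-u)^\gamma$ with $\gamma>1/2$. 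Splitting $g^{k,\varepsilon}_s - g^{k,\varepsilon}_u = (D\Phi^\varepsilon_s - D\Phi^\varepsilon_u)\,e^{ik\cdot \theta^\varepsilon_s(x)} + D\Phi^\varepsilon_u\,(e^{ik\cdot \theta^\varepsilon_s(x)} - e^{ik\cdot \theta^\varepsilon_u(x)})$, the second piece is handled by the uniform-in-$\varepsilon$ Lipschitz-in-time property of $\theta^\varepsilon$ from Theorem~\ref{theorem:rho_irregular_flow}, at the cost of $|k|\,K(|x|)(u-s)$; the first piece demands a uniform time-H\"older bound on $D\Phi^\varepsilon$ with some exponent $\gamma_0 > 1-\gamma$. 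Granting this, $|\delta A_{s,u,t}| \lesssim K(|x|)(1+|k|)^{1-\rho}(t-s)^{1+\gamma_0 \wedge (\gamma - 1/2)}$, the sewing lemma applies, and combining with the direct bound $|A_{0,t}(k,x)| \lesssim (1+|k|)^{-\rho}T^\gamma$ yields the required estimate for $J^\varepsilon_t$.

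The main obstacle is the uniform-in-$\varepsilon$ time-H\"older control on $D\Phi^\varepsilon$. Differentiating the flow equation produces $D\Phi^\varepsilon_t - D\Phi^\varepsilon_s = \int_s^t Db^\varepsilon(\Phi^\varepsilon_q)\,D\Phi^\varepsilon_q\,dq$, and running the same Fourier-plus-sewing machinery with $Db^\varepsilon \in \mathcal{F}L^{\alpha+1/2}$ would yield $|D\Phi^\varepsilon_t - D\Phi^\varepsilon_s| \lesssim K(|x|)\,\|b\|_{\mathcal{F}L^{\alpha+3/2}}(t-s)^\gamma$, which is exactly what is needed since $\gamma>1/2>1-\gamma$. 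The apparent circularity is broken by a joint bootstrap: I would set up a closed system of $\varepsilon$-uniform seminorms on $(\theta^\varepsilon, D\theta^\varepsilon)$---uniform boundedness, local Lipschitz in space, and time-$\gamma$-H\"older---and prove them simultaneously with constants depending only on $\|b\|_{\mathcal{F}L^{\alpha+3/2}}$, $\|\phi^X\|_{\mathcal{W}^{\rho,\gamma}_T}$, and $|x|$, with the linear-growth behaviour in $|x|$ coming from the fact that $\theta^\varepsilon_t(x)-x=\int_0^t b^\varepsilon(\Phi^\varepsilon_q(x))\dd q$ enters every estimate only through norms of $b^\varepsilon$ and the Lipschitz constant of $\theta^\varepsilon$. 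This joint estimate parallels and slightly extends the flow construction recalled from~\cite{catellier_averaging_2012}, and is where the technical weight of the proof lies; once it is in hand, the sewing step and the Fourier bound close the argument.
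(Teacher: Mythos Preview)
The paper does not supply its own proof of this proposition; it is quoted from \cite{catellier_averaging_2012} together with the other results of Subsection~\ref{subsec:regularization}. Your overall strategy---Fourier decomposition of $f$, the splitting $\Phi^\varepsilon=\theta^\varepsilon+X$, a sewing argument on the germ $g^{k,\varepsilon}_s\phi^X_{s,t}(k)$, and a joint bootstrap on the time regularity of $D\theta^\varepsilon$---is indeed the approach of that reference.

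There is, however, a genuine counting inconsistency in your argument. You announce $|J^\varepsilon_t(k,x)|\leq K(|x|)(1+|k|)^{-\rho}$, but the sewing step you describe does not deliver it: the second piece of $\delta A$ contributes $K(|x|)\,|k|(1+|k|)^{-\rho}(u-s)(t-u)^\gamma$, so the sewing correction $|J^\varepsilon_t-A_{0,t}|$ carries the factor $(1+|k|)^{1-\rho}$, which dominates the $(1+|k|)^{-\rho}$ coming from $A_{0,t}$. After multiplication by the $|k|$ from $\widehat{\nabla f}$ you would need $2-\rho\leq\alpha+3/2$, i.e.\ $\alpha+\rho\geq 1/2$, strictly stronger than the hypothesis $\alpha+\rho>0$.

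The repair is small but essential: replace the Lipschitz bound $|e^{ik\cdot\theta^\varepsilon_s}-e^{ik\cdot\theta^\varepsilon_u}|\leq|k|K(|x|)(u-s)$ by the interpolated estimate $|e^{ia}-e^{ib}|\leq 2^{1-\beta}|a-b|^\beta$. Choosing $\beta\in(1-\gamma,\alpha+\tfrac12+\rho)$---a nonempty interval precisely because $\alpha+\rho>0$ and $\gamma>1/2$---yields $|\delta A|\lesssim K(|x|)(1+|k|)^{\beta-\rho}(t-s)^{\beta+\gamma}$ with $\beta+\gamma>1$, hence $|J^\varepsilon_t|\lesssim K(|x|)(1+|k|)^{\beta-\rho}$ and $|k|\,|J^\varepsilon_t|\lesssim K(|x|)(1+|k|)^{1+\beta-\rho}\leq K(|x|)(1+|k|)^{\alpha+3/2}$, which is exactly what the final integration needs. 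The same interpolation must be used inside your bootstrap for the time-H\"older regularity of $D\theta^\varepsilon$; with it, the closed system you describe does close uniformly in~$\varepsilon$, and the argument matches the one in~\cite{catellier_averaging_2012}.
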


In the case of the fractional Brownian motion, the previous results are not
optimal in term of the regularity of the vectorfields involved in the
equation. Indeed, using a Girsanov transform, one can show that the fractional
Brownian motion has regularization properties for ODEs when $b$ lies in a
bigger space of function/distribution. The price for that extension is the
loss of the global character of the averaging property for the fractional
Brownian motion, as the exceptional set here depends on the function.

\begin{theorem}
  \label{theorem:fBm_flow}Let $H \in ( 0,1 )$, $( \Omega
  ,\mathcal{F},\mathbbm{P} )$ a probability space and $B^{H}$ a
  $d$--dimensional fractional Brownian motion on $( \Omega
  ,\mathcal{F},\mathbbm{P} )$. Let $\alpha >-1/2H$ and $\alpha +1>0$. Let $b
  \in \mathcal{C}^{\alpha +1}_{b}$. Then for all $x \in \mathbbm{R}^{d}$,
  there exists $\mathcal{N}_{b,x} \subset \mathcal{F}$ such that $\mathbbm{P}
  ( \mathcal{N}_{b,x} ) =0$ such that for all $\omega \notin
  \mathcal{N}_{b,x}$ there exists a unique solution of the equation
  \[ \Phi_{t} ( \omega ,x ) =x+ \int_{0}^{t} b ( \Phi_{q} ( \nobracket
     \nobracket \omega ,x ) ) \dd q+B^{H}_{t} ( \omega ) . \]
  Furthermore for any mollification $b^{\varepsilon}$ of $b$ and for
  $\Phi^{\varepsilon}$ the flow of the approximate equation, for any $x \in
  \mathbbm{R}^{d }$
  \[ \mathbbm{E} [ \| \Phi^{\varepsilon} ( x ) - \Phi ( x ) \|_{\infty
     ,[0,T]}^{2} ] \rightarrow_{\varepsilon \rightarrow 0} 0 \]
  and there exists a constant $K ( | x | )$ at most with linear growth in $|
  x |$ such that
  \[ \sup_{\varepsilon >0 }  \mathbbm{E} [ \sup_{t \in [0,T]} | D
     \Phi^{\varepsilon}_{t} ( x ) |^{2} ] \leqslant K ( | x | )^{2} <+ \infty
     . \]
\end{theorem}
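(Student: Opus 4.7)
The approach rests on the Girsanov transform for fractional Brownian motion, which effectively linearizes the problem by making $\Phi(x) - x$ a fractional Brownian motion under a new measure. Concretely, if $\Phi$ solves the equation, then under an equivalent measure $\tilde{\PP}^x$ the shifted process $t \mapsto \Phi_t(x) - x$ is itself a $d$-dimensional fBm, so $L^p(\PP)$-norms of occupation-type functionals $\int_0^\cdot g(\Phi_q(x))\dd q$ can be compared to $L^p(\tilde{\PP}^x)$-norms of plain fBm occupation functionals $\int_0^\cdot g(x + \tilde B^H_q) \dd q$. The latter are exactly what the $\rho$-irregularity estimates of Theorem \ref{theorem:rho_irregular_flow} and Proposition \ref{prop:rho_irregular_flow} control, provided one picks $\rho \in (-\alpha, 1/2H)$, which is a nonempty interval since $\alpha > -1/2H$.

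First I would work entirely with smooth mollifications $b^\varepsilon$. Standard ODE theory produces a classical $C^1$ flow $\Phi^\varepsilon$, and differentiation gives
\[
D\Phi^\varepsilon_t(x) = I + \int_0^t Db^\varepsilon(\Phi^\varepsilon_q(x)) D\Phi^\varepsilon_q(x) \dd q,
\]
so that Gronwall yields $|D\Phi^\varepsilon_t(x)| \leq \exp\bigl(\int_0^t |Db^\varepsilon(\Phi^\varepsilon_q(x))| \dd q\bigr)$. Applying Girsanov to replace $\Phi^\varepsilon_q(x)$ by $x + \tilde B^H_q$ inside the exponent, the $\rho$-irregularity of fBm combined with the averaging bound of Proposition \ref{prop:rho_irregular_flow} (extended to distributions of negative order $\alpha$) shows that $\sup_t \bigl|\int_0^t Db(x + \tilde B^H_q)\dd q\bigr|$ has all exponential moments, uniformly in $\varepsilon$ and at most with linear growth in $|x|$ coming from the Girsanov density. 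This gives the target bound $\sup_\varepsilon \EE[\sup_{t \in [0,T]} |D\Phi^\varepsilon_t(x)|^2] \leq K(|x|)^2$.

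Next I would show that $(\Phi^\varepsilon(x))_\varepsilon$ is Cauchy in $L^2(\PP; C([0,T];\RR^d))$. Writing
\[
\Phi^\varepsilon_t - \Phi^{\varepsilon'}_t = \int_0^t (b^\varepsilon - b^{\varepsilon'})(\Phi^\varepsilon_q) \dd q + \int_0^t \bigl(b^{\varepsilon'}(\Phi^\varepsilon_q) - b^{\varepsilon'}(\Phi^{\varepsilon'}_q)\bigr) \dd q,
\]
the second term is handled by Gronwall using the Jacobian bound of $\Phi^{\varepsilon'}$, while the first term is controlled via Girsanov and the occupation-time estimate applied to the distribution $b^\varepsilon - b^{\varepsilon'}$ of regularity $\alpha + 1$. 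One obtains
\[
\EE\Bigl[\sup_{t \in [0,T]} |\Phi^\varepsilon_t(x) - \Phi^{\varepsilon'}_t(x)|^2\Bigr] \lesssim K(|x|)^2 \|b^\varepsilon - b^{\varepsilon'}\|_{\cC^{\alpha+1}_b}^2 \to 0.
\]
The limit $\Phi(x)$ is defined off an exceptional set $\cN_{b,x}$ that depends on $x$ because the Girsanov change of measure depends both on $b$ and on the initial point; this is why, unlike the deterministic $\rho$-irregular statement, no joint full-measure set in $(\omega, x)$ is available at this stage. The limit inherits the Jacobian bound through weak compactness of $(D\Phi^\varepsilon(x))_\varepsilon$ in $L^2$. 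Uniqueness is then obtained by comparing any candidate solution with the canonical limit using the very same Girsanov argument: the difference satisfies a linear integral inequality driven by an occupation-type term that is finite almost surely, and Gronwall closes the loop.

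The main obstacle is to give a rigorous meaning to the composition $Db(\Phi_q(x))$ and to the integral $\int_0^\cdot Db(\Phi_q(x)) \dd q$ in the limit $\varepsilon \to 0$ when $Db$ is only a distribution of (possibly negative) order $\alpha$. This is where the nonlinear Young integral framework of \cite{catellier_averaging_2012} must be invoked in tandem with Girsanov: one first makes sense of the integral for a genuine fBm trajectory via the $\rho$-irregularity estimate of Theorem \ref{theorem:rho_irregular_flow}, then transports this interpretation back through the Girsanov density. The sharpness of the condition $\alpha > -1/2H$ reflects exactly the balance between the regularization strength of fBm ($\rho < 1/2H$) and the distributional order of $Db$ ($\alpha$), and it is at this junction that the linear-growth-in-$|x|$ estimate on the Girsanov density exponent (via $x$-dependence of the drift coefficient) must be exploited to obtain the at-most-linear dependence of $K(|x|)$.
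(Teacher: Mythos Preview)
The paper does not prove this theorem: it is stated in the preliminaries as a result imported from \cite{catellier_averaging_2012}, and the only hint the paper gives about its proof is the sentence preceding the statement, namely that one uses ``a Girsanov transform'' to pass from the Fourier--Lebesgue setting of Theorem~\ref{theorem:rho_irregular_flow} to the H\"older scale $\mathcal{C}^{\alpha+1}_b$. Your proposal is therefore not competing against an in-paper argument, but it does reconstruct precisely the mechanism the paper alludes to and that \cite{catellier_averaging_2012} carries out.

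Your sketch is essentially correct. Two small corrections. First, once Girsanov has turned $\Phi^\varepsilon(\cdot,x)-x$ into a genuine fBm under $\tilde{\mathbb P}^x$, the relevant input is the raw $\rho$-irregularity of fBm (the exponential-moment bound on $\|\phi^{B^H}\|_{\mathcal W^{\rho,\gamma}_T}$), not Proposition~\ref{prop:rho_irregular_flow}, which is already a downstream statement about the flow; you should invoke the averaging estimate along $x+\tilde B^H$ directly. Second, since $b\in\mathcal C^{\alpha+1}_b$ is bounded, the Girsanov density has moments bounded uniformly in $x$, so the $|x|$-dependence of $K(|x|)$ does not come from ``the Girsanov density exponent'' as you write; in this bounded-drift situation the constants are in fact essentially uniform in $x$, and the ``at most linear growth'' in the statement is a harmless upper bound carried over from the more general framework. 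With these adjustments your outline matches the argument of \cite{catellier_averaging_2012}.
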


The same kind of regularization properties for the flow occur in the context.

\begin{proposition}
  \label{prop:fBm_flow}Let $f \in C^{1} ( \mathbbm{R}^{d} ) \cap
  \mathcal{C}_{b}^{\alpha +1}$ with $\alpha +1>0$ and $\alpha >-1/2H$, $b$,
  $b^{\varepsilon}$ and $\Phi^{\varepsilon}$ as in the previous Theorem.
  
  For all $t \in [0,T]$ let us define the function
  \[ F_{t}^{\varepsilon} ( x ) = \int_{0}^{t} f ( \Phi^{\varepsilon}_{q} ( x
     ) ) \dd q. \]
  Then $F^{\varepsilon}_{t}$ is differentiable in space and there exists a
  constant $K ( | x | )$ at most with linear growth in $| x |$ and independent
  of $\varepsilon >0$ such that
  \[ \mathbbm{E} [ ( \sup_{t \in [0,T]} | D F^{\varepsilon}_{t} ( x ) | )^{2}
     ] \leqslant K ( | x | )^{2} \| f \|_{\mathcal{C}^{\alpha}_{b}}^{2} . \]
\end{proposition}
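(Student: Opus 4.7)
Since $b^\varepsilon$ is smooth (a mollification of $b\in\cC_b^{\alpha+1}$) and $f\in C^1(\RR^d)$, for each $\omega$ the flow $\Phi^\varepsilon$ is $C^1$ in $x$ and differentiating under the integral gives
\[DF_t^\varepsilon(x)=\int_0^t Df(\Phi_q^\varepsilon(x))\,D\Phi_q^\varepsilon(x)\,dq.\]
A naive bound $\|Df\|_\infty\int_0^t|D\Phi_q^\varepsilon(x)|\,dq$ would control this by $\|f\|_{\cC_b^1}$, strictly stronger than the target $\|f\|_{\cC_b^{\alpha+1}}$ when $\alpha+1\leq 1$ (and in that range $\|Df\|_\infty$ may be infinite). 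Hence one must exploit the regularization by the fractional Brownian motion exactly as in Theorem~\ref{theorem:fBm_flow}.

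\textbf{Girsanov and Fourier representation.} Mirroring the proof of Theorem~\ref{theorem:fBm_flow}, I invoke Girsanov for fractional Brownian motion: since $b^\varepsilon$ is smooth with $\|b^\varepsilon\|_\infty\leq\|b\|_{\cC_b^{\alpha+1}}$ uniformly in $\varepsilon$, there exists an equivalent probability measure $\QQ^\varepsilon$ under which $\Phi_\cdot^\varepsilon(x)-x$ has the law of $B^H$, with density $\cE^\varepsilon=d\PP/d\QQ^\varepsilon$ having $L^p$-moments uniform in $\varepsilon$ for every finite $p$. Next, decompose $f$ into Littlewood--Paley blocks and use the chain rule identity $\nabla_x e^{i\xi\cdot\Phi_q^\varepsilon(x)}=i(\xi\cdot D\Phi_q^\varepsilon(x))\,e^{i\xi\cdot\Phi_q^\varepsilon(x)}$ to rewrite (for $f$ sufficiently smooth)
\[DF_t^\varepsilon(x)=i\int_{\RR^d}\hat f(\xi)\,\nabla_x\Bigl(\int_0^t e^{i\xi\cdot\Phi_q^\varepsilon(x)}\,dq\Bigr)\,d\xi,\]
so the product $Df\cdot D\Phi^\varepsilon$ is replaced by a single spatial derivative of a Fourier-parametrised oscillatory time integral.

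\textbf{Core averaging estimate.} Under $\QQ^\varepsilon$ the inner time integral reads $\psi^\varepsilon_{s,t}(\xi,x)=e^{i\xi\cdot x}\int_s^t e^{i\xi\cdot\tilde B_q^H}\,dq$, a purely fBm-oscillatory integral whose time increments are controlled by the $\rho$-irregularity of $B^H$ with $\rho=1/(2H)$. For every $p\geq 2$ one has
\[\EE_{\QQ^\varepsilon}\bigl[|\psi^\varepsilon_{s,t}(\xi,x)|^p\bigr]\lesssim|t-s|^{\gamma p}(1+|\xi|)^{-\rho p}\]
for some $\gamma>1/2$. Taking the spatial gradient costs at most a factor of $|\xi|$; Kolmogorov's continuity criterion upgrades the $p$-th moment of the increment in $t$ to a bound on $\sup_t$; and a Littlewood--Paley resummation in $\xi$, using the key inequality $\alpha+\rho>0$, bounds the resulting sum by $\|f\|_{\cC_b^{\alpha+1}}^p$. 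Returning to $\PP$ via Cauchy--Schwarz against the uniformly bounded moments of $\cE^\varepsilon$ yields
\[\EE_\PP\Bigl[\sup_{t\in[0,T]}|DF_t^\varepsilon(x)|^2\Bigr]\leq K(|x|)^2\|f\|_{\cC_b^{\alpha+1}}^2,\]
with $K(|x|)$ of linear growth and independent of $\varepsilon$.

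\textbf{Main obstacle.} The principal difficulty is the coupling between $Df$ and $D\Phi^\varepsilon$ inside the integrand, which prevents a direct application of an averaging lemma. The Fourier representation collapses the product into a single gradient of $\psi^\varepsilon_{0,t}(\xi,\cdot)$, and Girsanov replaces $\Phi^\varepsilon(x)-x$ by a bona fide fBm under $\QQ^\varepsilon$, thereby reducing matters to $\rho$-irregularity estimates for a pure fBm oscillatory integral. The other delicate step is the passage from Fourier-side bounds back to the Besov-type norm $\|f\|_{\cC_b^{\alpha+1}}$, which is not a direct weighted $L^1$ embedding on $\hat f$ and so requires a block-by-block Littlewood--Paley argument. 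All uniformity in $\varepsilon$ is inherited from Theorem~\ref{theorem:fBm_flow} and from the uniform $L^p$-bound on $\cE^\varepsilon$, both of which depend only on $\|b\|_{\cC_b^{\alpha+1}}$.
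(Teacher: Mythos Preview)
The paper does not prove this proposition; Subsection~\ref{subsec:regularization} states these results and refers to \cite{catellier_averaging_2012} for proofs. So there is no in-paper argument to compare against, and your sketch must stand on its own.

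The overall toolkit (Girsanov, Littlewood--Paley, averaging estimates, Kolmogorov) is correct, but there is a genuine gap at the step ``Taking the spatial gradient costs at most a factor of $|\xi|$''. Girsanov only says that under $\QQ^{x,\varepsilon}$ the process $\Phi^\varepsilon(x)-x$ has the \emph{law} of an fBm; it does not give a pathwise identity $\Phi^\varepsilon(x)=x+\tilde B^H$ with $\tilde B^H$ independent of $x$. If you set $\tilde B^H_q:=\Phi_q^\varepsilon(x)-x$ (so that your displayed identity is literally true), then $\tilde B^H$ depends on $x$, $\nabla_x\tilde B^H_q=D\Phi_q^\varepsilon(x)-I$, and
\[
\nabla_x\Bigl(e^{i\xi\cdot x}\int_s^t e^{i\xi\cdot\tilde B_q^H}\,dq\Bigr)
=\int_s^t i\,(D\Phi_q^\varepsilon(x))^{\!\top}\xi\;e^{i\xi\cdot\Phi_q^\varepsilon(x)}\,dq,
\]
which reinstates the very coupling with $D\Phi^\varepsilon$ you set out to remove. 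Nor can you differentiate in $x$ \emph{after} changing measure, since $\QQ^{x,\varepsilon}$ and the density $\cE^\varepsilon$ themselves depend on $x$. In short, your Fourier identity does not by itself decouple $Df$ from $D\Phi^\varepsilon$.

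The route taken in \cite{catellier_averaging_2012} separates the two effects. One first establishes that the averaging operator $T^{B^H}_{s,t}g(y)=\int_s^t g(y+B^H_q)\,dq$ gains spatial regularity (so that $\nabla T^{B^H}_{s,t}f$ is controlled by $\|f\|_{\cC_b^{\alpha+1}}$ with good H\"older dependence in $(s,t)$, in an $L^p(\Omega)$ sense). One then writes $F^\varepsilon_t(x)=\int_0^t f(\theta_q^\varepsilon(x)+B_q^H)\,dq$ with $\theta^\varepsilon=\Phi^\varepsilon-B^H$ Lipschitz in time, and uses a nonlinear Young/sewing argument to combine the averaging bound on $T^{B^H}f$ with the uniform moment bound on $D\Phi^\varepsilon$ already supplied by Theorem~\ref{theorem:fBm_flow}. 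That is the mechanism that legitimately decouples the product; your sketch is missing this step.
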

	and one of the main theorem of that article. 
	
\section{Rough transport equation, existence}\label{section:existence}
In order to deal with the multiplicative perturbation of the classical
transport equation we need a new notion of solution. When $X$ is a Brownian
motion, one can use, as in Flandoli et al.
{\cite{flandoli_well-posedness_2010}} the Stratonovitch integral to deal with
the multiplicative term. As here, we intend to work with processes such as
fractional Brownian motion, which are neither a martingale nor a Markov
processes, there is no way to use classical stochastic calculus.

In order to replace the stochastic integral we will use controlled rough
paths, as presented in {\cite{gubinelli_controlling_2004}} but also in
{\cite{friz_course_????}}. The way we will require the solution to be weakly
controlled by the process $X$ is to be linked with the way Gubinelli et al. in {\cite{gubinelli_controlled_2014}} define controlled viscosity solution of non linear PDEs. In the following we will focus on rough paths $\mathbf{X} \in \mathcal{R}^{\gamma}$ with $1/3< \gamma \leqslant 1/2$, since in that case the controlled rough integrals are quite easy to define. When $1 \geqslant \gamma>1/2$, all the computations are easy as we can consider usual Young integrals.

We can now focus ourselves on the rough transport equation (RTE). Namely for
$b \in L ^{\infty} ( [0,T]; \tmop{Lin} ( \mathbbm{R}^{d} ) )$ and
$\mathbf{X} \in \mathcal{D}^{\gamma} ( [0,T] )$, we want to solve the
following Cauchy problem
\begin{equation}
  \left\{\begin{array}{l}
    \partial_{t} u_{t} ( x ) +b_{t} ( x ) . \nabla u_{t} ( x ) + \nabla u_{t}
    ( x ) . \dd \mathbf{X}_{t} =0\\
    u_{0} \in L^{\infty} ( \mathbbm{R}^{d} )
  \end{array}\right. \label{eq:RTE} .
\end{equation}
To deal with the term $\nabla u_{t} ( x ) . \dd \mathbf{X}_{t}$ which is
a priori ill-defined even in the weak sense, we need to introduce  new notions
of solution for equation {\eqref{eq:RTE}}. When $b$ is smooth enough, it is even possible to have strong solutions, when the last term of equation \eqref{eq:RTE} will be understood as a rough integral. Those strong solutions will be quite useful for proving the uniqueness of weak solutions thanks to a duality argument. This is why we define strong solutions for a more general equation, but with non-optimal hypothesis on the vector field $b$. It seems that $b$ could have linear growth (see \cite{diehl_stochastic_2014}).

\begin{definition}
  Let $b,c \in L^{\infty} ([0,T],L^{\infty} (\mathbbm{R}^{d} ))$, $\frac{1}{3}
  < \gamma \leqslant 1$ and $\mathbf{X}= ( X,\mathbbm{X} ) \in
  \mathcal{R}^{\gamma} ([0,T])$ a $\gamma$-rough path. A strong controlled
  solution with initial condition $u_{0}$ of the rough continuity equation
\begin{equation}\label{eq:rough_continuity_equation}
 \partial_{t} u_{t} ( x ) +b_{t}(x) . \nabla u_{t} ( x ) +c_{t} ( x ) u_{t} (
     x ) + \nabla u_{t} ( x ) . \dd \mathbf{X}_{t} =0 \nocomma ,
     \hspace{1em} u ( 0,x ) =u_{0} ( x ) , 
     \end{equation}
  is a function $u \in \mathcal{C}^{\gamma} ([0,T];C^{1} (\mathbbm{R}^{d} ))
  \cap L^{\infty} ([0,T] \times \mathbbm{R}^{d} )$ such that
  \begin{enumerate}
    \item For all $x \in \mathbbm{R}^{d}$, the function $t \rightarrow \nabla
    u_{t} ( x )$ is controlled by $X$.
    
    \item For all $x \in \mathbbm{R}^{d}$ and all $s \leqslant t \in [0,T]$,
    the following equation is satisfied
    \[ u_{t} ( x ) -u_{s} ( x ) + \int_{s}^{t} b_{q} ( x ) . \nabla u_{q} ( x
       ) +c_{q} ( x ) u_{q} ( x ) \dd q+ \int_{s}^{t} \nabla u_{q} ( x )
       \dd \mathbf{X}_{q} =0 \]
    where the last integral is the rough integral of $\nabla u_{.} ( x )$
    against $\dd \mathbf{X}$.
  \end{enumerate}
\end{definition}
When $u_{t} \in C^{2}_{b}$, and as $\bX$ is a geometric rough path,  we can
replace condition $(2)$ by the following one.
\begin{itemize}
  \item There exists a function $R^{u} ( x ) : [0,T]^{2} \rightarrow
  \mathbbm{R}$ such that $R^{u} ( x ) \in \mathcal{C}^{3 \gamma}_{2} ( [0,T]
  )$ and
  \[ u_{t} ( x ) -u_{s} ( x ) + \int_{s}^{t} ( b_{q} ( x ) . \nabla u_{q} ( x
     ) +c_{q} ( x ) u_{q} ( x ) ) \dd q+ \nabla u_{s} ( x ) .X_{s,t} +
     \frac{1}{2} \nabla^{2} u_{s} ( x ) X^{\otimes 2}_{s,t} +R_{s,t}^{u} ( x )
     =0. \]
\end{itemize}

As in the classical case, in order to have existence of strong solutions, the vector field $b$ has to be smooth. This is the meaning of the following theorem.

\begin{theorem}
  \label{theorem:strong_controlled_solutions}Let $b \in L^{\infty}
  ([0,T];C^{2}_{b} ( \mathbbm{R}^{d} ) )$, $c \in L^{\infty} ( [0,T];C^{2}_{b}
  ( \mathbbm{R}^{d } ) )$, $1/3< \gamma \leqslant 1/2$, $\mathbf{X} \in
  \mathcal{R}^{\gamma}$ and $\varphi_{0} \in C^{3}_{c} ( \mathbbm{R}^{d} )$.
  Let $\Phi$ the flow of the equation
  \[ \Phi_{t} ( x ) =x+ \int_{0}^{t} b_{q} ( \Phi_{q} ( x ) ) \dd q +X_{t} \]
  and $\Phi^{-1}$ its inverse. The function
  \[ ( t,x ) \rightarrow \psi_{t} ( x ) = \varphi_{0} ( \Phi^{-1}_{t} ( x ) )
     \exp \left( - \int_{0}^{t} c_{t-q} ( \Phi^{-1}_{q} ( x ) ) \dd q
     \right) \]
  is a strong controlled solution to equation
  {\eqref{eq:rough_continuity_equation}}. Furthermore the function $x
  \rightarrow \| \nabla \psi_{.} ( x ) \|_{\mathcal{D}^{\gamma}_{X}}$ is
  compactly supported.
\end{theorem}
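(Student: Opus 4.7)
The plan is to construct $\psi$ as a limit of classical solutions obtained by smooth approximation of $\mathbf{X}$, and to read off the controlled structure of $\nabla \psi$ directly from this approximation. Since $\mathbf{X}$ is geometric, choose $X^\varepsilon \in C^1([0,T];\mathbbm{R}^d)$ with natural lifts $\mathbf{X}^\varepsilon \to \mathbf{X}$ in $\mathcal{R}^\gamma$. Let $\Phi^\varepsilon$ be the flow driven by $X^\varepsilon$ and define $\psi^\varepsilon$ by the theorem's formula with $\Phi$ replaced by $\Phi^\varepsilon$. Because $X^\varepsilon$ is smooth, the classical method of characteristics (Appendix \ref{appendix:characteristics}) guarantees that $\psi^\varepsilon$ is a genuine $C^1$-in-time solution of the smoothed PDE, and hence satisfies the corresponding integral equation with the ordinary Riemann integral $\int_s^t \nabla\psi^\varepsilon_q(x)\cdot \dot X^\varepsilon_q\,\dd q$ in place of the rough integral.

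First I would establish uniform spatial regularity. Since $b \in L^\infty([0,T];C^2_b)$, differentiating the flow equation gives bounds on $D\Phi^\varepsilon$ and $D^2 \Phi^\varepsilon$ (and their inverses) depending only on $\|Db\|_\infty$, $\|D^2 b\|_\infty$ and $T$; combined with $\varphi_0\in C^3_c$ and $c \in L^\infty([0,T];C^2_b)$, a chain-rule computation produces $L^\infty$ bounds on $\psi^\varepsilon$, $\nabla\psi^\varepsilon$, $\nabla^2\psi^\varepsilon$ that are uniform in $\varepsilon$. Lemma \ref{lemma:flow_convergence_X_smooth}, together with its differentiated version for the variational flow, yields $\Phi^\varepsilon \to \Phi$ and $\Phi^{\varepsilon,-1}\to \Phi^{-1}$ in $C^2$ in space uniformly in time, whence $\nabla^j \psi^\varepsilon \to \nabla^j \psi$ pointwise for $j=0,1,2$.

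Next I would identify the controlled structure. Taking a gradient in the smoothed PDE and iterating the resulting identity once in time yields
\[
\nabla\psi^\varepsilon_t(x) - \nabla\psi^\varepsilon_s(x) = -\nabla^2\psi^\varepsilon_s(x)\,\delta X^\varepsilon_{s,t} + R^{\varepsilon}_{s,t}(x),
\]
with $\|R^{\varepsilon}(x)\|_{2\gamma} \lesssim (1+\|\mathbf{X}^\varepsilon\|_{\mathcal{R}^\gamma})$ uniformly in $\varepsilon$ and locally uniformly in $x$; the $\dd q$-term contributes a Lipschitz-in-time piece, and one further expansion of $\nabla\psi^\varepsilon_q$ around $q=s$ absorbs the term in $\dot X^\varepsilon_q$ into the $\nabla^2\psi^\varepsilon_s\,\delta X^\varepsilon_{s,t}$ contribution plus a $O(|t-s|^{2\gamma})$ remainder. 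Hence $\nabla\psi^\varepsilon(x)$ is controlled by $X^\varepsilon$ with Gubinelli derivative $-\nabla^2\psi^\varepsilon(x)$ and $\|\nabla\psi^\varepsilon(x)\|_{\mathcal{D}^\gamma_{X^\varepsilon}}$ uniformly bounded. Passing to the limit identifies $\nabla\psi(x) \in \mathcal{D}^\gamma_X$ with derivative $(\nabla\psi)'_s(x) = -\nabla^2\psi_s(x)$.

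The main obstacle is to show that $\int_s^t \nabla\psi^\varepsilon_q(x)\cdot \dot X^\varepsilon_q\,\dd q$ converges to the controlled rough integral $\int_s^t \nabla\psi_q(x)\,\dd \mathbf{X}_q$ as $\varepsilon\to 0$. This is handled by the continuity of the controlled rough integral (Theorem \ref{th:controlled_integral}): the uniform bounds on the controlled norms, together with the pointwise convergence of $\nabla\psi^\varepsilon$ and $\nabla^2\psi^\varepsilon$ upgraded to convergence in Hölder norms by standard interpolation, imply convergence of the integrals for each fixed $x,s,t$. The Lebesgue term passes to the limit by dominated convergence, yielding the required integral identity. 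Finally, the compact support claim follows because $\psi_t(x) \neq 0$ forces $\Phi^{-1}_t(x) \in \mathrm{supp}\,\varphi_0$, i.e. $x \in \Phi_t(\mathrm{supp}\,\varphi_0)$; by Lemma \ref{lemma:holder_norm_flow} the set $\bigcup_{t\in[0,T]}\Phi_t(\mathrm{supp}\,\varphi_0)$ is a bounded subset of $\mathbbm{R}^d$, outside of which $\nabla\psi_t(x)$, $\nabla^2\psi_t(x)$ and the associated remainder all vanish identically.
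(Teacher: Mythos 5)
Your overall strategy is the same as the paper's (smooth approximation of $\mathbf{X}$, classical characteristics for $\psi^{\varepsilon}$, pass to the limit using continuity of the rough integral), and your treatment of the compact-support claim and of the Lebesgue term is fine. The gap is in the step where you claim the uniform-in-$\varepsilon$ controlled structure of $\nabla\psi^{\varepsilon}$ directly from the smoothed PDE: you write the rough contribution as $-\nabla^{2}\psi^{\varepsilon}_{s}(x)\,\delta X^{\varepsilon}_{s,t}$ plus $-\int_{s}^{t}\bigl(\nabla^{2}\psi^{\varepsilon}_{q}(x)-\nabla^{2}\psi^{\varepsilon}_{s}(x)\bigr)\dot X^{\varepsilon}_{q}\,\dd q$ and assert the latter is $O(|t-s|^{2\gamma})$ uniformly in $\varepsilon$. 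This does not follow from uniform H\"older bounds on $\nabla^{2}\psi^{\varepsilon}$: the crude estimate gives $\|\nabla^{2}\psi^{\varepsilon}(x)\|_{\gamma}\int_{s}^{t}|q-s|^{\gamma}|\dot X^{\varepsilon}_{q}|\,\dd q$, and $\|\dot X^{\varepsilon}\|_{\infty}$ blows up as $\varepsilon\to 0$ (for a mollified $\gamma$-H\"older path it is of order $\varepsilon^{\gamma-1}$), so the constant is not uniform. A Young-type bound would require $2\gamma>1$, which is excluded here since $\gamma\leqslant 1/2$; and "one further expansion" does not close the argument either, because each iteration produces another integral against $\dot X^{\varepsilon}_{q}$ with a prefactor that is only H\"older small, and in addition it would require $\nabla^{3}\psi^{\varepsilon}$, hence more regularity of $b$ and $c$ than the hypotheses $b,c\in L^{\infty}([0,T];C^{2}_{b})$ provide. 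In short, the uniform controlled bound cannot be extracted from the equation satisfied by $\psi^{\varepsilon}$; asserting it this way is essentially circular (it is what the sewing/rough-integral machinery would give you once the controlled structure is already known uniformly).

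The paper circumvents exactly this point by never Taylor-expanding under a $\dot X^{\varepsilon}\,\dd q$ integral. It works with the explicit representation $\psi^{\eta}=\varphi_{0}((\Phi^{\eta})^{-1})\,K^{\eta}$ and expands $\varphi_{0}$ and $\nabla\varphi_{0}$ along increments of the (inverse) flow, whose time-H\"older norms are bounded uniformly in the driver (Lemma \ref{lemma:holder_norm_flow}); this yields the controlled decomposition with remainders expressed through $\int_{s}^{t}b_{q}(\Phi_{q}(x))\,\dd q$ and flow increments only (Lemma \ref{lemma:composition_flow_control}), together with quantitative Lipschitz estimates in $\|X-X^{\eta}\|_{\gamma}$ for the data $(\psi^{\eta},(\psi^{\eta})',(\psi^{\eta})^{\#})$ via Lemmas \ref{lemma:comparison_controlled}, \ref{lemma:convergence_jacobien_smooth} and \ref{lemma:product_controlled_path} (also avoiding the loss from $\gamma$ to $\gamma'<\gamma$ that your interpolation step would incur). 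To repair your proof, replace the PDE-based derivation of the controlled structure and of the uniform remainder bound by this flow-composition expansion; the rest of your limiting argument then goes through.
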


When $b$ is non-smooth, one has to give a weak sense for the solutions, and use the fact that, as a distribution, the function $u$ is weakly controlled by the process $X$.

\begin{definition}
  \label{definition_WCS}Let $u_{0} \in L^{\infty} ( \mathbbm{R}^{d} )$, $1/3<
  \gamma \leqslant 1/2$, $\mathbf{X}= ( X,\mathbbm{X} ) \in
  \mathcal{R}^{\gamma}$, $b\in L^1_{loc}([0,T];L^1_{loc}(\RR^d))$ with $\mathrm{div}\, b \in L^1_{loc}([0,T];L^\infty(\RR^d))$ .We say that $u \in L^{\infty} ( [0,T] \times
  \mathbbm{R}^{d} )$ is a weak controlled solution (WCS) of equation
  {\eqref{eq:RTE}} with initial condition $u_{0}$ if for all $\varphi \in
  C^{\infty}_{c} ( \mathbbm{R}^{d} )$,
  \begin{enumerate}
    \item we have $u ( \varphi ) \in \mathcal{D}^{\gamma}_{X} ( [0,T] )$;
    
    \item for all $t \in [0,T]$,
    \[ u_{t} ( \varphi ) =u_{0} ( \varphi ) + \int_{0}^{t} u_{s} ( b_{s} .
       \nabla \varphi + \tmop{div} ( b_{s} ) \varphi ) \dd s+ \int_{0}^{t}
       u_{s} ( \nabla \varphi ) \dd \mathbf{X}_{s} \]
    where the quantity $\int_{0}^{t} u_{s} ( \nabla \varphi ) \dd
    \mathbf{X}_{s}$ is understood as the controlled rough integral of $u (
    \nabla \varphi )$ against $\dd \mathbf{X}$.
  \end{enumerate}
\end{definition}

\begin{remark}
  Since this definition is true for all $\varphi \in C^{\infty}_{c} (
  \mathbbm{R}^{d} )$, when $u$ is a weak Controlled solution to equation
  {\eqref{eq:RTE}} and $\varphi \in C^{\infty}_{c} ( \mathbbm{R}^{d} )$, for
  all $\alpha \in \{ 1, \ldots ,d \}^{d}$ with $| \alpha | =1$, we have
  \[ \delta u_{r,t} ( \partial^{\alpha} \varphi ) = \int_{r}^{t} u_{s} ( b_{s}
     . \nabla ( \partial^{\alpha} \varphi ) + \tmop{div} ( b_{s} ) (
     \partial^{\alpha} \varphi ) ) \dd s+ \int_{r}^{t} u_{s} ( \nabla (
     \partial^{\alpha} \varphi ) ) \dd \mathbf{X}_{s} \]
  and the derivative of $u ( \nabla \varphi )$ as a controlled path is $u (
  \nabla^{2} \varphi )$. Hence as $\mathbf{X}$ is geometric, an alternative
  formulation for Definition $\ref{definition_WCS}$ can be:
  \begin{enumerate}
    \item For all $\varphi \in C^{\infty}_{c} ( \mathbbm{R}^{d } )$, $u_{.} (
    \nabla \varphi ) \in \mathcal{D}^{\gamma}_{X} ( [0,T] )$, with $u_{.} (
    \varphi )' =u_{.} ( \nabla \varphi )$.
    
    \item There exists a remainder $R^{\varphi} : [0,T]^{2} \rightarrow
    \mathbbm{R}$ such that $| R_{s,t} ( \varphi ) | \lesssim | t-s |^{3
    \gamma}$ and for all $0 \leqslant s \leqslant t \leqslant T$ the following
    equation is verified
    \[ \delta u_{s,t} ( \varphi ) = \int_{s}^{t} u_{q} ( b_{q} . \nabla
       \varphi + \tmop{div} ( b_{q} ) \varphi ) \dd q+u_{s} ( \nabla
       \varphi ) X_{s,t} + \frac{1}{2} u_{s} ( \nabla^{2} \varphi ) X^{\otimes
       2}_{s,t} +R_{s,t} ( \varphi ) . \]
  \end{enumerate}
\end{remark}

Thanks to the definition of the rough integral (see Theorem
\ref{th:controlled_integral} above) these two formulations are equivalent.
Hence, in the following we will either use one or the other.

However from this second formulation it is clear that the solution $u$
depends only on the first level $X$ of the rough path $\mathbf{X}$ due to
the fact that only the symmetric part of the area $\mathbbm{X}_{s,t}$ is
needed to compute the rough integral on the r.h.s. if $u ( \nabla \varphi )'
=u ( \nabla^{2} \varphi )$ and that this symmetic part is canonical for
geometric rough paths and given by $X^{\otimes 2}_{s,t} /2$. It seems that this remark could be extended for general path $X$ with arbitrary regularity. Nevertheless, we restrict ourselves to $\gamma\in(1/3,1/2]$.

Whereas this notion of solution is quite different from the classical one,
when $X$ is smooth, the rough integral $\int_{0}^{t} u_{s} ( \nabla \varphi )
\dd \mathbf{X}_{s}$ is equal to the classical integral $\int_{0}^{t}
u_{s} ( \nabla \varphi ) \dot{X}_{s} \dd s$. In that case, we have to show
that the two notions of weak solutions coincide. In fact, we have the following theorem.

\begin{theorem}
  \label{theorem:smooth_transport}Let $b \in L^{\infty} ( [0,T];  C^{1}_b ( \mathbbm{R}^{d} ) )$ with $\tmop{div}  b \in
  L^{\infty} ( [0,T];C^{1}_{b} ( \mathbbm{R}^{d} ) )$, $u_{0} \in L^{\infty} (
  \mathbbm{R}^{d} )$ and $X \in C^{1} ( [0,T] )$. Let $1/3< \gamma \leqslant
  1/2$ and $\mathbf{X}$ be the natural lift of $X$ into the space
  $\mathcal{R}^{\gamma}$. Then there exists a unique weak controlled solution
  $u$ to equation {\eqref{eq:RTE}} with initial condition $u_{0}$. For almost
  every $t \in [0,T]$ and almost every $x \in \mathbbm{R}^{d}$ we have
  \[ u_{t} ( x ) =u_{0} ( \Phi_{t}^{-1} ( x ) ) \]
  where $\Phi$ is the flow of equation {\eqref{eq:differential}}.
\end{theorem}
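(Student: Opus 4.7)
Since $X\in C^1([0,T])$ and $\mathbf{X}$ is its natural lift, Theorem~\ref{th:controlled_integral} identifies the controlled rough integral with the ordinary Riemann--Stieltjes integral: for any $y \in \mathcal{D}^\gamma_X$, $\int_0^t y_s\,\dd\mathbf{X}_s = \int_0^t y_s\,\dot X_s\,\dd s$. Setting $\tilde b_t(x) = b_t(x) + \dot X_t$, which belongs to $L^\infty([0,T];C^1_b(\RR^d))$ with $\div \tilde b = \div b \in L^\infty$, Definition~\ref{definition_WCS}(2) is therefore equivalent to the classical weak transport equation
\[u_t(\varphi) = u_0(\varphi) + \int_0^t u_s\bigl(\div(\tilde b_s \varphi)\bigr)\,\dd s,\qquad \varphi \in C^\infty_c(\RR^d),\]
supplemented by the controlled-path requirement $u(\varphi) \in \mathcal{D}^\gamma_X$. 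The proof thus reduces to producing one solution via the method of characteristics, checking that it is weakly controlled, and proving uniqueness at the classical weak level.

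For existence, I would set $u_t(x) = u_0(\Phi_t^{-1}(x))$. Since $\tilde b$ is Lipschitz in $x$ uniformly in time, $\Phi_t$ is a $C^1$ diffeomorphism of $\RR^d$. The change of variables $x = \Phi_t(y)$ yields $u_t(\varphi) = \int u_0(y)\,\varphi(\Phi_t(y))\,J_t(y)\,\dd y$ with $J_t = \tmop{Jac}\,\Phi_t$; combining $\partial_t \Phi_t = \tilde b_t \circ \Phi_t$ with the Liouville identity $\partial_t J_t = (\div b_t)(\Phi_t)\, J_t$ (using that $\dot X_t$ is $x$-independent) and differentiating under the integral give
\[\partial_t u_t(\varphi) = u_t\bigl(\div(b_t \varphi)\bigr) + u_t(\nabla\varphi)\cdot\dot X_t,\]
which integrates in time to the required weak equation. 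To verify $u(\varphi) \in \mathcal{D}^\gamma_X$, the integral identity produces the decomposition $u_t(\varphi) - u_s(\varphi) = u_s(\nabla\varphi)\cdot(X_t - X_s) + R^\varphi_{s,t}$ with
\[R^\varphi_{s,t} = \int_s^t u_q\bigl(\div(b_q \varphi)\bigr)\,\dd q + \int_s^t \bigl(u_q(\nabla\varphi)-u_s(\nabla\varphi)\bigr)\cdot\dot X_q\,\dd q.\]
The first summand is $O(|t-s|)$, and applying the same reasoning to $\nabla\varphi$ shows that $u(\nabla\varphi)$ is itself Lipschitz in time, so the second summand is $O(|t-s|^2)$; both dominate $|t-s|^{2\gamma}$ since $\gamma \leq 1/2$, so $u(\varphi) \in \mathcal{D}^\gamma_X$ with Gubinelli derivative $u(\nabla\varphi)$.

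For uniqueness, let $u^1,u^2$ be two weak controlled solutions sharing the initial datum and set $v = u^1 - u^2$. Given $\varphi_0 \in C^\infty_c(\RR^d)$, let $\psi$ be the classical, smooth, compactly supported (in $x$) solution of the backward continuity equation
\[\partial_t \psi_t + b_t\cdot\nabla\psi_t + \div(b_t)\,\psi_t + \dot X_t \cdot \nabla \psi_t = 0,\qquad \psi_T = \varphi_0,\]
constructed through the backward flow in the spirit of Theorem~\ref{theorem:strong_controlled_solutions} with $c_t = \div b_t \in C^1_b$. Pairing the weak equation for $v$ against $\psi_t$ with the smooth PDE for $\psi_t$ against $v_t$ and adding, the divergence-form identity $\div(b\psi) = b\cdot\nabla\psi + \div(b)\psi$ cancels all spatial terms and yields $\frac{\dd}{\dd t}\langle v_t,\psi_t\rangle = 0$; since $v_0 = 0$, we obtain $\langle v_T,\varphi_0\rangle = 0$ for every $\varphi_0$, hence $u^1 = u^2$. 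Combined with the explicit solution from existence, this yields the formula $u_t(x) = u_0(\Phi_t^{-1}(x))$ almost everywhere. The main obstacle is the verification of the controlled-path structure for the candidate solution, which is only bounded measurable in space: one must simultaneously identify the Gubinelli derivative $u(\nabla\varphi)$ and bound a $\mathcal{C}^{2\gamma}_2$ remainder; the quantitative integral identity above is precisely what delivers both.
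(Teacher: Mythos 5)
Your proposal is correct and follows essentially the same route as the paper: take the candidate $u_t(x)=u_0(\Phi_t^{-1}(x))$ from the method of characteristics, use that for the natural lift of a $C^1$ path the controlled rough integral coincides with the classical integral $\int u_s(\nabla\varphi)\dot X_s\,\dd s$, verify the controlled-path property of $u(\varphi)$, and obtain uniqueness by reducing a weak controlled solution to a classical weak solution and applying the duality argument with the backward continuity equation (the content of Appendix \ref{appendix:characteristics}). The only real deviation is that you verify $u(\varphi)\in\mathcal{D}^\gamma_X$ directly from the integral identity (Lipschitz-in-time bounds, which suffice since $2\gamma\leqslant 1$) instead of invoking Lemma \ref{lemma:smooth_control}; this is a legitimate simplification in the smooth case, though the paper's flow-based estimates are the ones reused later, uniformly in approximations, for the genuinely rough setting.
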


Here, as $X$ is smooth, equation {\eqref{eq:RTE}} can be understood in the
weak sense as the classical transport equation
\begin{equation}
  \left\{\begin{array}{l}
    \partial_{t} u_{t} ( x ) +b_{t} ( x ) . \nabla u_{t} ( x ) + \nabla u_{t}
    ( x ) . \dot{X}_{t} =0\\
    u_{0} \in L^{\infty} ( \mathbbm{R}^{d} )
  \end{array}\right. . \label{eq:STE}
\end{equation}

Thanks to the hypothesis, and thanks to the usual method of characteristics, the
function $( t,x ) \rightarrow u_{0} ( \Phi^{-1}_{t} ( x ) )$ is the unique
weak solution of equation {\eqref{eq:STE}}. In order to prove that classical solutions are also controlled solutions, we will use some Taylor expansion of the flow $\Phi$, and that will be the aim of Subsection \ref{subsection:taylor}.

When the flow $\Phi$ of equation \eqref{eq:differential} does not exists, we will still have controlled solutions for equation \eqref{eq:RTE}. This is the purpose of the following result.

\begin{theorem}\label{theorem:existence_WCS}
  Let $\frac{1}{3} < \gamma \leqslant 1/2$, $b \in L^{\infty} ( [0,T];
  \tmop{Lin} ( \mathbbm{R}^{d} ) )$ with $\tmop{div}  b \in L^{\infty} (
  [0,T];L^{\infty} ( \mathbbm{R}^{d } ) )$ and let $\mathbf{X} \in
  \mathcal{R}^{\gamma} ( [0,T] )$. Then there exists a weak controlled
  solution $u \in L^{\infty} ( [0,T] \times \mathbbm{R}^{d} )$ to equation
  \eqref{eq:RTE}.
\end{theorem}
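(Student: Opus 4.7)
The plan is the standard regularize-and-pass-to-limit scheme; the only rough-path-specific work is producing a \emph{uniform} second-order Taylor expansion of $u^\varepsilon(\varphi)$ along $X^\varepsilon$, after which compactness yields the limit.

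\emph{Step 1: approximations and approximate solutions.} Since $\mathbf{X}$ is geometric, pick $X^\varepsilon\in C^1([0,T];\mathbb{R}^d)$ with natural lifts $\mathbf{X}^\varepsilon\to\mathbf{X}$ in $\mathcal{R}^\gamma$. Mollify $b$ in space to obtain smooth $b^\varepsilon$ with $\|b^\varepsilon\|_{\infty;\mathrm{Lin}}\leq \|b\|_{\infty;\mathrm{Lin}}$ and $\|\mathrm{div}\, b^\varepsilon\|_\infty\leq \|\mathrm{div}\, b\|_\infty$. Lemma~\ref{lemma:holder_norm_flow} produces the (smooth) flow $\Phi^\varepsilon$ of $\dot y=b^\varepsilon(t,y)+\dot X^\varepsilon_t$, and a straightforward extension of Theorem~\ref{theorem:smooth_transport} to the linear-growth setting shows that $u^\varepsilon_t(x):=u_0\bigl((\Phi^\varepsilon_t)^{-1}(x)\bigr)$ is a weak controlled solution of~\eqref{eq:RTE} with driver $\mathbf{X}^\varepsilon$ and drift $b^\varepsilon$. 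Since characteristics preserve $L^\infty$, $\|u^\varepsilon\|_{L^\infty([0,T]\times\mathbb{R}^d)}\leq\|u_0\|_\infty$ uniformly in $\varepsilon$.

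\emph{Step 2: uniform controlled estimate (the core of the argument).} Fix $\varphi\in C_c^\infty(\mathbb{R}^d)$. The weak formulation satisfied by $u^\varepsilon$ reads
\[\delta u^\varepsilon_{s,t}(\varphi)=\int_s^t u^\varepsilon_q\bigl(b^\varepsilon_q\cdot\nabla\varphi+\mathrm{div}(b^\varepsilon_q)\,\varphi\bigr)\mathrm{d} q+\int_s^t u^\varepsilon_q(\nabla\varphi)\,\dot X^\varepsilon_q\,\mathrm{d} q.\]
Iterating the same identity on $u^\varepsilon_q(\nabla\varphi)$ and using that $\mathbf{X}^\varepsilon$ is geometric, so $\int_s^t\!\!\int_s^q\dot X^\varepsilon_r\otimes\dot X^\varepsilon_q\,\mathrm{d} r\,\mathrm{d} q=\tfrac12(\delta X^\varepsilon_{s,t})^{\otimes 2}$, yields
\[\delta u^\varepsilon_{s,t}(\varphi)=u^\varepsilon_s(\nabla\varphi)\cdot\delta X^\varepsilon_{s,t}+\tfrac12\,u^\varepsilon_s(\nabla^2\varphi):(\delta X^\varepsilon_{s,t})^{\otimes 2}+R^{\varepsilon,\varphi}_{s,t},\]
with a remainder that satisfies $|R^{\varepsilon,\varphi}_{s,t}|\lesssim C(\varphi,b,u_0)\bigl(1+\|\mathbf{X}^\varepsilon\|_{\mathcal{R}^\gamma}\bigr)^{2}|t-s|^{3\gamma}$ uniformly in $\varepsilon$. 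The constant only uses $\|u^\varepsilon\|_\infty\leq\|u_0\|_\infty$, the bound $\|\mathrm{div}\, b^\varepsilon\|_\infty\leq\|\mathrm{div}\, b\|_\infty$, and the compact support of $\varphi$ against the linear growth of $b^\varepsilon$. Lemma~\ref{remark:global_holder_norm} then identifies $u^\varepsilon(\varphi)$ as an element of $\mathcal{D}^\gamma_{X^\varepsilon}$ with Gubinelli derivative $u^\varepsilon(\nabla\varphi)$, and a uniform bound on $\|u^\varepsilon(\varphi)\|_{\mathcal{D}^\gamma_{X^\varepsilon}}$.

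\emph{Step 3: compactness and passage to the limit.} Choose a countable dense family $\{\varphi_k\}\subset C_c^\infty$. For each $k$, the paths $u^\varepsilon(\varphi_k)$, $u^\varepsilon(\nabla\varphi_k)$, $u^\varepsilon(\nabla^2\varphi_k)$ and the remainders $R^{\varepsilon,\varphi_k}$ are uniformly equibounded and equi-Hölder (of order $\gamma$ and $3\gamma$ respectively). Banach--Alaoglu together with Arzelà--Ascoli and a diagonal extraction yield, along a subsequence, a weak-$*$ limit $u\in L^\infty$ and uniform convergence of all these quantities to their natural counterparts $u(\varphi_k)$, $u(\nabla\varphi_k)$, $u(\nabla^2\varphi_k)$, $R^{\varphi_k}$; the uniform bounds and density extend this to every $\varphi\in C_c^\infty$. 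Passing to the limit in the expansion of Step~2 identifies $u(\varphi)$ as controlled by $X$ with derivative $u(\nabla\varphi)$. The drift term $b^\varepsilon\cdot\nabla\varphi+\mathrm{div}(b^\varepsilon)\,\varphi$ is uniformly bounded and supported in a fixed compact set, and converges a.e.\ to $b\cdot\nabla\varphi+\mathrm{div}(b)\,\varphi$; combined with $u^\varepsilon\rightharpoonup^{*}u$ this gives convergence of the drift integral. The rough integral term passes by continuity of the rough integral (Theorem~\ref{th:controlled_integral}), using $\mathbf{X}^\varepsilon\to\mathbf{X}$ in $\mathcal{R}^\gamma$ together with the convergence of the controlled data (interpolated down to some $\gamma'\in(1/3,\gamma)$). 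The limiting identity is exactly the one in Definition~\ref{definition_WCS}.

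\emph{Main obstacle.} The nontrivial point is Step~2: producing a second-order expansion with $3\gamma$-Hölder remainder \emph{uniformly} in $\varepsilon$. Formally the expansion is transparent from the geometricity of $\mathbf{X}^\varepsilon$, but each error term arising from the iteration must be bounded using only $\varepsilon$-independent data ($\|u_0\|_\infty$, $\|\mathrm{div}\,b\|_\infty$, $\|b\|_{\infty;\mathrm{Lin}}$, $\|\varphi\|_{C^3}$ and $\|\mathbf{X}^\varepsilon\|_{\mathcal{R}^\gamma}$), and \emph{not} by the blowing-up norms $\|b^\varepsilon\|_{C^k}$ or $\|\dot X^\varepsilon\|_\infty$; the $L^\infty$ bound on $u^\varepsilon$ and the compact support of $\varphi$, which tames the linear growth of $b$, are what make this possible.
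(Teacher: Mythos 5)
Your overall scheme (mollify $b$ and $X$, solve the smooth problem by characteristics, get $\varepsilon$-uniform controlled bounds, then extract limits and pass to the limit in the rough integral) is the same as the paper's. The genuine gap is in your Step~2, which you yourself flag as the core: you claim that the uniform second-order expansion with $3\gamma$-remainder follows by ``iterating the weak formulation'' and using only $\|u_0\|_\infty$, $\|b\|_{\infty;\mathrm{Lin}}$, $\|\mathrm{div}\,b\|_\infty$, $\|\varphi\|_{C^3}$ and $\|\mathbf{X}^\varepsilon\|_{\mathcal{R}^\gamma}$. But the iteration does not close with that data. After two substitutions the error term is of the form $\int_s^t\!\int_s^q \delta u^{\varepsilon}_{s,r}(\nabla^2\varphi)\,\dot X^\varepsilon_r\otimes\dot X^\varepsilon_q\,\mathrm{d} r\,\mathrm{d} q$; to see that this is $O(|t-s|^{3\gamma})$ uniformly in $\varepsilon$ you need an $\varepsilon$-uniform $\gamma$-H\"older modulus for $u^\varepsilon(\nabla^2\varphi)$, which is exactly the kind of estimate you are trying to produce. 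Iterating once more only reproduces the same structure with $\nabla^3\varphi$, and the recursion never terminates: the only $\varepsilon$-uniform information available from the weak formulation alone is the sup bound $|\delta u^\varepsilon_{s,r}(\psi)|\le 2\|u_0\|_\infty\|\psi\|_{L^1}$, and pulling that out of the innermost integral destroys the rough-path cancellations and leaves the total variation of $X^\varepsilon$, which blows up. So the key uniform estimate is asserted, not proved. (A smaller slip in the same display: the drift integral $\int_s^t u^\varepsilon_q(\mathrm{div}(b^\varepsilon_q\varphi))\,\mathrm{d} q$ is missing from the right-hand side; it is only $O(|t-s|)$ and, since $3\gamma>1$, it cannot be absorbed into a $3\gamma$-remainder, so it must be kept as a separate term as in Definition~\ref{definition_WCS}.)

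The paper closes this step by a Lagrangian, not Eulerian, argument, and that is the ingredient your proposal is missing. One writes $u^\varepsilon_t(\nabla\varphi)=\int u_0(x)\,\nabla\varphi(\Phi^\varepsilon_t(x))\,\mathrm{Jac}\,\Phi^\varepsilon_t(x)\,\mathrm{d} x$ by the change of variables; the Jacobian equals $\exp\bigl(\int_0^t\mathrm{div}\,b^\varepsilon(\Phi^\varepsilon_q(x))\,\mathrm{d} q\bigr)$ and is Lipschitz in $t$ uniformly in $\varepsilon$ thanks to $\|\mathrm{div}\,b^\varepsilon\|_\infty\le\|\mathrm{div}\,b\|_\infty$, while $t\mapsto\nabla\varphi(\Phi^\varepsilon_t(x))$ is controlled by $X^\varepsilon$ with Gubinelli derivative $D^2\varphi(\Phi^\varepsilon_t(x))$, the second-order Taylor expansion being tamed by the uniform flow estimate of Lemma~\ref{lemma:holder_norm_flow} and by the decay estimates in $x$ of Lemma~\ref{lemma:bound_control} and Corollary~\ref{corollary:holder_norm} (the linear growth of $b$ is compensated by the rapid decay of $\nabla\varphi$ along the flow). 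Integrating in $x$ gives the uniform bound $\sup_\varepsilon\|u^\varepsilon(\nabla\varphi)\|_{\mathcal{D}^\gamma_{X^\varepsilon}}<\infty$ of Lemma~\ref{lemma:smooth_control}, after which your Steps~1 and~3 go through essentially as in the paper. If you intend to cite those flow lemmas, then your Step~2 reduces to the paper's argument; if you insist on deriving the uniform expansion purely from the weak formulation, you need a new mechanism that you have not supplied.
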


Finally, we aim to use those notions of solutions when the driving term $X$ and the solution $u$ are stochastic processes. As in the classical setting, the proof of Theorem  \ref{theorem:existence_WCS} relies on compactness arguments and extraction arguments. These arguments do not guaranty that the limit is measurable in $\omega$. The following definition is a refinement of Definition \ref{definition_WCS} in the case we are working with a rough path lift of a stochastic porcess. The subsequent theorem proves that such solutions exists in that case. 

\begin{definition}\label{def:stochastic_controlled_solutions}
  Let $1/3< \gamma \leqslant 1/2$, $( \Omega ,\mathcal{F},\mathbbm{P} )$ a
  probability space and $X$ a continuous stochastic process on $( \Omega
  ,\mathcal{F},\mathbbm{P} )$ such that almost surely $X$ lifts to a Rough
  Path $\mathbf{X} \in \mathcal{R}^{\gamma} ( [0,T] )$ in a measurable way.
  Let $b \in L^{\infty} ( \Omega \times [0,T]; \tmop{Lin} ( \mathbbm{R}^{d} )
  )$, $\div b \in \L^\infty([0,T]\times \RR^d)$ and $u_{0} \in L^{\infty} ( \Omega \times \mathbbm{R}^{d} )$.
  
  In that setting, a (Stochastic) weak controlled solution of the Rough
  transport equation with initial condition $u_{0}$ is a function $u \in
  L^{\infty} ( \Omega \times [0,T] \times \mathbbm{R}^{d} )$ such that almost
  surely, $u ( \varphi ) \in \mathcal{D}_{X}^{\gamma} ( [0,T] )$ for all
  $\varphi \in C^{\infty}_{c} ( \mathbbm{R}^{d} )$ and almost surely, for
  almost all $t \in [0,T]$,
  \[ u_{t} ( \varphi ) =u_{0} ( \varphi ) + \int_{0}^{t} u_{q} ( \tmop{div} (
     b_{q} \varphi ) ) \dd q+ \int_{0}^{t} u_{q} ( \nabla \varphi ) \dd
     \mathbf{X}_{q} , \]
  where that last integral is understood as the controlled rough integral of
  $u ( \nabla \varphi )$ against $\dd \mathbf{X}$.
\end{definition}

\begin{theorem}
  \label{theorem:existence_SWCS}Let $b,u_{0}$ and $X$ as in the last
  definition. We also assume that for a smooth measurable approximation
  $\mathbf{X}^{\varepsilon}$ of $\mathbf{X}$ and all $1/3< \gamma
  \leqslant 1/2$ and all $+ \infty >p \geqslant 1$, $\mathbbm{E} [ \|
  \mathbf{X}-\mathbf{X}^{\varepsilon} \|_{\mathcal{R}^{\gamma}}^{p} ]
  \rightarrow 0$.
  
  Then there exists a Stochastic weak controlled solution for the Rough
  transport equation with initial condition $u_{0}$.
\end{theorem}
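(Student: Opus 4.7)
The plan is a double-regularization argument: mollify both the drift $b$ and the driver $\bX$, build explicit smooth solutions via the method of characteristics, pass to a weak-$*$ limit, and secure measurability in $\omega$ using the probabilistic approximation hypothesis on $\bX^\varepsilon$.

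First, I would pick a standard spatial mollifier $\rho_n$ and form $b^n = b *_x \rho_n$, so that $\|b^n\|_{\infty;\tmop{Lin}}$ and $\|\div b^n\|_\infty$ are uniformly bounded and $(b^n, \div b^n) \to (b, \div b)$ strongly in $L^1_{\tmop{loc}}([0,T]\times\RR^d)$, pathwise in $\omega$. Together with the given smooth measurable $\bX^\varepsilon$, Theorem~\ref{theorem:smooth_transport} provides, for each $\omega$ and each pair $(n,\varepsilon)$, a unique WCS of the smooth equation
\[
u^{n,\varepsilon}_t(\omega,x) = u_0\bigl(\omega,\bigl(\Phi^{n,\varepsilon}_t(\omega,\cdot)\bigr)^{-1}(x)\bigr),
\]
jointly measurable in $(\omega,t,x)$ thanks to the measurable dependence of smooth ODE flows on their coefficients, and satisfying $\|u^{n,\varepsilon}\|_{L^\infty} \le \|u_0\|_\infty$ since each $\Phi^{n,\varepsilon}_t(\omega,\cdot)$ is a diffeomorphism.

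Next, for each $\varphi \in C^\infty_c(\RR^d)$ the weak formulation combined with Theorem~\ref{th:controlled_integral} gives a controlled-path bound on $u^{n,\varepsilon}(\varphi)$ with respect to $X^\varepsilon$: the Gubinelli derivative is $u^{n,\varepsilon}(\nabla\varphi)$, the remainder is of order $|t-s|^{3\gamma}$, and the $\cD^\gamma_{X^\varepsilon}$ norm is controlled by $\|\varphi\|_{C^3}$, the data norms, and $\|\bX^\varepsilon\|_{\cR^\gamma}$, which is $L^p$-bounded by hypothesis. The assumed convergence $\EE[\|\bX^\varepsilon - \bX\|_{\cR^\gamma}^p] \to 0$ then lets me extract a deterministic subsequence $\varepsilon_\ell \to 0$ along which $\bX^{\varepsilon_\ell} \to \bX$ almost surely in $\cR^\gamma$; I couple it with a diagonal choice $n = n(\varepsilon_\ell) \to \infty$ so that $b^{n(\varepsilon_\ell)} \to b$ in $L^1_{\tmop{loc}}$ as well. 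Fixing a countable dense family $\{\varphi_k\} \subset C^\infty_c(\RR^d)$, the real-valued paths $t \mapsto u^{n(\varepsilon_\ell),\varepsilon_\ell}(\omega,t,\varphi_k)$ are uniformly $\cC^\gamma$-bounded, so Arzelà–Ascoli plus a diagonal extraction produces a subsequence converging uniformly in $t$ for every $k$, on a full-measure event.

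Finally, I would pass to the limit in the weak equation
\[
u^{n,\varepsilon}_t(\varphi) = u_0(\varphi) + \int_0^t u^{n,\varepsilon}_s\bigl(\div(b^n_s\varphi)\bigr)\,\dd s + \int_0^t u^{n,\varepsilon}_s(\nabla\varphi)\,\dd \bX^\varepsilon_s.
\]
The drift term passes because $\div(b^n_s\varphi) \to \div(b_s\varphi)$ strongly in $L^1([0,T]\times\RR^d)$ (the supports lie in the compact $\tmop{supp}\varphi$) against the weak-$*$ converging $u^{n,\varepsilon}$; the rough integral passes by the joint continuity of the controlled rough integral in the pair (controlled path, rough path) from Theorem~\ref{th:controlled_integral}, applied to the convergent inputs $(u^{n,\varepsilon}(\nabla\varphi), \bX^\varepsilon) \to (u(\nabla\varphi), \bX)$. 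The main obstacle I expect is the measurability of the extracted limit in $\omega$: generic weak-$*$ extraction is not automatically measurable, which is why the $L^p$ hypothesis on $\bX^\varepsilon$ is essential — it reduces the problem to almost-sure convergence of a single deterministic subsequence, after which a measurable selection argument on the metrizable weak-$*$ compact ball $\{\|u\|_{L^\infty} \le \|u_0\|_\infty\}$ together with the separability of $C^\infty_c(\RR^d)$ yields a measurable $u \in L^\infty(\Omega\times[0,T]\times\RR^d)$ satisfying the SWCS axioms.
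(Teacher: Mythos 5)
Your plan reproduces the deterministic existence proof (Theorem \ref{theorem:existence_WCS}) and then tries to upgrade it to the stochastic setting, but the upgrade step is exactly where the gap lies. The claim that ``Arzel\`a--Ascoli plus a diagonal extraction produces a subsequence converging uniformly in $t$ for every $k$, on a full-measure event'' is not justified: the $\cC^\gamma$-compactness of $t\mapsto u^{n,\varepsilon}(\omega,t,\varphi_k)$ is only pathwise (the bounds involve $\|\mathbf{X}(\omega)\|_{\mathcal{R}^\gamma}$), so the extracted subsequence necessarily depends on $\omega$, and a limit built from $\omega$-dependent subsequences need not be measurable in $\omega$. The hypothesis $\EE[\|\mathbf{X}-\mathbf{X}^\varepsilon\|_{\mathcal{R}^\gamma}^p]\to 0$ gives you a deterministic subsequence along which $\mathbf{X}^{\varepsilon_\ell}\to\mathbf{X}$ almost surely, but it says nothing about convergence of the solutions $u^{n,\varepsilon}$ along a deterministic subsequence; and the appeal to ``a measurable selection argument'' on the weak-$*$ ball is not an argument — one would have to show that the set of pathwise limit points that are weak controlled solutions is a measurable multifunction with closed values and then verify that a measurable selection still carries the controlled structure, none of which is sketched. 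This is precisely the obstruction the paper flags (``we can no longer apply na\"{\i}vely the Arzel\`a--Ascoli theorem''), so the proposal in effect assumes away the only genuinely new difficulty of the stochastic statement.

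The paper's proof circumvents this differently: it extracts a single deterministic subsequence by weak-$*$ compactness in $L^\infty(\Omega\times[0,T]\times\RR^d)$ and, for each dyadic time $t\in\Pi$, in $L^\infty(\Omega\times\RR^d)$, which keeps the limit measurable; it then recovers pathwise time regularity of the limit not by compactness but by moment estimates on increments, $\EE[(\delta\tilde u_{s,t}(\varphi))^{2p}]\lesssim |t-s|^{2\gamma p}$, combined with Kolmogorov's continuity theorem, and similarly obtains the Gubinelli derivative $u(\nabla^2\varphi)$ and a remainder in $\cC^{2\gamma'}_2$ by passing weak limits of the controlled decomposition at dyadic times and applying Kolmogorov again. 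Finally, the rough integrals are passed to the limit through their Riemann-sum approximations $S_k^\varepsilon(\nabla\varphi,t)$ tested against $Y\in L^q(\Omega)$, using the uniform rough-integral error bounds, rather than through pathwise continuity of the rough-integral map applied to a pathwise-convergent controlled path (which you do not have). If you want to salvage your draft, you must replace the pathwise Arzel\`a--Ascoli/selection step by this (or an equivalent) weak-convergence-plus-Kolmogorov scheme; the rest of your outline (mollifying $b$, smooth solutions from Theorem \ref{theorem:smooth_transport}, uniform $L^\infty$ and controlled bounds, convergence of the drift term) matches the paper.
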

 This notion of measurable solutions has to be compared with the usual notions of solutions for stochastic transport equation, when $X$ is a Brownian motion and the integral is understood in the Stratonovitch sense. Let us remind that in \cite{flandoli_well-posedness_2010}, the authors already show some regularizations effects for the stochastic transport equation driven by a (Stratonovitch) multiplicative Brownian motion. Let us remind the definition for stochastic solutions of the tranport equation in that article.

\begin{definition}[Definition 12 in \cite{flandoli_well-posedness_2010}]\label{def:linfty_solutions}
	Let $(\Omega,\cF,\PP)$ a probability space and $B$ a $d$-dimensional standard Brownian motion define on that space. Let  $b\in L^1_{loc}([0,T]\times \RR^d; \RR^d)$, $\div b \in L^1_{loc}([0,T]\times \RR^d)$ and $u_0\in L^\infty(\RR^d)$. A weal $L^\infty$-solutions of the transport equation
	\[\partial_t u(t,x) \dd t + b(t,x).\nabla u (t,x) \dd t + \nabla u(t,x) \circ \dd B_t =0; \quad u(0,x) = u((0,x)\]
	is a stochastic process $u\in L^\infty(\Omega\times[0,T]\times \RR^d)$ such that for every function test $\theta\in C^\infty_c(\RR^d,\RR)$, the process $t\to u_t(\theta) := \int_{\RR^d} u(t,x) \theta(x) \dd x$ has a continuous modification which is an $\cF$-semimartingale and for almost every $(w,t)\in\Omega\times \left[0,T\right]$,
	\[u_t(\theta) = u_0(\theta) + \int_0^t u_r(b(r,.).\nabla\theta+\mathrm{div}\, b(r,.) \theta ) \dd r + \int_0^t u_r (\nabla \theta) \circ \dd B_r.\]
	\end{definition}
 
One can wonder, if those two notions of solutions are the same, in the case of the Stratonovitch Brownian rough path. Proposition \ref{proposition:integral_strato_rough}, together with Definition \ref{def:stochastic_controlled_solutions} and Theorem \ref{theorem:existence_SWCS} allows us to say that whenever a weak controlled solution exists, in the setting of Brownian motion, then it is also a $L^\infty$ solution in the sense of Definition \ref{def:linfty_solutions}. One have to wonder if the contrary is true.
 
 \begin{proposition}
 Let $u_0$ and $b$ as in Theorem \ref{theorem:existence_SWCS}. Let $u$ a $L^\infty$ solution in the sense of Definition \ref{def:linfty_solutions}. There exists a modification such that $u$ is also a weak controlled solution. 
 \end{proposition}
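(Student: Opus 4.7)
The plan is to take the continuous-in-time modification of $u$ guaranteed by Definition \ref{def:linfty_solutions} (applied to each test function separately, then patched together via a countable dense family in $C^\infty_c(\RR^d)$ and an extension by density to define $u(\varphi)$ as a continuous process simultaneously for all $\varphi$) and to show that this modification satisfies Definition \ref{definition_WCS} almost surely. Fix $\varphi \in C^\infty_c(\RR^d)$ and set $L\varphi := b \cdot \nabla \varphi + \div b \, \varphi$. By hypothesis $u(\varphi)$ is a continuous $\cF$-semimartingale with decomposition
\begin{equation*}
u_t(\varphi) = u_0(\varphi) + \int_0^t u_r(L\varphi) \, \dd r + \int_0^t u_r(\nabla\varphi) \circ \dd B_r.
\end{equation*}
Applying the same equation to each $\partial_i\varphi$ shows that $u(\nabla\varphi)$ is itself a continuous adapted $\RR^d$-valued semimartingale, uniformly bounded in $\omega$ (since $u \in L^\infty$ and $\varphi$ is smooth with compact support); its bracket with $B^j$ is $\langle u(\partial_i\varphi), B^j \rangle_t = \int_0^t u_r(\partial_j \partial_i \varphi)\, \dd r$.

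The next step is to show that $u(\varphi)$ is $\gamma$-controlled by $B$ with Gubinelli derivative $u(\varphi)' = u(\nabla\varphi)$. Converting the Stratonovitch integral to an It\^o integral using the brackets just computed, and expanding around the left endpoint, one obtains
\begin{equation*}
\delta u_{s,t}(\varphi) - u_s(\nabla\varphi) \cdot \delta B_{s,t} = \int_s^t u_r(L\varphi) \, \dd r + \tfrac{1}{2}\int_s^t u_r(\Delta\varphi) \, \dd r + Z_{s,t},
\end{equation*}
with $Z_{s,t} := \int_s^t (u_r(\nabla\varphi) - u_s(\nabla\varphi))\, \dd B_r$. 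The two Lebesgue integrals are pathwise $\cO(|t-s|)$, hence in $\cC^{2\gamma}_2$, and $u(\nabla\varphi)$ is itself $\gamma$-Hölder by BDG applied to its martingale part.

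The main technical obstacle is the pathwise $2\gamma$-Hölder regularity of the It\^o remainder $Z$. From the semimartingale form of $u(\nabla\varphi)$ together with BDG one gets $\|u_r(\nabla\varphi) - u_s(\nabla\varphi)\|_{L^p(\Omega)} \lesssim_p |r-s|^{1/2}$, and a second BDG estimate then yields $\|Z_{s,t}\|_{L^p(\Omega)} \lesssim_p |t-s|$. Since $Z$ is jointly continuous in $(s,t)$, a two-parameter Kolmogorov continuity argument provides pathwise almost sure bounds $|Z_{s,t}| \lesssim |t-s|^{1 - 2/p}$, which for $p$ sufficiently large exceeds $|t-s|^{2\gamma}$ on any bounded time interval. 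This establishes $u(\varphi) \in \mathcal{D}^\gamma_B([0,T])$ almost surely with the claimed Gubinelli derivative. Finally, since $u(\nabla\varphi)$ is adapted and controlled by $B$, Proposition \ref{proposition:integral_strato_rough} identifies $\int_0^t u_r(\nabla\varphi) \, \dd \bB_r$ with $\int_0^t u_r(\nabla\varphi) \circ \dd B_r$ almost surely, so substituting into the $L^\infty$-solution identity gives the weak controlled solution equation of Definition \ref{definition_WCS} and concludes the construction.
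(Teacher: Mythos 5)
Your proposal is correct and follows essentially the same route as the paper: convert the Stratonovitch integral to It\^o form, identify the Gubinelli derivative of $u(\varphi)$ as $u(\nabla\varphi)$, control the remainder $\int_s^t (u_r(\nabla\varphi)-u_s(\nabla\varphi))\,\dd B_r$ by BDG moment bounds upgraded to pathwise $2\gamma$-H\"older regularity via a Kolmogorov criterion, and conclude with Proposition \ref{proposition:integral_strato_rough}. The only cosmetic difference is that you bound the remainder's moments directly from the $|t-s|^{1/2}$ increment estimate, whereas the paper first extracts the $\gamma$-H\"older modification of $u(\nabla\theta)$ and feeds that into the second BDG estimate; both yield the required exponent.
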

 
 \begin{remark}
 The meaning of such a proposition, is that when one work in the setting of this work, which is slightly less general than the one of \cite{flandoli_well-posedness_2010} in the case of the Brownian motion, one have the same notions of solutions, and though the same results.  
 \end{remark}
 
 \begin{proof}
  Thanks to proposition \ref{proposition:integral_strato_rough} it is enough to prove that if $u$ is an $L^\infty$ solution, it is weakly controlled by $B$. Let $\theta\in C^\infty_c(\RR^d)$. Then, using the Ito formulation of the equation, we have
\begin{align*}
  u_t(\theta) - u_s(\theta) & = \int_s^t u_r( \div b(r,.) \theta + b(r,.). \nabla \theta)\dd r + \int_s^t u_r (\nabla \theta) \circ \dd B_r \\
  & = \int_s^t u_r( \div b(r,.) \theta + b(r,.). \nabla \theta)\dd r + \int_s^t u_r (\Delta \theta) \dd r +  \int_s^t u_r (\nabla \theta) \dd B_r \\
\end{align*}
Thanks to the hypothesis on $b$ (linear growth) and as $u$ is bounded and $\theta$ has compact support, for all $p\ge 1$, we have
\[\mathbb{E}\left[\left|u_t(\theta) - u_s(\theta)\right|^{2p}\right] \lesssim |t-s|^{2p} + \mathbb{E}\left(\int\left|u_r(\nabla \theta)\right|^2\dd r\right)^p \lesssim |t-s|^p.\]
Thanks to the Kolmogorov criterium, there exists a modification such that for all $\theta\in C^\infty_c$, $t\to u_t(\theta)$ is almost surely $\gamma$-Hölder continuous. 
As it is true for all $\theta$ this is also true for $\nabla\theta$. Furthermore, 
\begin{align*}
\mathbb{E}\left[\left(\int_s^t u_r(\nabla)-u_s(\nabla) \dd B_r\right)^{2p}\right]
&\lesssim \mathbb{E}\left(\int_s^t \left|u_r(\nabla \theta)-u_s(\nabla \theta)\right|^2\dd r\right)^p\\
& \lesssim |t-s|^{p(2\gamma+1)}.
\end{align*}
Again thanks to a Kolmogorov criterium, there exists a modification such that for all $\theta$, $t\to\left(\int_0^t u_r(\nabla)-u_s(\nabla) \dd B_r\right)$ is $2\gamma$-Hölder continuous, which ends the proof.
\end{proof}

\subsection{Strong controlled solutions}\label{subsection:strong}

In order to prove the existence of strong controlled solution (SCS) for the
rough continuity equation (RCE) we will proceed as for the weak solutions.
Namely we will approximate $X$ and show that classical solutions (see
Appendix~\ref{appendix:characteristics}) are controlled solutions for the
approximate equation, and then remove the approximation. Nevertheless, in
order to have an explicit form, we cannot use a compactness argument anymore,
and we will have to control all the objects in order to make them converge. As
we focus on regular $b$ only, standard argument will be used. The proof relies on a Taylor expansion of the potential solution given by the method of characteristics (Lemmas \ref{lemma:bounded_bounded_drift}, \ref{lemma:joker} and \ref{lemma:composition_flow_control}). As we intend to have a explicit representation of the strong controlled solutions, we can not use a compactness argument, but must show that a solution of the approximate equation (when $W^\eps$ is smooth) converges to a solution of the equation. For that, it is necessary to control the Taylor expansion of flow for different driving terms. It is the purpose of Lemmas \ref{lemma:comparison_controlled} and \ref{lemma:convergence_jacobien_smooth}.

The three following lemmas are quite similar to
Lemma \ref{lemma:bound_control}, Corollary \ref{corollary:holder_norm} and
Lemma \ref{lemma:smooth_control}, but since we are working with bounded
function $b$ while proving results for the strong solutions, we can get ride of the locality. As the proof are quite similar to the proof of the aforementioned results, we do not develop them here.

\begin{lemma}
  \label{lemma:bounded_bounded_drift}Let $b \in L^{\infty} ( [0,T];L^{\infty}
  ( \mathbbm{R}^{d} ) )$ and $X \in \mathcal{C}^{\gamma} ( \mathbbm{R}^{d} )$
  such that the flow $\Phi$ of the equation
  \[ \Phi_{t} ( x ) =x+ \int_{0}^{t} b_{q} ( \Phi_{q} ( x ) ) \dd q+X_{t} .
  \]
  Let $\varphi \in C_{c} ( \mathbbm{R}^{d} )$ a continuous function with
  compact support. Then for all $s,t \in [0,T]$ and all $r \in [ 0,1 ]$
  \[ x \rightarrow \varphi ( r ( \Phi_{t} ( x ) - \Phi_{s} ( x ) ) + \Phi_{s}
     ( x ) )   \tmop{has}   \tmop{compact}   \tmop{support} . \]
  Furthermore if $\tmop{supp}   \varphi \subset B ( 0,R_{\varphi} )$,then
  $\tmop{supp}   \varphi ( r ( \Phi_{t} ( . ) - \Phi_{s} ( . ) ) + \Phi_{s} (
  . ) ) \subset B ( 0,R )$ where $R=R_{\varphi} +2T^{\gamma} ( \| b
  \|_{\infty} + \| X \|_{\gamma} )$.
\end{lemma}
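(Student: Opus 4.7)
The plan is to reduce the compact support statement to a direct estimate on $|x|$ using the flow equation, since the only content of the lemma is a uniform bound on the set of $x$ for which the argument of $\varphi$ lands in $\operatorname{supp}\varphi$.

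First I would rewrite the argument of $\varphi$ as the convex combination
\[
r(\Phi_t(x)-\Phi_s(x))+\Phi_s(x) = r\Phi_t(x)+(1-r)\Phi_s(x).
\]
If $x$ belongs to the support of $x\mapsto \varphi(r\Phi_t(x)+(1-r)\Phi_s(x))$, then $r\Phi_t(x)+(1-r)\Phi_s(x)$ lies in $\operatorname{supp}\varphi\subset B(0,R_\varphi)$, so the whole task is to bound $|x|$ in terms of $R_\varphi$ and the data $\|b\|_\infty$, $\|X\|_\gamma$, $T$.

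Next I would apply the triangle inequality
\[
|x| \le \left|r\Phi_t(x)+(1-r)\Phi_s(x)\right| + \left|x - r\Phi_t(x)-(1-r)\Phi_s(x)\right|,
\]
and use the flow equation to control the second term. Writing $\Phi_u(x)-x = \int_0^u b_q(\Phi_q(x))\,dq + X_u$ for $u\in\{s,t\}$ and using $X_0=0$ together with $|X_u|\le T^\gamma\|X\|_\gamma$, I get $|\Phi_u(x)-x|\le T\|b\|_\infty + T^\gamma\|X\|_\gamma$ for any $u\in[0,T]$. Hence
\[
\left|x - r\Phi_t(x)-(1-r)\Phi_s(x)\right| \le r|\Phi_t(x)-x|+(1-r)|\Phi_s(x)-x|\le T\|b\|_\infty+T^\gamma\|X\|_\gamma,
\]
which, combined with the first step, gives $|x|\le R_\varphi + 2T^\gamma(\|b\|_\infty+\|X\|_\gamma)$ after absorbing the $T\|b\|_\infty$ term into $T^\gamma\|b\|_\infty$ (implicitly using a normalization such as $T\le 1$ or the convention in the paper that these constants are bundled).

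There is no substantive obstacle here: the argument is just triangle inequality plus a sup bound on $b\circ\Phi$ and a Hölder bound on $X$. The only thing to watch is the bookkeeping of constants in the stated radius $R$; once the bounds $|\Phi_u(x)-x|\le T\|b\|_\infty+T^\gamma\|X\|_\gamma$ are written down, the conclusion $\operatorname{supp}\varphi(r\Phi_t(\cdot)+(1-r)\Phi_s(\cdot))\subset B(0,R)$ with $R=R_\varphi+2T^\gamma(\|b\|_\infty+\|X\|_\gamma)$ is immediate.
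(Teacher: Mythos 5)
Your argument is correct and is in fact more elementary than what the paper points to. The paper does not write out a proof of Lemma \ref{lemma:bounded_bounded_drift}: it declares it ``quite similar'' to Lemma \ref{lemma:bound_control}, whose proof substitutes $y=\Phi_s(x)$, invokes the inverse flow (Proposition \ref{proposition:ODE_inverse_flow}) and the H\"older bound of Lemma \ref{lemma:holder_norm_flow}, and works with weighted decay in $|x|$ because there $b$ only has linear growth. Since here $b$ is bounded, you bypass all of that: the crude estimate $|\Phi_u(x)-x|\leqslant T\|b\|_\infty+T^\gamma\|X\|_\gamma$ (using $X_0=0$) plus the triangle inequality on the convex combination $r\Phi_t(x)+(1-r)\Phi_s(x)$ gives the inclusion of the support in a fixed ball directly, with no need for the inverse flow or a localization in time. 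What your route buys is simplicity and a bound valid for all $s,t$ at once; what the paper's template buys is that it also covers the linear-growth case, which is why the authors phrase the three lemmas of Subsection \ref{subsection:strong} as corollaries of that scheme.

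One bookkeeping caveat: your estimate yields $R=R_\varphi+T\|b\|_\infty+T^\gamma\|X\|_\gamma$, which dominates the stated $R_\varphi+2T^\gamma(\|b\|_\infty+\|X\|_\gamma)$ only when $T^{1-\gamma}\leqslant 2$; your ``absorb $T\|b\|_\infty$ into $T^\gamma\|b\|_\infty$'' step indeed needs something like $T\leqslant 1$. This is harmless — the paper itself is inconsistent on this constant (Lemma \ref{lemma:joker} states the radius as $R_\varphi+2(T\|b\|_\infty+T^\gamma\|X\|_\gamma)$, and the proof of Theorem \ref{theorem:unique} uses $R_{\varphi_0}+2T\|b\|_\infty+\|B^H\|_\infty$), and everything downstream only uses that $R$ is a finite radius nondecreasing in $T$, $\|b\|_\infty$ and $\|X\|_\gamma$ — but you should state your radius as you actually prove it rather than force it into the displayed form.
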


\begin{lemma}\label{lemma:joker}
  Let $b$ and $X$ as in Lemma \ref{lemma:bounded_bounded_drift}. Let $\varphi
  \in C^{1}_{c} (\mathbbm{R}^{d} ,\mathbbm{R}^{m} )$ a continuous
  differentiable function with compact support. Then $t \rightarrow \varphi (
  \Phi_{t} ( x ) ) \in \mathcal{C}^{\gamma} ( [0,T] )$, and furthermore
  \[ \| \varphi ( \Phi_{.} ( x ) ) \|_{\gamma} \lesssim_{T} ( \| b \|_{\infty}
     + \| X \|_{\gamma} ) \| D \varphi \|_{\infty} \mathbbm{1}_{B_{f} ( 0,R )}
     ( x ) \]
  where $R=R_{\varphi} +2 (  T \| b \|_{\infty} +T^{\gamma} \| X \|_{\gamma}
  )$ and $R_{\varphi}$ is such that $\tmop{supp}   \varphi \subset B_{f} (
  0,R_{\varphi} )$.
\end{lemma}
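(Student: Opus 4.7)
The plan is to combine a simple flow increment bound with the mean value theorem applied to $\varphi$, and then to exploit the uniform drift of the flow to justify the compact support factor.

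First I would estimate the flow increment. From the defining equation one has
\[
\Phi_t(x)-\Phi_s(x) = \int_s^t b_q(\Phi_q(x))\,\dd q + (X_t-X_s),
\]
so that $|\Phi_t(x)-\Phi_s(x)| \le \|b\|_\infty|t-s| + \|X\|_\gamma|t-s|^\gamma$. For $|t-s|\le T$ the first term is bounded by $T^{1-\gamma}\|b\|_\infty|t-s|^\gamma$, hence
\[
|\Phi_t(x)-\Phi_s(x)| \lesssim_T (\|b\|_\infty+\|X\|_\gamma)\,|t-s|^\gamma.
\]

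Next, since $\varphi\in C^1$, the fundamental theorem of calculus along the segment $[\Phi_s(x),\Phi_t(x)]$ gives $|\varphi(\Phi_t(x))-\varphi(\Phi_s(x))| \le \|D\varphi\|_\infty\,|\Phi_t(x)-\Phi_s(x)|$, so plugging in the bound above yields the required Hölder estimate, up to the indicator factor.

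To recover the indicator $\mathbbm 1_{B_f(0,R)}(x)$ I would argue that if $|x|>R$ then $\varphi(\Phi_t(x))\equiv 0$ for every $t\in[0,T]$, so the left-hand side seminorm vanishes. Indeed, taking $s=0$ in the flow increment estimate (and using $X_0=0$) gives
\[
|\Phi_t(x)-x|\le T\|b\|_\infty + T^\gamma\|X\|_\gamma,
\]
and hence $|\Phi_t(x)|\ge |x|-(T\|b\|_\infty+T^\gamma\|X\|_\gamma)>R_\varphi$ whenever $|x|>R=R_\varphi+2(T\|b\|_\infty+T^\gamma\|X\|_\gamma)$. Since $\mathrm{supp}\,\varphi\subset B_f(0,R_\varphi)$, this means $\varphi(\Phi_t(x))=0$ for all $t\in[0,T]$, so $\|\varphi(\Phi_\cdot(x))\|_\gamma=0$, which finishes the bound.

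There is no real obstacle here: the only care point is to track the time-horizon constants so that the two pieces $T\|b\|_\infty$ and $T^\gamma\|X\|_\gamma$ aggregate into a factor $\lesssim_T(\|b\|_\infty+\|X\|_\gamma)$ in the Hölder seminorm, while appearing with the precise weights $T\|b\|_\infty+T^\gamma\|X\|_\gamma$ in the radius $R$, exactly as in Lemma~\ref{lemma:bounded_bounded_drift}.
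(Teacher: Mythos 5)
Your proof is correct, and it is essentially the argument the paper intends: the paper omits the proof of this lemma, stating it is analogous to Corollary \ref{corollary:holder_norm}, and your direct combination of the flow increment bound, the mean value theorem, and the support estimate of Lemma \ref{lemma:bounded_bounded_drift} is exactly that argument, simplified because the boundedness of $b$ lets you skip the local-to-global step (Lemma \ref{remark:global_holder_norm}) needed in the linear-growth case.
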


Those two previous lemmas guarantee that the function $t \rightarrow \varphi (
\Phi_{t} ( x ) )$ is controlled by $X$ and furthermore give a estimate on the
controlled norm. Indeed, the following lemma holds.

\begin{lemma}
  \label{lemma:composition_flow_control}Let $b$ and $X$ as in Lemma
  \ref{lemma:bounded_bounded_drift}. Let $\varphi \in C^{2}_{c} (
  \mathbbm{R}^{d} ,\mathbbm{R}^{m} )$, then $t \rightarrow \varphi ( \Phi_{t}
  ( x ) ) \in \mathcal{D}^{\gamma}_{X} ( \mathbbm{R}^{d} )$, and furthermore
  \[ \varphi ( \Phi_{.} ( x ) )' =D \varphi ( \Phi_{.} ( x ) ) \]
  and for $R$ as in the previous lemma, we have
  \[ \| \varphi ( \Phi_{.} ( x ) ) \|_{\mathcal{D}^{\gamma}_{X} ( [0,T] )}
     \leqslant C ( 1+ \| X \|_{\gamma} ) \mathbbm{1}_{B ( 0,R )} ( x ) . \]
\end{lemma}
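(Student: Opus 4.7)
The plan is to obtain the controlled decomposition of $\varphi(\Phi_\cdot(x))$ by combining the ODE satisfied by the flow with a second-order Taylor expansion of $\varphi$. First I would rewrite the flow increment as
\[ \delta \Phi_{s,t}(x) = I_{s,t}(x) + \delta X_{s,t}, \qquad I_{s,t}(x) = \int_s^t b_q(\Phi_q(x)) \dd q, \]
so that $|I_{s,t}(x)| \le \|b\|_\infty |t-s|$ and the whole increment is bounded by $\|b\|_\infty |t-s| + \|X\|_\gamma |t-s|^\gamma$ on $[0,T]$. Since $\varphi \in C^2_c$, Taylor's formula at $\Phi_s(x)$ gives
\[ \delta(\varphi \circ \Phi_\cdot(x))_{s,t} = D\varphi(\Phi_s(x)) \cdot \delta \Phi_{s,t}(x) + \tilde R_{s,t}(x), \]
with $|\tilde R_{s,t}(x)| \le \tfrac{1}{2}\|D^2\varphi\|_\infty |\delta\Phi_{s,t}(x)|^2$. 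Substituting the ODE decomposition yields
\[ \delta(\varphi \circ \Phi_\cdot(x))_{s,t} = D\varphi(\Phi_s(x)) \cdot \delta X_{s,t} + \bigl[D\varphi(\Phi_s(x)) \cdot I_{s,t}(x) + \tilde R_{s,t}(x)\bigr], \]
from which the candidate Gubinelli derivative $(\varphi \circ \Phi_\cdot(x))' = D\varphi(\Phi_\cdot(x))$ and an explicit remainder $(\varphi \circ \Phi_\cdot(x))^\#_{s,t}$ can be read off.

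Next I would estimate the three pieces of the controlled norm separately. The $\gamma$-Hölder norms of $\varphi \circ \Phi_\cdot(x)$ and $D\varphi \circ \Phi_\cdot(x)$ are handled by Lemma~\ref{lemma:joker} applied respectively to $\varphi$ and to $D\varphi$, which inherits the compact support in $B(0,R_\varphi)$. For the $2\gamma$-Hölder norm of the remainder, the drift summand is bounded by $\|D\varphi\|_\infty \|b\|_\infty |t-s| \le T^{1-2\gamma} \|D\varphi\|_\infty \|b\|_\infty |t-s|^{2\gamma}$, using $2\gamma \le 1$ to absorb the extra power of $|t-s|$ into a $T$-dependent constant. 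The Taylor remainder satisfies $|\tilde R_{s,t}(x)| \lesssim |t-s|^{2\gamma}(1+\|X\|_\gamma)^2$ by squaring the pointwise bound on $|\delta\Phi_{s,t}(x)|$ and again using $T$ to absorb powers of $|t-s|$ above $2\gamma$.

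Finally, to produce the indicator factor $\II_{B(0,R)}(x)$, I would invoke Lemma~\ref{lemma:bounded_bounded_drift}: since $\varphi$ and $D\varphi$ are both supported in $B(0,R_\varphi)$, the compositions $\varphi(\Phi_s(x))$ and $D\varphi(\Phi_s(x))$ vanish identically in $s \in [0,T]$ as soon as $|x| > R := R_\varphi + 2(T\|b\|_\infty + T^\gamma \|X\|_\gamma)$, hence so does the whole Taylor expansion and each of the three pieces of the controlled norm. Assembling the estimates yields the claimed bound. No step is a genuine obstacle; the only care needed is to consistently absorb drift contributions of order $|t-s|$ into the required $|t-s|^{2\gamma}$ bound via the finite horizon $T$, and to propagate the compact support of $\varphi$ to its derivative so that the indicator on $B(0,R)$ appears uniformly.
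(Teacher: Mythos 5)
Your proof is correct and is essentially the paper's own argument: the paper omits the proof of this lemma, referring to the analogous computations in Lemma \ref{lemma:bound_control}, Corollary \ref{corollary:holder_norm} and Lemma \ref{lemma:smooth_control}, which consist precisely of the first-order Taylor expansion of $\varphi$ around $\Phi_s(x)$, the splitting $\delta\Phi_{s,t}(x)=\int_s^t b_q(\Phi_q(x))\,\dd q+\delta X_{s,t}$ identifying $D\varphi(\Phi_\cdot(x))$ as the Gubinelli derivative, the absorption of the drift term of order $|t-s|$ into $|t-s|^{2\gamma}$ via the finite horizon $T$, and the indicator $\mathbbm{1}_{B(0,R)}(x)$ obtained from Lemma \ref{lemma:bounded_bounded_drift}. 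One minor point: your Taylor remainder contains the contribution $\bigl[\int_0^1\bigl(D\varphi(r\delta\Phi_{s,t}+\Phi_s)-D\varphi(\Phi_s)\bigr)\dd r\bigr]\cdot\delta X_{s,t}$, which is genuinely quadratic in $\|X\|_\gamma$, so what your estimates assemble to is $C(1+\|X\|_\gamma)^2\,\mathbbm{1}_{B(0,R)}(x)$ rather than the linear factor displayed in the lemma --- this is consistent with (indeed sharper than) the exponent $1+1/\gamma$ in the paper's analogous bounds, so it reflects an imprecision in the statement rather than a gap in your argument.
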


In order to prove that there exist strong controlled solutions to equation
{\eqref{eq:rough_continuity_equation}}, we will approximate the rough path
$X$. Hence, we need some regularity for the controlled norm of the potential
solutions of the equations. The following lemma gives us the regularity w.r.t.
the rough path norm of $\mathbf{X}$ and $\mathbf{Y}$ of the controlled
test functions.

\begin{lemma}
  \label{lemma:comparison_controlled}Let $b \in L^{\infty} ( [0,T];C^{1}_{b} (
  \mathbbm{R}^{d } ) )$, $X \nocomma ,Y \in \mathcal{C}^{\gamma}$ and
  $\Phi^{X} , \Phi^{Y}$ the associated flows. Let $\varphi \in C^{3}_{c} (
  \mathbbm{R}^{d} )$. Then
  \[ \| \varphi ( \Phi_{.}^{X} ( x ) )' - \varphi ( \Phi_{.}^{Y} ( x ) )'
     \|_{\gamma} \leqslant C \| X-Y \|_{\gamma} ( 1+  \| Y \|_{\gamma} ) ( 1+ 
     \| X \|_{\gamma} ) \mathbbm{1}_{B_{f} ( 0,R )} ( x ) \]
  and
  \[ \| \varphi ( \Phi_{.}^{X} ( x ) )^{\#} - \varphi ( \Phi_{.}^{Y} ( x )
     )^{\#} \|_{2 \gamma} \leqslant C \| X-Y \|_{\gamma} ( 1+  \| Y
     \|_{\gamma} ) ( 1+  \| X \|_{\gamma} ) \mathbbm{1}_{B_{f} ( 0,R )} ( x )
     , \]
  where the two constants 
  $C=C_T (\| b \|_{\infty} , \| D b \|_{\infty} , \| \varphi
  \|_{\infty} , \| D  \varphi \|_{\infty} , \| D ^{2} \varphi \|_{\infty} , \|
  D^{3} \varphi \|_{\infty} )$ and $R=R_T (\| b \|_{\infty} , \| X \|_{\gamma} , \| Y \|_{\gamma} )$ are nondecreasing in all the parameters.
\end{lemma}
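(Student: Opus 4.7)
My plan is to reduce both estimates to Hölder bounds on the path $t\mapsto D\varphi(\Phi^X_t(x))-D\varphi(\Phi^Y_t(x))$ and on the difference of the two second-order rough-path remainders, combining Taylor's formula with the flow-comparison estimate of Lemma~\ref{lemma:flow_convergence_X_smooth} and the a priori bound of Lemma~\ref{lemma:holder_norm_flow}. Indeed, Lemma~\ref{lemma:composition_flow_control} already identifies $\varphi(\Phi^{X}_{\cdot}(x))'=D\varphi(\Phi^{X}_{\cdot}(x))$ (and likewise for $Y$), so the quantities to control are the Hölder seminorm of $D\varphi(\Phi^X)-D\varphi(\Phi^Y)$ and the $\mathcal{C}^{2\gamma}_2$-seminorm of $\varphi(\Phi^X)^{\#}-\varphi(\Phi^Y)^{\#}$. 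The indicator $\mathbbm{1}_{B_f(0,R)}(x)$ is produced exactly as in Lemmas~\ref{lemma:bounded_bounded_drift} and~\ref{lemma:joker}: with $R\assign R_\varphi+2T\|b\|_\infty+2T^\gamma(\|X\|_\gamma+\|Y\|_\gamma)$, whenever $|x|>R$ both trajectories $\Phi^X_\cdot(x)$ and $\Phi^Y_\cdot(x)$ stay outside $\operatorname{supp}\varphi$ throughout $[0,T]$, so the four quantities $\varphi(\Phi^{X/Y})'$ and $\varphi(\Phi^{X/Y})^{\#}$ vanish identically. It therefore suffices to work with $x\in B_f(0,R)$ and use Lemma~\ref{lemma:holder_norm_flow} to bound $\|\Phi^X\|_\gamma,\|\Phi^Y\|_\gamma\lesssim (1+|x|)(1+\|X\|_\gamma+\|Y\|_\gamma)$.

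For the first estimate I would Taylor-expand
\[
D\varphi(\Phi^X_t)-D\varphi(\Phi^X_s)=\int_0^1 D^2\varphi\bigl(\Phi^X_s+r\,\delta\Phi^X_{s,t}\bigr)\delta\Phi^X_{s,t}\,\dd r,
\]
and subtract the analogous expression for $Y$. Splitting the integrand via $A_1B_1-A_2B_2=(A_1-A_2)B_1+A_2(B_1-B_2)$ gives two pieces. The first is bounded by $\|D^3\varphi\|_\infty$ times $|(\Phi^X_s-\Phi^Y_s)+r(\delta\Phi^X_{s,t}-\delta\Phi^Y_{s,t})|\lesssim\|X-Y\|_\gamma$ (by Lemma~\ref{lemma:flow_convergence_X_smooth}) multiplied by $|\delta\Phi^X_{s,t}|\lesssim (1+\|X\|_\gamma)|t-s|^\gamma$; the second is bounded by $\|D^2\varphi\|_\infty\,|\delta(\Phi^X-\Phi^Y)_{s,t}|\lesssim\|X-Y\|_\gamma|t-s|^\gamma$. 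Collecting yields the desired Hölder bound, with the $(1+\|X\|_\gamma)$ prefactor trivially absorbed into $(1+\|X\|_\gamma)(1+\|Y\|_\gamma)$.

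For the remainder bound I would use the second-order Taylor formula together with the ODE $\delta\Phi^X_{s,t}=\int_s^t b_q(\Phi^X_q)\,\dd q+\delta X_{s,t}$ to obtain the explicit representation
\[
\varphi(\Phi^X)^{\#}_{s,t}=D\varphi(\Phi^X_s)\int_s^t b_q(\Phi^X_q)\,\dd q+\int_0^1(1-r)D^2\varphi\bigl(\Phi^X_s+r\delta\Phi^X_{s,t}\bigr)(\delta\Phi^X_{s,t})^{\otimes 2}\,\dd r,
\]
and the analogous identity for $Y$. Subtracting and applying the telescoping identity to every product generates a handful of terms, each of the form (factor of order $\|X-Y\|_\gamma$)$\times$(factor of order $|t-s|^{2\gamma}$). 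The $\|X-Y\|_\gamma$ factor is produced by one of $\|\Phi^X-\Phi^Y\|_\infty$, $|\delta(\Phi^X-\Phi^Y)_{s,t}|$, or $\bigl|\int_s^t(b_q(\Phi^X_q)-b_q(\Phi^Y_q))\dd q\bigr|\leq\|Db\|_\infty\|\Phi^X-\Phi^Y\|_\infty|t-s|$, while the $|t-s|^{2\gamma}$ comes from $|t-s|\leq T^{1-2\gamma}|t-s|^{2\gamma}$ (for the drift integral) or from one copy of $|\delta\Phi^{X/Y}_{s,t}|\lesssim(1+\|X\|_\gamma+\|Y\|_\gamma)|t-s|^\gamma$. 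The delicate point is the quadratic term, which I would write as $(\delta\Phi^X)^{\otimes 2}-(\delta\Phi^Y)^{\otimes 2}=(\delta\Phi^X-\delta\Phi^Y)\otimes\delta\Phi^X+\delta\Phi^Y\otimes(\delta\Phi^X-\delta\Phi^Y)$ so that the dependence on the Hölder norms of $X$ and $Y$ stays linear in each; collecting through Lemma~\ref{lemma:holder_norm_flow} and absorbing the polynomial prefactors into $(1+\|X\|_\gamma)(1+\|Y\|_\gamma)$ then gives the announced bound.

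The main obstacle is essentially bookkeeping: the second-order Taylor expansion of the remainder spawns a moderately large family of cross-terms involving $D\varphi,D^2\varphi,D^3\varphi$, the drift $b$ and its gradient, and increments of both flows, and at each step one must distribute the differences so that exactly one factor carries $\|X-Y\|_\gamma$ and the remaining factors remain linear in $\|X\|_\gamma$ and $\|Y\|_\gamma$, thereby preventing spurious higher-order dependence on the driving signals and keeping the final bound first order in $\|X-Y\|_\gamma$.
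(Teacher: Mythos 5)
Your proposal is correct and follows essentially the same route as the paper: identify the Gubinelli derivative as $D\varphi(\Phi^{X}_{\cdot}(x))$, Taylor-expand, telescope each product difference so that exactly one factor carries $\|X-Y\|_\gamma$ (via Lemma~\ref{lemma:flow_convergence_X_smooth}), and localize with the same compact-support radius argument as in Lemmas~\ref{lemma:bounded_bounded_drift}--\ref{lemma:joker}. The only difference is cosmetic — you write the remainder via a second-order Taylor formula with $(\delta\Phi)^{\otimes 2}$, whereas the paper keeps the first-order integral form and splits $\delta\Phi_{s,t}$ into drift integral plus $X_{s,t}$ (terms $B_1,\dots,B_5$) — and your cross term with the difference of $D^2\varphi$ evaluations is, like the paper's $B_5$, in truth quadratic in $(1+\|Y\|_\gamma)$, a harmless imprecision shared with the paper's stated bound.
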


\begin{proof}
  We already know that the two functions $\varphi ( \Phi_{.}^{X} ( x ) )$ and
  $\varphi ( \Phi_{.}^{Y} ( x ) )$ are controlled by $X$ and by $Y$
  respectively and that
  \[ \varphi ( \Phi_{.}^{X} ( x ) )' =D \varphi ( \Phi_{.}^{X} ( x ) ) \]
  and
  
\begin{eqnarray*}
    \varphi ( \Phi_{.}^{X} ( x ) )^{\#} & = & \int_{0}^{1} \dd r D \varphi
    ( r \delta ( \Phi_{s,t}^{X} ) ( x ) + \Phi_{s}^{X} ( x ) ) . \int_{s}^{t}
    b_{q} ( \Phi_{q}^{X} ( x ) ) \dd q\\
    &  & + \int_{0}^{1} \int_{0}^{1} \dd r \dd q D^{2} \varphi ( r q
    \delta ( \Phi_{s,t}^{X} ) ( x ) + \Phi_{s}^{X} ( x ) ) . \delta (
    \Phi_{s,t}^{X} ) ( x ) .X_{s,t}
  \end{eqnarray*}
  and the same holds for $\varphi ( \Phi_{.}^{Y} ( x ) )$ when we replace $X$
  by $Y$. Hence

\begin{eqnarray*}
\lefteqn{\delta ( D \varphi ( \Phi_{.}^{X} ( x ) ) -D \varphi ( \Phi_{.}^{X} ( x
     ) ) )_{s,t}}\\
   &=& \int_{0}^{1} \dd r D^{2} \varphi ( r  \delta ( \Phi^{X}_{s,t} ) ( x
     ) + \Phi^{X}_{s} ( x ) ) . \delta ( \Phi^{X}_{s,t} ) ( x ) \\
   &&- \int_{0}^{1} \dd r D^{2} \varphi ( r  \delta ( \Phi^{Y}_{s,t} ) ( x
     ) + \Phi^{Y}_{s} ( x ) ) . \delta ( \Phi^{Y}_{s,t} ) ( x ) \\
   &=& \int_{0}^{1} \dd r D^{2} \varphi ( r \delta ( \Phi^{X}_{s,t} ) ( x )
     + \Phi^{X}_{s} ( x ) ) . \delta ( \Phi^{X} - \Phi^{Y} )_{s,t} ( x ) \\
  && + \int_{0}^{1}   ( D^{2} \varphi ( r \delta ( \Phi^{X}_{s,t} ) ( x ) +
     \Phi^{X}_{s} ( x ) ) - D^{2} \varphi ( r \delta ( \Phi^{Y}_{s,t} ) ( x )
     + \Phi^{Y}_{s} ( x ) ) ) . \delta ( \Phi^{Y}_{s,t} ) ( x ) \\
   &=& \int_{0}^{1} \dd r D^{2} \varphi ( r \delta ( \Phi^{X}_{s,t} ) ( x )
     + \Phi^{X}_{s} ( x ) ) . \delta ( \Phi^{X} - \Phi^{Y} )_{s,t} ( x ) \\
  && + \int_{0}^{1} \int_{0}^{1} \dd r  \dd q D^{3} \varphi ( q ( r 
     \delta ( \Phi^{X} - \Phi^{Y} )_{s,t} + \Phi^{X}_{s} ( x ) - \Phi^{Y}_{s}
     ( x ) ) +r \delta \Phi_{s,t}^{Y} ( x ) + \Phi^{Y}_{s} ( x ) )\\&& \qquad \qquad \qquad \qquad . ( r 
     \delta ( \Phi^{X} - \Phi^{Y} )_{s,t} + \Phi^{X}_{s} ( x ) - \Phi^{Y}_{s}
     ( x ) ) . \delta ( \Phi^{Y}_{s,t} ) ( x ) \\
   &=&A_{1} +A_{2} 
  \end{eqnarray*}
  
  Following the proof of Lemma~\ref{lemma:bounded_bounded_drift} and with the
  estimations of Lemma~$\ref{lemma:flow_convergence_X_smooth}$, there exists
  $R>0$ nondecreasing in all the parameters such that
  \[ | A_{1} | \leqslant C ( T, \| D^{3} \varphi \|_{\infty} , \| D^{2}
     \varphi \|_{\infty} \nocomma , \| D b \|_{\infty} , \| b \|_{\infty} ) \|
     X-Y \|_{\gamma} \mathbbm{1}_{B_{f} ( 0,R )} ( x ) | t-s | , \]
  where $C$ is nondecreasing with respect to the parameters. The same kind of
  bound holds for $A_{2}$ and we have
  \[ | A_{2} | \leqslant C | t-s |^{2 \gamma} \| X-Y \|_{\gamma}
     \mathbbm{1}_{B_{f} ( 0,R )} ( x ) \| X-Y \|_{\gamma} ( 1+  \| Y
     \|_{\gamma} ) . \]
  Let us turn now to the remainder. We decompose it into five terms:
  \[ \varphi ( \Phi_{.}^{X} ( x ) )_{s,t}^{\#} - \varphi ( \Phi_{.}^{Y} ( x )
     )_{s,t}^{\#} =B_{1} +B_{2} +B_{3} +B_{4} +B_{5} , \]
  where
  \[ B_{1} = \int_{0}^{1} \dd r D \varphi ( r \delta ( \Phi_{s,t}^{X} ) ( x
     ) + \Phi_{s}^{X} ( x ) ) . \int_{s}^{t} b_{q} ( \Phi_{q}^{X} ( x ) )
     -b_{q} ( \Phi_{q}^{Y} ( x ) ) \dd q, \]
  \[ B_{2} = \int_{0}^{1} \dd \tmop{r} D\varphi ( r \delta ( \Phi_{s,t}^{X}
     ) ( x ) + \Phi_{s}^{X} ( x ) ) -D \varphi ( r \delta ( \Phi_{s,t}^{Y} ) (
     x ) + \Phi_{s}^{Y} ( x ) ) . \int_{s}^{t} b_{q} ( \Phi_{q}^{Y} ( x ) )
     \dd q, \]
  \[ B_{3} = \int_{0}^{1} \int_{0}^{1} \dd r \dd q D^{2} \varphi ( r q
     \delta ( \Phi_{s,t}^{X} ) ( x ) + \Phi_{s}^{X} ( x ) ) . \delta (
     \Phi_{s,t}^{X} ) ( x ) . ( X-Y )_{s,t} , \]
  \[ B_{4} = \int_{0}^{1} \int_{0}^{1} \dd r \dd q D^{2} \varphi ( r q
     \delta ( \Phi_{s,t}^{X} ) ( x ) + \Phi_{s}^{X} ( x ) ) . \delta (
     \Phi^{X} - \Phi^{Y} )_{s,t} ( x ) .Y_{s,t} , \]
  \[ B_{5} = \int_{0}^{1} \int_{0}^{1} \dd r \dd q D^{2} \varphi ( r q
     \delta ( \Phi_{s,t}^{X} ) ( x ) + \Phi_{s}^{X} ( x ) ) -D^{2} \varphi ( r
     q \delta ( \Phi_{s,t}^{Y} ) ( x ) + \Phi_{s}^{Y} ( x ) ) . \delta (
     \Phi_{s,t}^{Y} ) ( x ) .Y_{s,t} . \]
  The analysis of those five terms being exactly the same as the analysis of
  the terms $A_{1}$ and $A_{2}$, the result follows easily.
\end{proof}
\begin{lemma}
  \label{lemma:convergence_jacobien_smooth}Let $b \in C^{1}_{b} (
  \mathbbm{R}^{d} )$, $X,Y \in \mathcal{C}^{\gamma} ( [0,T] )$ and $\Phi^{X}$
  and $\Phi^{Y}$ the associated flows. Let $c \in C^{1}_{b} ( \mathbbm{R}^{d }
  )$ and
  \[ K^{X}_{t} ( x ) = \exp \left( \int_{0}^{t} c_{q} ( \Phi^{X}_{q} ( x ) )
     \dd q \right) ,K^{Y}_{t} ( x ) = \exp \left( \int_{0}^{t} c_{q} (
     \Phi^{Y}_{q} ( x ) ) \dd q \right) . \]
  Then
  \[ \| K^{X} ( x ) -K^{Y} ( x ) \|_{\tmop{Lip}} \leqslant C ( T, \| D b
     \|_{\infty} , \| c \| , \| \nabla c \|_{\infty} ) \| X-Y \|_{\gamma} . \]
  Furthermore, if $b \in C^{2}_{b} ( \mathbbm{R}^{d} )$ and $c \in C^{2}_{b} (
  \mathbbm{R}^{d} )$, we have
  \[ \| \nabla K^{X} ( x ) - \nabla K^{Y} ( x ) \|_{\tmop{Lip}} \leqslant C (
     T, \| D b \|_{\infty} , \| D^{2} b \|_{\infty} , \| c \| , \| \nabla c
     \|_{\infty} , \| D^{2} c \|_{\infty} ) \| X-Y \|_{\gamma} . \]
  Finally the two constants are nondecreasing with respect to all the
  parameters. 
\end{lemma}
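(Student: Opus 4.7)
The plan is to treat $K^X(x) - K^Y(x)$ as a time--dependent function on $[0,T]$ and bound separately its value at $t=0$ (which vanishes since $K^X_0(x) = K^Y_0(x) = 1$) and its Lipschitz constant in $t$. The key ingredient is the explicit ODE structure $\partial_t K^X_t(x) = c_t(\Phi^X_t(x)) K^X_t(x)$, which reduces everything to the flow perturbation estimates of Lemma~\ref{lemma:flow_convergence_X_smooth}.

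For the sup--norm of the difference, I would write $K^X_t(x) = e^{A^X_t(x)}$ with $A^X_t(x) = \int_0^t c_q(\Phi^X_q(x))\,\dd q$ and similarly for $K^Y$. Since $|A^X_t|, |A^Y_t| \leq T\|c\|_\infty$, the mean value theorem applied to the exponential yields $|K^X_t(x) - K^Y_t(x)| \leq e^{T\|c\|_\infty} |A^X_t(x) - A^Y_t(x)| \leq e^{T\|c\|_\infty} T \|\nabla c\|_\infty \|\Phi^X(x) - \Phi^Y(x)\|_{\infty,[0,T]}$. Because $\Phi^X_0(x) = \Phi^Y_0(x) = x$, the sup--norm of the flow difference is bounded by $T^\gamma \|\Phi^X(x) - \Phi^Y(x)\|_{\gamma}$, which by Lemma~\ref{lemma:flow_convergence_X_smooth} is at most $C(T, \|D b\|_\infty) T^\gamma \|X-Y\|_\gamma$. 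The Lipschitz constant in $t$ is then handled by writing $\partial_t(K^X - K^Y)_t = c_t(\Phi^X_t)(K^X_t - K^Y_t) + K^Y_t(c_t(\Phi^X_t) - c_t(\Phi^Y_t))$ and combining the previous bound with $\|K^Y\|_\infty \leq e^{T\|c\|_\infty}$ and $\|\nabla c\|_\infty$.

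For the second estimate I would differentiate in space: $\nabla K^X_t(x) = K^X_t(x) \int_0^t (D\Phi^X_q(x))^{\top} \nabla c_q(\Phi^X_q(x))\,\dd q$, and likewise for $Y$. The plan is to expand $\nabla K^X - \nabla K^Y$ as a telescoping sum of three summands each involving exactly one of the differences $K^X - K^Y$, $D\Phi^X - D\Phi^Y$, or $\nabla c(\Phi^X) - \nabla c(\Phi^Y)$, with the remaining factors uniformly bounded in $(t,x)$. These three differences are respectively controlled by the first--part estimate on $\|K^X - K^Y\|_{\mathrm{Lip}}$ just proved, by the second part of Lemma~\ref{lemma:flow_convergence_X_smooth} (for which the assumption $b\in C^2_b$ is precisely what is needed), and by $\|D^2 c\|_\infty \|\Phi^X - \Phi^Y\|_{\infty}$ combined with the first part of Lemma~\ref{lemma:flow_convergence_X_smooth}. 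The Lipschitz--in--$t$ piece follows by differentiating once more in $t$ and repeating the same decomposition, the extra factor $c_t(\Phi^{\cdot}_t)$ being itself uniformly bounded and Lipschitz in $t$.

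The argument is essentially routine and there is no deep obstacle: all the underlying flow perturbation estimates are already supplied by Lemma~\ref{lemma:flow_convergence_X_smooth}. The only point requiring care is the bookkeeping of the product decomposition, in order to ensure that each summand contains exactly one difference factor while the remaining factors stay uniformly bounded; this is what yields a constant $C$ which is nondecreasing in each of the parameters listed in the statement.
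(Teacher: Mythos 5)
Your argument is correct and follows essentially the same route as the paper: both reduce the $\mathrm{Lip}$ bound to controlling time increments of $K^X-K^Y$ via the boundedness of the exponential, $\|\nabla c\|_\infty$, and Lemma~\ref{lemma:flow_convergence_X_smooth}, and both handle $\nabla K^X-\nabla K^Y$ through the formula $\nabla K^X_t(x)=K^X_t(x)\int_0^t \nabla c_q(\Phi^X_q(x))\cdot D\Phi^X_q(x)\,\dd q$ with a one-difference-per-term decomposition using both parts of that lemma and $\|D^2c\|_\infty$. The only cosmetic difference is that you bound the Lipschitz constant by differentiating in $t$ (which, if $c$ is merely bounded measurable in time, should be read as an a.e.\ derivative of an absolutely continuous function), whereas the paper estimates the increments $\delta(K^X-K^Y)_{s,t}$ directly; the resulting bounds and monotonicity of the constants are the same.
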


\begin{proof}
  As $K^{X}_{0} ( x ) =K^{X}_{0} ( y ) =1 \nocomma$, we only have to control
  the increments of the difference. Let $x \in \mathbbm{R}^{d}$, $s,t \in
  [0,T]$. Thanks to Lemma \ref{lemma:flow_convergence_X_smooth}, we have
  \begin{eqnarray*}
    | \delta ( K^{X} -K^{Y} )_{s,t} ( x ) | & \leqslant & | K^{X}_{s} ( x )
    -K^{Y}_{s} ( x ) | \left| \exp \left( \int_{s}^{t} c_{q} ( \Phi^{X}_{q} (
    x ) ) \dd q \right) -1 \right|\\
    &  & + \left| \exp \left( \int_{s}^{t} c_{q} ( \Phi^{X}_{q} ( x ) )
    \dd q \right) - \exp \left( \int_{s}^{t} c_{q} ( \Phi^{Y}_{q} ( x ) )
    \dd q \right) \right| K^{X}_{s} ( x )\\
    & \leqslant & e^{T \| c \|_{\infty}} \left| \int_{0}^{s} c_{q} (
    \Phi^{X}_{q} ( x ) ) \dd q- \int_{0}^{s} c_{q} ( \Phi^{Y}_{q} ( x ) )
    \dd q \right| | t-s | \| c \|_{\infty}\\
    &  & +e^{T \| c \|_{\infty}} \left| \int_{s}^{t} c_{q} ( \Phi^{X}_{q} ( x
    ) ) \dd q- \int_{s}^{t} c_{q} ( \Phi^{Y}_{q} ( x ) ) \dd q \right| T
    \| c \|_{\infty}\\
    & \lesssim & C ( \| D b \|_{\infty} , \| c \|_{\infty} , \| \nabla c
    \|_{\infty} ,T ) | t-s | \| X-Y \|_{\gamma} .
  \end{eqnarray*}
  Furthermore, as
  \[ \nabla K^{X}_{t} ( x ) = \int_{s}^{t} \nabla c_{q} ( \Phi^{X}_{q} ( x )
     ) .D \Phi^{X}_{q} ( x ) \dd q K_{t}^{X} ( x ) , \]
  we only have to prove the bound for $L_{t}^{X} ( x ) = \int_{0}^{t} \nabla
  c_{q} ( \Phi^{X}_{q} ( x ) ) .D \Phi^{X}_{q} ( x ) \dd q$. We have, again
  thanks to Lemma \ref{lemma:flow_convergence_X_smooth},
  \begin{eqnarray*}
    | \delta ( L^{X} -L^{Y} )_{s,t} ( x ) | & \leqslant & \left| \int_{s}^{t}
    \nabla c_{q} ( \Phi^{X}_{q} ( x ) ) . ( D \Phi^{X}_{q} ( x ) -D
    \Phi^{Y}_{q} ( x ) ) \dd q \right|\\
    &  & + \left| \int_{s}^{t} ( \nabla c_{q} ( \Phi^{X}_{q} ( x ) ) - \nabla
    c_{q} ( \Phi^{Y}_{q} ( x ) ) ) .D \Phi^{Y}_{q} ( x ) \dd q \right|\\
    & \lesssim & | t-s | \| \nabla c \|_{\infty} \| X-Y \|_{\gamma} + | t-s |
    \| D^{2} c \|_{\infty} \| X-Y \|_{\gamma} ,
  \end{eqnarray*}
  where the constant depends on $T$, $\| D b \|_{\infty}$, $\| D ^{2} b
  \|_{\infty}$, $\| c \|_{\infty}$ and $\| \nabla c \|_{\infty}$, which proves
  the result.
\end{proof}

We have gathered all the tools to prove the existence of strong controlled
solutions if the initial condition and the the two functions $b$ and $c$ are
regular enough.

\begin{proof}[Proof of Theorem \ref{theorem:strong_controlled_solutions}]
  Let us take a smooth approximation $\mathbf{X}^{\eta} = ( X^{\eta}
  ,\mathbbm{X}^{\eta} )$ of $\mathbbm{X}$ such that $\mathbf{X}^{\eta}
  \rightarrow \mathbf{X}$. By the method of characteristics (see
  Appendix~\ref{appendix:characteristics}) we already know that \[( t,x )
  \mapsto \psi^{\eta}_{t} ( x ) = \varphi_{0} ( ( \Phi^{\eta} )^{-1}_{t} ( x )
  ) \exp \left( - \int_{0}^{t} c_{t-q} ( ( \Phi^{\eta} )^{-1}_{q} ( x ) )
  \dd q \right)\] satisfies the following equation
  \begin{equation}
    \psi^{\eta}_{t} ( x ) - \psi^{\eta}_{s} ( x ) + \int_{s}^{t} b_{q} ( x ) .
    \nabla \psi^{\eta}_{q} ( x ) \dd q+ \int_{s}^{t} c_{q} ( x ) \psi_{q} (
    x ) \dd q+ \int_{s}^{t} \nabla \psi^{\eta}_{q} ( x ) .
    \dot{X}_{q}^{\eta} \dd q=0. \label{eq:strong_smooth_equation}
  \end{equation}
  Let us define $\varphi_{t}^{\eta} ( x ) = \varphi_{0} ( ( \Phi^{\eta}
  )^{-1}_{t} ( x ) )$ and $K^{\eta}_{t} ( x ) = \exp \left( - \int_{0}^{t}
  c_{t-q} ( ( \Phi^{\eta} )^{-1}_{q} ( x ) ) \dd q \right)$. Then
  \begin{eqnarray*}
    \nabla \psi^{\eta}_{t} ( x ) & = & \nabla \varphi_{t}^{\eta} ( x )
    K_{t}^{\eta} ( x ) + \nabla K_{t}^{\eta} ( x ) \varphi_{t}^{\eta} ( x )\\
    & = & \nabla \varphi_{0} ( ( \Phi^{\eta} )^{-1}_{t} ( x ) ) .D (
    \Phi^{\eta} )^{-1}_{t} ( x ) K_{t}^{\eta} ( x ) - \psi^{\eta}_{t} ( x )
    \int_{0}^{t} \nabla c_{t-q} ( ( \Phi^{\eta} )^{-1}_{q} ( x ) ) . (
    \Phi^{\eta} )^{-1}_{q} ( x ) \dd q,
  \end{eqnarray*}
  and since $( \Phi^{\eta} )^{-1}$ satisfies equation
  \[ ( \Phi^{\eta} )^{-1}_{t} ( x ) =x- \int_{0}^{t} b_{t-q} ( ( \Phi^{\eta}
     )^{-1}_{q} ( x ) ) \dd q-X_{t} , \]
  the function $t \mapsto \nabla \varphi_{0} ( ( \Phi^{\eta} )^{-1}_{t} ( x )
  )$ is controlled by $X^{\eta}$, thanks to Lemma
  \ref{lemma:composition_flow_control}. Furthermore,
  \[ t \mapsto D ( \Phi^{\eta} )^{-1}_{t} ( x ) \in \tmop{Lip} ( [0,T] ) \]
  and this function is controlled by $X^{\eta}$, with $( D ( \Phi^{\eta}
  )^{-1}_{.} ( x ) )' =0$. As $c \in C^{1}_{b} ( x )$, $t \rightarrow
  K_{t}^{\eta} ( x ) \in \tmop{Lip} ( \mathbbm{R}^{d } ) \cap L^{\infty} (
  \mathbbm{R}^{d } )$ and $t \mapsto \int_{0}^{t} \nabla c_{t-q} ( (
  \Phi^{\eta} )^{-1}_{q} ( x ) ) .D ( \Phi^{\eta} )^{-1}_{q} ( x ) \dd q
  \in \tmop{Lip} ( \mathbbm{R}^{d } ) \cap L^{\infty} ( \mathbbm{R}^{d } )$.
  The same arguments hold for $X$, and $t \mapsto \psi_{t} ( x )$. Hence,
  $\psi^{\eta}_{.} ( x )$ and $\psi_{.} ( x )$ are controlled respectively by
  $X^{\eta}$ and $X$. Furthermore, thanks to Lemmas
  \ref{lemma:comparison_controlled} and \ref{lemma:flow_convergence_X_smooth}:
  \begin{eqnarray*}
    \lefteqn{\| \nabla \varphi_{.}^{\eta} ( x ) - \nabla \varphi_{.} ( x ) \|_{\gamma}
    }\\
    & \leqslant & \| \nabla \varphi_{0} ( ( \Phi^{\eta} )^{-1}_{.} ( x ) ) -
    \nabla \varphi_{0} ( \Phi^{-1}_{.} ( x ) ) \|_{\gamma} ( \| D (
    \Phi^{\eta} )^{-1}_{t} ( x ) \|_{\gamma} + \| D \Phi^{-1}_{t} ( x )
    \|_{\gamma} )\\
    &  & + ( \| \nabla \varphi_{0} ( ( \Phi^{\eta} )^{-1}_{.} ( x ) )
    \|_{\mathcal{C}^{\gamma}} + \| \nabla \varphi_{0} \nobracket ( \Phi
    \nobracket^{-1}_{.} ( x ) ) \|_{\mathcal{C}^{\gamma}} ) \| D ( \Phi^{\eta}
    )^{-1}_{t} ( x ) - \nobracket \nobracket D \Phi^{-1}_{t} ( x )
    \|_{\gamma}\\
    & \lesssim & ( 1+ \| X \|_{\gamma} + \| X^{\eta  } \| )^{2} \| X-X^{\eta}
    \|_{\gamma} \mathbbm{1}_{B ( 0,R^{\eta} )} ( x ) ,
  \end{eqnarray*}
  Where $R^{\eta}$ is nondecreasing w.r.t. $\| X \|_{\gamma}$ and $\| X^{\eta}
  \|_{\gamma}$. As $( D ( \Phi^{\eta} )^{-1}_{.} ( x ) )' =0$ and $( D
  \Phi^{-1}_{.} ( x ) )' =0$, we also have, thanks to
  Lemma~\ref{lemma:product_controlled_path}
  \begin{eqnarray*}
    \| \nabla \varphi_{.}^{\eta} ( x )' - \nabla \varphi_{.} ( x )'
    \|_{\gamma} & \lesssim & \mathbbm{1}_{B_{f} ( 0,R^{\eta} )} ( x ) \|
    X-X^{\gamma} \| ( 1+ \| X \|_{\gamma} + \| X^{\eta} \|_{\gamma} )^{2}
  \end{eqnarray*}
  and
  \[ \| \varphi_{.}^{\eta} ( x )^{\#} - \varphi_{.} ( x )^{\#} \|_{\gamma}
     \lesssim \mathbbm{1}_{B_{f} ( 0,R^{\eta} )} ( x ) \| X-X^{\gamma} \| ( 1+
     \| X \|_{\gamma} + \| X^{\eta} \|_{\gamma} )^{2} , \]
  where the radius $R^{\eta}$ is nondecreasing with respect to $\| X \|$ and
  $\| X^{\eta} \|$. Furthermore, since $K^{\eta} ( x ) ,K ( x ) \in \tmop{Lip}
  ( [0,T] )$ and $\nabla K^{\eta} ( x ) , \nabla K ( x ) \in \tmop{Lip} (
  [0,T] )$, and thanks to Lemmas~\ref{lemma:convergence_jacobien_smooth}
  and~$\ref{lemma:product_controlled_path}$, and since $\| X^{\eta}
  \|_{\gamma} \lesssim \| X \|_{\gamma}$, there exists $R>0$ depending on $\|
  X \|_{\gamma} \nocomma , \| D b \|_{\infty}$ and $T$ and a constant $C$
  depending on $b,c, \varphi_{0} \nocomma ,T  \tmop{and}   \| X \|_{\gamma}$
  such that
  \[ \| \psi_{.}^{\eta} ( x ) - \psi_{.} ( x ) \|_{\gamma} + \|
     \psi_{.}^{\eta} ( x )' - \psi_{.} ( x )' \|_{\gamma} + \| \psi_{.}^{\eta}
     ( x )^{\#} - \psi_{.} ( x )^{\#} \|_{2 \gamma} \leqslant C \| X-X^{\eta}
     \|_{\gamma} \mathbbm{1}_{B_{f} ( 0,R )} ( x ) . \]
  Furthermore, $X \rightarrow^{\mathcal{C}^{\gamma}} X^{\eta}$, hence, by the
  definition of the rough integral and the comparison between controlled path
  (see Theorem \ref{th:controlled_integral} and Lemma
  \ref{lemma:product_controlled_path}), for all $s,t \in [0,T]$,
  \[ \int_{s}^{t} \nabla \psi^{\eta}_{q} ( x ) . \dot{X}_{q}^{\eta} \dd q
     \rightarrow_{\eta \rightarrow 0} \nabla \psi_{s} ( x ) .X_{s,t} +
     \frac{1}{2} \nabla^{2} \psi_{s} ( x ) .X^{\otimes 2}_{s,t}
     +R^{\psi}_{s,t} ( x ) \]
  and
  \[ |R^{\psi}_{s,t} ( x ) | \lesssim |t-s|^{3 \gamma} \mathbbm{1}_{B_{f} (
     0,R )} ( x ) . \]

  It remains to show that the other terms of Equation
  {\eqref{eq:strong_smooth_equation}} converge to the right quantities. But,
  thanks to Lemma~\ref{lemma:flow_convergence_X_smooth}, we know that $(
  \Phi^{\eta} )^{-1}_{t} ( x ) \rightarrow \Phi^{-1}_{t} ( x )$ and $D (
  \Phi^{\eta} )_{t}^{-1} ( x ) \rightarrow D \Phi_{t}^{-1} ( x )$, hence, as
  $\varphi_{0}$, $\nabla \varphi_{0}$, $c$ and $\nabla c$ are continuous,
  \[ \psi^{\eta}_{t} ( x ) \rightarrow \psi_{t} ( x )   \tmop{and}   \nabla
     \psi^{\eta}_{t} ( x ) \rightarrow \nabla \psi_{t} ( x ) . \]
  Furthermore
  \[ | b_{q} ( x ) . \nabla \psi^{\eta}_{t} ( x ) +c_{q} ( x ) \psi^{\eta}_{q}
     ( x ) | \lesssim ( 1+ \| X^{\eta} \| ) \mathbbm{1}_{B_{f} ( 0,R )} ( x )
     , \]
  hence
  \[ \int_{s}^{t} b_{q} ( x ) . \nabla \psi^{\eta}_{t} ( x ) +c_{q} ( x )
     \psi^{\eta}_{q} ( x ) \dd q \rightarrow \int_{s}^{t} b_{q} ( x ) .
     \nabla \psi_{t} ( x ) +c_{q} ( x ) \psi_{q} ( x ) \dd q \]
  and
  \[ \left| \int_{s}^{t} b_{q} ( x ) . \nabla \varphi_{q} ( x ) +c_{q} ( x )
     \psi_{q} ( x ) \dd q \right| \lesssim | t-s | \mathbbm{1}_{B_{f} ( 0,R
     )} ( x ) . \]
  Finally all the quantities converge and we have
  \[ \psi_{t} ( x ) - \psi_{s} ( x ) + \int_{s}^{t} b_{q} ( x ) . \nabla
     \psi_{q} ( x ) +c_{q} ( x ) \psi_{q} ( x ) \dd q+ \nabla \psi_{s} ( x
     ) .X_{s,t} + \frac{1}{2} \nabla \psi_{s} ( x ) .X^{\otimes 2}_{s,t}
     +R^{\psi}_{s,t} ( x ) =0 \]
  with
  \[ \psi_{t} ( x ) = \varphi_{0} ( \Phi^{-1}_{t} ( x ) ) \exp \left( -
     \int_{0}^{t} c_{t-q} ( \Phi^{-1}_{q} ( x ) ) \dd q \right) , \]
  which ends the proof.
\end{proof}

If $c=0$ in the last theorem, we have the existence of a strong controlled
solution for the rough transport equation. When $c= \tmop{div}  b$, it is a
solution for the rough continuity equation. This result gives us the good
dynamic to solve the rough transport equation by a duality argument. Indeed,
as stated before, we will be able to test weak controlled solution against
good test functions, \tmtextit{i.e.} the solution of the Rough Continuity
Equation with an approximate vector.

	\subsection{Taylor expansion of potential solutions}\label{subsection:taylor}.
	As explain before, the whole method of the existence results relies on a Taylor expansion of the candidate for the solution, $u_0(\Phi^{-1}_t(x)$, as soon as the flow exists. As we consider weak solution, we need some integrability conditions on the Taylor expansion of the potential solution. Here, since we will rely on a compactness result, we will not need comparison lemmas.

\begin{lemma}
  \label{lemma:bound_control}Let $b \in L^{\infty} ([0,T], \tmop{Lin}
  (\mathbbm{R}^{d} ))$ and $X \in \mathcal{C}^{\gamma} (\mathbbm{R}^{d} )$
  such that the flow $\Phi$ and its inverse $\Phi^{-1}$ exist.
   There exists
  $\varepsilon ( T, \| b \|_{\infty ; \tmop{Lin}} , \| X \|_{\gamma} ) >0$,
  such that for all $0 \leqslant s<t \leqslant T$ with $| t-s | <
  \varepsilon$, all $r \in [ 0,1 ]$, all test functions $\psi \in
  C^{\infty}_{c} ( \mathbbm{R}^{d} ,\mathbbm{R}^{m} )$ and all $N \in
  \mathbbm{N}$, there exists a constant $C_{N, \psi} >0$ such that
  \[ \psi ( r \Phi_{t} ( x ) + ( 1-r ) \Phi_{s} ( x ) ) ( 1+ | x | )^{N}
     \leqslant  C_{N, \psi} . \]
\end{lemma}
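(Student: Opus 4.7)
The strategy is to use the compact support of $\psi$: I would show that there exist constants $c > 0$ and $C \ge 0$ depending on $T$, $\|b\|_{\infty;\tmop{Lin}}$ and $\|X\|_\gamma$, as well as an $\varepsilon > 0$ depending on the same data, such that
\[ |r\Phi_t(x) + (1-r)\Phi_s(x)| \ge c|x| - C \qquad \text{for all } x \in \RR^d,\ r \in [0,1],\ |t-s| < \varepsilon. \]
Granting such a coercivity bound, whenever $\psi(r\Phi_t(x) + (1-r)\Phi_s(x)) \neq 0$ one has $|r\Phi_t(x) + (1-r)\Phi_s(x)| \le R_\psi$ with $R_\psi$ the radius of $\operatorname{supp}\psi$, hence $|x| \le (R_\psi + C)/c$. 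The bound $\psi(r\Phi_t(x) + (1-r)\Phi_s(x))(1+|x|)^N \le \|\psi\|_\infty (1 + (R_\psi + C)/c)^N =: C_{N,\psi}$ then follows immediately.

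To obtain the coercivity, I would proceed in two steps. A direct Gronwall argument on the integral equation \eqref{eq:differential} yields the upper bound $|\Phi_s(x)| \le C_1(1+|x|)$ uniformly in $s \in [0,T]$, for some $C_1 = C_1(T, \|b\|_{\infty;\tmop{Lin}}, \|X\|_\gamma)$ (note $\|X\|_\infty \le T^\gamma \|X\|_\gamma$ since $X_0=0$). The inverse flow $\Phi_s^{-1}$ satisfies an analogous integral equation obtained by time reversal (with drift $-b_{s-\cdot}$ and driver $X_{s-\cdot} - X_s$), and the same Gronwall estimate gives $|\Phi_s^{-1}(y)| \le C_1(1+|y|)$ uniformly in $s$, up to enlarging $C_1$. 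Substituting $y = \Phi_s(x)$ yields the matching lower bound
\[ |\Phi_s(x)| \ge \frac{|x|}{C_1} - 1. \]

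Combining this with the Hölder control $|\Phi_t(x) - \Phi_s(x)| \le K(1+|x|)(1+\|X\|_\gamma)|t-s|^\gamma$ from Lemma \ref{lemma:holder_norm_flow} and the triangle inequality gives
\[ |r\Phi_t(x) + (1-r)\Phi_s(x)| \ge |\Phi_s(x)| - |\Phi_t(x) - \Phi_s(x)| \ge \left( \frac{1}{C_1} - K(1+\|X\|_\gamma)|t-s|^\gamma \right)|x| - C_2, \]
where $C_2 = 1 + K(1+\|X\|_\gamma)T^\gamma$. Choosing $\varepsilon$ so that $K(1+\|X\|_\gamma)\varepsilon^\gamma \le 1/(2C_1)$ makes the coefficient of $|x|$ at least $1/(2C_1) > 0$ on the whole range $|t-s| < \varepsilon$, giving the required coercivity with $c = 1/(2C_1)$.

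The only point of care is the uniformity in $s \in [0,T]$ of the lower bound on $|\Phi_s(x)|$: a Gronwall argument on the forward equation alone controls $|\Phi_s(x)|$ from above by $C_1(1+|x|)$, but the corresponding lower bound $|\Phi_s(x)| \ge |x|(1-sM) - \text{const}$ derived from $|\Phi_s(x) - x| \le sM(1+|x|) + \|X\|_\infty$ degenerates when $s$ is of order $T$. Using the existence of the inverse flow (which is part of the hypothesis) circumvents this by treating the two sides symmetrically, producing a coercivity constant $1/C_1$ that depends on $T$ but is strictly positive.
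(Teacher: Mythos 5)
Your proof is correct, and it reaches the conclusion by a somewhat different mechanism than the paper. The paper's proof substitutes $y=\Phi_s(x)$, uses the linear growth of the inverse flow (Lemma \ref{lemma:holder_norm_flow} applied to the reversed equation of Proposition \ref{proposition:ODE_inverse_flow}) together with the bound $|\Phi_t(\Phi_s^{-1}(y))-y|\leqslant K\varepsilon^\gamma(1+|y|)(1+\|X\|_\gamma)$, then splits into the cases $|y|\leqslant 1$ and $|y|>1$ and exploits the rapid decay $|\psi(z)|\leqslant \tilde C_\psi(1+2|z|)^{-N}$ of the compactly supported test function; this yields the decay $(1+|x|)^{-N}$ directly and would work verbatim for Schwartz $\psi$. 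You instead establish the coercivity bound $|r\Phi_t(x)+(1-r)\Phi_s(x)|\geqslant c|x|-C$ (lower bound on $|\Phi_s(x)|$ via the inverse flow, plus the H\"older estimate for $|\Phi_t(x)-\Phi_s(x)|$ and the smallness of $|t-s|$) and conclude purely from the compact support of $\psi$. The ingredients are the same --- Lemma \ref{lemma:holder_norm_flow} for $\Phi$ and for $\Phi^{-1}$, and a choice $\varepsilon\sim(K(1+\|X\|_\gamma))^{-1/\gamma}$ --- but your execution is more elementary: no change of variables, no case splitting, no use of the identity $\Phi_t\circ\Phi_s^{-1}=\Phi_{t-s}$ (which, for time-dependent $b$, the paper invokes somewhat abusively), and your correct observation that the naive forward lower bound on $|\Phi_s(x)|$ degenerates for $s$ of order $T$ is precisely why both proofs must go through the inverse flow. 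One point worth noting: your constant $C_{N,\psi}$ depends polynomially on $T$, $\|b\|_{\infty;\mathrm{Lin}}$ and $\|X\|_\gamma$ through $c$ and $C$, whereas the paper's argument is organised so that (up to the step $1+|x|\lesssim 1+|y|$) the $X$-dependence is confined to $\varepsilon$; this bookkeeping is what later lets Corollary \ref{corollary:holder_norm} display the exponent $(1+\|X\|_\gamma)^{1+1/\gamma}$. Your version still gives polynomial dependence on $\|X\|_\gamma$, which is enough for all subsequent uses, though it would alter the exponent quoted there.
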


\begin{proof}
We only give the proof of the first point, as the proof of the second point is an straightforward adaptation of the one for the first point.
  Let $\varepsilon >0$ to be specified later and $s,t \in [0,T]$ with $| t-s |
  < \varepsilon$. Thanks to Proposition \ref{proposition:ODE_inverse_flow},
  where we set $y= \Phi_{s} ( x )$
  \begin{eqnarray*}
    | \psi ( r \Phi_{t} ( x ) + ( 1-r ) \Phi_{s} ( x ) ) | ( 1+ | x | )^{N} &
    = & | \psi ( r \Phi_{t} ( \Phi^{-1}_{s} ( y ) ) + ( 1-r ) y ) | ( 1+ |
    \Phi^{-1}_{s} ( y ) | )^{N}\\
    & \lesssim & ( 1+ | y | )^{N} | \psi ( r ( \Phi_{t} ( \Phi^{-1}_{s} ( y )
    ) -y ) +y ) | .
  \end{eqnarray*}
  Furthermore, applying Lemma \ref{lemma:holder_norm_flow},
  \begin{eqnarray*}
    | \Phi_{t} ( \Phi^{-1}_{s} ( y ) ) -y | & = & | \Phi_{t-s} ( y ) -y |\\
    & \leqslant & | t-s |^{\gamma} \| \Phi ( y ) \|_{\gamma}\\
    & \leqslant & K \varepsilon^{\gamma} ( 1+ | y | ) ( 1+ \| X \|_{\gamma} )
    .
  \end{eqnarray*}
  If $| y | \leqslant 1$, then for $\varepsilon \leqslant ( 2K ( 1+ \| X
  \|_{\gamma} ) )^{-1/ \gamma}$,
  \[ | \Phi_{t} ( \Phi^{-1}_{s} ( y ) ) -y | \leqslant 1 \]
  and $r \Phi_{t} ( \Phi^{-1}_{s} ( y ) ) + ( 1-r ) y \in B ( 0,2 )$, so that
  \[ \mathbbm{1}_{B ( 0,1 )} ( | y | ) | \psi ( r \Phi_{t} ( \Phi^{-1}_{s} ( y
     ) ) + ( 1-r ) y ) | ( 1+ | y | )^{N} \lesssim \sup_{B ( 0,2 )} | \psi ( z
     ) | . \]
  When $| y | >1$ and $\varepsilon \leqslant 2^{-2/ \gamma} ( K ( 1+ \| X
  \|_{\gamma} ) )^{-1/ \gamma}$, we have
  \[ | \Phi_{t} ( \Phi^{-1}_{s} ( y ) ) -y | \leqslant | y | /2 \]
  and $| r \Phi_{t} ( \Phi^{-1}_{s} ( y ) ) + ( 1-r ) y | \geqslant | y | /2$.
  Now let us take $\tilde{C}_{\psi} >0$ such that $| \psi ( z ) | \leqslant
  \tilde{C}_{\psi} / ( 1+2 | z |^{} )^{N}$. We have
  \begin{eqnarray*}
    \mathbbm{1}_{^{c} B ( 0,1 )} ( | y | ) | \psi ( r ( \Phi_{t} (
    \Phi^{-1}_{s} ( y ) ) -y ) +y ) | & \lesssim & ( 1+2 | r \Phi_{t} (
    \Phi^{-1}_{s} ( y ) ) + ( 1-r ) y | )^{-N}\\
    & \lesssim & ( 1+ | y | )^{-N} .
  \end{eqnarray*}
  Since $\Phi^{-1}$ satisfies the same type of equation as $\Phi$, thanks to
  Lemma \ref{lemma:holder_norm_flow}, for $\varepsilon$ as before,
  \[ ( 1+ | x | ) \leqslant 1+ | \Phi^{-1}_{s} ( y ) -y | + | y | \leqslant 2
     ( 1+ | y | ) \]
  and finally
  \[ | \psi ( r ( \Phi_{t} ( x ) - \Phi_{s} ( x ) ) + \Phi_{s} ( x ) ) |
     \lesssim ( 1+ | x | )^{-N} \]
  which ends the proof.
\end{proof}

\begin{remark}
  \label{remark:eps_choice}We can choose $\varepsilon =2^{-2/ \gamma} ( K ( 1+
  \| X \|_{\gamma} ) )^{-1/ \gamma}$ where $K$ is the constant of Lemma
  \ref{lemma:holder_norm_flow}.
\end{remark}

An immediate corollary gives an estimate of the growth of the function
$\varphi ( \Phi_{t} ( . ) )$.

\begin{corollary}
  \label{corollary:flot_schwarz}Let $b$ and $\Phi$ as in the previous lemma
  and let $\varphi \in C^{\infty}_{c} ( \mathbbm{R}^{d} ,\mathbbm{R}^{m} )$.
  Then for all $t \in [0,T]$ and all $N \in \mathbbm{N}$, $x \rightarrow ( 1+
  | x | )^{N} \varphi ( \Phi_{t} ( x ) ) \in L^{\infty} ( \mathbbm{R}^{d} )$.
\end{corollary}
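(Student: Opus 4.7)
The plan is to observe that the corollary follows as soon as we establish that, for each $t \in [0,T]$, the function $x \mapsto \varphi(\Phi_t(x))$ is supported in a compact set whose size is uniform in $t$. Once this is done, the bound is immediate: on the common compact support, $(1+|x|)^N$ is bounded, and so is $\varphi$.

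To establish this uniform compact support, my plan is to combine the linear-growth estimate for $\Phi$ (Lemma \ref{lemma:holder_norm_flow}) with an analogous estimate for the inverse flow $\Phi^{-1}$, which holds by the same Grönwall argument since $\Phi^{-1}_t$ solves an ODE of the same type (this is exactly what Proposition \ref{proposition:ODE_inverse_flow} referenced in the proof of Lemma \ref{lemma:bound_control} provides). This yields a constant $C = C(T, \|b\|_{\infty;\mathrm{Lin}}, \|X\|_\gamma)$ such that $|\Phi^{-1}_t(y)| \leq C(1+|y|)$ for all $t \in [0,T]$ and $y \in \mathbb{R}^d$. Substituting $y = \Phi_t(x)$ gives $|x| \leq C(1+|\Phi_t(x)|)$, hence the lower bound $|\Phi_t(x)| \geq (|x|-C)/C$.

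Since $\varphi$ has compact support, say $\mathrm{supp}\,\varphi \subset B(0,R_\varphi)$, the above lower bound shows that $\varphi(\Phi_t(x)) = 0$ as soon as $|x| > C(1+R_\varphi) =: R^{*}$, uniformly in $t \in [0,T]$. On the remaining ball $B(0, R^{*})$, one has $(1+|x|)^N \leq (1+R^{*})^N$ and $|\varphi| \leq \|\varphi\|_\infty$, yielding
\[
\sup_{x \in \mathbb{R}^d} (1+|x|)^N |\varphi(\Phi_t(x))| \leq (1+R^{*})^N \|\varphi\|_\infty,
\]
which is the desired $L^\infty$ bound.

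An alternative, more ``immediate'' route would be to apply Lemma \ref{lemma:bound_control} with $s=0$ and $r=1$, which directly gives the bound for $t \in [0,\varepsilon]$, and then to iterate across $[0,T]$ using the flow semigroup property (noting that the time-shifted flow $\Phi_{t_i,\cdot}$ satisfies an ODE of the same form as $\Phi$). The only subtlety in either approach is the control of constants over the full interval, and both are handled by the linear-growth bounds on $\Phi$ and $\Phi^{-1}$; I do not foresee a real obstacle.
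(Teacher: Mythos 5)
Your proof is correct, but it takes a genuinely different route from the paper. The paper proves the corollary by chopping $[0,t]$ into $\lesssim T/\varepsilon$ subintervals of length $\varepsilon$ (the $\varepsilon$ of Lemma \ref{lemma:bound_control}), telescoping $\varphi(\Phi_t(x))-\varphi(x)$ and bounding each increment by the fundamental theorem of calculus together with the weighted estimate of Lemma \ref{lemma:bound_control} applied to $D\varphi$ and the H\"older bound of Lemma \ref{lemma:holder_norm_flow}; the weight $(1+|x|)^N$ is absorbed locally, interval by interval. You instead prove that $x\mapsto\varphi(\Phi_t(x))$ has compact support, uniformly in $t$, by running the linear-growth/Gr\"onwall bound on the inverse flow and substituting $y=\Phi_t(x)$; the corollary is then trivial. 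This is legitimate given the hypotheses (existence of $\Phi^{-1}$ is assumed, and the paper itself uses the same "inverse flow satisfies an equation of the same type" device inside the proof of Lemma \ref{lemma:bound_control}); one small caution is that the final display of Proposition \ref{proposition:ODE_inverse_flow} (a single backward equation for $t\mapsto\Phi^{-1}_t(x)$ on $[0,T]$) is delicate for time-dependent $b$ and driver, but you only need the per-$t_0$ backward equation $\Phi^{-1}_{t_0}=\psi^{t_0}_{t_0}$, whose constants from Lemma \ref{lemma:holder_norm_flow} are uniform in $t_0$, so this is harmless. The trade-off is quantitative: your bound is $(1+R^*)^N\|\varphi\|_\infty$ with $R^*\sim C(1+\|X\|_\gamma)$, i.e. the constant grows like $(1+\|X\|_\gamma)^N$, whereas the paper's telescoping gives a constant of order $T/\varepsilon\sim(1+\|X\|_\gamma)^{1/\gamma}$ independently of $N$ — and it is this $N$-independent dependence on $\|X\|_\gamma$ that is exploited later (Corollary \ref{corollary:holder_norm}, Lemma \ref{lemma:smooth_control}) to reach the $(1+\|X\|_\gamma)^{1+1/\gamma}$ estimates. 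For the statement as written (mere $L^\infty$ membership for fixed $t$ and $N$) your argument is complete; your sketched alternative via iterating Lemma \ref{lemma:bound_control} along a partition is essentially the paper's proof, provided you use the two-parameter flow started at $t_i$ rather than any semigroup identity.
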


\begin{proof}
  Let $0=t_{0} <t_{1} < \cdots <t_{n+1} =t$ such that $| t_{i+1} -t_{i} | <
  \varepsilon$ where $\varepsilon >0$ is chosen as in the previous lemma and
  $n$ is chosen as small as possible. Hence
  \[ | ( 1+ | x | )^{N} \varphi ( \Phi_{t} ( x ) ) | \leqslant ( 1+ | x |
     )^{N} \left( | \varphi ( x ) | + \sum_{i=0}^{n} | \varphi (
     \Phi_{t_{i+1}} ( x ) ) - \varphi ( \Phi_{t_{i}} ( x ) ) | \right) \]
  \[ \leqslant ( 1+ | x | )^{N} \left( \sum_{i=0}^{n} \int_{0}^{1} \dd r |
     D \varphi ( r ( \Phi_{t_{i+1}} ( x ) - \Phi_{t_{i}} ( x ) ) +
     \Phi_{t_{i}} ( x ) ) | | \Phi_{t_{i+1}} ( x ) - \Phi_{t_{i}} ( x ) | + |
     \varphi ( x ) | \right) \]
  \[ \lesssim ( n+1 ) \lesssim \frac{1}{\varepsilon} . \]
  
\end{proof}

Finally, thanks to Lemma~\ref{remark:global_holder_norm}, we are able to give
some estimates for the $\mathcal{C}^{\gamma}$ and $\mathcal{D}_{X}^{\gamma}$
norms of $\varphi ( \Phi_{.} ( x ) )$.

\begin{corollary}
  \label{corollary:holder_norm}Let $b$ as in Lemma~\ref{lemma:bound_control},
  and $\varphi \in C^{\infty}_{c} ( \mathbbm{R}^{d } ,\mathbbm{R}^{m} )$, then
  $\varphi ( \Phi_{.} ( x ) ) \in \mathcal{D}^{\gamma}_{X} ( [0,T] )$.
  Furthermore for all $N \geqslant 0$, we have
  \[ \| \varphi ( \Phi_{.} ( x ) ) \|_{\gamma} \lesssim ( 1+ \| X \|_{\gamma}
     )^{1+1/ \gamma} ( 1+ | x | )^{-N} \]
  and the implicit constant on the right hand side is nondecreasing in all the
  parameters.
\end{corollary}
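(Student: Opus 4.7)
The plan is to assemble the estimate from three ingredients already established in the excerpt: the pointwise decay of $\nabla^j\varphi$ along the trajectory provided by Lemma~\ref{lemma:bound_control}, the increment bound on the flow from Lemma~\ref{lemma:holder_norm_flow}, and the global-from-local principle of Lemma~\ref{remark:global_holder_norm}.

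First, I would identify the controlled path structure. Starting from the ODE decomposition $\delta\Phi_{s,t}(x) = X_{s,t} + \int_s^t b_q(\Phi_q(x))\,dq$ and a second-order Taylor expansion of $\varphi$, one obtains
\[
\delta(\varphi(\Phi_\cdot(x)))_{s,t} = \nabla\varphi(\Phi_s(x))\, X_{s,t} + \nabla\varphi(\Phi_s(x)) \int_s^t b_q(\Phi_q(x))\,dq + r_{s,t}(x),
\]
with $r_{s,t}$ a quadratic Taylor remainder in $\delta\Phi_{s,t}$. This identifies $\varphi(\Phi_\cdot(x))' = \nabla\varphi(\Phi_\cdot(x))$ and exhibits $\varphi(\Phi_\cdot(x))^{\#}_{s,t}$ as the sum of the drift term and $r_{s,t}$.

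Second, I would fix the localisation scale $\varepsilon = 2^{-2/\gamma}(K(1+\|X\|_\gamma))^{-1/\gamma}$ from Remark~\ref{remark:eps_choice}. For $|t-s|\leq \varepsilon$, Lemma~\ref{lemma:bound_control} applied to $\psi = \nabla\varphi$, $\nabla^2\varphi$ with exponent $N+1$ or $N+2$ converts any evaluation along the chord $r\Phi_t(x)+(1-r)\Phi_s(x)$ into the desired $(1+|x|)^{-N-j}$ decay. Together with $|\delta\Phi_{s,t}(x)|\lesssim (1+|x|)(1+\|X\|_\gamma)|t-s|^\gamma$ from Lemma~\ref{lemma:holder_norm_flow} and the linear growth of $b$ evaluated along the trajectory, this yields the local controlled norm $\llbracket \varphi(\Phi_\cdot(x))\rrbracket_{\mathcal{D}^\gamma_{X,\varepsilon}} \lesssim (1+|x|)^{-N}(1+\|X\|_\gamma)^{2}$. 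An application of Lemma~\ref{remark:global_holder_norm} then gives $\varphi(\Phi_\cdot(x))\in\mathcal{D}^\gamma_X([0,T])$.

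For the quantitative Hölder estimate I would proceed more directly by the mean value identity
\[
|\varphi(\Phi_t(x))-\varphi(\Phi_s(x))| \leq \int_0^1 |\nabla\varphi(r\Phi_t(x)+(1-r)\Phi_s(x))|\,dr \cdot |\delta\Phi_{s,t}(x)|.
\]
For $|t-s|\leq\varepsilon$, Lemma~\ref{lemma:bound_control} with $\psi=\nabla\varphi$ and exponent $N+1$ bounds the first factor by $(1+|x|)^{-(N+1)}$, while Lemma~\ref{lemma:holder_norm_flow} bounds the second by $(1+|x|)(1+\|X\|_\gamma)|t-s|^\gamma$. Hence the local Hölder seminorm is $\lesssim (1+|x|)^{-N}(1+\|X\|_\gamma)$. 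Invoking the Hölder version of Lemma~\ref{remark:global_holder_norm} (the remark just below it) introduces a multiplicative factor $T/\varepsilon$, which by Remark~\ref{remark:eps_choice} scales like $(1+\|X\|_\gamma)^{1/\gamma}$. Adding the trivial estimate $|\varphi(\Phi_0(x))|=|\varphi(x)|\lesssim (1+|x|)^{-N}$, coming from the compact support of $\varphi$, produces exactly
\[
\|\varphi(\Phi_\cdot(x))\|_\gamma \lesssim (1+\|X\|_\gamma)^{1+1/\gamma}(1+|x|)^{-N}.
\]

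The only delicate point — more bookkeeping than true obstacle — is the interplay between the two competing scales. The localisation window $\varepsilon$ shrinks like $(1+\|X\|_\gamma)^{-1/\gamma}$, so that the number $T/\varepsilon$ of subintervals needed to patch local into global Hölder control is itself responsible for the extra power $1/\gamma$ beyond the direct factor $(1+\|X\|_\gamma)$ coming from the flow estimate. Extracting the arbitrary polynomial decay in $|x|$ is handled entirely by the compact-support-through-$\nabla\varphi$ mechanism of Lemma~\ref{lemma:bound_control}, which has already done the hard work; what remains here is purely a matter of tracking exponents.
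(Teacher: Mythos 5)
Your proof is correct and follows essentially the same route as the paper: the mean value identity combined with Lemma~\ref{lemma:bound_control} (applied to $\nabla\varphi$ with exponent $N+1$) and Lemma~\ref{lemma:holder_norm_flow} for the local H\"older bound, then Lemma~\ref{remark:global_holder_norm} with $\varepsilon\sim (K(1+\|X\|_\gamma))^{-1/\gamma}$ from Remark~\ref{remark:eps_choice} to produce the extra factor $(1+\|X\|_\gamma)^{1/\gamma}$. Your additional sketch of the controlled decomposition (drift term plus quadratic Taylor remainder) is consistent with what the paper does later in Lemma~\ref{lemma:smooth_control}, so nothing is missing.
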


\begin{proof}
  Let $\varepsilon >0$ as in Lemma \ref{lemma:bound_control} and $| t-s |
  \leqslant \varepsilon$ and $N>0$. If $C$ denotes the constant of Lemma
  \ref{lemma:bound_control}, we have
  \begin{eqnarray*}
    | \varphi ( \Phi_{t} ( x ) ) - \varphi ( \Phi_{s} ( x ) ) | & = & \left|
    \int_{s}^{t} \dd r D \varphi ( r ( \Phi_{t} ( x ) - \Phi_{s} ( x ) ) +
    \Phi_{s} ( x ) ) . ( \Phi_{t} ( x ) - \Phi_{s} ( x ) ) \right|\\
    & \leqslant & C \| \Phi_{.} ( x ) \|_{\gamma} ( 1+ | x | )^{- ( N+1 )} |
    t-s |^{\gamma} .\\
    & \leqslant & C K ( 1+ \| X \|_{\gamma} ) ( 1+ | x | )^{-N} | t-s
    |^{\gamma}
  \end{eqnarray*}
  Hence, thanks to Lemma \ref{remark:global_holder_norm}, we have
  \[ \| \varphi ( \Phi_{.} ( x ) ) \|_{\gamma} \leqslant \frac{T}{\varepsilon}
     C_{N+1}   ( 1+ \| X \|_{\gamma} ) K ( 1+ | x | )^{-N} . \]
  Thanks to Remark \ref{remark:eps_choice} we can choose $\varepsilon =2^{-2/
  \gamma} K^{-1/ \gamma}$, we finally have
  \[ \| \varphi ( \Phi_{.} ( x ) ) \|_{\gamma} \leqslant T C_{N+1} K^{1+1/
     \gamma} ( 1+ \| X \|_{\gamma} )^{1+1/ \gamma} ( 1+ | x | )^{-N} . \]
  To end the proof, we just have to remember that $K$ and $C_{N+1}$ are
  nondecreasing in the parameters.
\end{proof}

\begin{lemma}
  \label{lemma:smooth_control}Let $b$ and $X$ as in Lemma
  \ref{lemma:bound_control} and with $\tmop{div}  b \in L^{\infty} (
  \mathbbm{R}^{d} )$. For all $\varphi \in C^{\infty}_{c} ( \mathbbm{R}^{d}
  )$, $u ( \nabla \varphi ) \in \mathcal{D}^{\gamma}_{X}$ is controlled by $X$
  and there exists a constant $C_{\mathcal{D}} ( \varphi )$ which is
  nondecreasing in $\| b \|_{\tmop{Lin}} , \| \tmop{div}  b \|_{\infty}$ and
  $T$ such that
  \[ \| u ( \nabla \varphi ) \|_{\mathcal{D}^{\gamma}_{X}} \leqslant
     C_{\mathcal{D}} ( \varphi ) \| u_{0} \|_{\infty} ( 1+ \| X \|_{\gamma}
     )^{1+1/ \gamma} . \]
\end{lemma}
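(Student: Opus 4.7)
The plan is to use the candidate solution $u_t(x) = u_0(\Phi_t^{-1}(x))$ furnished by the method of characteristics, and to transfer the controlled-path structure from the integrand to the pairing $u_t(\nabla \varphi)$ via a change of variables. Concretely, I would first set $J_t(y) \assign \det(D\Phi_t(y)) = \exp\!\left(\int_0^t \div b_r(\Phi_r(y))\dd r\right)$ and write
\[
u_t(\nabla \varphi) = \int_{\RR^d} u_0(\Phi_t^{-1}(x))\,\nabla\varphi(x)\,\dd x = \int_{\RR^d} u_0(y)\,\nabla\varphi(\Phi_t(y))\, J_t(y)\,\dd y.
\]
The Jacobian $J_t(y)$ is uniformly bounded by $e^{T\|\div b\|_\infty}$ and Lipschitz in time with Lipschitz constant $\|\div b\|_\infty\, e^{T\|\div b\|_\infty}$, uniformly in $y$; moreover its Gubinelli derivative with respect to $X$ is zero.

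Next I would apply Corollary \ref{corollary:holder_norm} to $\nabla\varphi$ (which is still in $C^\infty_c$) to get, for any $N\ge 0$,
\[
\| \nabla\varphi(\Phi_\cdummy(y))\|_\gamma \lesssim (1+\|X\|_\gamma)^{1+1/\gamma}(1+|y|)^{-N},
\]
and analogously with $\nabla^2\varphi$. The natural Taylor expansion
\[
\nabla\varphi(\Phi_t(y)) - \nabla\varphi(\Phi_s(y)) = \nabla^2\varphi(\Phi_s(y))\,\delta X_{s,t} + \nabla^2\varphi(\Phi_s(y))\!\int_s^t\! b_q(\Phi_q(y))\dd q + \mathrm{r}_{s,t}(y),
\]
with $|\mathrm{r}_{s,t}(y)|\lesssim (1+|y|)^{-N}|t-s|^{2\gamma}$ by a second-order Taylor remainder and Lemma \ref{lemma:holder_norm_flow}, shows that $t\mapsto \nabla\varphi(\Phi_t(y))$ is in $\mathcal{D}^\gamma_X$ with derivative $\nabla^2\varphi(\Phi_t(y))$. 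Using the product structure (Lemma \ref{lemma:product_controlled_path}) against the Lipschitz, $X$-derivative-free factor $J_t(y)$, the integrand $y\mapsto u_0(y)\nabla\varphi(\Phi_\cdummy(y))J_\cdummy(y)$ is controlled by $X$ pointwise in $y$, with all the controlled-norm quantities bounded by $\|u_0\|_\infty (1+\|X\|_\gamma)^{1+1/\gamma}(1+|y|)^{-N}$ times a constant depending only on $\varphi, T, \|b\|_{\mathrm{Lin}}, \|\div b\|_\infty$.

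Choosing $N>d$ to ensure $y$-integrability, I would then integrate the expansion against $u_0(y)\dd y$ and recognize, via the reverse change of variables $x=\Phi_t(y)$, that the $\delta X_{s,t}$-coefficient equals $u_s(\nabla^2\varphi)$. This identifies the Gubinelli derivative as $u(\nabla\varphi)' = u(\nabla^2\varphi)$ and gives a remainder $R^\varphi_{s,t}$ satisfying $|R^\varphi_{s,t}|\lesssim |t-s|^{2\gamma}$. Taking sup-norms yields directly
\[
\|u(\nabla\varphi)\|_\gamma + \|u(\nabla\varphi)'\|_\gamma + \|u(\nabla\varphi)^\#\|_{2\gamma} \leq C_{\mathcal D}(\varphi)\,\|u_0\|_\infty (1+\|X\|_\gamma)^{1+1/\gamma},
\]
with the constant $C_{\mathcal D}(\varphi)$ nondecreasing in $T$, $\|b\|_{\mathrm{Lin}}$ and $\|\div b\|_\infty$ (through the Lipschitz constant of $J$, the exponential factor $e^{T\|\div b\|_\infty}$, and the parameters $\eps, K$ of Lemma \ref{lemma:bound_control}/Corollary \ref{corollary:holder_norm}).

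The main obstacle I anticipate is purely bookkeeping: one must make sure that the pointwise-in-$y$ bounds from Corollary \ref{corollary:holder_norm} are applied with enough polynomial decay (large $N$) so that Fubini and the $L^\infty$-control of $u_0$ together produce finite integrals, while simultaneously keeping track of the fact that the $(1+\|X\|_\gamma)^{1+1/\gamma}$ factor and the constant's dependence on $\|b\|_{\mathrm{Lin}}, \|\div b\|_\infty, T$ do not deteriorate under the change of variables. The remainder term from the Taylor expansion must be split into the $\int_s^t b_q(\Phi_q(y))\dd q$ piece (which is $O(|t-s|)\subseteq O(|t-s|^{2\gamma})$ since $\gamma\leq 1/2$) and the quadratic Taylor piece of $\nabla\varphi$ (which is $O(|t-s|^{2\gamma})$ by Lemma \ref{lemma:holder_norm_flow}), both with compactly-supported-in-$\Phi$ weights handled by Lemma \ref{lemma:bound_control}. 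Once this decomposition is in place the estimate follows by a direct integration.
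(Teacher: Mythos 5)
Your proposal is correct and follows essentially the same route as the paper's proof: change of variables to $\int u_0(y)\,\nabla\varphi(\Phi_t(y))\,\mathrm{Jac}(\Phi_t(y))\,\dd y$, the explicit Jacobian with zero Gubinelli derivative, the Taylor expansion of $\nabla\varphi(\Phi_\cdot(y))$ with derivative $\nabla^2\varphi(\Phi_\cdot(y))$ and an $O(|t-s|^{2\gamma})$ remainder (split into the $\int_s^t b$ piece and the quadratic Taylor piece), the product Lemma \ref{lemma:product_controlled_path}, and the polynomial decay in $y$ from Lemma \ref{lemma:bound_control}/Corollary \ref{corollary:holder_norm} to integrate against $u_0$. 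The only cosmetic difference is your slightly different grouping of the remainder (evaluating $\nabla^2\varphi$ at $\Phi_s(y)$ in front of the $b$-integral rather than at intermediate points), which changes nothing in the estimates.
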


\begin{proof}
  We first rewrite $u ( \varphi )$ in a more suitable way. We use
  Theorem~\ref{theorem:jacobien} and Lemma~\ref{proposition:ODE_inverse_flow}
  and we have
  \begin{eqnarray*}
    u_{t} ( \nabla \varphi )_{} & = & \int_{\mathbbm{R}^{d}} u_{t} ( x )
    \nabla \varphi ( x ) \dd x\\
    & = & \int_{\mathbbm{R}^{d}} u_{0} ( \Phi^{-1}_{t} ( x ) ) \nabla \varphi
    ( x ) \dd x\\
    & = & \int_{\mathbbm{R}^{d}} u_{0} ( x ) \nabla \varphi ( \Phi_{t} ( x )
    ) | \tmop{Jac} ( \Phi_{t} ( x ) ) | \dd x.
  \end{eqnarray*}
  Thanks to Lemma \ref{lemma:product_controlled_path}, we know that when $a,b$
  are controlled by $X$, the product $a b$ is also controlled by $X$ and
  furthermore $\| a b \|_{\mathcal{D}^{\gamma}_{X}} \lesssim \| a
  \|_{\mathcal{D}^{\gamma}_{X}} \| b \|_{\mathcal{D}^{\gamma}_{X}}$. Hence, in
  order to prove that $u ( \nabla \varphi )$ is controlled it is enough to
  prove that for all $x \in \mathbbm{R}^{d}$, $t \mapsto \nabla \varphi (
  \Phi_{t} ( x ) )$ and $t \mapsto | \tmop{Jac} ( \Phi_{t} ( x ) ) |$ are
  controlled, with good estimates in $x$ for the controlled norms of those two
  functions. Since everything is smooth here, we can apply Theorem
  \ref{theorem:jacobien} and we have
  \[ | \tmop{Jac} ( \Phi_{t} ( x ) ) | = \exp \left( \int_{0}^{t} ( \tmop{div}
     b ) ( \Phi_{q} ( x ) ) \dd q \right) . \]
  Hence
  \[ | \tmop{Jac} ( \Phi_{t} ( x ) ) | \leqslant \exp ( T \| \tmop{div}  b
     \|_{\infty} ) \nosymbol . \]
  Furthermore
  \[ |   | \tmop{Jac} ( \Phi_{t} ( x ) ) | - | \tmop{Jac} ( \Phi_{s} ( x ) ) |
     |_{} \leqslant | t-s | \| \tmop{div}  b \|_{\infty} \exp ( T \|
     \tmop{div}  b \|_{\infty} ) , \]
  hence $| \tmop{Jac} ( \Phi_{.} ( x ) ) | \in \mathcal{D}^{\gamma}_{X}$, its
  derivative is zero and its recaller is itself, and we have
  \[ \| \tmop{Jac} ( \Phi_{.} ( x ) ) \|_{\mathcal{D}^{\gamma}_{X}} \lesssim
     ( 1+ \| \tmop{div}  b \|_{\infty} ) \exp ( T \| \tmop{div}  b \|_{\infty}
     ) . \]
  We need a bit more work to handle $\nabla \varphi ( \Phi_{.} ( x ) )$.
  Thanks to Corollaries~\ref{corollary:flot_schwarz}
  and~\ref{corollary:holder_norm}, for all $N \geqslant 0$ we already have
  \[ \| \nabla \varphi ( \Phi_{.} ( x ) ) \|_{\gamma} + \| \nabla \varphi (
     \Phi_{.} ( x ) ) \|_{\infty} \lesssim ( 1+ \| X \|_{\gamma} )^{1+1/
     \gamma} ( 1+ | x | )^{-N} . \]
  Moreover
  \begin{eqnarray*}
    \lefteqn{ \nabla \varphi ( \Phi_{t} ( x ) ) - \nabla \varphi ( \Phi_{s} ( x ) )}\\ & =
    & D^{2} \varphi ( \Phi_{s} ( x ) ) . ( X_{t} -X_{s} )\\
    &  & + \int_{0}^{1} \dd r D^{2} \varphi ( r ( \Phi_{t} ( x ) -
    \Phi_{s} ( x ) ) + \Phi_{s} ( x ) ) -D^{2} \varphi ( \Phi_{s} ( x ) ) . (
    X_{t} -X_{s} )\\
    &  & + \int_{0}^{1} \dd r D^{2} \varphi ( r ( \Phi_{t} ( x ) -
    \Phi_{s} ( x ) ) + \Phi_{s} ( x ) ) . \int_{s}^{t} b_{q} ( \Phi_{q} ( x )
    ) \dd q\\
    & = & \nabla \varphi ( \Phi_{.} ( x ) )'_{s} X_{s,t} + \nabla \varphi (
    \Phi_{.} ( x ) )_{s,t}^{\#}  .
  \end{eqnarray*}
  Thanks to Corollary~\ref{corollary:holder_norm},
  \[ \| D^{2} \varphi ( \Phi_{.} ( x ) ) \|_{\gamma} \lesssim ( 1+ \| X
     \|_{\gamma} )^{1+1/ \gamma} ( 1+ | x | )^{-N} . \]
  Next, thanks to Lemma~\ref{remark:global_holder_norm}, it is enough to
  control the local norm of $(s,t) \mapsto \nabla \varphi ( \Phi_{.} ( x )
  )_{s,t}^{\#}$ when we choose $\varepsilon$ as in
  Lemma~\ref{lemma:bound_control}. By Lemma~\ref{lemma:holder_norm_flow}, we
  already know that,
  \[ \left| \int_{s}^{t} b_{q} ( \Phi_{q} ( x ) ) \dd q \right| \lesssim K
     | t-s | ( 1+ \| b \|_{\tmop{Lin}} ) ( 1+ | x | ) ( 1+ \| X \|_{\gamma} )
     , \]
  hence it is enough to bound $\int_{0}^{1} \dd r D^{2} \varphi ( r (
  \Phi_{t} ( x ) - \Phi_{s} ( x ) ) + \Phi_{s} ( x ) )$. But thanks to
  Lemma~\ref{lemma:bound_control}, this quantity is bounded by $( 1+ | x |
  )^{- ( N+1 )}$. Furthermore
  \[ \int_{0}^{1} \dd r  ( D^{2} \varphi ( r ( \Phi_{t} ( x ) - \Phi_{s} (
     x ) ) + \Phi_{s} ( x ) ) -D^{2} \varphi ( \Phi_{s} ( x ) ) ) . ( X_{t}
     -X_{s} ) \]
  \[ = \int_{0}^{1} \dd r \int_{0}^{1} \dd q D^{3} \varphi ( r q (
     \Phi_{t} ( x ) - \Phi_{s} ( x ) ) + \Phi_{s} ( x ) ) . ( \Phi_{t} ( x ) -
     \Phi_{s} ( x ) ) . ( X_{t} -X_{s} ) \]
  and again thanks to Lemmas \ref{lemma:holder_norm_flow} and
  \ref{lemma:bound_control} we have
  \[ \sup_{x \in \mathbbm{R}^{d}} ( 1+ | x | )^{-N} \sup_{s \neq t, | t-s |
     \leqslant \varepsilon} | \nabla \varphi ( \Phi_{.} ( x ) )_{s,t}^{\#} | /
     | t-s | <+ \infty . \]
  Furthermore, since $\varepsilon \sim ( ( 1+ \| X \|_{\gamma} ) K )^{-1/
  \gamma}$ thanks to Remark \ref{remark:eps_choice}, thanks to Lemma
  \ref{remark:global_holder_norm} we have
  \[ \| \nabla \varphi ( \Phi_{.} ( x ) ) \|_{\mathcal{D}^{\gamma}_{X}}
     \lesssim ( 1+ | x | )^{-N} ( 1+ \| X \|_{\gamma} )^{1+1/ \gamma} . \]
  As all the previous constants are nondecreasing with respect to $\| b
  \|_{\tmop{Lin}}$, so is the implicit constant in this last inequality.
  Hence, when we put this inequality and the inequality for $| \tmop{Jac} (
  \Phi ( x ) ) |$ together, we have
  \begin{equation}
    \| \nabla \varphi ( \Phi_{.} ( x ) ) \tmop{Jac} ( \Phi_{.} ( x ) )
    \|_{\mathcal{D}^{\gamma}_{X}} \leqslant C_{N,T, \varphi} (    \| b
    \|_{\tmop{Lin}} , \| \tmop{div}  b \|_{\infty} ) ( 1+ | x | )^{-N} ( 1+ \|
    X \|_{\gamma} )^{1+1/ \gamma} \label{eq:integrability_controlled_norm}
  \end{equation}
  where the constant $C$ is nondecreasing in $T $, $\| b \|_{\tmop{Lin}}$ and
  $\| \tmop{div}  b \|_{\infty}$. Finally as
  \[ u_{t} ( \nabla \varphi ) = \int_{\mathbbm{R}^{d}} u_{0} ( x ) \nabla
     \varphi ( \Phi_{t} ( x ) ) \tmop{Jac} ( \Phi_{t} ( x ) ) \dd x, \]
  the function $u ( \nabla \varphi )$ is controlled by $X$ and
  \[ \| \nabla \varphi ( \Phi_{.} ( x ) ) \tmop{Jac} ( \Phi_{.} ( x ) )
     \|_{\mathcal{D}^{\gamma}_{X}} \leqslant C ( T , \| X \|_{\gamma}  , \| b
     \|_{\tmop{Lin}} , \| \tmop{div}  b \|_{\infty} , \varphi ,D \varphi
     ,D^{2} \varphi ,D^{3} \varphi ) \| u_{0} \|_{\infty} . \]
  where the constant $C$ is the constant of Equation
  {\eqref{eq:integrability_controlled_norm}} with $N=d+1$.
\end{proof}

\begin{remark}
  \label{remark:def_sol}The last proof shows gives that for all $\psi \in
  \mathcal{C}^{\infty}_{c} ( \mathbbm{R}^{d} )$, $u ( \psi ) \in
  \mathcal{C}^{\gamma} ( \mathbbm{R}^{d} )$ with the bound
  \[ \| u ( \psi ) \|_{\gamma} \leqslant C ( T, \| u_{0} \|_{\infty} , \| b
     \|_{\infty , \tmop{Lin}} , \psi ) ( 1+ \| X \|_{\gamma} )^{1+1/ \gamma} .
  \]
  Hence, for all $t \in [0,T]$, $u_{t} ( \psi )$ has a meaning as a continuous
  function of $t$.
\end{remark}

The last proof, and particularly
Equation~{\eqref{eq:integrability_controlled_norm}} gives us the following
corollary

\begin{corollary}
  \label{coro:regular_control}Let $\varphi_{0} \in C^{\infty}_{c} (
  \mathbbm{R}^{d} )$. For all $x \in \mathbbm{R}^{d}$ the function $t
  \rightarrow \varphi_{0} ( \Phi_{t} ( x ) ) e^{\int_{0}^{t} \tmop{div}  b (
  \Phi_{q} ( x ) ) \dd q}$ is controlled by $X \nosymbol$. Furthermore $x
  \rightarrow \| \varphi_{0} ( \Phi_{.} ( x ) ) \|_{\mathcal{D}^{\gamma}_{X}}$
  is decreasing faster than any polynomial and
  \[ \| \| \varphi_{0} ( \Phi_{.} ( x ) ) \|_{\mathcal{D}^{\gamma}_{X} (
     [0,T] )} ( 1+ | x | )^{-N} \|_{L^{\infty} ( \mathbbm{R}^{d} )} \leqslant
     C ( T, \| b \|_{\infty , \tmop{Lin}} , \varphi_{0} ,D \varphi_{0} ,D^{2}
     \varphi_{0} ) ( 1+ \| X \|_{\gamma} )^{1+1/ \gamma} . \]
\end{corollary}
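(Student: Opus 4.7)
The plan is to observe that this corollary is essentially a repackaging of the estimates already established inside the proof of Lemma~\ref{lemma:smooth_control}, applied to $\varphi_0$ in place of $\nabla\varphi$. First I would treat the two factors separately. For the factor $t\mapsto \varphi_0(\Phi_t(x))$, the argument is identical to the treatment of $\nabla\varphi(\Phi_.(x))$ in the proof of Lemma~\ref{lemma:smooth_control}: one performs the Taylor expansion
\[
\varphi_0(\Phi_t(x))-\varphi_0(\Phi_s(x)) = D\varphi_0(\Phi_s(x)) X_{s,t} + \varphi_0(\Phi_.(x))^{\#}_{s,t},
\]
with the remainder collecting the $b$--drift term and the quadratic Taylor error involving the second derivative of $\varphi_0$. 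Using Lemma~\ref{lemma:bound_control} to localise $D\varphi_0$, $D^2\varphi_0$ along the flow trajectory (gaining the arbitrary polynomial decay $(1+|x|)^{-N}$), combined with Lemma~\ref{lemma:holder_norm_flow} for the drift estimate and Corollary~\ref{corollary:holder_norm} for $\|D\varphi_0(\Phi_.(x))\|_\gamma$, gives
\[
\|\varphi_0(\Phi_.(x))\|_{\mathcal{D}^\gamma_{X,\varepsilon}} \lesssim (1+|x|)^{-N}(1+\|X\|_\gamma),
\]
on any interval of size $\varepsilon\sim(1+\|X\|_\gamma)^{-1/\gamma}$ from Remark~\ref{remark:eps_choice}. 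Passing to the global norm via Lemma~\ref{remark:global_holder_norm} produces the extra factor $(T/\varepsilon)^{1-\gamma}(1+\|X\|_\gamma)$, yielding exactly the exponent $1+1/\gamma$ on $(1+\|X\|_\gamma)$, with a constant that is nondecreasing in $T$, $\|b\|_{\infty,\mathrm{Lin}}$ and $\|\varphi_0\|_{C^3}$.

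Next I would deal with $K_t(x)=\exp(\int_0^t \div b(\Phi_q(x))\,\dd q)$. Since $\div b\in L^\infty$, the map $t\mapsto K_t(x)$ is Lipschitz uniformly in $x$, with $|K_t(x)|\leq e^{T\|\div b\|_\infty}$ and $|\delta K_{s,t}(x)|\leq |t-s|\,\|\div b\|_\infty e^{T\|\div b\|_\infty}$. In particular $K(x)$ is trivially controlled by $X$ with derivative $K(x)'\equiv 0$ and remainder $K(x)^{\#}=\delta K$, and
\[
\|K(x)\|_{\mathcal{D}^\gamma_X}\lesssim (1+\|\div b\|_\infty)e^{T\|\div b\|_\infty},
\]
uniformly in $x$. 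Then, applying the product estimate Lemma~\ref{lemma:product_controlled_path} to $\varphi_0(\Phi_.(x))$ and $K_.(x)$ shows that the product $t\mapsto \varphi_0(\Phi_t(x))K_t(x)$ lies in $\mathcal{D}^\gamma_X$, proving the first claim of the corollary.

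Finally, the quantitative bound in the statement concerns only the first factor $\varphi_0(\Phi_.(x))$, which was already established in the first step; one just absorbs the $N$--dependent implicit constant into $C(T,\|b\|_{\infty,\mathrm{Lin}},\varphi_0,D\varphi_0,D^2\varphi_0)$ and uses that the integrand decays faster than any polynomial in $x$. There is no serious obstacle: everything reduces to the Taylor expansion plus the localisation in Lemma~\ref{lemma:bound_control}, the only mildly delicate point being to check that the exponent $1+1/\gamma$ on $(1+\|X\|_\gamma)$ is tight, which comes from exactly one application each of Lemma~\ref{lemma:holder_norm_flow} (giving the factor $1+\|X\|_\gamma$) and Lemma~\ref{remark:global_holder_norm} (giving the additional $(T/\varepsilon)^{1-\gamma}$ with $\varepsilon^{-1}\sim (1+\|X\|_\gamma)^{1/\gamma}$).
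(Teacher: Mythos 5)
Your proposal is correct and follows essentially the same route as the paper, which gives no separate argument but simply notes that the corollary is a repackaging of the proof of Lemma~\ref{lemma:smooth_control} (in particular Equation~\eqref{eq:integrability_controlled_norm}) with $\varphi_0$ in place of $\nabla\varphi$: Taylor expansion plus the localisation of Lemma~\ref{lemma:bound_control}, the globalisation of Lemma~\ref{remark:global_holder_norm} giving the exponent $1+1/\gamma$, the Lipschitz (zero-derivative) exponential factor, and the product Lemma~\ref{lemma:product_controlled_path}. The only cosmetic difference is that your constant is stated in terms of $\|\varphi_0\|_{C^3}$, whereas only derivatives up to $D^2\varphi_0$ are actually needed here, as in the statement.
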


\begin{remark}
  In fact, to prove that $u ( \nabla \varphi_{0} )$ is controlled by $X$ we do
  not need $\varphi_{0} \in C^{\infty}_{c} ( \mathbbm{R}^{d} )$, but only
  $\varphi_{0} \in C^{3} ( \mathbbm{R}^{d} )$ with $x \rightarrow ( 1+ | x |
  )^{d+2} ( | \nabla \varphi_{0} ( x ) | + | D^{2} \varphi ( x ) | + | D^{3}
  \varphi ( x ) | ) \in L^{\infty} ( \mathbbm{R}^{d} )$.
\end{remark}

When we test $( t,x ) \mapsto u_{0} ( \Phi^{-1}_{t} ( x ) )$ against smooth
compactly supported function $\varphi$, thanks to a change of variable,
proving that $u ( \varphi )$ is controlled by $X$ is equivalent proving that
$t \mapsto \varphi ( \Phi_{t} ( x ) )$ is controlled by $X$ and has good
integrability properties, which is what the previous lemmas have just
achieved.
	
	\subsection{Weak controlled solutions}
	The results of subsection \ref{subsection:taylor} allow us to prove the existence of weak controlled solution in the smooth case (Theorem \ref{theorem:smooth_transport}) and in the non-smooth case (Theorem \ref{theorem:existence_WCS}).
	
	\begin{proof}[Proof of Theorem \ref{theorem:smooth_transport}]
  Let us first consider the weak solution $u$ of equation {\eqref{eq:STE}}
  (see Appendix~\ref{appendix:characteristics}). We have shown that for all
  $\varphi \in C^{\infty}_{c} ( \mathbbm{R}^{d } )$, $u ( \varphi ) \in
  \mathcal{C}^{\gamma} ( [0,T] )$ and $u ( \nabla \varphi ) \in
  \mathcal{D}^{\gamma}_{X} ( [0,T] )$, hence $\int_{0}^{.} u_{s} ( \nabla
  \varphi ) \dd \mathbbm{X}_{s}$ is well defined as a rough integral.
  Furthermore, for $t \in [0,T]$
  \begin{eqnarray*}
    u_{t} ( \tmop{div} ( b_{t} \varphi ) ) & = & \int_{\mathbbm{R}^{d}} \dd
    x u_{t} ( x ) ( b_{t} ( x ) . \nabla \varphi_{} ( x ) + \varphi ( x )  
    \tmop{div}  b_{t} ( x ) ) \dd x\\
    & = & \int_{\mathbbm{R}^{d}} u_{0} ( x ) ( b_{t} ( \Phi_{t} ( x ) ) .
    \nabla \varphi ( \Phi_{t} ( x ) ) + \varphi ( \Phi_{t} ( x ) )  
    \tmop{div}  b_{t} ( \Phi_{t} ( x ) ) ) | \tmop{Jac} ( \Phi_{t} ( x ) ) |
    \dd x
  \end{eqnarray*}
  and as $| \tmop{Jac} ( \Phi_{t} ( x ) ) | \leqslant e^{\| \tmop{div}  b
  \|_{\infty} T}$ and $| \tmop{Jac} ( \Phi_{t}^{-1} ( x ) ) | \leqslant e^{\|
  \tmop{div}  b \|_{\infty} T}$
  \begin{eqnarray*}
    | u_{t} ( \tmop{div}   ( b_{t} \varphi ) ) | & \leqslant & \| u_{0}
    \|_{\infty} e^{\| \tmop{div}  b \|_{\infty} T} \int_{\mathbbm{R}^{d}} | (
    b_{t} ( \Phi_{t} ( x ) ) . \nabla \varphi ( \Phi_{t} ( x ) ) + \varphi (
    \Phi_{t} ( x ) )   \tmop{div}  b_{t} ( \Phi_{t} ( x ) ) ) | \dd x\\
    & \leqslant & \| u_{0} \|_{\infty}  e^{2 \| \tmop{div}  b \|_{\infty} T}
    \int_{\mathbbm{R}^{d}} \dd x | \nobracket b_{t} ( x ) . \nabla \varphi
    ( x ) + \varphi ( x )   \tmop{div}  b_{t} ( x \nobracket ) | \dd x\\
    & \lesssim_{\varphi , \nabla \varphi} & \| u_{0} \|_{\infty} \| b
    \|_{\infty ; \tmop{Lin}}  e^{2 \| \tmop{div}  b \|_{\infty} T} ( \| b
    \|_{\infty , \tmop{Lin}} + \| \tmop{div}  b \|_{\infty} )
  \end{eqnarray*}
  Hence $u ( \tmop{div}   ( b. \nabla \varphi ) ) \in L^{\infty} ( [0,T] )$
  and $\int_{0}^{t} \dd s u_{s} ( b_{s} . \nabla \varphi )$ is well-defined
  for all $t \in [0,T]$. As $u$ is a weak solution of {\eqref{eq:STE}}, for
  all $\varphi \in C^{\infty}_{c} ( \mathbbm{R}^{d} )$ and all $f \in
  C^{\infty}_{c} ( [0,T] )$, the smooth functions with compact support in
  $[0,T]$,
  \[ u_{0} ( \varphi ) f_{0} -u_{T} ( f_{T} ) + \int_{0}^{T} \left( u_{t} (
     \varphi ) - \int_{0}^{t} u_{s} ( b_{s} . \nabla \varphi + \tmop{div} (
     b_{s} ) \varphi ) \dd s- \int_{0}^{t} u_{s} ( \nabla \varphi )
     \dot{X}_{s} \dd s \right) \dot{f}_{t} \dd t=0 \]
  and since $\int_{0}^{t} u_{s} ( \nabla \varphi ) \dot{X}_{s} \dd s=
  \int_{0}^{t} u_{s} ( \nabla \varphi ) \dd \mathbf{X}_{s}$, thanks to
  Lemma~\ref{lemma:smooth_control}, we have

  \[ \int_{0}^{T} \left( u_{t} ( \varphi ) -u_{0} ( \varphi ) - \int_{0}^{t}
     u_{s} ( \tmop{div}   ( b_{s} \varphi ) ) \dd s- \int_{0}^{t} u_{s} (
     \nabla \varphi ) \dd \mathbf{X}_{s} \right) \dot{f}_{t} \dd t=0.
  \]
  But $t \rightarrow u_{t} ( \varphi ) -u_{0} ( \varphi ) - \int_{0}^{t} u_{s}
  ( \tmop{div}   ( b_{s} \varphi ) ) \dd s- \int_{0}^{t} u_{s} ( \nabla
  \varphi ) \dd \mathbf{X}_{s}$ is a continuous function, thanks to
  Remark~\ref{remark:def_sol}, Lemma~\ref{lemma:smooth_control} and the
  previous computation. Hence, for every $t \in [0,T]$,

  \[ u_{t} ( \varphi ) -u_{0} ( \varphi ) - \int_{0}^{t} u_{s} ( \tmop{div}  
     ( b_{s} \varphi ) ) \dd s- \int_{0}^{t} u_{s} ( \nabla \varphi )
     \dd \mathbf{X}_{s} =0 \]
  and $u$ is a WCS of equation {\eqref{eq:RTE}}.
  
  We will show that it is unique. Let $v$ a WCS to eq.~{\eqref{eq:STE}}. Then,
  we have for all $f \in C^{\infty}_{c} ( [0,T] )$ and all $\varphi \in
  C^{\infty}_{c} ( \mathbbm{R}^{d} )$,
  \[ \int_{0}^{T} v_{t} ( \varphi ) \dot{f}_{t} \dd t= \int_{0}^{T} \left(
     v_{0} ( \varphi ) + \int_{0}^{t} v_{s} ( \tmop{div}   ( b_{s} \varphi ) )
     \dd s + \int_{0}^{t} v_{s} ( \nabla f ) \dot{X}_{s} \dd s \right)
     \dot{f}_{t} \dd t \]
  hence
  \[ -v ( \partial_{t} f \otimes \varphi ) -v_{0} ( f_{0} \otimes \varphi ) +v
     ( \tmop{div} \{ ( b+ \dot{X} ) ( f \otimes \varphi ) \} ) =0. \]
  As this equation is linear, and thanks to the density of the linear span of
  $C^{\infty}_{c} ( [0,T] ) \otimes C^{\infty}_{c} ( \mathbbm{R}^{d} )$ into
  $C^{\infty}_{c} ( [0,T] \times \mathbbm{R}^{d} )$, for all $  \psi \in
  C^{\infty}_{c} ( [0,T] \times \mathbbm{R}^{d} )$,
  \[ -v ( \partial_{t} \psi ) -v_{0} ( \psi ( 0,. ) ) +v ( \tmop{div} \{ ( b+
     \dot{X} ) \psi \} ) =0 \]
  Hence, $v$ is a weak solution of equation {\eqref{eq:STE}}. By Appendix
  \ref{appendix:characteristics}, $v=u$ and uniqueness holds for the weak
  controlled solutions in the smooth case.
\end{proof}

Thanks to Theorem \ref{theorem:smooth_transport}, we now have all the tools to deal with the rough transport equation. We are
looking for weak controlled solutions. Here we will no longer suppose that $X$
is smooth. As usual in such a setting, we will approximate $X$ and $b$ in a
smooth way, and use the a priori bounds of the previous part to obtain
compactness.

\begin{proof}[Proof of Theorem \ref{theorem:existence_WCS}]
  As $\mathbf{X} \in \mathcal{R}^{\gamma} ( [0,T] )$, there exists a
  sequence $( \mathbf{X}^{\varepsilon} )_{\varepsilon} = ( X^{\varepsilon}
  ,\mathbbm{X}^{\varepsilon} )_{\varepsilon} \in \mathcal{R}^{\gamma} ( [0,T]
  )$ with $\mathbbm{X}^{\varepsilon}_{s,t} = \int_{s}^{t} X^{\varepsilon}_{s}
  -X^{\varepsilon}_{r} \dd X^{\varepsilon}_{r}$ such that $X^{\varepsilon}
  \in C^{1} ( \mathbbm{R}^{d} )$ and $\|
  \mathbf{X}-\mathbf{X}^{\varepsilon} \|_{\mathcal{R}^{\gamma}}
  \rightarrow 0$. We can also approximate $b$ by $b^{\varepsilon}$ such that
  $b^{\varepsilon} \in L^{\infty} ( [0,T]; \tmop{Lip} ( \mathbbm{R}^{d} ) \cap
  C^{1} ( \mathbbm{R}^{d} ) )$ and $\tmop{div}  b^{\varepsilon} \in L^{\infty}
  ( [0,T];C^{1}_{b} ( \mathbbm{R}^{d} ) )$. Let us consider the weak Rough
  solution $u^{\varepsilon}$ of the equation $\partial_{t} u^{\varepsilon}
  +b^{\varepsilon} . \nabla u^{\varepsilon} + \dot{X}^{\varepsilon}_{t} =0$
  with $u^{\varepsilon}_{0} =u_{0}$. Thanks to Theorem
  \ref{theorem:smooth_transport}, we know that for all $t \in [0,T]$ and all
  $\varphi \in C^{\infty}_{c} ( \mathbbm{R}^{d} )$
  \begin{equation}
    u^{\varepsilon}_{t} ( \varphi ) =u_{0} ( \varphi ) + \int_{0}^{t}
    u^{\varepsilon}_{s} ( b^{\varepsilon}_{s} . \nabla \varphi ) \dd s+
    \int_{0}^{t} u^{\varepsilon}_{s} ( \nabla \varphi ) \dd
    \mathbbm{X}^{\varepsilon}_{s} . \label{eq:mollified_RTE}
  \end{equation}
  The strategy here is to extract a subsequence such that each term from the
  previous equality converges. First let us show that there exists $u \in
  L^{\infty} ( [0,T] \times \mathbbm{R}^{d} )$ and a subsequence
  $\varepsilon_{n}$ such that
  \[ u^{\varepsilon_{n}} \xrightarrow{w- \ast  L^{1}} u. \]
  This is nearly straightforward since $u^{\varepsilon}_{t} ( x ) =u_{0} ( (
  \Phi^{\varepsilon} )^{-1}_{t} ( x ) )$ and $\| u^{\varepsilon}
  \|_{L^{\infty}} \leqslant \| u_{0} \|_{L^{\infty}}$. Hence $(
  u^{\varepsilon} )_{\varepsilon}$ is relatively compact for the weak-star
  topology of $L^{1}$, and we take a subsequence
  $\left(\varepsilon_{n}\right)$ and $u \in L^{\infty} ( [0,T] \times
  \mathbbm{R}^{d} )$ such that $u^{\varepsilon_{n}} \xrightarrow{w- \ast 
  L^{1}} u$. Furthermore, for all $\varphi \in C^{\infty}_{c} (
  \mathbbm{R}^{d} )$, we have
  \[ u^{\varepsilon_{n}} ( \varphi ) \xrightarrow{w- \ast  L^{1} ( [0,T] )} u
     ( \varphi ) \]
  But thanks to Remark~\ref{remark:def_sol}, we know that $u^{\varepsilon} (
  \varphi ) \in \mathcal{C}^{\gamma} ([0,T])$ and that
  \begin{eqnarray*}
    \| u^{\varepsilon} ( \varphi ) \|_{\gamma} & \leqslant & C ( T, \| u_{0}
    \|_{\infty} , \| b^{\varepsilon} \|_{\infty , \tmop{Lin}} , \varphi ) ( 1+
    \| X^{\varepsilon} \|_{\gamma} )^{1+1/ \gamma}\\
    & \lesssim & C ( T, \| u_{0} \|_{\infty} , \| b \|_{\infty , \tmop{Lin}}
    , \varphi ) ( 1+ \| X \|_{\gamma} )^{1+1/ \gamma}\\
    & < & + \infty .
  \end{eqnarray*}
  We can apply Arzel{\`a}-Ascoli to $u^{\varepsilon} ( \varphi )$, and there
  exists $l ( u, \varphi ) \in C ( [0,T] )$ such that, for another subsequence
  $( \tilde{\varepsilon}_{n} )$ of $( \varepsilon_{n} )$, $(
  u^{\tilde{\varepsilon}_{n}} ( \varphi ) )$ converges uniformly to $l ( u,
  \varphi )$. Hence we have $u ( \varphi ) =l ( u, \varphi )$ and $u ( \varphi
  ) \in \mathcal{C}^{\gamma} ( [0,T] )$. The same strategy works for
  $\int_{0}^{t} u^{\varepsilon}_{s} ( \tmop{div}   ( b^{\varepsilon}_{s}
  \varphi ) ) \dd s$ up to extraction of an other subsequence. Hence, for
  all $\varphi \in C^{\infty}_{c} ( \mathbbm{R}^{d} )$, there exists another
  subsequence, let us denote it again by $( \varepsilon_{n} )$ such that
  $\left( \int_{0}^{.} u^{\varepsilon_{n}}_{s} ( \tmop{div}   (
  b^{\varepsilon_{n}}_{s} . \nabla \varphi ) ) \dd s \right)$ converges
  uniformly to $\int_{0}^{.} u_{s} ( b_{s} . \nabla \varphi ) \dd s$.
  Thanks to Lemma~\ref{lemma:smooth_control}, as $\sup_{\varepsilon >0} \|
  X^{\varepsilon} \|_{0, \gamma} + \| b^{\varepsilon} \|_{\infty , \tmop{Lin}}
  + \| \tmop{div}  b^{\varepsilon} \|_{\infty} <+ \infty$ we have
  \[ \sup_{\varepsilon >0} \| u^{\varepsilon}_{s} ( \nabla \varphi )
     \|_{\mathcal{D}^{\gamma}_{X^{\varepsilon}}} <+ \infty . \]
  Hence, $u^{\varepsilon} ( \nabla \varphi )$ is bounded in the space
  $\mathcal{D}^{\gamma}_{X^{\varepsilon}}$, uniformly in $\varepsilon$. It is
  possible to apply the Arzel{\`a}-Ascoli theorem to $u^{\varepsilon} ( \nabla
  \varphi )$, $( u^{\varepsilon} ( \nabla \varphi ) )'$ and $( u^{\varepsilon}
  ( \nabla \varphi ) )^{\#}$, and there exists $u ( \nabla \varphi ) \in
  \mathcal{D}^{\gamma}_{X}$ such that
  \[ u^{\varepsilon_{n}} ( \nabla \varphi )
     \xrightarrow{\mathcal{D}^{\gamma}} u ( \nabla \varphi ) . \]
  Furthermore, thanks to the definition of the rough integral (see Theorem
  \ref{th:controlled_integral} above) and the comparison between controlled
  paths (see Lemma \ref{lemma:product_controlled_path}), we know that
  \[ \int_{0}^{.} u_{s}^{\varepsilon_{n}} ( \nabla \varphi ) \dd
     \mathbf{X}^{\varepsilon_{n}} \xrightarrow{\mathcal{C}^{\gamma}}
     \int_{0}^{.} u_{s} ( \nabla \varphi ) \dd \mathbf{X}_{s} . \]
  Hence the last term in equation {\eqref{eq:mollified_RTE}} converges.
  Finally we have shown that $u \in L^{\infty} ( [0,T] \times \mathbbm{R}^{d}
  )$ is such that for all $\varphi \in C^{\infty}_{c} ( \mathbbm{R}^{d} )$, $u
  ( \nabla \varphi )$ is controlled by $X$ and for all $t \in [0,T]$,
  \[ u_{t} ( \varphi ) =u_{0} ( \varphi ) + \int_{0}^{t} u_{s} ( b_{s} .
     \nabla \varphi ) \dd s+ \int_{0}^{t} u_{s} ( \nabla \varphi ) \dd
     \mathbf{X}_{s} \]
  and $u$ is a weak controlled solution to equation {\eqref{eq:RTE}}.
\end{proof}
	\subsection{Stochastic processes, measurable weak controlled solutions}
	The proof of Theorem \ref{theorem:existence_SWCS} is quite similar to the one in the deterministic case. The only -
but significant - difference is that we can no longer apply na{\"i}vely the
Arzel{\`a}-Ascoli theorem. As before we will use a weak-$\ast$-compactness
theorem to identify a limit. In order to find a H{\"o}lder continuous version
of this limit, we will use a sequence of partitions (here the dyadic numbers)
and Riemann sums. The end of the proof will be devoted to prove the
convergence of each term to the wanted quantities.

\begin{proof}[Proof of Theorem \ref{theorem:existence_SWCS}]
  Let $( \mathbf{X}^{\varepsilon} )$ the smooth approximation of
  $\mathbf{X}$. Let $k :\mathbbm{R}^{d} \rightarrow \mathbbm{R}^{d}$ a
  smooth mollifier, \tmtextit{i.e.} $k \in C^{\infty}_{c} ( \mathbbm{R}^{d}
  )$, $k=1$ for $| x | \leqslant 1$ and $k=0$ for $| x | >2$, and
  $k_{\varepsilon} ( x ) = \frac{1}{\varepsilon^{d}} k \left(
  \frac{x}{\varepsilon} \right)$. Let $b^{\varepsilon} =k_{\varepsilon} \ast
  b$, hence $b^{\varepsilon} \in L^{\infty} ( \Omega \times [0,T];C^{\infty} (
  \mathbbm{R}^{d} ) \cap \tmop{Lin} ( \mathbbm{R}^{d} ) )$ and for all $Y \in
  L^{1} ( \Omega )$
  \[ \mathbbm{E} [ \| b^{\varepsilon} -b \|_{L^{\infty} ( [0,T]; \tmop{Lin} (
     \mathbbm{R}^{d} ) )} Y ] \rightarrow 0 \]
  and $\| b^{\varepsilon} \|_{L^{\infty} ( \Omega \times [0,T]; \tmop{Lin} (
  \mathbbm{R}^{d} ) )} \leqslant_{} \| b \|_{L^{\infty} ( \Omega \times [0,T];
  \tmop{Lin} ( \mathbbm{R}^{d} ) )}$. Furthermore $\tmop{div}  b^{\varepsilon}
  \rightharpoonup^{L^{\infty} ( \Omega \times [0,T] \times \mathbbm{R}^{d} )}
  \tmop{div}  b$, \tmtextit{i.e.} for all $f \in L^{1} \nobracket ( \Omega
  \times [0,T] \times \nobracket \mathbbm{R}^{d} )$,
  \[ \mathbbm{E} \left[ \int_{[0,T] \times \mathbbm{R}^{d}} \tmop{div} 
     b^{\varepsilon}_{t} ( x ) f_{t} ( x ) \dd x \dd t \right]
     \rightarrow \mathbbm{E} \left[ \int_{[0,T] \times \mathbbm{R}^{d}}
     \tmop{div}  b_{t} ( x ) f_{t} ( x ) \dd x \dd t \right] \]
  and $\| \tmop{div}  b^{\varepsilon} \|_{L^{\infty} ( \Omega \times [0,T]
  \times \mathbbm{R}^{d} )} \lesssim \| \tmop{div}  b \|_{L^{\infty} ( \Omega
  \times [0,T] \times \mathbbm{R}^{d} )}$.
  
  Let $\Phi^{\varepsilon}$ the flow of the approximate equation
  $\Phi^{\varepsilon}_{t} ( x ) =x+ \int_{0}^{t} b^{\varepsilon}_{q} (
  \Phi^{\varepsilon}_{q} ( x ) ) \dd q+X^{\varepsilon}_{t}$ and $(
  \Phi^{\varepsilon} )^{-1}$ its inverse. We know, thanks to Theorem \
  \ref{theorem:smooth_transport}, that $u^{\varepsilon}_{t} ( x ) =u_{0} ( (
  \Phi^{\varepsilon} )^{-1}_{t} ( x ) )$ is a weak controlled solution of the
  approximate rough transport equation with initial condition $u_{0}$,
  \tmtextit{i.e.} for all $\varphi \in C^{\infty}_{c} ( \mathbbm{R}^{d} )$, $t
  \rightarrow u^{\varepsilon}_{t} ( \varphi )$ is controlled by
  $X^{\varepsilon}$ almost surely, and for all $s<t \in [0,T]$,
  \[ u^{\varepsilon}_{t} ( \varphi ) -u^{\varepsilon}_{s} ( \varphi ) =
     \int_{s}^{t} u_{q}^{\varepsilon} ( \tmop{div}   ( b_{q}^{\varepsilon}
     \varphi ) ) \dd q+ \int_{s}^{t} u_{q}^{\varepsilon} ( \nabla \varphi )
     \dd \mathbf{X}^{\varepsilon}_{q} \]
  First, as $u_{0} \in L^{\infty} ( \Omega \times \mathbbm{R}^{d} )$,
  \[ \| u^{\varepsilon} \|_{L^{\infty} ( \Omega \times [0,T] \times
     \mathbbm{R}^{d} )} \leqslant \| u_{0} \|_{\infty} . \]
  Hence there exists a subsequence abusively denoted again by $(
  u^{\varepsilon} )$, and $u \in L^{\infty} ( \Omega \times [0,T] \times
  \mathbbm{R}^{d} )$ such that for all $Y \in L^{1} ( \Omega )$, $f \in L^{1}
  ( [0,T] ) \nocomma$, $\varphi \in L^{1} ( \mathbbm{R}^{d} )$
  \[ \mathbbm{E} \left[ \int_{0}^{T} \dd t \int_{\mathbbm{R}^{d}}
     u^{\varepsilon}_{t} ( x ) \varphi ( x ) \dd x f_{t} \dd t Y \right]
     \rightarrow \mathbbm{E} \left[ \int_{0}^{T} \int_{\mathbbm{R}^{d}} u_{t}
     ( x ) \varphi ( x ) \dd x f_{t} \dd t Y \right] . \]
  Let us find a weakly continuous version of the limit $u$. Let us define for
  $n \geqslant 0$ the points of the dyadic partition of $[0,T]$ by $t^{n}_{i}
  =i T2^{-n}$. We denote by $\Pi_{n} = \{ t_{i}^{n} :i \in \{ 0, \ldots ,2^{n}
  \} \}$ and by \ $\Pi = \cup_{n \geqslant 1} \Pi_{n}$. As $\Pi$ is countable
  and for all $t \in \Pi$
  \[ \| u^{\varepsilon}_{t} \|_{L^{\infty} ( \Omega \times \mathbbm{R}^{d} )}
     \leqslant \| u_{0} \|_{\infty} <+ \infty , \]
  there exists a subsequence of $\left(u^{\varepsilon}\right)$, denoted again
  by $( u^{\varepsilon} )$ such that for all $t \in \Pi$ there exists
  $\tilde{u}_{t} \in L^{\infty} ( \Omega \times \mathbbm{R}^{d} )$ such that
  $u_{t}^{\varepsilon} \rightharpoonup \tilde{u}_{t}$ weakly in $L^{\infty} (
  \Omega \times \RR^d )$. Furthermore, for all $t \in \Pi \nocomma , \|
  \tilde{u}_{t} \|_{L^{\infty} ( \Omega \times \mathbbm{R}^{d} )} \leqslant \|
  u_{0} \|_{L^{\infty} ( \Omega \times \mathbbm{R}^{d} )}$.
  
  Furthermore, let us recall that thanks to Lemma \ref{lemma:smooth_control},
  $u^{\varepsilon} ( \varphi ) \in \mathcal{D}^{\gamma}_{X^{\varepsilon}}$
  almost surely and since $\| b^{\varepsilon} \|_{L^{\infty} ( \Omega \times
  [0,T]; \tmop{Lin} ( \mathbbm{R}^{d} ) )} \leqslant \| b \|_{L^{\infty} (
  \Omega \times [0,T]; \tmop{Lin} ( \mathbbm{R}^{d} ) )}$ and $\|
  \mathbf{X}^{\varepsilon} \|_{\mathcal{R}^{\gamma}} \leqslant \|
  \mathbf{X} \|_{\mathcal{R}^{\gamma}}$
  \[ \| u^{\varepsilon} ( \varphi )
     \|_{\mathcal{D}^{\gamma}_{X^{\varepsilon}}} \lesssim K_{\varphi ,b,u_{0}}
     ( 1+ \| \mathbf{X} \|_{\mathcal{R}^{\gamma}} )^{1+1/ \gamma} \nosymbol
     . \]
  We have, for all $s,t \in \Pi$, all $\varphi \in C^{\infty}_{c} (
  \mathbbm{R}^{d} )$, since $\tilde{u}_{t} ( \varphi ) - \tilde{u}_{s} (
  \varphi ) \in L^{\infty} ( \Omega )$, for all $p \geqslant 1$,
  \[ \mathbbm{E} [ ( \delta u^{\varepsilon}_{s,t} ( \varphi ) ) ( \delta
     \tilde{u}_{s,t} ( \varphi ) )^{2p-1} ] \rightarrow \mathbbm{E} [ ( \delta
     \tilde{u}_{s,t} ( \varphi ) )^{2p} ] . \]
  But, by H{\"o}lder's inequality,
  \begin{eqnarray*}
    \mathbbm{E} [ \delta u^{\varepsilon_{n}}_{s,t} ( \varphi ) ( \delta
    \tilde{u}_{s,t} ( \varphi ) )^{2p-1} ] & \leqslant & \mathbbm{E} [ (
    \delta \tilde{u}_{s,t} ( \varphi )_{} )^{2p} ]^{1-1/2p} \mathbbm{E} [ (
    \delta u^{\varepsilon_{n}}_{s,t} ( \varphi ) )^{2p} ]^{1/2p}\\
    & \lesssim_{p} & \mathbbm{E} [ ( \delta \tilde{u}_{s,t} ( \varphi )
    )^{2p} ]^{2p/ ( 2p-1 )} | t-s |^{\gamma}
  \end{eqnarray*}
  Hence
  \begin{equation}
    \mathbbm{E} [ ( \delta \tilde{u}_{s,t} ( \varphi ) )^{2p} ] \lesssim_{p}
    K_{\varphi , \| b \| , \| u_{0} \|} | t-s |^{2 \gamma p} .
    \label{eq:holder_bound_u_tilde}
  \end{equation}
  Hence, for $t \in [0,T]$, $( t_{k} )_{k} , ( \tilde{t}_{k} )_{k} \in
  \Pi^{\mathbbm{N}}$ such that $t_{k} \rightarrow t$ and $\tilde{t}_{k}
  \rightarrow t$, $( \tilde{u}_{t_{k}} ( \varphi ) - \tilde{u}_{\tilde{t}_{k}}
  ( \varphi ) )_{k}$ converges to zero in $L^{p} ( \Omega )$. Furthermore $(
  \tilde{u}_{t_{k}} ( \varphi ) )_{k}$ is Cauchy in $L^{2p} ( \Omega )$. We
  call its limit $\tilde{u}_{t} ( \varphi )$. Note that it is independent of
  the sequence in $\Pi$. Since $\| \tilde{u}_{t} ( \varphi ) \|_{L^{\infty} (
  \Omega )} \leqslant \| u_{0} \|_{L^{\infty} ( \Omega \times \mathbbm{R}^{d}
  )} \| \varphi \|_{L^{1} ( \mathbbm{R}^{d} )}$, by the dominated convergence
  theorem, the bound in Equation {\eqref{eq:holder_bound_u_tilde}} holds
  for all $s,t \in [0,T]$. Hence, thanks to the usual Kolmogorov continuity
  Theorem, almost surely for all $\gamma' < \gamma$, $\tilde{u} ( \varphi )
  \in \mathcal{C}^{\gamma'} ( [0,T] )$ and for all $\mathbbm{E} [ \| \tilde{u}
  ( \varphi ) \|_{\mathcal{C}^{\gamma}}^{p} ] <+ \infty$ for $1 \leqslant p<+
  \infty$.
  
  Let us take $f \in C^{\infty}_{c} ( [0,T] )$, $Y \in L^{q} ( \Omega )$ for
  $1<q \leqslant + \infty$ and, $\varphi \in C^{\infty}_{c} ( \mathbbm{R}^{d}
  )$. As for every point $s \in \Pi$, $\tilde{u}_{s} ( \varphi ) =u_{s} (
  \varphi )$, and thanks to Riemann sum estimates, where we define
  \[ S_{n}^{\varepsilon} ( \varphi ,f ) =2^{-n} \sum_{i=0}^{2^{n} -1}
     f_{t_{i}} u^{\varepsilon}_{t_{i}^{n}} ( \varphi )   \tmop{and}  S_{n} (
     \varphi ,f ) =2^{-n} \sum_{i=0}^{2^{n} -1} f_{t_{i}} u_{t_{i}^{n}} (
     \varphi ) \]
  we have
  \begin{eqnarray}
    \left| \mathbbm{E} \left[ \int_{0}^{T} ( u^{\varepsilon}_{t} ( \varphi ) -
    \tilde{u}_{t} ( \varphi ) ) f_{t} \dd t Y \right] \right| & \leqslant &
    \mathbbm{E} \left[ \left| \int^{T}_{0} f_{t} u^{\varepsilon}_{t} ( \varphi
    ) \dd t-S_{n}^{\varepsilon} ( \varphi ,f ) \right|   | Y | \right] 
    \label{eq:riemann_weak_cv}\\
    &  & + | \mathbbm{E} [ ( S_{n}^{\varepsilon} ( \varphi ,f ) -S_{n} (
    \varphi ,f ) )  Y ] | \nonumber\\
    &  & +\mathbbm{E} \left[ \left| \int^{T}_{0} f_{t} u_{t} ( \varphi )
    \dd t-S_{n} ( \varphi ,f ) \right|   | Y | \right] \nonumber\\
    & \lesssim & \| f' \|_{\infty} \| u_{0} \|_{L^{\infty} ( \Omega \times
    \mathbbm{R}^{d} )} \mathbbm{E} [ | Y | ] 2^{-n} \nonumber\\
    &  & + \| f \|_{\infty} 2^{-n} \sum_{i=0}^{2^{n} -1} | \mathbbm{E} [
    u^{\varepsilon}_{t^{n}_{i}} ( \varphi ) -u^{\varepsilon}_{t^{n}_{i}} (
    \varphi ) Y ] | \nonumber\\
    &  & + \| f \|_{\infty} 2^{- \gamma n} \mathbbm{E} [ ( \sup_{\varepsilon}
    \| u^{\varepsilon} ( \varphi ) \|_{\gamma} \nosymbol + \| \tilde{u}
    \|_{\gamma} ) | Y | ] . \nonumber
  \end{eqnarray}
  Letting $\varepsilon$ go to zero and $n$ go to infinity, we have
  \[ \mathbbm{E} \left[ \int_{0}^{T} f_{t} u_{t} ( \varphi ) \dd t Y
     \right] =\mathbbm{E} \left[ \int_{0}^{T} f_{t} \tilde{u}_{t} ( \varphi )
     \dd t Y \right] . \]
  hence almost surely and for almost all $t \in [0,T]$ $u_{t} ( \varphi ) =
  \tilde{u}_{t} ( \varphi )$. Since it is also true for all $\varphi \in
  C^{\infty}_{c} ( \mathbbm{R}^{d} )$, we have shown that there exists a
  version $\tilde{u}$ of the weak limit $u$ such that $\tilde{u} \in
  L^{\infty} ( \Omega \times [0,T] \times \mathbbm{R}^{d} )$ and
  \[ u ( \varphi ) \in L^{p} ( \Omega ;\mathcal{C}^{\gamma} ( [0,T] ) ) . \]
  From now on we will consider only this version and denote it by $u$.
  
  Thanks to the hypothesis on $b$ and $b^{\varepsilon}$, we know that for all
  $\varphi \in C^{\infty}_{c} ( \mathbbm{R}^{d} )$, $\tmop{div}   (
  b^{\varepsilon} \varphi ) \rightarrow \tmop{div}   ( b \varphi )$ strongly
  in $L^{1} ( [0,T] \times \mathbbm{R}^{d} )$. Hence, for all $t \in [0,T]$
  \[ \int_{0}^{t} u_{q}^{\varepsilon} ( \tmop{div}   ( b^{\varepsilon}_{q}
     \varphi ) ) \dd q= \int_{0}^{T} \dd q \int_{\mathbbm{R}^{d}} \dd
     x u^{\varepsilon}_{q} ( x ) \tmop{div}   ( b^{\varepsilon}_{q} \varphi )
     \mathbbm{1}_{[ 0,t ]} ( q ) \rightarrow \int_{0}^{t} u_{q} ( \tmop{div}  
     ( b_{q} \varphi ) ) \dd q, \]
  where the convergence is weak in $L^{\infty} ( \Omega )$. Furthermore, since
  the following bound holds
  \[ \| u_{q}^{\varepsilon} ( \tmop{div} ( b^{\varepsilon}_{q} \varphi ) )
     \|_{L^{\infty} ( \Omega \times [0,T] )} \leqslant \| u_{0} \|_{( \Omega
     \times \mathbbm{R}^{d} )} \| \tmop{div}  b \varphi \|_{L^{\infty} (
     \Omega \times [0,T];L^{1} ( \mathbbm{R}^{d} ) )} , \]
  by the dominated convergence theorem we have
  \[ \int_{0}^{t} u_{q}^{\varepsilon} ( \tmop{div}   ( b^{\varepsilon}_{q}
     \varphi ) ) \dd q \rightharpoonup^{L^{\infty} ( \Omega \times [0,T] )}
     \int_{0}^{t} u_{q} ( \tmop{div}   ( b_{q} \varphi ) ) \dd q. \]
  In order to prove that $u$ is a weak controlled solution of the Rough
  transport equation, it remains to show that the last term $\int_{0}^{.}
  u_{q}^{\varepsilon} ( \nabla \varphi ) \dd
  \mathbf{X}^{\varepsilon}_{q}$ converges weakly in $L^{\infty} ( \Omega
  ;\mathcal{D}' ( [0,T] ) )$ where $\mathcal{D}'$ is the set of distributions
  on $[0,T]$ to $\int_{0}^{.} u_{q} ( \nabla \varphi ) \dd
  \mathbf{X}_{q}$. In order to do that, it is necessary to show that $u (
  \nabla \varphi )$ is controlled by $X$. Thanks to the last construction, for
  all $m \in \mathbbm{N}$ and all $\varphi \in C^{\infty}_{c} (
  \mathbbm{R}^{d} ,\mathbbm{R}^{m} )$ and all $t \in \Pi$,
  $u^{\varepsilon}_{t} ( \varphi )$ converges to $u_{t} ( \varphi )$ weakly in
  $L^{\infty} ( \Omega )$ and furthermore $t \rightarrow u_{t} ( \varphi )$ is
  almost surely in $\mathcal{C}^{\gamma'} ( [0,T] )$ for all $\gamma' <
  \gamma$, and $\| u ( \varphi ) \|_{\mathcal{C}^{\gamma'}} \in L^{p} ( \Omega
  )$ for all $1 \leqslant p<+ \infty$. Hence, this holds for $\nabla \varphi$
  and $\nabla^{2} \varphi$ when $\varphi \in C^{\infty}_{c} ( \mathbbm{R}^{d}
  )$. Furthermore, thanks to Lemma \ref{lemma:smooth_control},
  $u^{\varepsilon} ( \nabla \varphi )' =u^{\varepsilon} ( \nabla^{2} \varphi
  )$.
  
  For all $s \leqslant t \in \Pi$ and all $Y \in L^{q} ( \Omega )$ for $q>1$,
  we have $X_{s,t} Y \in L^{1} ( \Omega )$, and the following computation
  holds
  \begin{eqnarray*}
    | \mathbbm{E} [ u^{\varepsilon}_{s} ( \nabla \varphi )
    X^{\varepsilon}_{s,t} Y ] -\mathbbm{E} [ u_{s} ( \nabla \varphi ) X_{s,t}
    Y ] | & \leqslant & | \mathbbm{E} [ u^{\varepsilon}_{s} ( \nabla \varphi )
    X_{s,t} Y ] -\mathbbm{E} [ u_{s} ( \nabla \varphi ) X_{s,t} Y ] |\\
    &  & +\mathbbm{E} [ | u^{\varepsilon}_{s} ( \nabla \varphi ) | |
    X_{s,t}^{\varepsilon} -X_{s,t} | | Y | ]\\
    & \lesssim & | \mathbbm{E} [ u^{\varepsilon}_{s} ( \nabla \varphi )
    X_{s,t} Y ] -\mathbbm{E} [ u_{s} ( \nabla \varphi ) X_{s,t} Y ] |\\
    &  & +\mathbbm{E} [ | X_{s,t}^{\varepsilon} -X_{s,t} |^{p} ]^{1/p}
    \mathbbm{E} [ | Y |^{q} ]^{1/q} .
  \end{eqnarray*}
  Since $\mathbbm{E} [ \| \mathbf{X}^{\varepsilon} -\mathbf{X}
  \|^{p}_{\mathcal{R}^{\gamma'}} ] \rightarrow_{\varepsilon \rightarrow 0} 0$
  and $| u^{\varepsilon}_{s} ( \nabla \varphi ) | \leqslant \| u_{0}
  \|_{L^{\infty} ( \Omega \times \mathbbm{R}^{d} )} \| \nabla \varphi
  \|_{L^{1} ( \mathbbm{R}^{d} )}$, for all $1 \leqslant p<+ \infty$ and all
  $s,t \in \Pi$, $u^{\varepsilon}_{s} ( \varphi ) X^{\varepsilon}_{s,t}$
  converge weakly in $L^{p} ( \Omega )$ to $u_{s} ( \varphi ) X_{s,t}$. The
  same computation holds for $u^{\varepsilon}_{s} ( \nabla^{2} \varphi )
  X^{\varepsilon}_{s,t}$ and $u_{s} ( \nabla^{2} \varphi ) X_{s,t}$ and
  $u^{\varepsilon}_{s} ( \nabla^{2} \varphi ) \mathbbm{X}^{\varepsilon}_{s,t}$
  and $u_{s} ( \nabla^{2} \varphi ) \mathbbm{X}_{s,t}$.
  
  Furthermore, for all $s,t \in \Pi$
  \[ r^{\varepsilon}_{s,t} ( \nabla \varphi ) =u^{\varepsilon} ( \nabla
     \varphi )_{t} -u^{\varepsilon} ( \nabla \varphi )_{s}
     -u^{\varepsilon}_{s} ( \nabla^{2} \varphi ) X^{\varepsilon}_{s,t} . \]
  Hence $r^{\varepsilon}_{s,t} ( \nabla \varphi ) \rightharpoonup r_{s,t} (
  \nabla \varphi )$ weakly in $L^{p}$ for all $1 \leqslant p<+ \infty$ and we
  have for all $Y \in L^{q} ( \Omega )$,
  \[ \mathbbm{E} [ ( r_{s,t} ( \nabla \varphi ) - ( u ( \nabla \varphi )_{t}
     -u ( \nabla \varphi )_{s} ) -u_{s} ( \nabla^{2} \varphi ) X_{s,t} ) Y ]
     =0. \]
  Hence, almost surely for all $s,t \in \Pi$,
  \[ u ( \nabla \varphi )_{t} -u ( \nabla \varphi )_{s} =u_{s} ( \nabla
     \varphi )' X_{s,t} +r_{s,t} ( \nabla \varphi ) . \]
  thanks to the same limiting procedure, the previous equation is true for all
  $s,t \in [0,T]$. Furthermore, by the same computation, we also have that for
  all $s,t \in [0,T]$.
  \[ \mathbbm{E} [ | r_{s,t} |^{2p} ] \lesssim_{p} | t-s |^{4 \gamma p} . \]
  Therefore, by Kolmogorov's continuity theorem, $r_{s,t} \in \mathcal{C}^{2
  \gamma'}$. Hence, almost surely $u ( \nabla \varphi )$ is
  $\gamma'$-controlled by $X$ and $\| u ( \nabla \varphi )
  \|_{\mathcal{D}^{\gamma'}_{X}} \in L^{p} ( \Omega )$ for all $1 \leqslant
  p<+ \infty$.
  
  For all $n \geqslant 0$, $k \geqslant n$ and all $t \in \cap_{k \geqslant n}
  \Pi_{k}$, there exists $i_{t}^{k} \in \{ 0, \ldots ,2^{k} \}$ such that
  $t=t^{k}_{i^{k}_{t}}$. Then we define
  \[ S^{\varepsilon}_{k} ( \nabla \varphi ,t ) = \sum^{i_{t}^{k} -1}_{i=0} [
     u_{t^{k}_{i}}^{\varepsilon} ( \nabla \varphi ) \delta
     X^{\varepsilon}_{t_{i}^{k} ,t^{k}_{i+1}} +u_{t^{k}_{i}}^{\varepsilon} (
     \nabla^{2} \varphi ) \mathbbm{X}^{\varepsilon}_{t^{k}_{i} ,t^{k}_{i+1}} ]
  \]
  and $S_{k} ( \nabla \varphi ,t )$ as the same quantity for $u$ and
  $\mathbf{X}$. Thanks to the definitions of $u$, we know that
  $S^{\varepsilon}_{k} ( \nabla \varphi ,r )$ converges weakly in all $L^{p} (
  \Omega )$ for $1 \leqslant p<+ \infty$ to $S_{k} ( \nabla \varphi ,r )$.
  Furthermore, thanks to the estimates for the rough integrals, and since $\|
  u^{\varepsilon} ( \nabla \varphi )
  \|_{\mathcal{D}^{\gamma'}_{X^{\varepsilon}}} \lesssim ( 1+ \| \mathbf{X}
  \|_{\mathcal{R}^{\gamma'}} )^{1+1/ \gamma}$, we have
  \begin{eqnarray*}
    \left| \int_{0}^{t} u_{q}^{\varepsilon} ( \nabla \varphi ) \dd
    \mathbf{X}_{q}^{\varepsilon} -S^{\varepsilon}_{k} ( \nabla \varphi ,t )
    \right| & \lesssim & \| u_{.}^{\varepsilon} ( \nabla \varphi )
    \|_{\mathcal{D}^{\gamma'}_{X^{\varepsilon}}} \| \mathbf{X}^{\varepsilon}
    \|_{\mathcal{R}^{\gamma'}} 2^{- ( 3 \gamma' -1 )}\\
    & \lesssim & ( 1+ \| \mathbf{X} \|_{\mathcal{R}^{\gamma'}} )^{2+1/
    \gamma'} 2^{- ( 3 \gamma' -1 )}
  \end{eqnarray*}
  and
  \begin{eqnarray*}
    \left| \int_{0}^{t} u_{q} ( \nabla \varphi ) \dd \mathbf{X}_{q}
    -S_{k} ( \nabla \varphi ,t ) \right| & \lesssim & \| u_{.} ( \nabla
    \varphi ) \|_{\mathcal{D}^{\gamma'}_{X}} \| \mathbf{X}
    \|_{\mathcal{R}^{\gamma'}} 2^{- ( 3 \gamma' -1 )} .
  \end{eqnarray*}
  Hence, for all $1<q \leqslant + \infty$, and $Y \in L^{q} ( \Omega )$,
  \begin{eqnarray*}
    \lefteqn{\left| \mathbbm{E} \left[ \left( \int_{0}^{t} u_{q}^{\varepsilon} ( \nabla
    \varphi ) \dd \mathbf{X}_{q}^{\varepsilon} - \int_{0}^{t} u_{q} (
    \nabla \varphi ) \dd \mathbf{X}_{q} \right) Y \right] \right|^{p} }\\
    &
    \leqslant & \mathbbm{E} \left[ \left| \int_{0}^{t} u_{q}^{\varepsilon} (
    \nabla \varphi ) \dd \mathbf{X}_{q}^{\varepsilon}
    -S^{\varepsilon}_{k} ( \nabla \varphi ,t ) \right|^{p} \right] \| Y
    \|^{p}_{L^{q} ( \Omega )}+ | \mathbbm{E} [ ( S^{\varepsilon}_{k} ( \nabla \varphi ,t ) -S_{k}
    ( \nabla \varphi ,t ) ) Y ] |^{p}\\
    &  & +\mathbbm{E} \left[ \left| \int_{0}^{t} u_{q} ( \nabla \varphi )
    \dd \mathbf{X}_{q}^{\varepsilon} -S_{k} ( \nabla \varphi ,t )
    \right|^{p} \right] \| Y \|^{p}_{L^{q} ( \Omega )} .
  \end{eqnarray*}
  As $\varepsilon \rightarrow 0$ and $k \rightarrow + \infty$, the right hand
  side of the previous inequality goes to zero, and for all $t \in \Pi$,
  $\int_{0}^{t} u_{q}^{\varepsilon} ( \nabla \varphi ) \dd
  \mathbf{X}_{q}^{\varepsilon}$ converges weakly to $\int_{0}^{t} u_{q} (
  \nabla \varphi ) \dd \mathbf{X}_{q}$ in $L^{p} ( \Omega )$ for all $1
  \leqslant p<+ \infty$.
  
  Furthermore, as for all $t \in [0,T]$
  \[ \left| \int_{0}^{t} u_{q}^{\varepsilon} ( \nabla \varphi ) \dd
     \mathbf{X}_{q}^{\varepsilon} \right| \lesssim \left( 1+ \|
     u^{\varepsilon} ( \nabla \varphi )
     \|_{\mathcal{D}^{\gamma'}_{X^{\varepsilon}}} \right) ( ( 1+ \|
     \mathbf{X}^{\varepsilon} \|_{\mathcal{R}^{\gamma'}} ) ) \lesssim ( 1+
     \| \mathbf{X} \|_{\mathcal{R}^{\gamma'}} )^{2+1/ \gamma'} \]
  and the same kind of bound holds for $\left| \int_{0}^{t} u_{q} ( \nabla
  \varphi ) \dd \mathbf{X}_{q} \right|$. By a similar computation to
  {\eqref{eq:riemann_weak_cv}}, for all $1 \leqslant p<+ \infty$, all $Y \in
  L^{q} ( \Omega )$ and all $f \in C^{\infty}_{c} ( [0,T] )$,
  \[ \mathbbm{E} \left[ \int_{0}^{T} f_{t} \int_{0}^{t} u_{q}^{\varepsilon} (
     \nabla \varphi ) \dd \mathbf{X}_{q}^{\varepsilon} \dd t Y \right]
     \rightarrow \mathbbm{E} \left[ \int_{0}^{T} f_{t} \int_{0}^{t} u_{q} (
     \nabla \varphi ) \dd \mathbf{X}_{q} \dd t Y \right] . \]
  Hence all terms of the approximate equation converge and we have, for all
  test functions $f$ and $Y$,
  \[ \mathbbm{E} \left[ \int_{0}^{T} \left( u_{t} ( \varphi ) -u_{0} ( \varphi
     ) - \int_{0}^{t} u_{q} ( \tmop{div} ( b_{q} \varphi ) ) \dd q-
     \int_{0}^{t} u_{q} ( \nabla \varphi ) \dd \mathbf{X}_{q} \right)
     f_{t} \dd t Y \right] =0. \]
  Almost surely and for almost all $t \in [0,T]$,
  \[ u_{t} ( \varphi ) =u_{0} ( \varphi ) + \int_{0}^{t} u_{q} ( \tmop{div} (
     b_{q} \varphi ) ) \dd q+ \int_{0}^{t} u_{q} ( \nabla \varphi ) \dd
     \mathbf{X}_{q} . \]
  Hence, $u \in L^{\infty} ( \Omega \times [0,T] \times \mathbbm{R}^{d} )$ is
  a weak controlled solution of the rough transport equation driven by
  $\mathbf{X}$.
\end{proof}
\section{Uniqueness of solutions}\label{section:uniqueness}
In order to prove the uniqueness of weak controlled solutions, we will use a
duality argument. Indeed, we will suppose that everything is smooth, and that
$\psi$ is a strong solution of the Continuity Equation
\begin{equation*}
  \partial_{t} \psi + \tmop{div} ( b ) \psi +b/ \nabla \psi   \noplus + \nabla
  \psi . \dot{X} =0 
\end{equation*}
with $\psi_{t} \in C^{\infty}_{c} ( \mathbbm{R}^{d } )$ for all $t \in [ 0,T
]$, we have for any weak solution of the rough transport equation, using the
Leibniz rule on $u_{t} ( \psi_{t} ) = \langle u_{t} , \psi_{t} \rangle$,
\begin{eqnarray*}
  \partial_{t} ( u_{t} ( \psi_{t} ) ) & = & \partial_{t} u_{t} ( \psi_{t} )
  +u_{t} ( \partial_{t} ( \psi_{t} ) )\\
  & = & u_{t} (  \tmop{div}   ( b_{t} \psi_{t} ) ) +u_{t} ( \nabla \psi_{t} )
  \dot{X}_{t} -u_{t} ( \tmop{div} ( b \psi_{t} ) - \nabla \psi_{t} \dot{X}_{t}
  )\\
  & = & 0.
\end{eqnarray*}
Hence $u_{t} ( \psi_{t} ) =u_{0} ( \psi_{0} )$. As the equation is linear, it
is enough to prove uniqueness when $u_{0} =0$, then $u_{t} ( \psi_{t} ) =0$.
The trick here is to solve the rough continuity equation 
backward, such that for any fixed $\varphi \in C^{\infty}_{c} ( \mathbbm{R}^{d
} )$ and any $t \in [0,T]$, there exists a solution of the rough continuity equation such that $\psi_{t} = \varphi$. Hopefully the work for such an existence has been done in Section \ref{section:existence} and in subsection \ref{subsection:strong}.

As the transport equation is linear, it is enough to prove uniqueness when
$u_{0} =0$. We would like to use the standard duality argument to prove that
in that case the only solution is zero. As we need to test the weak controlled
solution against smooth compactly supported functions, it is not possible to
do it directly. The idea is to approximate the vectorfield $b$ with a smooth
one, and hence to show that the error we make by such a trick goes to zero
when the regularization goes to zero. This is the purpose of the lemma of the following section, which allows us to compare different weak controlled solution associated to different vector fields.
	\subsection{The fundamental lemma}

\begin{lemma}[Fundamental Lemma]
  \label{lemma:uniqueness}Let $1/3< \gamma \leqslant 1/2$, $\mathbf{X}= (
  X,\mathbbm{X} ) \in \mathcal{R}^{\gamma}$ be a geometric rough path, $b \in L^{\infty} ( [0,T];
  \tmop{Lin} ( \mathbbm{R}^{d } ) )$ be two vector spaces such that $\tilde{b} \in L^{\infty} (
  [0,T];C^{\infty}_{c} ( \mathbbm{R}^{d} ) )$ with $\tmop{div}  b \nocomma \in
  L^{\infty} ( [0,T];L^{\infty} ( \mathbbm{R}^{d} ) )$. Let $\tilde{\Phi}$ the
  flow associated to $X$ and $\tilde{b}$, and let $\tilde{R}$ the radius of
  the ball of Lemma \ref{lemma:bounded_bounded_drift} associated to
  $\tilde{b}$ and $X$.
  
  Let $u \in L^{\infty} ( [0,T] \times \mathbbm{R}^{d} )$ a weak controlled
  solution of the rough transport equation with $u_{0} =0$, and
  \[ \tilde{G}_{q}^{t_{0}} ( x ) = \int_{q}^{t_{0}} ( \tmop{div}  
     \tilde{b}_{r} ) ( \tilde{\Phi}_{r-q} ( x ) ) \dd r. \]
  Then
  \[ | u_{t_{0}} ( \varphi_{0} ) | \lesssim \| u \|_{\infty} \sup_{q \in [
     0,t_{0} ]} \int_{B_{f} ( 0, \tilde{R} )} \dd x  [ | \tmop{div}   ( b-
     \tilde{b} )_{q} |+| ( b- \tilde{b} )_{q} ( x ) | ] ( | D
     \tilde{\Phi}_{t_{0} -q} ( x ) | + | \nabla \widetilde{G^{}}^{t_{0}}_{q} (
     x ) | ) . \]
\end{lemma}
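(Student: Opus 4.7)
The strategy is a duality/commutator argument: construct a dual test function $\psi$ that would cancel the $\tilde b$-transport operator against $u$, and show that the only surviving contribution is a residual built from $b-\tilde b$. Concretely, I would first use Theorem~\ref{theorem:strong_controlled_solutions} (applied after time-reversal) to build a strong controlled solution $\psi:[0,t_0]\times\RR^d\to\RR$ of the rough continuity equation
\[ \partial_q\psi_q + \tilde b_q\cdot\nabla\psi_q + (\div\tilde b_q)\psi_q + \nabla\psi_q\cdot\dd\mathbf{X}_q = 0, \qquad \psi_{t_0}=\varphi_0, \]
given explicitly by $\psi_q(x)=\varphi_0(\tilde\Phi_{t_0-q}(x))\exp(\tilde G^{t_0}_q(x))$. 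Lemma~\ref{lemma:bounded_bounded_drift} places the support of $\psi_q,\nabla\psi_q,\nabla^2\psi_q$ uniformly in the ball $B_f(0,\tilde R)$, and Lemma~\ref{lemma:composition_flow_control} ensures each of these is a path controlled by $X$ with norm bounds uniform in $x\in B_f(0,\tilde R)$.

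Next I would discretize the pairing. Since $u_0=0$, telescoping over a partition $0=q_0<\dots<q_N=t_0$ gives
\[ u_{t_0}(\varphi_0)=\sum_i\bigl[u_{q_{i+1}}(\psi_{q_{i+1}})-u_{q_i}(\psi_{q_i})\bigr], \]
and each increment splits as $u_{q_{i+1}}(\psi_{q_{i+1}}-\psi_{q_i})+\bigl(u_{q_{i+1}}-u_{q_i}\bigr)(\psi_{q_i})$. For the first piece I expand $\psi_{q_{i+1}}-\psi_{q_i}$ pointwise in $x$ using the strong controlled equation for $\psi$, then pair with $u_{q_{i+1}}$; for the second piece I use the weak controlled equation for $u$ with test function $\psi_{q_i}$ (observing that $u(\nabla\psi_{q_i})'=u(\nabla^2\psi_{q_i})$, as noted after Definition~\ref{definition_WCS}). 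Both expansions produce, at the rough-path level, terms involving $X_{q_i,q_{i+1}}$ and $\mathbb{X}_{q_i,q_{i+1}}$, with remainders of order $|q_{i+1}-q_i|^{3\gamma}$ that vanish under refinement by Proposition~\ref{th:sewing_map}.

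The crucial step is the cancellation of the rough contributions. Because $\mathbf{X}$ is geometric one has $\mathbb{X}_{s,t}+\mathbb{X}_{s,t}^{T}=X_{s,t}^{\otimes 2}$, and the matching $\psi$-derivative of the controlled path $u(\nabla\psi_{q_i})$ is exactly $u(\nabla^{2}\psi_{q_i})$; putting these together the first and second-order rough terms coming from the two expansions combine to zero. Only the drift integrals survive, and they recombine as
\[ u_{t_0}(\varphi_0)=\int_0^{t_0}u_q\bigl(\div(b_q\psi_q)-\div(\tilde b_q\psi_q)\bigr)\dd q=\int_0^{t_0}u_q\bigl(\div((b_q-\tilde b_q)\psi_q)\bigr)\dd q. \]

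Finally I would bound the residual. Using $\|u\|_\infty$ and the compact support of $\psi_q$ in $B_f(0,\tilde R)$, Leibniz gives $\div((b-\tilde b)\psi)=\div(b-\tilde b)\,\psi+(b-\tilde b)\cdot\nabla\psi$, and the explicit formula for $\psi$ yields $|\nabla\psi_q(x)|\lesssim |D\tilde\Phi_{t_0-q}(x)|+|\nabla\tilde G^{t_0}_q(x)|$ (with implicit constant depending on $\varphi_0,\nabla\varphi_0$ and $\|\div\tilde b\|_\infty$). Combining yields the claimed bound. The main obstacle will be Step 3--4: justifying the Leibniz-type product formula for $q\mapsto u_q(\psi_q)$ when $u$ is only $L^\infty$, which forces one to work entirely through the discretization and the sewing map rather than any pointwise chain rule, and to keep careful track of how the weak (tested) and strong (pointwise) controlled structures interact so that the second-order rough cross terms cancel exactly.
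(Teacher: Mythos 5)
Your proposal is correct and follows essentially the same duality argument as the paper: pair $u$ against the backward strong controlled solution $\psi_q(x)=\varphi_0(\tilde\Phi_{t_0-q}(x))e^{\tilde G^{t_0}_q(x)}$, use geometricity and the matching Gubinelli derivatives to cancel the rough terms, kill the $3\gamma$-remainder via Proposition~\ref{th:sewing_map}, and bound $\int_0^{t_0}u_q(\div((b_q-\tilde b_q)\psi_q))\dd q$ using the compact support in $B_f(0,\tilde R)$ and the explicit form of $\nabla\psi$. Your telescoping over a partition is just the Riemann-sum form of the paper's direct increment identity plus sewing lemma, so the two arguments coincide in substance.
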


\begin{proof}
  Let $\varphi_{0} \in C^{\infty}_{c}$ and $\tilde{b} \in C^{\infty}_{b}$. For
  all $t \in [0,T]$ we define,
  \[ \tilde{\psi}_{t} :x \rightarrow \tilde{\psi}_{t} ( x ) = \varphi_{0} (
     \tilde{\Phi}^{-1}_{t} ( x ) ) \exp \left[ - \int_{0}^{t} ( \tmop{div}  
     \tilde{b} ) ( \tilde{\Phi}^{-1}_{q} ( x ) ) \dd q \right] \in
     C^{\infty}_{c} ( \mathbbm{R}^{d} ) . \]
  By Theorem \ref{theorem:strong_controlled_solutions}, $\tilde{\psi}$ is a
  strong controlled solution of the rough continuity equation.
  
  Furthermore, thanks to the definition of $\tilde{R}$, for all $n \in
  \mathbbm{N}$
  \[ t \rightarrow D^{n} \tilde{\psi}_{t} ( x ) \in \mathcal{D}^{\gamma}_{X} (
     [0,T] ) \]
  and
  \[ \| D^{n} \tilde{\psi}_{.} ( x ) \|_{\mathcal{D}^{\gamma}_{X}}
     \lesssim_{n} \mathbbm{1}_{B_{f} ( 0, \tilde{R} )} ( x ) . \]
  Since $\nabla \tilde{\psi}_{.} ( x ) \in \mathcal{D}^{\gamma} ( [0,T] )$ and
  the function $x \rightarrow \| \nabla \tilde{\psi}_{.} ( x )
  \|_{\mathcal{D}^{\gamma} ( [0,T] )}$ is compactly supported, we have
  \begin{eqnarray*}
    u_{t} ( \tilde{\psi}_{t} ) -u_{s} ( \tilde{\psi}_{s} ) & = & ( u_{t}
    -u_{s} ) ( \tilde{\psi}_{s} ) +u_{s} ( \tilde{\psi}_{t} - \tilde{\psi} ) +
    ( u_{t} -u_{s} ) ( \tilde{\psi}_{t} - \tilde{\psi}_{s} )\\
    & = & \int_{s}^{t} u_{q} ( b. \nabla \tilde{\psi}_{s} + ( \tmop{div}  b )
    \tilde{\psi}_{s} ) \dd q+u_{s} ( \nabla \tilde{\psi}_{s} ) .X_{s,t} +
    \frac{1}{2} u_{s} ( \nabla^{2} \tilde{\psi}_{s} ) .X^{\otimes 2}_{s,t}\\
    &  & - \int_{s}^{t} u_{s} ( \tilde{b} . \nabla \tilde{\psi}_{q} + (
    \tmop{div}   \tilde{b} ) \tilde{\psi}_{s} ) \dd q-u_{s} ( \nabla
    \tilde{\psi}_{s} ) .X_{s,t} + \frac{1}{2} u_{s} ( \nabla^{2}
    \tilde{\psi}_{s} ) .X^{\otimes 2}_{s,t}\\
    &  & + \int_{s}^{t} u_{q} ( b. ( \nabla \tilde{\psi}_{t} - \nabla
    \tilde{\psi}_{s} ) + ( \tmop{div}  b ) ( \tilde{\psi}_{t} -
    \tilde{\psi}_{s} ) ) \dd q+u_{s} ( \nabla \tilde{\psi}_{t} - \nabla
    \tilde{\psi}_{s} ) .X_{s,t}\\
    &  & +R_{s,t} ,
  \end{eqnarray*}
  where $| R_{s,t} | \lesssim | t-s |^{3 \gamma}$.
  
  But
  \[ u_{s} ( \nabla \tilde{\psi}_{t} - \nabla \tilde{\psi}_{s} ) =-u_{s} (
     \nabla^{2} \tilde{\psi}_{s} ) .X_{s,t} + \tilde{R}_{s,t} \]
  and all the rough terms cancelled. Finally we have
  \begin{eqnarray*}
    u_{t} ( \tilde{\psi}_{t} ) -u_{s} ( \tilde{\psi}_{s} ) & = & \int_{s}^{t}
    u_{q} ( ( b- \tilde{b} ) . \nabla \tilde{\psi}_{q} ) \dd q+
    \int_{s}^{t} u_{q} ( \tmop{div}   ( b- \tilde{b} ) \tilde{\psi}_{q} )
    \dd q\\
    &  & + \int_{s}^{t} u_{q} ( b. ( \nabla \tilde{\psi}_{t} - \nabla
    \tilde{\psi}_{q} ) ) \dd q+ \int_{s}^{t} u_{q} ( ( \tmop{div}  b ) (
    \tilde{\psi}_{t} - \tilde{\psi}_{q} ) ) \dd q\\
    &  & + \int_{s}^{t} ( u_{q} -u_{s} ) ( \tilde{b} . \nabla
    \tilde{\psi}_{q} + ( \tmop{div}   \tilde{b} ) \tilde{\psi}_{q} ) \dd q+
    \tilde{R}_{s,t} .
  \end{eqnarray*}
  Furthermore
  \[ \left| \int_{s}^{t} u_{q} ( b. ( \nabla \tilde{\varphi}_{t} - \nabla
     \tilde{\psi}_{q} ) ) \dd q \right| \lesssim_{\varepsilon ,T, \varphi
     ,b, \| X \|_{\gamma}} \| u \|_{\infty} \int_{s}^{t} | t-q |^{\gamma}
     \dd q \lesssim | t-s |^{1+ \gamma} , \]
  \[ \left| \int_{s}^{t} u_{q} ( \tmop{div}  b ( \tilde{\psi}_{t} -
     \tilde{\psi}_{q} ) ) \dd q \right| \lesssim | t-s |^{1+ \gamma} , \]
  and since $\tilde{b} . \nabla \tilde{\psi}_{q} + ( \tmop{div}   \tilde{b} )
  \tilde{\psi}_{q} \in C^{\infty}_{c} ( \mathbbm{R}^{d} )$ and $u$ is a WCS
  \[ | ( u_{q} -u_{s} ) ( \tilde{b} . \nabla \tilde{\psi}_{q} + ( \tmop{div}  
     \tilde{b} ) \tilde{\psi}_{q} ) | \lesssim | q-s |^{\gamma} . \]
  Hence, we have the following decomposition
  \[ u_{t} ( \tilde{\psi}_{t} ) -u_{s} ( \tilde{\psi}_{s} ) = \int_{s}^{t}
     u_{q} ( ( b- \tilde{b} ) . \nabla \tilde{\psi}_{q} + \tmop{div} ( b-
     \tilde{b} ) \tilde{\psi}_{q} ) \dd q+ \tilde{R}_{s,t} \]
  where $| \tilde{R}_{s,t} | \lesssim | t-s |^{3 \gamma}$. But, thanks to the
  last equation,
  \[ \delta \tilde{R}_{r,s,t} =0 \]
  and the Lemma \ref{th:sewing_map} gives
  \[ \tilde{R}_{s,t} =0. \]
  Finally, we have
  \begin{eqnarray}
    u_{t} ( \tilde{\psi}_{t} ) -u_{s} ( \tilde{\psi}_{s} ) & = & \int_{s}^{t}
    u_{q} ( ( b- \tilde{b} ) . \nabla \tilde{\psi}_{q} + \tmop{div} ( b-
    \tilde{b} ) \tilde{\psi}_{q} ) \dd q \nonumber\\
    & = & \int_{s}^{t} u_{q} ( \tmop{div} [ ( b_{q} - \tilde{b}_{q} )
    \tilde{\psi}_{q} ] ) \dd q.  \label{eq:increment_duality_method}
  \end{eqnarray}
  As Equation {\eqref{eq:RTE}} is linear, in order to prove the uniqueness of
  the solutions, it is enough to prove it when $u_{0} =0$. As stated above, we
  need backward solutions of the Continuity Equation. When $\tilde{b}$ is
  bounded smooth, for $\varphi_{0} \in C^{\infty}_{c} ( \mathbbm{R}^{d} )$, $x
  \rightarrow \varphi_{0} ( x ) = \varphi_{0} ( \tilde{\Phi}_{t_{0}} ( x ) )
  \exp \left( \int_{0}^{t_{0}} ( \tmop{div}   \tilde{b}_{q} ) (
  \tilde{\Phi}_{q} ( x ) ) \dd q \right)$ is smooth, compactly supported
  and $\tilde{\psi}$ is a solution of the rough continuity equation.
  Furthermore $\tilde{\psi}_{t_{0}} ( x ) = \varphi_{0} ( x )$, and
  \[ \tilde{\psi}_{t} ( x ) = \varphi_{0} ( \tilde{\Phi}_{t_{0} -t} ( x ) )
     \exp \left( \int_{t}^{t_{0}} ( \tmop{div}   \tilde{b}_{q} ) (
     \tilde{\Phi}_{q-t} ( x ) ) \dd q \right) . \]

  Hence, we can choose $t=t_{0}$ in {\eqref{eq:increment_duality_method}}
  and $s=0$, and we have
  \[ u_{t_{0}} ( \varphi_{0} ) = \int_{0}^{t_{0}} u_{q} ( \tmop{div} [ ( b_{q}
     - \tilde{b}_{q} ) \tilde{\psi}_{q} ] ) \dd q. \]
  We can split the right hand side into three parts
  \[ A_{1} = \int_{0}^{t_{0}} u_{q} ( \tmop{div} ( b- \tilde{b} )_{q}
     \tilde{\psi}_{q} ) \dd q, \]
  \[ A_{2} = \int_{0}^{t} \dd q \int_{\mathbbm{R}^{d}} \dd x u_{q} ( x )
     ( b- \tilde{b} )_{q} ( x ) . \nabla \varphi_{0} ( \tilde{\Phi}_{t-q} ( x
     ) ) .D \tilde{\Phi}_{t-q} ( x ) \exp \left( \int_{q}^{t} ( \tmop{div}  
     \tilde{b}_{r} ) ( \tilde{\Phi}_{r-q} ( x ) ) \dd q \right) \dd q,
  \]
  and
  \[ A_{3} = \int_{0}^{t} \dd q \int_{\mathbbm{R}^{d }} \dd x u_{q} ( x
     ) \tilde{\psi}_{q} ( x ) ( b- \tilde{b} )_{q} ( x ) \int_{q}^{t} \nabla (
     \tmop{div}   \tilde{b}_{r} ) ( \tilde{\Phi}_{r-q} ( x ) ) .D
     \tilde{\Phi}_{r-q} ( x ) \dd q. \]

  Let us recall that $| \tilde{\psi}_{q} ( x ) | \leqslant \| \varphi_{0}
  \|_{\infty} \exp ( T \| \tmop{div}   \tilde{b} \|_{\infty} )
  \mathbbm{1}_{B_{f} ( 0, \tilde{R} )} ( x )$ where $\tilde{R}$ is
  nondecreasing in $T$, $\| X \|_{\gamma}$ and $\| \tilde{b} \|_{\infty}$.
  Hence
  \begin{eqnarray*}
    | A_{1} | & \leqslant & \| u \|_{\infty} T  \sup_{q \in [0,T]}
    \int_{\mathbbm{R}^{d}} \dd x  | \tmop{div}  b_{q} - \tmop{div}  
    \tilde{b}_{q} | \mathbbm{1}_{B_{f} ( 0, \tilde{R} )} ( x ) .
  \end{eqnarray*}
  For $A_{2}$ we will use the same trick: as $| \nabla \varphi_{0} (
  \tilde{\Phi}_{t-q} ( x ) ) | \leqslant \| \nabla \varphi_{0} \|_{\infty}
  \mathbbm{1}_{B_{f} ( 0, \tilde{R} )} ( x )$, we have
  \[ | A_{2} | \lesssim   \sup_{q \in [0,T]} \int_{\mathbbm{R}^{d}} \dd x |
     b_{q} ( x ) - \tilde{b}_{q} ( x ) | \mathbbm{1}_{B_{f} ( 0, \tilde{R} )}
     ( x ) | D \tilde{\Phi}_{t-q} ( x ) | . \]
  The same holds for $A_{3}$, and we have
  \begin{eqnarray*}
    | A_{3} | & \lesssim & \sup_{q \in [0,T]} \int_{\mathbbm{R}^{d}} \dd x
    | b_{q} ( x ) - \tilde{b}_{q} ( x ) | \mathbbm{1}_{B_{f} ( 0, \tilde{R} )}
    ( x ) \left| \int_{q}^{t} \nabla ( \tmop{div}   \tilde{b}_{r} ) (
    \tilde{\Phi}_{r-q} ( x ) ) .D \tilde{\Phi}_{r-q} ( x ) \dd r \right|
  \end{eqnarray*}
  Once put together, this gives the wanted result.
\end{proof}

\begin{remark}
  \label{remark:decomposition_weak_solution}In fact, the proof gives us a
  decomposition of $u_{q} ( \varphi_{0} )$ as the follows:
  \[   \tilde{\psi}_{t} ( x ) = \varphi_{0} ( \tilde{\Phi}_{t_{0} -t} ( x ) )
     \exp \left( \int_{t}^{t_{0}} ( \tmop{div}   \tilde{b}_{q} ) (
     \tilde{\Phi}_{q-t} ( x ) ) \dd q \right) \]
  and every weak controlled solution of the rough transport equation verifies
  \[ u_{t_{0}} ( \varphi_{0} ) = \int_{0}^{t_{0}} u_{q} ( \tmop{div} [ ( b_{q}
     - \tilde{b}_{q} ) \tilde{\psi}_{q} ] ) \dd q. \]
\end{remark}
	\subsection{Strong uniqueness}

In the case of the fractional Brownian motion, a phenomenon of regularization
by noise will occur. But even without any regularization, we have the
following theorem.

\begin{theorem}\label{theorem:strong_uniqueness}
  Let $b \in L^{\infty} ( [0,T];L^{\infty} ( \mathbbm{R}^{d} ) \cap C^{1} (
  \mathbbm{R}^{d} ) )$, $\delta >0$ and $\tmop{div}  b \in L^{\infty} ( [0,T]
  \times \mathbbm{R}^{d} )$, $1/3< \gamma \leqslant 1/2$ and $\mathbf{X} \in
  \mathcal{R}^{\gamma} ( [0,T] )$. There exists a unique weak controlled
  solution with $u_{0} \in L^{\infty} ( \mathbbm{R}^{d} )$ to the rough
  transport equation.
\end{theorem}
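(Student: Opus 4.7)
The strategy is a duality argument based on the Fundamental Lemma (Lemma \ref{lemma:uniqueness}). By linearity of the rough transport equation, it is enough to prove that any weak controlled solution $u$ associated to the trivial initial condition $u_0 \equiv 0$ vanishes identically, and since the definition of a weak controlled solution only tests $u$ against functions in $C^\infty_c(\mathbb{R}^d)$, this reduces to showing that $u_{t_0}(\varphi_0)=0$ for every $t_0\in [0,T]$ and every $\varphi_0\in C^\infty_c(\mathbb{R}^d)$. Fix such a pair. The Fundamental Lemma requires a drift $\tilde b$ lying in $L^\infty([0,T];C^\infty_c(\mathbb{R}^d))$, so I would construct a two-parameter approximation $b^{R,\varepsilon}:=\chi_R\,(\rho_\varepsilon\ast b)$, where $\chi_R\in C^\infty_c$ equals $1$ on the ball $B(0,R)$ and $\rho_\varepsilon$ is a standard mollifier. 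The radius $R$ is chosen larger than the ball $B(0,\tilde R)$ supplied by Lemma \ref{lemma:bounded_bounded_drift}; since $\tilde R$ depends only on $T$, $\|X\|_\gamma$ and $\|\tilde b\|_\infty\le\|b\|_\infty$, it can be fixed uniformly in $\varepsilon$.

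Applying the Fundamental Lemma with $\tilde b=b^{R,\varepsilon}$ yields
\[
|u_{t_0}(\varphi_0)|\lesssim \|u\|_\infty\sup_{q\in[0,t_0]}\int_{B(0,\tilde R)}\bigl[|\mathrm{div}(b-b^{R,\varepsilon})_q(x)|+|(b-b^{R,\varepsilon})_q(x)|\bigr]\bigl(|D\tilde\Phi^{\varepsilon}_{t_0-q}(x)|+|\nabla\tilde G^{\varepsilon,t_0}_q(x)|\bigr)\,dx.
\]
To pass $\varepsilon\to 0$ I need uniform control of the dual flow quantities. The $C^1$ hypothesis (with bounded derivative on the relevant compact set) gives $\|Db^{R,\varepsilon}\|_\infty$ uniformly bounded, so the linearized flow equation combined with Gronwall's lemma produces $\|D\tilde\Phi^{\varepsilon}_{t_0-q}\|_\infty\le e^{T\|Db^{R,\varepsilon}\|_\infty}$ uniformly in $\varepsilon$. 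On the convergence side, since $b\in C^1$ and $\mathrm{div}\,b\in L^\infty$, the approximations $b^{R,\varepsilon}$ and $\mathrm{div}\,b^{R,\varepsilon}$ converge to $b$ and $\mathrm{div}\,b$ respectively in $L^1(B(0,\tilde R))$.

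The main obstacle is the factor $|\nabla\tilde G^{\varepsilon,t_0}_q(x)|=|\int_q^{t_0}\nabla(\mathrm{div}\,b^{R,\varepsilon}_r)(\tilde\Phi^{\varepsilon}_{r-q}(x))\,D\tilde\Phi^{\varepsilon}_{r-q}(x)\,dr|$, which contains a gradient of $\mathrm{div}\,b^{R,\varepsilon}$; with $\mathrm{div}\,b$ merely bounded this blows up like $\varepsilon^{-1}$, and the crude bound $\|b-b^{R,\varepsilon}\|_\infty\lesssim\varepsilon\|Db\|_\infty$ only produces $O(1)$ in the product. This is precisely where the auxiliary regularity parameter $\delta>0$ must enter: assuming the sharpened hypothesis $b\in L^\infty([0,T];\mathcal{C}^{1+\delta}_b)$ (alternatively $\mathrm{div}\,b\in\mathcal{C}^\delta_b$), standard mollification estimates give $\|\nabla\mathrm{div}\,b^{R,\varepsilon}\|_\infty\lesssim \varepsilon^{-(1-\delta)}$ while $\|b-b^{R,\varepsilon}\|_\infty\lesssim \varepsilon^{1+\delta}$, so that the product $\|b-b^{R,\varepsilon}\|_\infty\,\|\nabla\tilde G^{\varepsilon,t_0}_q\|_\infty=O(\varepsilon^{2\delta})\to 0$. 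The corresponding term involving $|\mathrm{div}(b-b^{R,\varepsilon})_q|\cdot|D\tilde\Phi^{\varepsilon}|$ vanishes even more easily by the $L^1_{\mathrm{loc}}$-convergence of $\mathrm{div}\,b^{R,\varepsilon}$. Plugging these estimates into the inequality above forces $u_{t_0}(\varphi_0)=0$, completing the proof of strong uniqueness.
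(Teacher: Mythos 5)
Your overall strategy (duality via Lemma \ref{lemma:uniqueness}, mollify $b$, bound the dual flow quantities uniformly in $\varepsilon$) is the same as the paper's, and your diagnosis of the obstacle is exactly right: under the stated hypotheses $\|b-b^{\varepsilon}\|_{L^{\infty}(B(0,R))}\lesssim\varepsilon$ while $\|\nabla\,\mathrm{div}\,b^{\varepsilon}\|_{L^{\infty}(B(0,R))}\lesssim\varepsilon^{-1}$, so the term containing $\nabla G^{\varepsilon,t_{0}}_{q}$ is only $O(1)$. The gap is in how you resolve this. You resolve it by \emph{changing the hypotheses}: you read the stray ``$\delta>0$'' of the statement as $b\in L^{\infty}([0,T];\mathcal{C}^{1+\delta}_{b})$ (or $\mathrm{div}\,b\in\mathcal{C}^{\delta}_{b}$), which gives $\|b-b^{\varepsilon}\|_{\infty}\lesssim\varepsilon^{1+\delta}$ and $\|\nabla\,\mathrm{div}\,b^{\varepsilon}\|_{\infty}\lesssim\varepsilon^{\delta-1}$, hence a vanishing product. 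But the theorem asserts uniqueness with only $b\in L^{\infty}\cap C^{1}$ and $\mathrm{div}\,b\in L^{\infty}$ (the $\delta$ in the statement is attached to nothing and is not used in the paper's argument), so your proof establishes a strictly weaker statement and leaves the actual theorem unproved: you supply no mechanism for killing the $O(1)$ term when $\mathrm{div}\,b$ has no Hölder regularity.

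The paper closes this gap with an extra approximation layer that you are missing. Fix $\eta>0$ and choose $\theta\in C^{\infty}([0,T]\times\mathbbm{R}^{d})$ with $\|(u-\theta)\mathbbm{1}_{B(0,R)}\|_{L^{1}([0,T]\times\mathbbm{R}^{d})}<\eta$. Split $\int_{0}^{t_{0}}u_{q}\bigl(\mathrm{div}[(b_{q}-b^{\varepsilon}_{q})\psi^{\varepsilon}_{q}]\bigr)\,\dd q$ into the contribution of $\theta$ and of $u-\theta$. For the $\theta$ part you integrate by parts, moving the divergence onto $\nabla\theta$, so it is bounded by $\|b-b^{\varepsilon}\|_{L^{\infty}([0,T]\times B(0,R))}\,\|\nabla\theta\|_{L^{\infty}([0,T]\times B(0,R))}\to 0$ as $\varepsilon\to 0$; no derivative of $\mathrm{div}\,b^{\varepsilon}$ ever appears. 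For the $u-\theta$ part you only need that $\mathrm{div}\bigl((b-b^{\varepsilon})\psi^{\varepsilon}\bigr)$ is bounded on $B(0,R)$ \emph{uniformly in} $\varepsilon$ — which is precisely what your $O(1)$ estimate (the $A_{3}$-type cancellation $\varepsilon\cdot\varepsilon^{-1}$) delivers — so this part is $\lesssim\eta$. Letting $\varepsilon\to0$ and then $\eta\to0$ gives $u_{t_{0}}(\varphi_{0})=0$ under the stated hypotheses. Without this (or some equivalent device), your argument does not prove the theorem as stated.
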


\begin{proof}
  As $b \in L^{\infty} ( [0,T]; \tmop{Lin} ( \mathbbm{R}^{d} ) )$, and since
  $\tmop{div}  b \in L^{\infty} ( [0,T] \times \mathbbm{R}^{d} )$ and
  $\mathbf{X} \in \mathcal{R}^{\gamma}$, there exists a weak controlled
  solution. Furthermore, $b$ is locally Lipschitz continuous in the second
  variable and $b$ has linear growth in the second variable. It is well-known
  that there exists a unique solution $\Phi$ to the equation
  \[ \Phi_{t} ( x ) =x+ \int_{0}^{t} b_{q} ( \Phi_{q} ( x ) ) \dd q+X_{q} .
  \]
  Furthermore, $\Phi$ is differentiable in space, and its spatial derivative
  satisfies the following equation
  \[ D \Phi_{t} ( x ) = \tmop{id} + \int_{0}^{t} D b_{q} ( \Phi_{q} ( x ) ) .D
     \Phi_{q} ( x ) \dd q. \]
  Furthermore, for for $r>0$ and $x \leqslant r$
  \[ | D \Phi_{t} ( x ) | \lesssim \sup_{q \in [0,T]} e^{\sup_{y \in B ( r+ \|
     X \|_{\infty} +T \| b \|_{\infty} )}   | D b_{q} ( y ) |} . \]
  In order to prove uniqueness, with the notations of Remark
  \ref{remark:decomposition_weak_solution}, we only have to check that there
  exists a sequence $b^{\varepsilon} \in C^{\infty} ( \mathbbm{R}^{d} )$ such
  that
  \begin{equation}
    | u_{t_{0}} ( \varphi ) | = \left| \int_{0}^{t_{0}} u_{q} ( \tmop{div} [ (
    b_{q} -b^{\varepsilon}_{q} ) \psi^{\varepsilon}_{q} ] ) \dd q \right|
    \rightarrow_{\varepsilon \rightarrow 0} 0
    \label{eq:strong_uniqueness_estimate} .
  \end{equation}
  Let $k \in C_{c}^{\infty} ( \mathbbm{R}^{d} ,\mathbbm{R} )$ a
  mollifier,\tmtextit{i.e.} $k \geqslant 0$ such that $\int_{\mathbbm{R}^{d}}
  k ( x ) \dd x=1$ and $k ( x ) =k ( -x )$. Let $k_{\varepsilon} ( x ) =
  \frac{1}{\varepsilon^{d}} k ( x/ \varepsilon )$ and $b^{\varepsilon}
  =k_{\varepsilon} \ast b$. Hence, $b^{\varepsilon} \in C_{b}^{\infty} (
  \mathbbm{R}^{d} )$ with $\| \tmop{div}  b^{\varepsilon} \|_{\infty}
  \leqslant \| \tmop{div}  b \|_{\infty}$. Let us recall, thanks to Lemma
  \ref{lemma:smooth_control} and since $\varphi \in C^{\infty}$ and
  $b^{\varepsilon} \in C^{\infty}_{b} ( \mathbbm{R}^{d} ) \cap \tmop{Lin} (
  \mathbbm{R}^{d} )$, for all $t \in [0,T]$, $x \rightarrow \varphi (
  \Phi^{\varepsilon}_{t} ( x ) ) \in C^{\infty}_{c} ( \mathbbm{R}^{d} )$ and
  since $\| b^{\varepsilon} \|_{\infty} \lesssim \| b \|_{\infty}$, for all $N
  \in \mathbbm{N}$, there exists $R>0$ independent of $\varepsilon$ such that
  for all $t \in [0,T]$,
  \[ \tmop{supp}   \nobracket \varphi_{0} ( \Phi^{\varepsilon}_{t} ( . ) ) |
     \subset B ( 0,R ) . \]
  Furthermore, as $b^{\varepsilon} \in C^{\infty}$, for all $t \in [0,T]
  \nocomma , \Phi^{\varepsilon}_{t} \in C^{1} ( \mathbbm{R}^{d} )$ and for $x
  \leqslant R$
  \[ \sup_{\varepsilon >0} \sup_{t \in [0,T]} | D \Phi^{\varepsilon}_{q} ( x )
     | \leqslant \sup_{q \in [0,T]} e^{\sup_{y \in B ( R+ \| X \|_{\infty} +T
     \| b \|_{\infty} )}   | D b_{q} ( y ) |} . \]
  Furthermore, as $b \in L^{\infty} ( [0,T];C^{1} ( \mathbbm{R}^{d} ) \cap
  L^{\infty} ( \mathbbm{R}^{d} ) )$, by localization
  \[ \sup_{\varepsilon >0}   \varepsilon^{-1} \sup_{q \in [0,T] ,x \in B ( 0,R
     )} | b_{q} ( x ) -b_{q}^{\varepsilon} ( x ) | <+ \infty . \]
  In order to prove the theorem when $\tmop{div}  b \in L^{\infty} (
  \mathbbm{R}^{d} )$, we need to use an approximation argument. As all the
  function are localized in a ball of radius $R$, let $\eta >0$ and let
  $\theta \in C^{\infty} ( [0,T] \times \mathbbm{R}^{d} )$ such that $\| (
  \theta -u ) \mathbbm{1}_{B ( 0,R )} \|_{L^{1} ( [0,T] \times \mathbbm{R}^{d}
  )} < \eta$. We have
  \begin{eqnarray*}
    \int_{0}^{t_{0}} \dd q \left| \int_{B ( 0,R )} \dd x  \theta_{q} ( x
    ) \tmop{div} ( ( b_{q} -b^{e}_{q} ) \psi^{\varepsilon}_{q} ) ( x ) \right|
    & = & \int_{0}^{t_{0}} \dd q \left| \int_{B ( 0,R )} \dd x  \nabla
    \theta_{q} ( x ) . ( b_{q} -b^{e}_{q} ) ( x ) \psi^{\varepsilon}_{q} ( x )
    \right|\\
    & \lesssim & \| b-b^{\varepsilon} \|_{L^{\infty} ( [0,T] \times B ( 0,R )
    )} \| \nabla \theta \|_{L^{\infty} ( [0,T] \times B ( 0,R ) )} .
  \end{eqnarray*}
  On the other hand, we have

  \begin{multline*}
    \int_{0}^{t_{0}} \dd q \left| \int_{B ( 0,R )} \dd x ( u_{q} ( x ) -
    \theta_{q} ( x ) ) \tmop{div} ( ( b ( x ) -b^{\varepsilon} ( x ) )
    \psi^{\varepsilon}_{q} ( x ) ) \right|\\  
    \lesssim  \| u- \theta \|_{L^{\infty} ( [0,T] \times B ( 0,R ) )}
     \| \tmop{div} ( ( b_{q} -b^{e}_{q} ) \psi^{\varepsilon}_{q} )
    \|_{L^{\infty} ( [0,T] \times B ( 0,R ) )} .
  \end{multline*}
  
  But
  \[ \mathbbm{1}_{B ( 0,R )} ( x ) \tmop{div} ( ( b_{q} -b^{e}_{q} )
     \psi^{\varepsilon}_{q} ) ( x ) \leqslant e^{T \| \tmop{div}  b
     \|_{L^{\infty} ( [0,T] \times \mathbbm{R}^{d} )}} ( A^{\varepsilon}_{1} (
     x ) +A^{\varepsilon}_{2} ( x ) +A^{\varepsilon}_{3} ( x ) ) , \]
  where
  \[ A^{\varepsilon}_{1} = | ( \tmop{div}  b_{q} - \tmop{div} 
     b^{\varepsilon}_{q} ) ( x ) | | \varphi ( \Phi^{\varepsilon}_{t_{0} -q} (
     x ) ) | \mathbbm{1}_{B ( 0,R )} ( x ) \lesssim \| \tmop{div}  b_{q}
     \|_{L^{\infty} ( [0,T] \times \mathbbm{R}^{d} )} , \]
  \[ A^{\varepsilon}_{2} = | b_{q} ( x ) - b^{\varepsilon}_{q} ( x ) | |
     \nabla \varphi ( \Phi^{\varepsilon}_{t_{0} -q} ( x ) ) | | D
     \Phi^{\varepsilon}_{t_{0} -q} ( x ) | \mathbbm{1}_{B ( 0,R )} ( x )
     \lesssim \sup_{q \in [0,T]} \| b_{q} \mathbbm{1}_{B ( 0,R )}
     \|_{L^{\infty} ( \mathbbm{R}^{d} )} \]
  and
  \begin{eqnarray*}
    A^{\varepsilon}_{3} & = & | b_{q} ( x ) - b^{\varepsilon}_{q} ( x ) | |
    \varphi ( \Phi^{\varepsilon}_{t_{0} -q} ( x ) ) | \int_{q}^{t_{0}} \nabla
    \tmop{div}  b^{\varepsilon} ( \Phi^{\varepsilon}_{r-q} ( x ) ) | D
    \Phi^{\varepsilon}_{r-q} ( x ) | \dd r \mathbbm{1}_{B ( 0,R )} ( x )\\
    & \lesssim & \sup_{q,r \in [0,T]} \| ( b_{q} - b^{\varepsilon}_{q} )
    \mathbbm{1}_{B ( 0,R )} \nabla \tmop{div}  b^{\varepsilon} (
    \Phi^{\varepsilon}_{r} ( . ) ) \|\\
    & \lesssim & 1,
  \end{eqnarray*}
  since $\| ( b_{q} - b^{\varepsilon}_{q} ) \mathbbm{1}_{B ( 0,R )}
  \|_{\infty} \lesssim \varepsilon$ and $\| \nabla \tmop{div}  b^{\varepsilon}
  ( \Phi^{\varepsilon}_{r} ( . ) ) \mathbbm{1}_{B ( 0,R )} \| \lesssim 1/
  \varepsilon$. Hence
  \[ \int_{0}^{t_{0}} \dd q \left| \int_{B ( 0,R )} \dd x ( u_{q} ( x )
     - \theta_{q} ( x ) ) \tmop{div} ( ( b ( x ) -b^{\varepsilon} ( x ) )
     \psi^{\varepsilon}_{q} ( x ) ) \right| \lesssim \eta . \]
  Finally thanks to Remark~\ref{remark:decomposition_weak_solution},
  \begin{eqnarray*}
    | u_{t_{0}} ( \varphi ) | & \leqslant & \int_{0}^{t_{0}} \dd q \left|
    \int_{B ( 0,R )} \dd x ( u_{q} ( x ) - \theta_{q} ( x ) ) \tmop{div} (
    ( b ( x ) -b^{\varepsilon} ( x ) ) \psi^{\varepsilon}_{q} ( x ) )
    \right|\\
    &  & + \int_{0}^{t_{0}} \dd q \left| \int_{B ( 0,R )} \dd x 
    \theta_{q} ( x ) \tmop{div} ( ( b_{q} -b^{e}_{q} ) \psi^{\varepsilon}_{q}
    ) ( x ) \right|\\
    & \lesssim & \eta + \| b-b^{\varepsilon} \|_{L^{\infty} ( [0,T] \times B
    ( 0,R ) )} \| \nabla \theta \|_{L^{\infty} ( [0,T] \times B ( 0,R ) )} .
  \end{eqnarray*}
  Now $\varepsilon$ goes to zero, and $u_{t_{0}} ( \varphi ) =0$ for all
  $\varphi \in C^{\infty}_{c} ( \mathbbm{R}^{d} )$, which is exactly the
  wanted result.
\end{proof}

\begin{remark}
  If $b \in \tmop{Lin} ( \mathbbm{R}^{d} )$ the last argument does not work.
  Indeed, we were not able to prove that in that case the the functions $x
  \mapsto \varphi ( \Phi^{\varepsilon}_{t} ( x ) )$ is compactly supported. In
  order to fix that, one solution might be to localize the function $b$ before
  mollifying it. Let $\theta \in C^{\infty}_{c} ( \mathbbm{R}^{d} )$ such that
  $\theta ( x ) =1$ for $| x | \leqslant 1$ and $\theta ( x ) =0$ for $| x |
  \geqslant 2$. For $r>0$ let us define $\theta_{r} ( x ) = \theta ( x/r )$
  and
  \[ b^{\varepsilon ,r}_{q} ( x ) =k_{\varepsilon} \ast ( b_{q} \theta_{r} ) .
  \]
  In order to complete the duality argument in Lemma $\ref{lemma:uniqueness}$
  we would have to test $u$ against $\psi^{\varepsilon ,r} \theta_{r}$. In
  that procedure other remainder terms should appear.
\end{remark}

\begin{corollary}
  With the hypothesis of the previous theorem, for $u_{0} \in C^{3}_{b} (
  \mathbbm{R}^{d } )$ there exists a unique strong controlled solution $u$ to
  the rough transport equation. Furthermore, for all $( t,x ) \in [0,T] \times
  \mathbbm{R}^{d}$, $u_{t} ( x ) =u_{0} ( \Phi^{-1}_{t} ( x ) )$.
\end{corollary}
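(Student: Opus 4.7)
For uniqueness, the plan is first to observe that any strong controlled solution is a weak controlled solution: for $\varphi\in C^\infty_c(\RR^d)$, integrating the pointwise identity of the strong definition against $\varphi$ and using Fubini turns the pointwise controlled decomposition of $\nabla u_\cdot(x)$ into a controlled decomposition of $u_\cdot(\nabla\varphi)$, and the pointwise bound $|R^u_{s,t}(x)|\lesssim|t-s|^{3\gamma}$ integrates to the bound on $u_\cdot(\nabla\varphi)^\#_{s,t}$ required by Definition~\ref{definition_WCS}. Uniqueness then follows directly from Theorem~\ref{theorem:strong_uniqueness}.

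For existence and the representation formula, the candidate is $u_t(x):=u_0(\Phi^{-1}_t(x))$ where $\Phi$ is the classical flow associated to $b$ and $X$. Since $b\in L^\infty([0,T];L^\infty\cap C^1)$, Cauchy--Lipschitz provides a unique $\Phi$ with $\sup_{t,x}(|D\Phi_t(x)|+|D\Phi^{-1}_t(x)|)\le e^{T\|Db\|_\infty}$ by Gronwall, so that $u\in L^\infty\cap\mathcal{C}^\gamma([0,T];C^1_b(\RR^d))$ with $\nabla u_t(x)=\nabla u_0(\Phi^{-1}_t(x))\cdot D\Phi^{-1}_t(x)$.

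To verify that $t\mapsto\nabla u_t(x)$ is $X$-controlled and that the integral equation of a strong controlled solution holds, the plan is to mollify $b$ to $b^\varepsilon\in C^\infty_b$ and to approximate $X$ by $X^\varepsilon\in C^1$ with $\mathbf{X}^\varepsilon\to\mathbf{X}$ in $\mathcal{R}^\gamma$. For each $\varepsilon$, Theorem~\ref{theorem:strong_controlled_solutions} (with $c=0$, extended from $\varphi_0\in C^3_c$ to $u_0\in C^3_b$ by cutoff and a pointwise-in-$x$ reading of its proof) says that $u^\varepsilon_t(x)=u_0(\Phi^{\varepsilon,-1}_t(x))$ is a strong controlled solution of the approximate equation. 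Lemma~\ref{lemma:flow_convergence_X_smooth} yields pointwise convergence $\Phi^{\varepsilon,-1}(x)\to\Phi^{-1}(x)$ and $D\Phi^{\varepsilon,-1}(x)\to D\Phi^{-1}(x)$ in $\mathcal{C}^\gamma$; combined with a pointwise-in-$x$ reading of Lemma~\ref{lemma:comparison_controlled}, this upgrades to $\nabla u^\varepsilon_\cdot(x)\to\nabla u_\cdot(x)$ at the controlled-path level, and Theorem~\ref{th:controlled_integral} transfers the convergence to the rough integrals. Passing $\varepsilon\to 0$ in the approximate identity then produces the Taylor expansion with $O(|t-s|^{3\gamma})$ remainder required of a strong controlled solution.

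The main obstacle is that Theorem~\ref{theorem:strong_controlled_solutions} and Lemma~\ref{lemma:comparison_controlled} are formulated under $b\in C^2_b$, whereas mollifying a merely $C^1_b$ drift makes $\|D^2 b^\varepsilon\|_\infty$ diverge like $1/\varepsilon$, so the constants in those statements blow up. This is bypassed by observing that the strong-controlled-solution framework is purely local in $x$: for each fixed $x$, the controlled norm of $\nabla u^\varepsilon_\cdot(x)$ is controlled in terms of $\|Db^\varepsilon\|_\infty\le\|Db\|_\infty$ (through the Gronwall bound on $D\Phi^\varepsilon_t(x)$) and of the $C^3_b$-norm of $u_0$, without any $\|D^2 b^\varepsilon\|_\infty$ entering. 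Thus the $\mathcal{D}^\gamma_{X^\varepsilon}$-bounds on $\nabla u^\varepsilon_\cdot(x)$ are uniform in $\varepsilon$ for each $x$, and the limiting object $u_t(x)=u_0(\Phi^{-1}_t(x))$ inherits the strong controlled solution property, which is the desired representation.
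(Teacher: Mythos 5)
Your overall strategy is exactly the paper's: show that $u_t(x)=u_0(\Phi^{-1}_t(x))$ is a strong controlled solution, observe that a strong controlled solution is in particular a weak controlled solution, and conclude uniqueness from Theorem~\ref{theorem:strong_uniqueness}. Your uniqueness half is the paper's argument (the paper states it in one line; your Fubini/integration-by-parts remark is the implicit content). Where you diverge is the existence half: the paper simply invokes Theorem~\ref{theorem:strong_controlled_solutions} with $c=0$, while you re-run its mollification proof because the corollary only assumes $b\in L^{\infty}([0,T];L^{\infty}\cap C^{1})$ and $u_0\in C^{3}_{b}$, whereas that theorem is stated for $b\in C^{2}_{b}$ and $\varphi_0\in C^{3}_{c}$. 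That observation is legitimate (the paper's citation glosses over the mismatch), and your key point is correct: the uniform-in-$\varepsilon$ controlled bounds for $\nabla u^{\varepsilon}_{\cdot}(x)=\nabla u_0((\Phi^{\varepsilon})^{-1}_{\cdot}(x))\,D(\Phi^{\varepsilon})^{-1}_{\cdot}(x)$ only require bounds on $b$, on $Db^{\varepsilon}$ (locally), on $\|u_0\|_{C^3_b}$ and on $\|X\|_{\gamma}$, the second factor being Lipschitz in time with vanishing Gubinelli derivative, and one concludes with Lemma~\ref{lemma:product_controlled_path}. Note only that the hypothesis is $C^1$, not $C^1_b$, so your global quantity $\|Db\|_{\infty}$ must everywhere be read as a supremum over the ball of radius $|x|+T\|b\|_\infty+\|X\|_\infty$; since the strong notion is pointwise in $x$, this is harmless.

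The one step that is not justified as written is the passage to controlled-path convergence. Lemma~\ref{lemma:flow_convergence_X_smooth} compares flows of the \emph{same} drift driven by two paths, and its estimate on $D\Phi^{X}-D\Phi^{Y}$ explicitly requires $b\in C^{2}_{b}$ (the constant contains $\|D^{2}b\|_{\infty}$); Lemma~\ref{lemma:comparison_controlled} likewise keeps $b$ fixed and takes $\varphi\in C^{3}_{c}$. Neither covers the simultaneous replacement $b\mapsto b^{\varepsilon}$, and with $b$ merely $C^{1}$ you cannot expect a quantitative rate in $\|X-X^{\varepsilon}\|_{\gamma}+\|b-b^{\varepsilon}\|_{\infty,\mathrm{loc}}$ for $D(\Phi^{\varepsilon})^{-1}-D\Phi^{-1}$, so a "pointwise-in-$x$ reading" of those lemmas does not deliver what you claim. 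The repair is a rate-free argument: Gronwall comparisons using only local bounds on $Db$ together with $b^{\varepsilon}\to b$, $Db^{\varepsilon}\to Db$ locally uniformly and $X^{\varepsilon}\to X$ give, for fixed $x$, uniform-in-$t$ convergence of $(\Phi^{\varepsilon})^{-1}_t(x)$ and $D(\Phi^{\varepsilon})^{-1}_t(x)$; combined with your uniform $\gamma$-controlled bounds and interpolation this yields convergence in $\mathcal{D}^{\gamma'}_{X}$ for any $1/3<\gamma'<\gamma$, after which Theorem~\ref{th:controlled_integral} passes the rough term to the limit and the $O(|t-s|^{3\gamma})$ remainder bound survives by Fatou. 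With that substitution your proof closes; apart from this repaired existence step it coincides with the paper's.
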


\begin{proof}
  Thanks to Theorem \ref{theorem:strong_controlled_solutions}, for $u_{0} \in
  C^{3}_{b} ( \mathbbm{R}^{d} )$ we know that $( t,x ) \rightarrow u_{0} (
  \Phi^{-1}_{t} ( x ) )$ is a strong controlled solution, and then a weak
  controlled solution. But the previous theorem guarantees that there is only
  one weak controlled solution.
\end{proof}
\subsection{Regularization by noise}
	In the article by Catellier and Gubinelli {\cite{catellier_averaging_2012}}, as
we presented in Subsection \ref{subsec:regularization}, that when the process $X$ is
$\rho$-irregular, a phenomenon of regularization occurs. Indeed, for less
regular vectorfields $b$ the flow of the equation exists, and furthermore its
averaging properties are nice. We will give two different results: for a
general $\rho$-irregular path, and for the fractional Brownian motion.

\subsubsection{General $\rho -$irregular paths}

\begin{theorem}
  Let $\frac{1}{3} < \gamma \leqslant 1$, $\mathbf{X}= ( X,\mathbbm{X} ) \in
  \mathcal{R}^{\gamma} ( [0,T] )$, $\rho >0$ such that $X$ is
  $\rho$-irregular. Let $\alpha >- \rho$ such that $\alpha +3/2>0$ and $b \in
  \mathcal{F}L^{\alpha +3/2} ( \mathbbm{R}^{d} )$ and $\tmop{div}  b  \in
  \mathcal{F}L^{\alpha +3/2}$. Let $u_{0} \in L^{\infty} ( \mathbbm{R}^{d} )$.
  There exists a unique weak controlled solution to the Rough Transport
  Equation with initial condition $u_{0}$.
\end{theorem}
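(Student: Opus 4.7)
The statement combines an existence claim and a uniqueness claim, and I would attack them separately, with uniqueness being the substantive part.

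\emph{Existence.} Since $\alpha+3/2>0$, the embedding $\mathcal{F}L^{\alpha+3/2}\hookrightarrow \mathcal{C}^{\alpha+3/2}_b$ (recorded in the Remark after the definition of $\mathcal{F}L^\alpha$) gives in particular $b\in L^\infty([0,T];L^\infty(\RR^d))\subset L^\infty([0,T];\tmop{Lin}(\RR^d))$ and $\div b\in L^\infty([0,T]\times\RR^d)$. Thus the hypotheses of Theorem~\ref{theorem:existence_WCS} are met and a weak controlled solution $u\in L^\infty([0,T]\times\RR^d)$ exists.

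\emph{Uniqueness.} By linearity I reduce to $u_0=0$ and aim to show $u_{t_0}(\varphi_0)=0$ for every $t_0\in[0,T]$ and $\varphi_0\in C^\infty_c(\RR^d)$. I would apply the Fundamental Lemma~\ref{lemma:uniqueness} with $\tilde b=b^\varepsilon:=k_\varepsilon \ast b$, where $k_\varepsilon$ is a standard mollifier. Since Fourier convolution by $\hat k_\varepsilon$ has modulus $\le 1$, both $\|b^\varepsilon\|_{\mathcal{F}L^{\alpha+3/2}}$ and $\|\div b^\varepsilon\|_{\mathcal{F}L^{\alpha+3/2}}$ are bounded by the corresponding norms of $b$, uniformly in $\varepsilon$. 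In particular the ball radius $\tilde R$ produced by Lemma~\ref{lemma:bounded_bounded_drift} is uniform in $\varepsilon$, since it depends on $b^\varepsilon$ only through $\|b^\varepsilon\|_\infty\le\|b\|_\infty$. The Fundamental Lemma then yields
\begin{equation*}
|u_{t_0}(\varphi_0)|\lesssim \|u\|_\infty \sup_{q\in[0,t_0]}\int_{B_f(0,\tilde R)}\bigl[|\div(b-b^\varepsilon)_q(x)|+|(b-b^\varepsilon)_q(x)|\bigr]\bigl(|D\tilde\Phi_{t_0-q}(x)|+|\nabla\tilde G^{t_0}_q(x)|\bigr)\,\dd x.
\end{equation*}

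\emph{Uniform estimates on the flow factors via $\rho$-irregularity.} This is where the hypothesis on $X$ is crucially used and is the main technical step. By Theorem~\ref{theorem:rho_irregular_flow} applied to the family $b^\varepsilon\in\mathcal{F}L^{\alpha+3/2}$ (viewing $b^\varepsilon$ itself as a mollification of $b$), the Jacobians $|D\tilde\Phi_{t_0-q}(x)|$ are bounded by a constant $K(|x|)$ independent of $\varepsilon$ on $B_f(0,\tilde R)$. By Proposition~\ref{prop:rho_irregular_flow} applied with $f=\div b^\varepsilon$ (whose $\mathcal{F}L^{\alpha+3/2}$-norm is $\le\|\div b\|_{\mathcal{F}L^{\alpha+3/2}}$), the averaged quantity $\tilde G^{t_0}_q(x)=\int_q^{t_0}(\div\tilde b_r)(\tilde\Phi_{r-q}(x))\,\dd r$ is differentiable in $x$ with $|\nabla \tilde G^{t_0}_q(x)|\le K(|x|)\|\div b\|_{\mathcal{F}L^{\alpha+3/2}}$, again uniformly in $\varepsilon$.

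\emph{Conclusion.} With those uniform bounds the right-hand side of the Fundamental Lemma is controlled by a constant times $\sup_q\|b-b^\varepsilon\|_{L^1(B_f(0,\tilde R))}+\sup_q\|\div(b-b^\varepsilon)\|_{L^1(B_f(0,\tilde R))}$. Since $b$ and $\div b$ lie in $\mathcal{F}L^{\alpha+3/2}\subset \mathcal{C}^0_b(\RR^d)$, standard mollification on the compact set $B_f(0,\tilde R)$ gives uniform (hence $L^1$) convergence $b^\varepsilon\to b$ and $\div b^\varepsilon\to\div b$, so the right-hand side tends to $0$. Therefore $u_{t_0}(\varphi_0)=0$ for every $t_0$ and every $\varphi_0\in C^\infty_c(\RR^d)$, which proves $u\equiv 0$ and yields uniqueness. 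The hard point is strictly the verification that Proposition~\ref{prop:rho_irregular_flow} may be applied along the whole family $b^\varepsilon$, which reduces to the contractivity of mollification in $\mathcal{F}L^{\alpha+3/2}$ noted above.
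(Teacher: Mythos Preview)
Your proof is correct and follows essentially the same route as the paper: existence via the embedding $\mathcal{F}L^{\alpha+3/2}\hookrightarrow\mathcal{C}^{\alpha+3/2}_b$ and Theorem~\ref{theorem:existence_WCS}, and uniqueness via the Fundamental Lemma~\ref{lemma:uniqueness} applied with a mollification $b^\varepsilon$, using Theorem~\ref{theorem:rho_irregular_flow} and Proposition~\ref{prop:rho_irregular_flow} for the uniform-in-$\varepsilon$ control of $|D\tilde\Phi|$ and $|\nabla\tilde G|$, and the uniform convergence of $b^\varepsilon\to b$ and $\div b^\varepsilon\to\div b$ on the fixed compact set to close. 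Your explicit observation that mollification is contractive in $\mathcal{F}L^{\alpha+3/2}$ (since $|\hat k_\varepsilon|\le 1$) is the right way to justify the uniform applicability of Proposition~\ref{prop:rho_irregular_flow} along the whole family.
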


\begin{proof}
  Thanks to Theorem \ref{theorem:rho_irregular_flow}, the flow $\Phi$ of the
  equation
  \[ \Phi_{t} ( x ) =x+ \int_{0}^{t} b ( \Phi_{q} ( x ) ) \dd q+X_{t} \]
  exists. Furthermore, we know that for a mollification $b^{\varepsilon}$ of
  $b$ such that $\| b-b^{\varepsilon} \|_{\mathcal{F}L^{\alpha +3/2}}
  \rightarrow 0$,
  \[ \sup_{\varepsilon >0}   \sup_{t \in [0,T] ,x \in \mathbbm{R}^{d}} | D
     \Phi^{\varepsilon}_{q} ( x ) | <+ \infty . \]
  As $\alpha +3/2>0$,
  \[ | b ( x ) -b ( y ) | \lesssim | x-y |^{\alpha +3/2} \| b
     \|_{\mathcal{F}L^{\alpha +3/2}} . \]
  Hence $b \in \tmop{Lin} ( \mathbbm{R}^{d} )$, $\tmop{div}  b \in L^{\infty}
  ( \mathbbm{R}^{d} )$ and there exists a weak controlled solution of the
  rough transport equation. Finally for all $t \in [0,T]$, thanks to
  Proposition \ref{prop:fBm_flow} the function
  \[ ( G^{\varepsilon} )^{t_{0}}_{q} :x \rightarrow \int_{q}^{t_{0}} (
     \tmop{div}  b^{\varepsilon} ) ( \Phi^{\varepsilon}_{r-q} ( x ) ) \dd q
  \]
  is differentiable and
  \[ \sup_{\varepsilon >0}   \sup_{q \leqslant t_{0} \in [0,T]} (
     G^{\varepsilon} )^{t_{0}}_{q} ( | x | ) \lesssim K ( | x | ) N_{\alpha
     +1} ( \tmop{div}  b ) . \]
  Thanks to Lemma \ref{lemma:uniqueness}, it is enough to prove that
  \[ A_{\varepsilon} = \sup_{q \in [0,T]} \int_{\mathbbm{R}^{d}}
     \mathbbm{1}_{B_{f} ( 0,R^{\varepsilon} )} ( x ) | b ( x )
     -b^{\varepsilon} ( x ) | ( | D \Phi^{\varepsilon}_{t_{0} -q} ( x ) | + |
     \nabla   ( G^{\varepsilon} )^{t_{0}}_{q} ( x ) | ) \rightarrow 0 \]
  and
  \[ B_{\varepsilon} = \sup_{q \in [0,T]} \int_{\mathbbm{R}^{d}}
     \mathbbm{1}_{B_{f} ( 0,R^{\varepsilon} )} ( x ) | \tmop{div}  b ( x ) -
     \tmop{div}  b^{\varepsilon} ( x ) | ( | D \Phi^{\varepsilon}_{t_{0} -q} (
     x ) | + | \nabla   ( G^{\varepsilon} )^{t_{0}}_{q} ( x ) | ) . \]
  Since $\alpha +3/2>0$, $b \in C^{0}_{b} ( [0,T] )$, $R^{\varepsilon}
  \lesssim R$ where $R^{\varepsilon}$ and $R$ are the radii defined in Lemma
  \ref{lemma:bounded_bounded_drift}, and $\| b-b^{\varepsilon} \|_{\infty}
  \rightarrow 0$. Furthermore, thanks to Theorem
  \ref{theorem:rho_irregular_flow}, $( | D \Phi^{\varepsilon}_{t_{0} -q} ( x )
  | + | \nabla   ( G^{\varepsilon} )^{t_{0}}_{q} ( x ) | ) \lesssim K ( | x |
  )$ uniformely in $\varepsilon$. Hence $A_{\varepsilon} \rightarrow 0$.
  Furthermore, $\tmop{div}  b^{\varepsilon} =k_{\varepsilon} \ast ( \tmop{div}
  b )$ and since $\alpha +3/2>0$, $\tmop{div}  b^{\varepsilon}
  \rightarrow^{L^{\infty}}   \tmop{div}  b^{\varepsilon}$, and then the result
  follows.
\end{proof}

As the proof of the last theorem is pathwise, one can prove the following
corollary where $b$ and $u_{0}$ are random and $X$ a generic continuous and
$\rho$-irregular stochastic process which lifts into $\mathcal{R}^{\gamma}$
and with good approximation properties in $L^{p} ( \Omega )$.

\begin{corollary}\label{theorem:regu_uniqueness}
  Let $\rho >0$, $\alpha >- \rho$ such that $3/2+ \alpha >0$ and $1/3< \gamma
  \leqslant 1.2$. Let $( \Omega ,\mathcal{F},\mathbbm{P} )$ a probability
  space, X a continuous stochastic process on $( \Omega
  ,\mathcal{F},\mathbbm{P} )$ such that $X$ is almost surely $\rho$-irregular
  and almost surely $X$ lifts in a measurable way to $\mathbf{X} \in
  \mathcal{R}^{\gamma}$. Suppose furthermore that for any smooth approximation
  $\mathbf{X}^{\varepsilon}$ of $\mathbf{X}$ and any $1 \leqslant p<+
  \infty$, $\mathbbm{E} [ \| \mathbf{X}^{\varepsilon} -\mathbf{X}
  \|_{\mathcal{R}^{\gamma}}^{p} ] \rightarrow 0$.
  
  Let $b  \in L^{\infty} ( \Omega \times \mathbbm{R}^{d} )$, $\tmop{div}  b
  \in L^{\infty} ( \Omega \times \mathbbm{R}^{d} )$ such that almost surely $b
  ( \omega ,. ) \in \mathcal{F}L^{\alpha +3/2}$ and $\tmop{div}  b \in
  \mathcal{F}L^{\alpha +3/2}$. Let $u_{0} \in L^{\infty} ( \Omega \times
  \mathbbm{R}^{d} )$.
  
  There exists a unique Stochastic weak controlled solution $u \in L^{\infty}
  ( \Omega \times [0,T] \times \mathbbm{R}^{d} )$ of the Rough Transport
  Equation with initial condition $u_{0}$.
\end{corollary}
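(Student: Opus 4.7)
The plan is to treat existence and uniqueness separately: existence will follow directly from Theorem \ref{theorem:existence_SWCS}, while uniqueness will be deduced by a pathwise application of the preceding deterministic theorem (for $\rho$-irregular driving signals and vector fields in $\mathcal{F}L^{\alpha+3/2}$), exploiting the fact that the axioms of Definition \ref{def:stochastic_controlled_solutions} are, by construction, the measurable envelope of the corresponding deterministic notion.

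\textbf{Existence.} First I would check that the hypotheses of Theorem \ref{theorem:existence_SWCS} are satisfied. Since $b \in L^{\infty}(\Omega \times \mathbb{R}^{d})$ is in particular of linear growth uniformly in $\omega$, we have $b \in L^{\infty}(\Omega \times [0,T]; \mathrm{Lin}(\mathbb{R}^{d}))$; the bound $\mathrm{div}\, b \in L^{\infty}(\Omega \times \mathbb{R}^{d})$ gives a fortiori $\mathrm{div}\, b \in L^{\infty}([0,T] \times \mathbb{R}^{d})$ almost surely; and the measurability of the rough path lift together with the $L^{p}(\Omega)$-approximability of $\mathbf{X}$ by smooth $\mathbf{X}^{\varepsilon}$ is hypothesis of the corollary. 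Theorem \ref{theorem:existence_SWCS} therefore produces a stochastic weak controlled solution $u \in L^{\infty}(\Omega \times [0,T] \times \mathbb{R}^{d})$ with the prescribed initial condition.

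\textbf{Uniqueness.} By linearity of the rough transport equation, it suffices to prove that any stochastic weak controlled solution $u$ with $u_{0} \equiv 0$ vanishes almost surely. Let $\Omega_{0} \subset \Omega$ be the intersection of the following full-measure events: (i) $X(\omega)$ is $\rho$-irregular, (ii) $X(\omega)$ admits a measurable lift to $\mathbf{X}(\omega) \in \mathcal{R}^{\gamma}([0,T])$, (iii) $b(\omega,\cdot), \mathrm{div}\, b(\omega,\cdot) \in \mathcal{F}L^{\alpha+3/2}$, and (iv) the controlled structure and integral identity of Definition \ref{def:stochastic_controlled_solutions} hold for every $\varphi$ in a countable dense subset of $C^{\infty}_{c}(\mathbb{R}^{d})$. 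For each fixed $\omega \in \Omega_{0}$, a density argument extends (iv) to all $\varphi \in C^{\infty}_{c}(\mathbb{R}^{d})$, so that $u(\omega,\cdot,\cdot)$ is a deterministic weak controlled solution (Definition \ref{definition_WCS}) of the rough transport equation driven by $\mathbf{X}(\omega)$, with vector field $b(\omega,\cdot)$ and initial datum $u_{0}(\omega,\cdot) \equiv 0$. The hypotheses of the preceding (deterministic) theorem on $\rho$-irregular drivers are precisely (i)--(iii), so that theorem forces $u(\omega,\cdot,\cdot) \equiv 0$ almost everywhere on $[0,T] \times \mathbb{R}^{d}$. Since $\mathbb{P}(\Omega_{0}) = 1$, we conclude $u = 0$ in $L^{\infty}(\Omega \times [0,T] \times \mathbb{R}^{d})$, as required.

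\textbf{Main difficulty.} This corollary is essentially a measurable packaging of results already established, so no fundamentally new estimates are needed; the pathwise regularization effect of $\rho$-irregularity (Theorem \ref{theorem:rho_irregular_flow} and Proposition \ref{prop:rho_irregular_flow}) and the Fundamental Lemma \ref{lemma:uniqueness} have already done the analytic work. The only point requiring care is the countable-union step used to build $\Omega_{0}$: one must check that the defining identity of a WCS can be verified on a countable dense set of test functions and then extended by continuity pathwise, which is routine since $u(\omega,\varphi)$ is shown in Theorem \ref{theorem:existence_SWCS} to lie in $\mathcal{C}^{\gamma'}([0,T])$ for every smooth compactly supported $\varphi$, depending linearly and continuously on $\varphi$ in a suitable topology.
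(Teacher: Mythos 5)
Your proof is correct and follows essentially the same route as the paper: existence is quoted from Theorem \ref{theorem:existence_SWCS}, and uniqueness is obtained by observing that almost every $\omega$ satisfies the hypotheses of the deterministic $\rho$-irregular uniqueness theorem, which is then applied pathwise. Your extra bookkeeping with the countable dense family of test functions (and the implicit upgrade from ``for almost all $t$'' to ``for all $t$'' by continuity of $u(\varphi)$ and of the rough integral) only makes explicit the null-set argument that the paper leaves tacit.
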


\begin{proof}
  The existence of such a solution is proved in Theorem
  \ref{theorem:existence_SWCS}. For the uniqueness result, remark that the
  lift $\mathbf{X}$ of $X$ and the functions $b$ and $u_{0}$ satisfy the
  conditions of the last theorem almost surely. Hence, the solution is unique.
\end{proof}

\begin{remark}
  For $H \in ( 0,1 )$, $- \alpha < \rho <1/2H$ with $\alpha +3/2>0$, the
  fractional Brownian motion $B^{H}$ satisfies the hypothesis of the last
  theorem. The last result allows us to take random vectorfields.
\end{remark}

\subsubsection{Fractional Brownian motion, the h{\"o}lder continuous case}

The last proofs, and in particular the proof of the Lemma
$\ref{lemma:uniqueness}$ relies on the existence of a flow for the
characteristic equation associated to the vectorfield $b$ and the path $X$. In
order to prove uniqueness, a uniform bound on the differential of the flow for
regularized vectorfields $\tilde{b}$ is also needed. In
{\cite{catellier_averaging_2012}}, the authors use a Girsanov transform for
the fractional Brownian motion in order to extend the space of vectorfields,
namely from $\mathcal{F}L^{\alpha}$ to $\mathcal{C}^{\beta}$, the space of
H{\"o}lder continuous function.

\begin{theorem}\label{theorem:unique}
  Let $H \in ( 1/3,1/2 ]$ and $\alpha >-1/2H$ with $\alpha +1>0$. Let $(
  \Omega ,\mathcal{F},\mathbbm{P} )$ a probability space and $B^{H}$ a
  $d$-dimensional fractional Brownian motion of Hurst parameter $H$ associated
  to the probability space and $\mathbf{B}^{H}$ its natural lift. Let $b \in
  \mathcal{C}^{\alpha +1}_{b} ( \mathbbm{R}^{d} )$ with $\tmop{div}  b  \in 
  \mathcal{C}^{\alpha +1}_{b} ( \mathbbm{R}^{d} )$, and $u_{0} \in L^{\infty}
  ( \mathbbm{R}^{d} )$. There exists a unique Stochastic weak controlled
  solution $u$ of the Stochastic rough transport equation driven by
  $\mathbf{B}^{H}$ with initial condition $u_{0}$.
\end{theorem}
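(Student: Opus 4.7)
The plan breaks naturally into existence and uniqueness. For existence, since $\alpha+1>0$ the embedding $\cC_b^{\alpha+1}\hookrightarrow L^\infty$ shows that $b$ and $\operatorname{div} b$ are bounded (and deterministic, hence in $L^\infty(\Omega\times[0,T];\operatorname{Lin}(\RR^d))$), while $\bB^H$ satisfies the smooth approximation property required by Theorem \ref{theorem:existence_SWCS}. That theorem therefore produces a stochastic weak controlled solution with initial condition $u_0$.

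For uniqueness, by linearity it suffices to show that any SWCS $u$ with $u_0=0$ vanishes almost surely. Fix $t_0\in[0,T]$ and $\varphi_0\in C^\infty_c(\RR^d)$, and let $b^\varepsilon := k_\varepsilon * b$ be a standard mollification (possibly further multiplied by a spatial cutoff chosen large enough around $\operatorname{supp}\varphi_0$, as in the remark after Theorem \ref{theorem:strong_uniqueness}, to meet the compact-support requirement of Lemma \ref{lemma:uniqueness}). Then $b^\varepsilon$ is smooth with compact support, $\|b^\varepsilon\|_{\cC_b^{\alpha+1}} \lesssim \|b\|_{\cC_b^{\alpha+1}}$ uniformly in $\varepsilon$, and since $\alpha+1 > 0$ one has $\|b-b^\varepsilon\|_\infty\to 0$ and $\|\operatorname{div}(b-b^\varepsilon)\|_\infty\to 0$ as $\varepsilon\to 0$. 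On the almost-sure event (provided by Theorem \ref{theorem:fBm_flow}) where the flow $\Phi^\varepsilon$ of $b^\varepsilon$ driven by $B^H$ exists, the Fundamental Lemma \ref{lemma:uniqueness} applied with $\tilde b=b^\varepsilon$ yields the pathwise bound
\[
|u_{t_0}(\varphi_0)| \lesssim \|u\|_\infty \bigl(\|b-b^\varepsilon\|_\infty + \|\operatorname{div}(b-b^\varepsilon)\|_\infty\bigr) \int_{B(0,R^\varepsilon)} \sup_{q\in[0,t_0]}\bigl(|D\Phi^\varepsilon_{t_0-q}(x)| + |\nabla G^{\varepsilon,t_0}_{q}(x)|\bigr)\dd x,
\]
where, by Lemma \ref{lemma:bounded_bounded_drift} and the uniform bound on $\|b^\varepsilon\|_\infty$, $R^\varepsilon\lesssim 1+\|B^H\|_\gamma$ uniformly in $\varepsilon$.

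Taking expectations, Fubini and the Cauchy--Schwarz inequality reduce the task to bounding, uniformly in $\varepsilon$, the integral
\[
\int_{\RR^d} \PP(R^\varepsilon\ge|x|)^{1/2} \Bigl(\EE\bigl[\sup_s|D\Phi^\varepsilon_s(x)|^2\bigr]^{1/2} + \EE\bigl[\sup_q|\nabla G^{\varepsilon,t_0}_{q}(x)|^2\bigr]^{1/2}\Bigr)\dd x.
\]
Theorem \ref{theorem:fBm_flow} bounds the first $L^2$-quantity by some $K(|x|)$ of at most linear growth in $|x|$, independent of $\varepsilon$. For the second, the substitution $s=r-q$ rewrites $G^{\varepsilon,t_0}_q(x) = F^\varepsilon_{t_0-q}(x)$ with $F^\varepsilon_t(x) = \int_0^t \operatorname{div} b^\varepsilon(\Phi^\varepsilon_s(x))\dd s$, so Proposition \ref{prop:fBm_flow} applied to $f=\operatorname{div} b^\varepsilon$ gives the bound $K'(|x|)\|\operatorname{div} b^\varepsilon\|_{\cC^\alpha_b} \lesssim K'(|x|)\|\operatorname{div} b\|_{\cC_b^{\alpha+1}}$, again uniform in $\varepsilon$. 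Finally, the Gaussian integrability of $\|B^H\|_\gamma$ (from the last assertion of Subsection \ref{subsec:regularization}) forces $\PP(R^\varepsilon\ge|x|)$ to decay super-polynomially in $|x|$, so the displayed integral is finite uniformly in $\varepsilon$. Sending $\varepsilon\to 0$ gives $\EE|u_{t_0}(\varphi_0)|=0$; as this holds for every $t_0$ and every $\varphi_0$, and since $t\mapsto u_t(\varphi_0)$ is almost surely continuous, $u\equiv 0$ almost surely.

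The main obstacle is the conceptual gap between the deterministic $\rho$-irregular setting (Corollary \ref{theorem:regu_uniqueness}), where both the flow-derivative and averaging bounds are pathwise and uniform, and the present Hölder setting in which these bounds hold only in $L^2(\Omega)$ and are entangled with the random localization radius $R^\varepsilon$ appearing in the Fundamental Lemma. Bridging this gap requires the Fubini/Cauchy--Schwarz combination above, and crucially relies on (a) the at-most-linear growth of the constants $K,K'$ in Theorem \ref{theorem:fBm_flow} and Proposition \ref{prop:fBm_flow}, and (b) the sub-Gaussian integrability of $\|B^H\|_\gamma$, so that $\PP(R^\varepsilon\ge|x|)^{1/2}(K(|x|)+K'(|x|))$ is integrable on $\RR^d$. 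A secondary technical point — the compact-support hypothesis on $\tilde b$ in Lemma \ref{lemma:uniqueness} — is handled by a spatial cutoff adapted to $\operatorname{supp}\varphi_0$, exactly as hinted at in the remark after Theorem \ref{theorem:strong_uniqueness}.
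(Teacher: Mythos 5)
Your proof is correct and follows essentially the same route as the paper: existence from Theorem \ref{theorem:existence_SWCS}, and uniqueness by mollifying $b$, applying the Fundamental Lemma \ref{lemma:uniqueness} pathwise, and then controlling the random integral via Cauchy--Schwarz, the uniform-in-$\varepsilon$ $L^2$ bounds of Theorem \ref{theorem:fBm_flow} and Proposition \ref{prop:fBm_flow}, and Fernique's theorem for the exponential decay of $\PP(R^\varepsilon\ge |x|)$, before letting $\varepsilon\to 0$. Your parenthetical spatial cutoff is unnecessary (and best omitted, since with a fixed deterministic cutoff radius $\|b-\tilde b\|_\infty$ would not vanish on the random ball $B(0,R^\varepsilon)$): because $b\in\cC^{\alpha+1}_b$ is bounded, the paper applies the lemma directly to $b^\varepsilon=k_\varepsilon * b$, exactly as the rest of your argument does.
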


\begin{proof}
  The existence holds thanks to Theorem \ref{theorem:existence_SWCS}.

  Let $-1/2H< \alpha' < \alpha$ such that $\alpha' +1>0$. Let
  $b^{\varepsilon}$ a mollification of $b$. We know that $\| b-b^{\varepsilon}
  \|_{\infty} \rightarrow 0$. Let $\varphi_{0} \in C^{\infty}_{c} (
  \mathbbm{R}^{d} )$ a test function and the radius $R^{\varepsilon}$
  associated to the flow $\Phi^{\varepsilon}$ and $\varphi_{0}$. As
  $R^{\varepsilon} =R_{\varphi_{0}} +2T \| b^{\varepsilon} \|_{\infty} + \|
  B^{H} \|_{\infty} \leqslant R=R_{\varphi_{0}} +2T \| b \|_{\infty} + \|
  B^{H} \|_{\infty}$. Hence, when $u_{0} =0$, we have, thanks to the Lemma
  \ref{lemma:uniqueness},
  
\begin{multline}
    \nobracket u_{0} ( \varphi_{0} ) | \lesssim ( \| b-b^{\varepsilon}
    \|_{\infty} + \| \tmop{div}  b -  \tmop{div}  b^{\varepsilon} \|_{\infty}
    )\\
    \times
    \int_{\mathbbm{R}^{d}} \dd x \mathbbm{1}_{B_{f} ( 0,R )} ( x )
    \sup_{q \in [ 0,t_{0} ]} ( | D \Phi^{\varepsilon}_{t_{0} -q} ( x ) | + |
    \nabla ( G^{\varepsilon} )^{t_{0}}_{q} ( x ) | )
    \label{eq:uniqueness_bound_stochastic_case} .
  \end{multline}

  Furthermore, thanks to Theorem \ref{theorem:fBm_flow} and Proposition
  \ref{prop:fBm_flow},
  \[ \mathbbm{E} [ \mathbbm{1}_{B_{f} ( 0,R )} ( x ) \sup_{q \in [ 0,t_{0} ]}
     ( | D \Phi^{\varepsilon}_{t_{0} -q} ( x ) | + | \nabla ( G^{\varepsilon}
     )^{t_{0}}_{q} ( x ) | ) ] \lesssim \mathbbm{E} [ \mathbbm{1}_{B_{f} ( 0,R
     )} ( x ) ]^{1/2} K ( | x | ) . \]
  But
  \[ \mathbbm{E} [ \mathbbm{1}_{B_{f} ( 0,R )} ( x ) ] =\mathbbm{P} (
     R_{\varphi_{0}} +T \| b \|_{\infty} + \| B^{H} \|_{\infty} \geqslant | x
     | ) \]
  and thanks to Fernique's theorem {\cite{da_prato_stochastic_2014}}, $\exp (
  2 \| B^{H} \|_{\infty} ) \in L^{1} ( \Omega )$ for all $p \geqslant 1$, so
  that by the inquelaity of Markov
  \[ \mathbbm{E} [ \mathbbm{1}_{B_{f} ( 0,R )} ( x ) ] \lesssim \exp ( -2 | x
     | ) . \]
  Hence
  \[ \mathbbm{E} [ \mathbbm{1}_{B_{f} ( 0,R )} ( x ) \sup_{q \in [ 0,t_{0} ]}
     ( | D \Phi^{\varepsilon}_{t_{0} -q} ( x ) | + | \nabla ( G^{\varepsilon}
     )^{t_{0}}_{q} ( x ) | ) ] \leqslant \exp ( - | x | ) K ( | x | ) \]
  and
  \[ \int \mathbbm{1}_{B_{f} ( 0,R )} ( x ) \sup_{q \in [ 0,t_{0} ]} ( | D
     \Phi^{\varepsilon}_{t_{0} -q} ( x ) | + | \nabla ( G^{\varepsilon}
     )^{t_{0}}_{q} ( x ) | ) \dd x \in L^{1} ( \Omega ) \]
  and is finite almost surely. By letting $\varepsilon$ go to zero in
  {\eqref{eq:uniqueness_bound_stochastic_case}}, almost surely $| u_{0} (
  \varphi_{0} ) | =0$, which ends the proof.
\end{proof}

\appendix
\section{Standard theory of flows for additive rough ODEs}\label{appendix:standard_flow}

Our approach for the study of rough transport equations comes mostly from the
method of characteristics. This method requires lots of informations about the
regularity of the flow of the characteristic differential equation linked to
the transport equation. We give her some standard theorem about flows of differential equations. Although these properties are quite standard, we give them with some proof.
\subsection{Standard theory for flows}

The following Theorem is standard, but we recall the form of the Jacobian
determinant of the flow. All along this subsection, the perturbation $X$ will
be a path lying in $\mathcal{C}^{\gamma}$ and $\gamma \in ( 0,1 ]$.

\begin{theorem}
  \label{theorem:jacobien}Let $b \in L^{\infty} ( [0,T]; \tmop{Lip} (
  \mathbbm{R}^{d} ) \cap C^{1} ( \mathbbm{R}^{d} ) )$ and $X \in
  \mathcal{C}^{\gamma} ( [0,T] )$. The equation
  \begin{equation}
    \left\{\begin{array}{l}
      \dd y_{t} =b_{t} ( y_{t} ) + \dd X_{t}\\
      y_{0} =x
    \end{array}\right. , \label{eq:differential_annex}
  \end{equation}
  understood in its integral form
  \[ y_{t} =x+ \int_{0}^{t} b_{s} ( y_{s} ) \dd s+X_{t} -X_{0} \]
  has a unique solution $\Phi ( x ) \in \mathcal{C}^{\gamma} ( [0,T] )$.
  Furthermore, in that case, for all $t \in [0,T]$, $x \rightarrow \Phi_{t} (
  x )$ is differentiable and its derivative $D \Phi_{t}$ is the unique
  solution of the linear ordinary differential equation
  \[ D \Phi_{t} ( x ) = \tmop{id} + \int_{0}^{t} D b_{s} ( \Phi_{s} ( x ) ) .D
     \Phi_{s} ( x ) \dd s. \]
  In that case, we have
  \[ \tmop{Jac} ( \Phi_{t} ( x ) ) = \exp \left( \int_{0}^{t} \tmop{div} 
     b_{s} ( \Phi_{s} ( x ) ) \dd s \right) . \]
\end{theorem}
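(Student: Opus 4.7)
The plan is to reduce to a classical ODE by the standard change of variable that isolates the rough perturbation, and then to recycle the textbook proofs for the three claims.

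First, I would set $\theta_{t}(x) = y_{t} - X_{t}$, so that equation \eqref{eq:differential_annex} becomes
\[
\theta_{t}(x) = x + \int_{0}^{t} b_{s}(\theta_{s}(x) + X_{s})\,\dd s.
\]
Since $X \in \mathcal{C}^{\gamma}([0,T])$ is fixed and $b_{s}$ is Lipschitz uniformly in $s$, the map $(s,\theta) \mapsto b_{s}(\theta + X_{s})$ is Lipschitz in $\theta$ with a deterministic constant. A classical Picard iteration (or Banach fixed point on $\mathcal{C}([0,t_{0}])$ for small $t_{0}$, then concatenation) yields a unique global solution $\theta(x) \in \mathrm{Lip}([0,T])$. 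Going back to $\Phi(x) = \theta(x) + X$, one gets existence, uniqueness and $\Phi(x) \in \mathcal{C}^{\gamma}([0,T])$ (the Hölder regularity being inherited from $X$, the time integral being Lipschitz).

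Second, for the differentiability in $x$, I would study finite differences $h^{-1}(\Phi_{t}(x+h) - \Phi_{t}(x))$ using the integral form and Grönwall's lemma with the bound $\|Db\|_{\infty}$ to obtain a uniform $L^{\infty}$ bound in $h$. A second Grönwall argument, combined with continuity of $Db$ along $\Phi_{s}(x)$, shows that the difference quotient converges and that the limit $D\Phi_{t}(x)$ satisfies the announced linear matrix ODE
\[
D\Phi_{t}(x) = \mathrm{id} + \int_{0}^{t} Db_{s}(\Phi_{s}(x)) \cdot D\Phi_{s}(x)\,\dd s.
\]
This step is the only mildly delicate one, but it is completely standard because $X$ drops out of the derivative in $x$.

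Finally, for the Jacobian formula, I would apply Liouville's theorem to the linear ODE $\dot{M}_{t} = A_{t} M_{t}$ with $A_{t} = Db_{s}(\Phi_{s}(x))$ and $M_{0} = \mathrm{id}$. Jacobi's formula gives $\frac{\dd}{\dd t}\det(M_{t}) = \mathrm{tr}(A_{t})\det(M_{t}) = \div b_{s}(\Phi_{s}(x))\det(M_{t})$, and integrating this scalar linear ODE with initial value $1$ produces
\[
\tmop{Jac}(\Phi_{t}(x)) = \exp\left(\int_{0}^{t} \div b_{s}(\Phi_{s}(x))\,\dd s\right).
\]
Since $\mathrm{tr}(A_{t})$ depends only on $b$ through $\div b$ and not on $X$ directly, no rough path machinery is needed here; the presence of $X$ only shifts the argument $\Phi_{s}(x)$ at which $b$ and $\div b$ are evaluated.
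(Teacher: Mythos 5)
Your proposal is correct and follows essentially the same route as the paper, which simply remarks that the first assertions are standard (your change of variable $\theta_t = y_t - X_t$ plus Picard iteration and the Gr\"onwall-based variational argument is exactly that standard proof, valid in the Carath\'eodory sense since $b$ is only measurable and bounded in time) and that the Jacobian formula follows from the Liouville lemma, i.e.\ your Jacobi-formula computation $\frac{\dd}{\dd t}\det(M_t)=\mathrm{tr}(A_t)\det(M_t)$ with $A_t = D b_t(\Phi_t(x))$. Apart from a harmless index slip ($s$ versus $t$ in $A_t$), nothing is missing.
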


The first assertions are quite standard. The proof of the last assertion
relies on the so called Liouville Lemma. 

In the classical case of the transport equation, the existence of a solution
is granted when the vectorfield $b$ has spatial linear growth. The reminder of
this section presents a brief study of flows when $b$ has linear growth. 

As in the standard theory of transport equations, the solutions will involve
the inverse of the flow of equation {\eqref{eq:differential_annex}}. The following
lemma gives informations about the equation verified by this inverse.

\begin{proposition}
  \label{proposition:ODE_inverse_flow}Let $b$ and $X$ such that that the
  equations {\eqref{eq:ODE_classic}} and {\eqref{eq:reverse_time}} below
  have unique solution (for all $t_{0} \in [0,T]$):
  \begin{equation}
    \Phi_{t} ( x ) =x+ \int_{0}^{t} b_{q} ( \Phi_{u} ( x ) ) \dd u+X_{t}
    \label{eq:ODE_classic} \hspace{1em} t \in [ 0,T ] .
  \end{equation}
  and
  \begin{equation}
    \psi_{t}^{t_{0}} ( y ) =y- \int_{0}^{t} b_{t_{0} -q} ( \psi^{t_{0}}_{u} (
    y ) ) \dd u- ( X_{t_{0}} -X_{t_{0} -t} ) , \label{eq:reverse_time}
    \hspace{1em} t \in [ 0,t_{0} ]
  \end{equation}
  Then for all $t_{0} \in [0,T]$, $\Phi^{-1}_{t_{0}} ( x )$ exists, and
  $\Phi^{-1}_{t_{0}} ( x ) = \psi_{t_{0}} ( x )$ where $\psi$ is the unique
  solution of the equation. Furthermore, $\Phi^{-1}$ also verifies the
  following equation:
  \[ \Phi^{-1}_{t} ( x ) =x- \int_{0}^{t} b_{T-u} ( \Phi^{-1}_{u} ( x ) )
     \dd u- ( X_{T} -X_{T-t} ) \nocomma , \hspace{1em} t \in [ 0,T ] . \]
\end{proposition}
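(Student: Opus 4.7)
The statement has two claims: (1) for every $t_0\in[0,T]$, the map $\Phi_{t_0}$ is a bijection of $\mathbb{R}^d$ with inverse $\psi^{t_0}_{t_0}$, and (2) the time-reversed flow $t\mapsto \psi^{T}_{t}(x)$ (which at $t=T$ recovers the inverse $\Phi_{T}^{-1}$) solves the displayed backward integral equation. My plan is to reduce both statements to ODE uniqueness applied to \eqref{eq:ODE_classic} and \eqref{eq:reverse_time}, through a single ``time reversal'' change of variable.

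For the first claim, I would prove the two composition identities separately. For $\Phi_{t_0}\circ \psi^{t_0}_{t_0}=\mathrm{id}$: given $y$, introduce the curve $w_t\assign \psi^{t_0}_{t_0-t}(y)$ on $[0,t_0]$, so that $w_0=\psi^{t_0}_{t_0}(y)$ and $w_{t_0}=y$. Plugging $s=t_0-t$ into \eqref{eq:reverse_time} and performing the substitution $u=t_0-v$ in the integral converts the equation for $w$ into
\[ w_t = w_0+\int_0^t b_v(w_v)\dd v + (X_t-X_0), \qquad t\in[0,t_0], \]
which is exactly \eqref{eq:ODE_classic} with initial condition $w_0$ (recall $X_0=0$ throughout the paper). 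By the assumed uniqueness for \eqref{eq:ODE_classic}, $w_t=\Phi_t(\psi^{t_0}_{t_0}(y))$; taking $t=t_0$ yields $y=\Phi_{t_0}(\psi^{t_0}_{t_0}(y))$. For the reverse composition $\psi^{t_0}_{t_0}\circ \Phi_{t_0}=\mathrm{id}$, the mirror argument: set $Y_t\assign \Phi_{t_0-t}(x)$ on $[0,t_0]$, so $Y_0=\Phi_{t_0}(x)$ and $Y_{t_0}=x$; writing $\Phi_{t_0-t}(x)-\Phi_{t_0}(x)$ from \eqref{eq:ODE_classic} and changing variables $u=t_0-v$ produces precisely the defining equation \eqref{eq:reverse_time} for $\psi^{t_0}_{\cdot}(Y_0)$, and uniqueness of \eqref{eq:reverse_time} then gives $Y_t=\psi^{t_0}_t(\Phi_{t_0}(x))$, so $x=\psi^{t_0}_{t_0}(\Phi_{t_0}(x))$.

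For the second claim, I would simply observe that the curve $t\mapsto\psi^{T}_{t}(x)$ on $[0,T]$ satisfies the displayed equation by the very definition \eqref{eq:reverse_time} (with $t_0=T$), and at $t=T$ it equals $\psi^{T}_{T}(x)=\Phi^{-1}_{T}(x)$ by the first part. Thus the backward flow $\Phi^{-1}_{\cdot}$ (in the sense used in the proposition) indeed solves the stated integral equation.

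The argument is almost entirely a bookkeeping exercise in change of variable; the only nontrivial ingredients invoked are the two uniqueness statements stated in the hypotheses and the standing convention $X_0=0$. The main place where one must be careful is matching signs under the substitution $u=t_0-v$, in particular ensuring that the $X$-increment $-(X_{t_0}-X_{t_0-s})$ in \eqref{eq:reverse_time} converts to $+(X_t-X_0)$ in \eqref{eq:ODE_classic} and vice-versa; this is the step where the geometric meaning of time reversal is encoded, so it is worth writing the computation cleanly rather than at speed.
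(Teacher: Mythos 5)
Your treatment of the bijectivity claim is correct and is essentially the paper's own argument: you show that $t\mapsto\Phi_{t_0-t}(x)$ solves \eqref{eq:reverse_time} started from $\Phi_{t_0}(x)$ and, symmetrically, that $t\mapsto\psi^{t_0}_{t_0-t}(y)$ solves \eqref{eq:ODE_classic} started from $\psi^{t_0}_{t_0}(y)$, and then invoke the two uniqueness hypotheses; the sign bookkeeping under $u=t_0-v$ and the use of $X_0=0$ are right. (You in fact carry out both compositions explicitly, where the paper does one and appeals to the symmetry of the two equations for the other.)

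For the last displayed equation, however, your proof establishes less than what is claimed. You only observe that $t\mapsto\psi^{T}_{t}(x)$ satisfies the equation --- which is true by the very definition \eqref{eq:reverse_time} with $t_0=T$ and so carries no content --- and that it agrees with the inverse at the single time $t=T$. The proposition asserts that the family of inverses $t\mapsto\Phi_t^{-1}(x)$ solves the equation for \emph{every} $t\in[0,T]$; the actual content is therefore the identification $\Phi_t^{-1}=\psi^{T}_{t}$, equivalently $\Phi_t^{-1}=\Phi_{T-t}\circ\Phi_T^{-1}$, at all intermediate times, not just $t=T$. That is precisely what the second half of the paper's proof is devoted to: it first derives the composition property $\Phi_s^{-1}\circ\Phi_t^{-1}=\Phi_{s+t}^{-1}$ of the inverse flow and then computes $x-\Phi_t^{-1}(x)=\Phi_T(\Phi_T^{-1}(x))-\Phi_{T-t}(\Phi_T^{-1}(x))=\int_0^t b_{T-u}(\Phi_u^{-1}(x))\,\dd u+X_T-X_{T-t}$ after a change of variables. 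Reading ``$\Phi^{-1}_{\cdot}$ in the sense used in the proposition'' as the backward flow $\psi^T$ amounts to restating the definition rather than proving the claim. If you want the statement as written, you must supply this flow-composition step; note that it does not follow from the two uniqueness hypotheses alone (already for $b=0$ and a generic path $X$ one has $\Phi_t^{-1}(x)=x-X_t$, whereas the displayed equation would give $x-(X_T-X_{T-t})$), so this is exactly the point where the paper's additional flow-property argument is doing the work, and your write-up is missing it.
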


\begin{proof}
  We have
  \begin{eqnarray*}
    \Phi_{t_{0} -t} ( x ) & = & x+ \int_{0}^{t_{0} -t} b_{u} ( \Phi_{u} ( x )
    ) \dd u+X_{t_{0} -t}\\
    & = & x+ \int_{t}^{t_{0}} b_{t_{0} -u} ( \Phi_{t_{0} -u} ( x ) ) \dd
    u+X_{t_{0} -t}\\
    & = & \Phi_{t_{0}} ( x ) - \int_{0}^{t} b_{t_{0} -u} ( \Phi_{t_{0} -u} (
    x ) ) \dd u- ( X_{t_{0}} -X_{t_{0} -t} )\\
    &  & +x+X_{t_{0}} + \int_{0}^{t_{0}} b_{t_{0} -u} ( \Phi_{u} ( x ) )
    \dd u- \Phi_{t_{0}} ( x )\\
    & = & \Phi_{t_{0}} ( x ) - \int_{0}^{t} b_{t_{0} -u} ( \Phi_{t_{0} -u} (
    x ) ) \dd u- ( X_{t_{0}} -X_{t_{0} -t} ) .
  \end{eqnarray*}
  Hence, since the solution of equation {\eqref{eq:reverse_time}} is unique,
  $\Phi_{t_{0} -t} ( x ) = \psi_{t}^{t_{0}} ( \Phi_{t_{0}} ( x ) )$, 
  and we
  have $\psi_{t_{0}}^{t_{0}} ( \Phi_{t_{0}} ( x ) ) =x$ which is the wanted
  result. As equation {\eqref{eq:reverse_time}} is of same form as equation
  {\eqref{eq:ODE_classic}}, we also have $\Phi_{t_{0}} \circ
  \psi^{t_{0}}_{t_{0}} = \tmop{id} .$ Hence $\Phi^{-1}_{t_{0}}$ exists and we
  have $\Phi^{-1}_{t_{0}} = \psi^{t_{0}}_{t_{0}}$. Furthermore for $s,t \in
  [0,T]$
  \[ \Phi_{s+t} \circ \Phi_{s}^{-1} \circ \Phi^{-1}_{t} = \Phi_{t} \circ
     \Phi_{s} \circ \Phi^{-1}_{s} \circ \Phi^{-1}_{t} = \tmop{id} . \]
  Hence \ $\Phi_{s}^{-1} \circ \Phi^{-1}_{t} = \Phi_{s+t}^{-1} = \Phi_{t}^{-1}
  \circ \Phi^{-1}_{s}$ and $\Phi^{-1}$ has the semi-group property.
  Furthermore,
  \begin{eqnarray*}
    x- \Phi_{t}^{-1} ( x ) & = & \Phi_{T} ( \Phi^{-1}_{T} ( x ) ) - \Phi_{T-t}
    ( \Phi_{T}^{-1} ( x ) )\\
    & = & \int_{T-t}^{T} b_{r} ( \Phi_{r} ( \Phi^{-1}_{T} ( x ) ) ) \dd
    r+X_{T} -X_{T-t}\\
    & = & \int_{T-t}^{T} b_{r} ( \Phi_{T-r}^{-1} ( x ) ) \dd r+X_{T}
    -X_{T-t}\\
    & = & \int_{0}^{t} b_{T-u} ( \Phi_{u}^{-1} ( x ) ) \dd u+X_{T}
    -X_{T-t} ,
  \end{eqnarray*}
  and the result follows.
\end{proof}

As in the method of characteristics (see Appendix
\ref{appendix:characteristics}), in the following, we will strongly use the
spatial regularity of the flow. When the vector-field $b$ is regular enough,
the following result gives a bound for the norm of the spatial derivative.

\begin{proposition}
  \label{propo:flot_n_diff}Let $b: [0,T] \times \mathbbm{R}^{d} \rightarrow
  \mathbbm{R}^{d}$, $N \in \mathbbm{N}$,and $b \in L^{\infty} (
  [0,T];C^{N}_{b} ( \mathbbm{R}^{d} ) )$. Let $X \in \mathcal{C}^{\gamma} (
  [0,T] )$ and $\Phi$ the flow of the equation
  \[ \Phi_{t} ( x ) =x+ \int_{0}^{t} b_{r} ( \Phi_{r} ( x ) ) \dd r+X_{t} .
  \]
  For all $k \in \{ 1, \ldots N \}$ and all $t \in [0,T]$, the $k$-th spatial
  derivative $D^{k} \Phi_{t}$ exists, and there exists a constant $C_{k}$
  depending on $T$ $\| X \|_{\mathcal{D}^{\gamma}}$ and $\| D^{k} b \|_{\infty
  ;L^{\infty}}$ for all $k \in \{ 0, \ldots ,n \}$ such that
  \[ | D^{k} \Phi_{t} ( x ) | \leqslant C_{k} . \]
\end{proposition}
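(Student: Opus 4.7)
The plan is to proceed by induction on $k$, with the key tool being Grönwall's inequality applied to linear ODEs satisfied by successive spatial derivatives of $\Phi$.

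First, for the base case $k=1$, I would invoke Theorem \ref{theorem:jacobien} (or its extension by localization if $b$ only has bounded derivatives but not bounded values, which can be handled by introducing a cut-off and exploiting Lemma \ref{lemma:holder_norm_flow} to ensure the flow stays bounded on compacts over $[0,T]$): the derivative $D\Phi_t(x)$ exists and satisfies the linear ODE
\[
D\Phi_t(x) = \mathrm{Id} + \int_0^t Db_r(\Phi_r(x)) \cdot D\Phi_r(x) \dd r.
\]
Grönwall's inequality then gives $|D\Phi_t(x)| \leqslant \exp(T \|Db\|_{\infty;L^\infty}) =: C_1$, independent of $x$ (the $X$-dependence enters only through the fact that $\Phi_r(x)$ appears inside $Db_r$, which is bounded by hypothesis).

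For the inductive step, assume $D^1 \Phi, \ldots, D^{k-1}\Phi$ exist and satisfy bounds $|D^j\Phi_t(x)| \leqslant C_j$ for $j \leqslant k-1$, with the $C_j$ depending on $T$, $\|X\|_\gamma$, and $\|D^j b\|_{\infty;L^\infty}$ for $j \leqslant k-1$. Differentiating the equation for $D\Phi_t$ a further $k-1$ times in $x$ and applying Faà di Bruno's formula to $Db_r(\Phi_r(x))\cdot D\Phi_r(x)$, one obtains an identity of the form
\[
D^k \Phi_t(x) = \int_0^t Db_r(\Phi_r(x)) \cdot D^k \Phi_r(x) \dd r + \int_0^t S^{(k)}_r(x) \dd r,
\]
where $S^{(k)}_r(x)$ is a finite sum of terms of the shape $D^j b_r(\Phi_r(x)) \cdot \big(D^{\alpha_1}\Phi_r(x), \ldots, D^{\alpha_j}\Phi_r(x)\big)$ with $2 \leqslant j \leqslant k$, $\alpha_i \geqslant 1$, and $\alpha_1 + \cdots + \alpha_j = k$ (in particular each $\alpha_i \leqslant k-1$). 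By the inductive hypothesis and the assumption $b \in L^\infty([0,T]; C^N_b)$, the source $S^{(k)}$ is bounded by a constant depending on $T$ and on $\|D^j b\|_{\infty;L^\infty}$ for $j \leqslant k$. This gives existence of $D^k\Phi_t(x)$ (by the Picard iteration for the linear ODE above, or equivalently by differentiating under the integral sign once the continuity of the candidate limit is verified) and a uniform bound
\[
|D^k\Phi_t(x)| \leqslant \Big(\int_0^T |S^{(k)}_r(x)| \dd r\Big) \exp(T \|Db\|_{\infty;L^\infty}) =: C_k,
\]
again by Grönwall.

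The conceptually nontrivial step is justifying that the formal derivative $D^k\Phi$ one obtains this way is indeed the genuine $k$-th derivative of $\Phi_t(\cdot)$; this is the main obstacle to keep honest. The standard way is to argue inductively on finite differences: set $\Delta^h D^{k-1}\Phi_t(x) = h^{-1}(D^{k-1}\Phi_t(x + he_i) - D^{k-1}\Phi_t(x))$, subtract the corresponding integral equations, apply the mean value theorem to $D^{j}b_r$-factors, and use the inductive bounds on $|D^j\Phi|$ ($j\leqslant k-1$) together with Grönwall to show $\Delta^h D^{k-1}\Phi_t(x) \to D^k\Phi_t(x)$ uniformly on compacts as $h \to 0$. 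All other parts of the argument are routine book-keeping, so I would just sketch the induction, state Faà di Bruno schematically, and invoke Grönwall to close the bound.
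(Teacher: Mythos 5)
Your proof is correct and follows essentially the same route as the paper: the paper's own argument is precisely a sketch of induction on the order of the derivative, using the integral equation satisfied by $D^{k}\Phi$ (in which only lower-order derivatives of $\Phi$ and derivatives of $b$ up to order $k$ appear) and closing the bound with Gr\"onwall's lemma. You simply supply the details the paper leaves as ``standard'' (the Fa\`a di Bruno structure of the source term and the difference-quotient justification of differentiability), which is consistent with its intent.
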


\begin{proof}
  The fact that $\Phi_{t} \in C^{N} ( \mathbbm{R}^{d} )$ is standard. Let us
  recall the equation verified by the flow
  \[ D \Phi_{t} ( x ) = \tmop{id} + \int_{0}^{t} D b_{r} ( \Phi_{r} ( x ) ) .D
     \Phi_{r} ( x ) \dd r. \]
  Hence, the proof of the proposition is quite straightforward. We have to use
  the equation verified by $D^{n} \Phi ( x )$, where all the other derivatives
  of $\Phi$ and all the derivatives of $b$ up to order $n$ appear. We then
  prove by induction the wanted bound for $| D^{n} \Phi_{t} ( x ) |$, using
  the bound for the smaller derivatives of $\Phi$ and the Gronwall lemma.
\end{proof}

\begin{remark}
  The proof also gives that $C_{k}$ has exponential growth with respect to $\|
  D b \|_{\infty ;L^{\infty}}$ but polynomial growth wrt the other quantities.
\end{remark}


\subsection{A priori bound for flows: proof of Lemmas \ref{lemma:holder_norm_flow} and \ref{lemma:flow_convergence_X_smooth}}

\begin{lemma}[Lemma \ref{lemma:holder_norm_flow}] Let $b \in L^{\infty} ( [0,T], \tmop{Lin} (
  \mathbbm{R}^{d} ) )$ such that the flow $\Phi$ of the equation
  {\eqref{eq:differential}} exists for all $t \in [ 0,T ]$. There exists a constant
  $K ( T, \| b \|_{\infty ; \tmop{Lin}} ) >0$ such that for all $x \in
  \mathbbm{R}^{d}$, $\Phi ( x ) \in \mathcal{C}^{\gamma} ( [0,T] )$ and
  \[ \| \Phi ( x ) \|_{\gamma} \leqslant K ( 1+ | x | ) ( 1+ \| X \|_{\gamma}
     ) . \]
\end{lemma}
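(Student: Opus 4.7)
The plan is a standard two-step Gronwall-type argument. First I would establish a uniform sup-norm bound on $\Phi(x)$ that is linear in $|x|$ and in $\|X\|_\gamma$, then use it to control the $\gamma$-Hölder seminorm of $\Phi(x)$ directly from the defining integral equation.

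\textbf{Step 1 (sup-norm bound).} Using the linear growth estimate $|b_r(y)| \leq \|b\|_{\infty;\mathrm{Lin}}(1+|y|)$ and the fact that, by convention, $X_0 = 0$ so that $|X_t| = |X_t - X_0| \leq T^\gamma \|X\|_\gamma$, the integral equation \eqref{eq:differential} yields
\begin{equation*}
|\Phi_t(x)| \leq |x| + T^\gamma \|X\|_\gamma + T\|b\|_{\infty;\mathrm{Lin}} + \|b\|_{\infty;\mathrm{Lin}} \int_0^t |\Phi_r(x)| \, \dd r.
\end{equation*}
Gronwall's lemma then gives
\begin{equation*}
\|\Phi(x)\|_{\infty,[0,T]} \leq \bigl(|x| + T^\gamma\|X\|_\gamma + T\|b\|_{\infty;\mathrm{Lin}}\bigr) e^{T\|b\|_{\infty;\mathrm{Lin}}},
\end{equation*}
which can be rewritten as $\|\Phi(x)\|_{\infty,[0,T]} \leq C_1(T,\|b\|_{\infty;\mathrm{Lin}}) (1+|x|)(1+\|X\|_\gamma)$.

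\textbf{Step 2 (Hölder seminorm).} For $0 \leq s < t \leq T$, the integral equation gives
\begin{equation*}
|\Phi_t(x) - \Phi_s(x)| \leq \|b\|_{\infty;\mathrm{Lin}}(1+\|\Phi(x)\|_\infty)(t-s) + \|X\|_\gamma (t-s)^\gamma.
\end{equation*}
Bounding $(t-s)$ by $T^{1-\gamma}(t-s)^\gamma$, dividing by $(t-s)^\gamma$, and plugging in the bound from Step 1 yields
\begin{equation*}
\llbracket \Phi(x) \rrbracket_\gamma \leq T^{1-\gamma}\|b\|_{\infty;\mathrm{Lin}}\bigl(1 + C_1(1+|x|)(1+\|X\|_\gamma)\bigr) + \|X\|_\gamma.
\end{equation*}
Since $\|\Phi(x)\|_\gamma = \llbracket \Phi(x) \rrbracket_\gamma + |\Phi_0(x)| = \llbracket \Phi(x) \rrbracket_\gamma + |x|$, factoring the right-hand side produces a bound of the desired form $\|\Phi(x)\|_\gamma \leq K(T,\|b\|_{\infty;\mathrm{Lin}})(1+|x|)(1+\|X\|_\gamma)$.

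No step is really an obstacle: the argument is classical and purely deterministic, since one only uses the integral equation and linear growth. The only point requiring care is recovering the precise form $K = K_T\, g(T\|b\|_{\infty;\mathrm{Lin}})$ with $g(x) = ((x^2+x)e^x + x + 1)e^x$ announced in the remark. Setting $y := T\|b\|_{\infty;\mathrm{Lin}}$, the first factor $e^y$ comes from the Gronwall exponential in Step 1 and the polynomial pieces $(x^2+x)$, $x$, $1$ come from gathering the terms $y \cdot y = y^2$, $y\cdot|x|$, $y$, and the $\|X\|_\gamma$ contribution through Step 2; a second factor $e^y$ then appears when reassembling. This is pure bookkeeping of constants rather than a conceptually delicate step.
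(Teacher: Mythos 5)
Your proof is correct and takes essentially the same route as the paper: both arguments rest on the linear-growth bound $|b_r(y)|\le \|b\|_{\infty;\mathrm{Lin}}(1+|y|)$ and Gronwall applied to the integral equation, and your Step 1 sup-norm estimate is exactly the paper's. The only (harmless) difference is that the paper controls the H\"older quotient by a second Gronwall argument (splitting $|\Phi_r(x)|\le|\Phi_s(x)|+|\Phi_r(x)-\Phi_s(x)|$ inside the integral), whereas you simply insert the sup-norm bound into the increment estimate, which streamlines the bookkeeping and still produces a constant depending only on $T$ and $\|b\|_{\infty;\mathrm{Lin}}$ as required.
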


\begin{proof}
  Let $x \in \mathbbm{R}^{d}$ and $0 \leqslant s<t \leqslant T$
  \begin{eqnarray*}
    \frac{| \Phi ( x )_{t} - \Phi ( x )_{s} |}{| t-s |^{\gamma}} & \leqslant &
    \| X \|_{\gamma} + \frac{1}{| t-s |^{\gamma}} \int_{s}^{t} | b_{r} (
    \Phi_{r} ( x ) ) | \dd r\\
    & \lesssim & \| X \|_{\gamma} + \| b \|_{\infty ; \tmop{Lin}} \left(
    T^{1- \gamma} ( 1+ | \Phi_{s} ( x ) | ) + \int_{s}^{t} \frac{| \Phi_{r} (
    x ) - \Phi_{s} ( x ) |}{| r-s |^{\gamma}} \dd r \right) .
  \end{eqnarray*}
  By Gronwall's lemma,
  \[ \frac{| \Phi ( x )_{t} - \Phi ( x )_{s} |}{| t-s |^{\gamma}} \leqslant (
     \| X \|_{\gamma} +T^{1- \gamma} ( 1+ | \Phi_{s} ( x ) | ) \| b \|_{\infty
     ; \tmop{Lin}} ) \exp ( T \| b \|_{\infty ; \tmop{Lin}} ) , \]
  hence
  \[ \llbracket \Phi ( x ) \rrbracket_{\gamma} \leqslant ( \| X \|_{\gamma}
     +T^{1- \gamma} ( 1+ \| \Phi ( x ) \|_{\infty} ) \| b \|_{\infty ;
     \tmop{Lin}} ) \exp ( T \| b \|_{\infty ; \tmop{Lin}} ) . \]
  It remains to estimate the supremum of $\Phi ( x )$. We have
  \[ | \Phi ( x )_{t} | \leqslant | x | + \| b \|_{\infty ; \tmop{Lin}} \left(
     T+ \int_{0}^{t} | \Phi ( x )_{r} | \dd r \right) + \| X \|_{\infty} ,
  \]
  which gives, again by Gronwall's lemma
  \[ \| \Phi ( x ) \|_{\infty} \leqslant ( | x | +T \| b \|_{\infty ;
     \tmop{Lin}} + \| X \|_{\infty} ) \exp ( T \| b \|_{\infty ; \tmop{Lin}} )
     . \]
  The result follows after recalling that $\| X \|_{\infty} \lesssim \| X
  \|_{\gamma}$.
\end{proof}

\begin{remark}
  The constant $K$ can be chosen as $K ( T, \| b \|_{\infty ; \tmop{Lin}} )
  =K_{T}  g ( T \| b \|_{\infty ; \tmop{Lin}} )$ with $g ( x ) = ( ( x^{2} +x
  ) e^{x} +x+1 ) e^{x}$.
\end{remark}

The heart of the Section \ref{section:uniqueness} is a smooth approximation of
the characteristics equations of the transport equation by an approximate one.
Indeed, when $X \notin C^{1} ( [0,T] )$ and $b \notin C^{1} ( \mathbbm{R}^{d}
)$, the above results of Proposition \ref{propo:flot_n_diff} about the differentiability of the flow can not be used.
The following result gives the regularity of the flow regarding the
perturbation $X$.

\begin{lemma}[Lemma \ref{lemma:flow_convergence_X_smooth}] Let $b \in L^{\infty} (
  [0,T];C^{1}_{b} ( \mathbbm{R}^{d } ) )$ and $X,Y \in \mathcal{C}^{\gamma}$.
  Then
  \[ \| \Phi^{X} ( x ) - \Phi^{Y} ( x ) \|_{\gamma , [ 0,T ]} \leqslant C (
     T, \| D b \|_{\infty} ) \| X-Y \|_{\gamma , [ 0,T ]} \]
  where $C$ is independent of $x$ and nondecreasing in $T$ and $\| D b
  \|_{\infty}$. Furthermore, when $b \in L^{\infty} ( [0,T];C^{2}_{b} (
  \mathbbm{R}^{d } ) )$, we also have
  \[ \| D \Phi^{X} ( x ) -D \Phi^{Y} ( x ) \|_{\tmop{Lip}} \leqslant C ( T, \|
     D b \|_{\infty} , \| D^{2}  b \|_{\infty} ) \| X-Y \|_{\gamma} . \]
\end{lemma}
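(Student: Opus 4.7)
My plan is to prove the two assertions separately using Gronwall's lemma, following the standard strategy for dependence of ODE flows on the forcing term, but adapted to the Hölder scale.

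For the first estimate, I start from the integral equation
\[
\Phi^X_t(x) - \Phi^Y_t(x) = \int_0^t \bigl[b_r(\Phi^X_r(x)) - b_r(\Phi^Y_r(x))\bigr]\, \mathrm{d}r + (X_t - Y_t).
\]
Setting $s=0$ and using $\Phi^X_0(x)=\Phi^Y_0(x)=x$ together with $\|b\|_\infty$ being Lipschitz in space with constant $\|Db\|_\infty$, I obtain
\[
|\Phi^X_t(x) - \Phi^Y_t(x)| \leq \|Db\|_\infty \int_0^t |\Phi^X_r(x) - \Phi^Y_r(x)|\, \mathrm{d}r + T^\gamma \|X-Y\|_\gamma,
\]
and Gronwall gives $\|\Phi^X(x)-\Phi^Y(x)\|_\infty \leq T^\gamma e^{T\|Db\|_\infty}\|X-Y\|_\gamma$. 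I then bound the increment $\delta(\Phi^X-\Phi^Y)_{s,t}$ by the same splitting: the integral term is controlled by $\|Db\|_\infty |t-s|\cdot \|\Phi^X-\Phi^Y\|_\infty$ and the $X-Y$ term contributes $|t-s|^\gamma\|X-Y\|_\gamma$. Dividing by $|t-s|^\gamma$ and using the previous $L^\infty$ bound yields the first claim with the advertised constant.

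For the second estimate I differentiate the defining ODE in $x$ to get
\[
D\Phi^X_t(x) = \mathrm{Id} + \int_0^t Db_r(\Phi^X_r(x))\cdot D\Phi^X_r(x)\, \mathrm{d}r
\]
(and similarly for $Y$). Subtracting, I use the algebraic identity
\[
Db_r(\Phi^X_r)\cdot D\Phi^X_r - Db_r(\Phi^Y_r)\cdot D\Phi^Y_r = Db_r(\Phi^X_r)\bigl[D\Phi^X_r - D\Phi^Y_r\bigr] + \bigl[Db_r(\Phi^X_r) - Db_r(\Phi^Y_r)\bigr] D\Phi^Y_r.
\]
The second piece is bounded in absolute value by $\|D^2 b\|_\infty |\Phi^X_r-\Phi^Y_r|\cdot |D\Phi^Y_r|$. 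The first part of the lemma controls $|\Phi^X_r-\Phi^Y_r|$, and Proposition~\ref{propo:flot_n_diff} applied to $b \in L^\infty([0,T];C^2_b)$ furnishes an $x$-uniform bound on $|D\Phi^Y_r|$. A Gronwall argument (with constant $\|Db\|_\infty$ coming from the first piece) then gives
\[
\sup_{t \in [0,T]} |D\Phi^X_t(x) - D\Phi^Y_t(x)| \leq C(T, \|Db\|_\infty, \|D^2 b\|_\infty) \|X-Y\|_\gamma.
\]
For the Lipschitz-in-time norm I note that $D\Phi^X_0 = D\Phi^Y_0 = \mathrm{Id}$, so it suffices to control the time increments. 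Writing $\delta(D\Phi^X-D\Phi^Y)_{s,t} = \int_s^t (g^X_r - g^Y_r)\, \mathrm{d}r$ with $g^Z_r = Db_r(\Phi^Z_r)\cdot D\Phi^Z_r$, the integrand is bounded by a constant times $\|X-Y\|_\gamma$ using the two estimates just obtained, so the increment is bounded by $|t-s|\cdot C \|X-Y\|_\gamma$, giving the Lipschitz norm bound.

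The only mildly delicate step will be the second estimate's Gronwall closure, since one must keep track of the a priori bound on $D\Phi^Y$; that is precisely why Proposition~\ref{propo:flot_n_diff} is invoked and why the constant in the second claim must depend on $\|D^2 b\|_\infty$ in addition to $\|Db\|_\infty$. Everything else reduces to Lipschitz estimates on $b$ and $Db$ and the elementary control of integral increments on the Hölder scale.
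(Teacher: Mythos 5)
Your proposal is correct and follows essentially the same route as the paper's own proof: the integral equation for $\Phi^X-\Phi^Y$, Lipschitz estimates on $b$ and $Db$ (with the uniform bound $\|D\Phi\|_\infty\leqslant e^{T\|Db\|_\infty}$), and Gronwall. The only cosmetic difference is that you control the H\"older and Lipschitz increments directly from the sup-norm bounds already obtained, whereas the paper runs a second Gronwall argument on the increments; both closings are valid and give the same constants up to inessential factors.
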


\begin{proof}
  First, we give an estimate of the difference in supremum norm.
  \begin{eqnarray*}
    | \Phi^{Y}_{t} ( x ) - \Phi^{X}_{t} ( x ) | & \leqslant & | X_{t} -Y_{t} |
    + \int_{0}^{t} | b_{q} ( \Phi^{Y}_{q} ( x ) ) -b_{q} ( \Phi^{X}_{q} ( x )
    ) | \dd q\\
    & \leqslant & t^{\gamma} \| X-Y \|_{\gamma} + \| D b \|_{\infty}
    \int_{0}^{t} | \Phi^{Y}_{q} ( x ) - \Phi_{q}^{X} ( x ) | \dd q
  \end{eqnarray*}
  By Gronwall's lemma,
  \[ | \Phi^{Y}_{t} ( x ) - \Phi_{t}^{X} ( x ) | \leqslant T^{\gamma} \| X-Y
     \|_{\gamma} e^{T \| D b \|_{\infty}} . \]
  Furthermore, we have
  \begin{eqnarray*}
    | \delta ( \Phi^{X} - \Phi^{Y} )_{s,t} ( x ) | & \leqslant & | \delta (
    X-Y )_{s,t} | + \| D b \|_{\infty} \int_{s}^{t} | ( \Phi^{X} - \Phi^{Y}
    )_{s,q} ( x ) | + | ( \Phi^{X} - \Phi^{Y} )_{s} ( x ) | \dd q.
  \end{eqnarray*}
  Again by Gronwall's lemma, we get
  \[ | \delta ( \Phi^{X} - \Phi^{Y} )_{s,t} ( x ) | \lesssim_{T, \| D b
     \|_{\infty}} | t-s |^{\gamma} \| X-Y \|_{\gamma} , \]
  which is the first part of the result. Furthermore, as
  \[ D  \Phi^{X}_{t} ( x ) = \tmop{id} + \int_{0}^{t} D b_{q} ( \Phi^{X}_{q}
     ( x ) ) .D  \Phi^{X}_{q} ( x ) \dd q, \]
  we have
  \[ \| D \Phi^{X} ( x ) \|_{\infty} \leqslant e^{T \| D b \|_{\infty}} \]
  and
  \begin{eqnarray*}
    | D( \Phi^{X} - \Phi^{Y} )_{t} ( x ) | & \leqslant & \left| \int_{0}^{t} (
    D b_{q} ( \Phi^{X}_{q} ( x ) ) -D b ( \Phi^{Y}_{q} ( x ) ) ) .D
    \Phi^{X}_{t} ( x ) \dd q \right|\\
    &  & + \left| \int_{0}^{t} D b ( \Phi^{Y}_{q} ( x ) ) .D ( \Phi^{X} -
    \Phi^{Y} )_{q} ( x ) \dd q \right|\\
    & \lesssim & \| D ^{2} b \|   \| \Phi^{X} ( x ) - \Phi^{Y} ( x )
    \|_{\infty} e^{T \| D b \|_{\infty}} + \| D b \|_{\infty} \int_{0}^{t} | D
    ( \Phi^{X} - \Phi^{Y} )_{q} ( x ) | \dd q.
  \end{eqnarray*}
  Hence
  \[ \| D ( \Phi^{X} - \Phi^{Y} )_{.} ( x ) \|_{\infty} \lesssim_{T, \| D b
     \|_{\infty} , \| D ^{2} b \|_{\infty}} \| X-Y \|_{\gamma} . \]
  Finally,
  \begin{eqnarray*}
    | \delta D ( \Phi^{X} - \Phi^{Y} )_{s,t} ( x ) | & \lesssim & \| X-Y
    \|_{\gamma} | t-s | + \| D b \|_{\infty} \int_{s}^{t} | \delta D (
    \Phi^{X} - \Phi^{Y} )_{s,q} ( x ) | \dd q,
  \end{eqnarray*}
  and the result follows by Gronwall's Lemma.
\end{proof}
\section{The method of characteristics}\label{appendix:characteristics}
We give a proof of the usual method of characteristics, and a proof of weak
uniqueness thanks to the dual equation.

\begin{theorem}
  Let $b \in L^{\infty} ( [0,T];C^{1}_{b} ( \mathbbm{R}^{d} ) )$, $c \in
  L^{\infty} ( [0,T],C^{1}_{b} ( \mathbbm{R}^{d} ) )$ and $X \in C^{1} ( [0,T]
  )$. Let $\Phi$ the flow of the equation
  \[ \Phi_{t} ( x ) =x+ \int_{0}^{t} b ( \Phi_{u} ( x ) ) \dd u+X_{t} . \]
  Let $u_{0} \in C^{1} ( \mathbbm{R}^{d} )$. There exists a unique strong
  solution $u$ to equation~{\eqref{eq:equation_smooth}} with initial
  condition $u_{0}$.
  \begin{equation}
    \partial_{t} u+ (b+ \dot{X} ) . \nabla u+c u=0. \label{eq:equation_smooth}
  \end{equation}
  Furthermore the solution has the explicit form
  \[ u_{t} ( x ) =u_{0} ( \Phi_{t}^{-1} ( x ) ) \exp \left( - \int_{0}^{t}
     c_{t-q} ( \Phi^{-1}_{q} ( x ) ) \dd q \right) . \]
\end{theorem}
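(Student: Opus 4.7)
The plan is to apply the classical method of characteristics. The augmented drift $\tilde b_t(x) := b_t(x) + \dot X_t$ lies in $L^\infty([0,T]; C^1_b(\RR^d))$, so Theorem \ref{theorem:jacobien} yields a $C^1$-diffeomorphism flow $\Phi_t$, Proposition \ref{proposition:ODE_inverse_flow} provides the inverse $\Phi_t^{-1}$ together with the ODE it satisfies, and Proposition \ref{propo:flot_n_diff} gives the $C^1$-spatial regularity of $\Phi_t^{-1}$. This is all the regularity required to justify the chain rule below.

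For existence I would first establish the right invariant along characteristics: given $x_0 \in \RR^d$ and the curve $y_t := \Phi_t(x_0)$, the desired identity is
\[
v_t(y_t) = u_0(x_0)\exp\left(-\int_0^t c_s(\Phi_s(x_0))\,\dd s\right),
\]
which is precisely what one obtains by inverting $y_t = \Phi_t(x_0)$ in the formula of the statement (the change of variables $q = t-s$ combined with the backward-flow identity $\Phi_{t-s}^{-1}\circ \Phi_t(x_0)$ expressing $\Phi_s(x_0)$ in terms of $\Phi^{-1}$ is exactly the content of Proposition \ref{proposition:ODE_inverse_flow}). Differentiating this invariant in $t$ gives $\frac{d}{dt}v_t(y_t) = -c_t(y_t)v_t(y_t)$, while the chain rule rewrites the left-hand side as $(\partial_t v + \tilde b_t\cdot\nabla v)(t,y_t)$. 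Since every $(t,y)\in[0,T]\times\RR^d$ is of the form $(t,\Phi_t(x_0))$ for a unique $x_0$, the candidate $v$ solves \eqref{eq:equation_smooth} pointwise with $v_0 = u_0$.

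For uniqueness, linearity reduces the problem to $u_0\equiv 0$. Given any strong solution $u$, set $w(t,x) := u_t(\Phi_t(x))\exp\left(\int_0^t c_s(\Phi_s(x))\,\dd s\right)$. The chain rule combined with the PDE satisfied by $u$ and the characteristic ODE for $\Phi$ gives $\partial_t w \equiv 0$, so $w(t,x) = w(0,x) = u_0(x) = 0$; the bijectivity of $\Phi_t$ then forces $u\equiv 0$.

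The main technical obstacle is confirming that the candidate $v$ has enough joint $(t,x)$ regularity for the chain rule identifications to be legitimate. Spatial $C^1$-regularity follows from Proposition \ref{propo:flot_n_diff} applied to the ODE for $\Phi^{-1}$ together with $u_0, c_s \in C^1_b$, and time-regularity along each characteristic is automatic from the integrated ODE formulation. Since $b$ and $c$ are only $L^\infty$ in time, the PDE and the chain-rule computations should be read in their integrated form in $t$; the identities then hold for every $t\in[0,T]$ because the integrated PDE is satisfied everywhere and all inner integrals against $c_s$ and $b_s$ are absolutely continuous in $t$.
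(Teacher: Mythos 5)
Your proposal is correct and follows essentially the same route as the paper's proof in Appendix \ref{appendix:characteristics}: existence by checking that the candidate composed with the flow satisfies the linear ODE $\frac{\dd}{\dd t}\,u_t(\Phi_t(x))=-c_t(\Phi_t(x))\,u_t(\Phi_t(x))$ and then undoing the flow, and uniqueness by composing an arbitrary strong solution with $\Phi$ and observing it obeys the same ODE along characteristics (your integrating-factor form of this step is only a cosmetic variation). Your closing remarks on reading the equations in integrated form in $t$ and on the spatial regularity of $\Phi^{-1}$ via Proposition \ref{propo:flot_n_diff} address the only points the paper glosses over, so no gap remains.
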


\begin{proof}
  We first prove the existence of such a solution. As $b \in \tmop{Lip} (
  \mathbbm{R}^{d} ) \cap C^{1} ( \mathbbm{R}^{d} )$, the flow $\Phi$ and its
  inverse are well-defined. Let us define
  \[ u_{t} ( x ) =u_{0} ( \Phi_{t}^{-1} ( x ) ) \exp \left( - \int_{0}^{t}
     c_{t-q} ( \Phi^{-1}_{q} ( x ) ) \dd q \right) . \]
  Then $u_{t} ( \Phi_{t} ( x ) ) =u_{0} ( x ) \exp \left( - \int_{0}^{t} c_{q}
  ( \Phi_{q} ( x ) ) \dd q \right)$ and
  \begin{eqnarray*}
    \partial_{t} ( u_{t} ( \Phi_{t} ( x ) ) ) & = & -u_{0} ( x ) c_{t} (
    \Phi_{t} ( x ) ) \exp \left( - \int_{0}^{t} c_{t-q} ( \Phi^{-1}_{q} ( x )
    ) \dd q \right) =-c_{t} ( \Phi_{t} ( x ) ) u_{t} ( \Phi_{t} ( x ) ) .
  \end{eqnarray*}
  On the other hand, as $b,c \in L^{\infty} ( [0,T],C^{1} ( \mathbbm{R}^{d } )
  )$, $\Phi^{-1} \in C^{1} ( [0,T] \times \mathbbm{R}^{d} )$ and $u \in C^{1}
  ( [0,T] \times \mathbbm{R}^{d} )$. Hence
  \begin{eqnarray*}
    \partial_{t} ( u_{t} ( \Phi_{t} ( x ) ) ) & = & \partial_{t} u_{t} (
    \Phi_{t} ( x ) ) + \nabla u_{t} ( \Phi_{t} ( x ) ) . \Phi'_{t} ( x )\\
    & = & \partial_{t} u_{t} ( \Phi_{t} ( x ) ) + \nabla u_{t} ( \Phi_{t} ( x
    ) ) . ( b ( \Phi_{t} ( x ) ) + \dot{X}_{t} ) .
  \end{eqnarray*}
  and therefore
  \[ \partial_{t} u_{t} ( \Phi_{t} ( x ) ) + \nabla u_{t} ( \Phi_{t} ( x ) ) .
     ( b_{t} ( \Phi_{t} ( x ) ) + \dot{X_{t}} ) =-c_{t} ( \Phi_{t} ( x ) )
     u_{t} ( \Phi_{t} ( x ) ) . \]
  We take $x= \Phi_{t}^{-1} ( y )$, and we have
  \[ u_{t} ( y ) +b_{t} ( y ) . \nabla u_{t} ( y ) +c_{t} ( y ) u_{t} ( y ) +
     \nabla u_{t} ( y ) \dot{X}_{t} =0. \]
  Hence, $u$ is a solution of equation {\eqref{eq:equation_smooth}} with
  initial condition $u_{0}$.
  
  For the uniqueness, let $\varphi$ a solution of equation $\eqref{eq:equation_smooth}$ with $\varphi_{0} =u_{0}$ and let $v_{t}
  ( x ) = \varphi_{t} ( \Phi_{t} ( x ) )$. Then
  \[ \partial_{t} ( v_{t} ( x ) ) =-c_{t} ( \Phi_{t} ( x ) ) v_{t} ( x ) . \]
  We thus have
  \[ \varphi_{t} ( \Phi_{t} ( x ) ) =v_{t} ( x ) =v_{0} ( x ) \exp \left( -
     \int_{0}^{t} c_{q} ( \Phi_{q} ( x ) ) \dd q \right) =u_{0} ( x ) \exp
     \left( - \int_{0}^{t} c_{q} ( \Phi_{q} ( x ) ) \dd q \right) , \]
  and then $\varphi_{t} ( x ) =u_{t} ( x )$ which ends the proof.
\end{proof}

\begin{remark}
  For $c=0$, we have the transport equation. When $c= \tmop{div}  b$, we have
  the continuity equation, the dual equation of the transport equation.
\end{remark}

\begin{remark}
  Thanks to the use of Corollary \ref{corollary:flot_schwarz}, we see that
  when $b$ and $c$ are as in the previous theorem, and $u_{0} \in
  C^{\infty}_{c} ( \mathbbm{R}^{d} )$, and $u \in C^{1} ( \mathbbm{R}^{d} )$
  is the unique solution of equation~{\eqref{eq:equation_smooth}}, then for
  every multi-index $\alpha = ( \alpha_{1} , \ldots , \alpha_{d} ) \in
  \mathbbm{N}^{d}$ and all $t \in [0,T]$,
  \[ x \mapsto x^{\alpha} \nabla u_{t} ( x )   \hspace{1em} \tmop{and}
     \hspace{1em}  x \mapsto x. \nabla u_{t} ( x ) \]
  are in $L^{1} ( \mathbbm{R}^{d} )$ where $x^{\alpha} =x_{1}^{\alpha_{1}}
  \ldots x_{d}^{\alpha_{d}}$.
\end{remark}

We can now prove the existence and the uniqueness of $L^{\infty}$ weak
solutions.

\begin{theorem}
  Let $b \in L^{\infty} ( [0,T];C^{1}_{b} ( \mathbbm{R}^{d} ) )$, $\tmop{div} 
  b \nocomma ,c \in L^{\infty} ( [0,T],C^{1}_{b} ( \mathbbm{R}^{d} ) )$ and $X
  \in C^{1} ( [0,T] )$. Let $\Phi$ the flow of the equation
  \[ \Phi_{t} ( x ) =x+ \int_{0}^{t} b ( \Phi_{u} ( x ) ) \dd u+X_{t} . \]
  Let $u_{0} \in L^{\infty} ( \mathbbm{R}^{d} )$. Then there exists a unique
  weak solution $u \in L^{\infty} ( [ 0,T ] \times \mathbbm{R}^{d} )$ of
  equation {\eqref{eq:weak_equation_smooth}} with initial condition $u_{0}$
  \begin{equation}
    \partial_{t} u+ (b+ \dot{X} ) . \nabla u+c u=0,
    \label{eq:weak_equation_smooth}
  \end{equation}
  furthermore for almost every $t$ and $x$,
  \[ u_{t} ( x ) =u_{0} ( \Phi_{t}^{-1} ( x ) ) \exp \left( - \int_{0}^{t}
     c_{T-q} ( \Phi^{-1}_{q} ( x ) ) \dd q \right) . \]
\end{theorem}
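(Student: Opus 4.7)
The plan is to combine the smooth existence result (previous theorem in the appendix) with a standard mollification argument for existence, and a duality argument for uniqueness.

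For existence, I would simply define the candidate $u$ by the explicit formula and check it is a weak solution. Concretely, pick a sequence $u_0^n \in C^{\infty}_c(\mathbb{R}^d)$ with $\|u_0^n\|_\infty \leq \|u_0\|_\infty$ and $u_0^n \to u_0$ almost everywhere. The previous theorem gives a smooth (hence weak) solution $u^n$ with initial condition $u_0^n$, given by $u_t^n(x) = u_0^n(\Phi_t^{-1}(x)) \exp(-\int_0^t c_{t-q}(\Phi_q^{-1}(x))\,\mathrm{d}q)$. Since $\Phi_t^{-1}$ is a diffeomorphism and $\mathrm{div}\, b$ is bounded (which controls $|\mathrm{Jac}(\Phi_t^{-1})|$), one can pass to the limit in the weak formulation
\[
-\int_0^T \int u^n \partial_t\varphi\,\mathrm{d}x\,\mathrm{d}t - \int u_0^n \varphi(0,\cdot)\,\mathrm{d}x - \int_0^T\int u^n \mathrm{div}((b+\dot X)\varphi)\,\mathrm{d}x\,\mathrm{d}t + \int_0^T\int u^n c\varphi\,\mathrm{d}x\,\mathrm{d}t = 0
\]
for every test $\varphi \in C^{\infty}_c([0,T)\times\mathbb{R}^d)$, by dominated convergence (the integrand is uniformly bounded in $n$ and pointwise convergent a.e.). The explicit form of the limit then follows from the explicit form of each $u^n$ and the continuity of $\Phi_t^{-1}$.

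For uniqueness, by linearity it suffices to show that a weak solution $v \in L^\infty$ with $v_0 = 0$ vanishes. I would run a duality argument against the backward dual equation. Given arbitrary $t_0 \in (0,T]$ and $\psi_0 \in C^{\infty}_c(\mathbb{R}^d)$, consider on $[0,t_0]$ the Cauchy problem
\[
\partial_t \varphi + (b+\dot X)\cdot\nabla\varphi + (\mathrm{div}\, b - c)\varphi = 0, \qquad \varphi(t_0,\cdot) = \psi_0,
\]
which by the time reversal $\tilde\varphi_t := \varphi_{t_0-t}$ fits exactly into the framework of the previous theorem (the drift is now $-b_{t_0-\cdot} - \dot X_{t_0-\cdot}$, which is still in $L^\infty([0,t_0];C^1_b)$ since $X \in C^1$, and the zeroth-order coefficient $-(\mathrm{div}\, b - c)_{t_0-\cdot}$ is in $L^\infty([0,t_0];C^1_b)$ by hypothesis). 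This yields a unique strong solution $\varphi$ which, by the characteristic formula, is of the form $\varphi_t(x) = \psi_0(\Psi_{t_0-t}(x))\exp(\dots)$ for a smooth flow $\Psi$. In particular $\varphi_t \in C^{\infty}_c(\mathbb{R}^d)$ for every $t \in [0,t_0]$, with uniformly bounded support (using boundedness of $b$ and $\dot X$).

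The key step is then to insert $\varphi$ (suitably truncated in time near $t_0$) as a test function in the weak formulation of $v$. The tangential manipulation $\partial_t (v\varphi) = \partial_t v \cdot\varphi + v\cdot \partial_t\varphi$ followed by integration by parts in space produces exact cancellation, by design of the dual equation, of all the terms involving $b$, $\dot X$ and $c$. This yields $\frac{\mathrm{d}}{\mathrm{d}t}\int v_t\varphi_t\,\mathrm{d}x = 0$ in the distributional sense on $[0,t_0]$, hence $\int v_{t_0}\psi_0\,\mathrm{d}x = \int v_0\varphi_0\,\mathrm{d}x = 0$. Since $\psi_0 \in C^{\infty}_c$ and $t_0$ are arbitrary, $v \equiv 0$ almost everywhere.

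The only real technical point is making the above integration by parts rigorous at the level of weak solutions: one must be allowed to test $v$ against the time-dependent $\varphi$, which is not a priori in $C^\infty_c([0,T)\times\mathbb{R}^d)$ because $\varphi(t_0,\cdot) = \psi_0$ need not vanish. The standard remedy, which I expect to be the main (but routine) obstacle, is to choose a cutoff $\chi_\eta \in C^{\infty}_c([0,t_0))$ with $\chi_\eta \to \mathbf{1}_{[0,t_0)}$ as $\eta \to 0$, test against $\chi_\eta\varphi$, and let $\eta \to 0$ using the fact that $t\mapsto \int v_t \varphi_t\,\mathrm{d}x$ has a continuous representative (which follows by testing against fixed $C^{\infty}_c$ functions and using the weak equation to obtain a Hölder modulus of continuity as in Remark \ref{remark:def_sol}).
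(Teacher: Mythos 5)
Your proposal is correct, and the uniqueness half is essentially the paper's own argument: the same backward dual equation $\partial_t \varphi + (b+\dot X)\cdot\nabla\varphi + (\mathrm{div}\, b - c)\varphi = 0$ with terminal data $\psi_0$ at $t_0$, solved by the previous (strong/characteristics) theorem after time reversal, and the same cancellation $\tfrac{\dd}{\dd t}\int v_t\varphi_t\,\dd x=0$; you are in fact more careful than the paper about admissibility of the time-dependent test function (the cutoff $\chi_\eta$ near $t_0$ and the continuous representative of $t\mapsto v_t(\varphi_t)$), which the paper glosses over. Where you genuinely diverge is existence: the paper never mollifies $u_0$; it verifies the explicit formula directly, writing $u_{t_0}(\varphi)=\int u_0(x)\,\varphi_0^{t_0}(x)\,\dd x$ by the change of variables $x\mapsto \Phi_{t_0}^{-1}(x)$, then expanding $\varphi_0^{t_0}=\varphi - \int_0^{t_0}\partial_q\varphi_q^{t_0}\,\dd q$ and using the dual PDE to recover the weak formulation. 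Your route instead approximates $u_0$ by $u_0^n\in C^\infty_c$, invokes the strong-solution theorem for each $n$, and passes to the limit by dominated convergence; this works (note that the a.e. convergence of $u_0^n\circ\Phi_t^{-1}$ uses that $\Phi_t$ is Lipschitz, hence maps null sets to null sets), and it buys you a black-box use of the previous theorem at the cost of an extra limiting step, while the paper's computation is shorter and yields the representation formula without any approximation of the data. One small inaccuracy: with $b,\,\mathrm{div}\,b,\,c$ only in $C^1_b$, the dual solution satisfies $\varphi_t\in C^1_c(\mathbbm{R}^d)$ (and is Lipschitz in time), not $C^\infty_c$ as you claim; since the weak formulation involves at most first derivatives of the test function with bounded coefficients, this is repaired by a routine density/mollification of the test function (the paper commits the same abuse), so it is a fixable blemish rather than a gap.
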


\begin{proof}
  Let $\varphi \in C^{\infty}_{c} ( \mathbbm{R}^{d} )$, and let us define
  \[ \varphi_{t}^{t_{0}} ( x ) = \varphi ( \Phi_{t_{0} -t} ( x ) ) \exp
     \left( \int_{t}^{t_{0}} ( \tmop{div}  b_{q} -c_{q} ) ( \Phi_{q-t} ( x ) )
     \dd q \right) . \]
  Hence, $\varphi^{t_{0}}$ is the unique strong solution of the equation
  \[ \partial_{t} u+ (b+ \dot{X} ) . \nabla u+ ( \tmop{div}  b-c)  u=0 \]
  such that $\varphi_{t_{0}}^{t_{0}} ( x ) = \varphi ( x )$. Furthermore,
  thanks to the previous remark and since $\varphi \in C^{\infty}_{c}$, all
  the following computations are allowed. Let us define as before
  \[ u_{t} ( x ) =u_{0} ( \Phi_{t}^{-1} ( x ) ) \exp \left( - \int_{0}^{t}
     c_{T-q} ( \Phi^{-1}_{q} ( x ) ) \dd q \right) . \]
  We have
  \begin{eqnarray*}
    u_{t_{0}} ( \varphi ) & = & \int_{\mathbbm{R}^{d }} u_{0} (
    \Phi_{t_{0}}^{-1} ( x ) ) \exp \left( - \int_{0}^{t_{0}} c_{t_{0} -q} (
    \Phi^{-1}_{q} ( x ) ) \dd q \right) \varphi ( x )\\
    & = & \int_{\mathbbm{R}^{d}} u_{0} ( x ) \varphi_{0}^{t_{0}} ( x ) \dd
    x\\
    & = & \int_{\mathbbm{R}^{d}} \dd x u_{0} ( x ) \int_{0}^{t_{0}}
    \partial \varphi_{q}^{t_{0}} ( x ) \dd q+u_{0} ( \varphi )\\
    & = & - \int_{\mathbbm{R}^{d}} \dd x u_{0} ( x ) \int_{0}^{t_{0}} (
    b_{q} ( x ) + \dot{X}_{q} ) . \nabla \varphi_{q} ( x ) + ( \tmop{div} 
    b_{q} ( x ) -c_{q} ( x ) ) \varphi_{q} ( x ) \dd q+u_{0} ( \varphi )\\
    & = & \int_{0}^{t_{0}} \dd q \int_{\mathbbm{R}^{d}} \dd x u_{q} ( (
    \nobracket \nobracket b_{q} + \dot{X}_{q} ) . \nabla \varphi_{q} + (
    \tmop{div}  b_{q} -c_{q} ) \varphi ) +u_{0} ( \varphi ) .
  \end{eqnarray*}
  Hence, by definition, $u$ is a weak solution of Equation
  {\eqref{eq:weak_equation_smooth}}. Furthermore, for a weak solution $v$
  with initial condition $0$, by testing against $\varphi$ we have
  \begin{eqnarray*}
    v_{t_{0}} ( \varphi ) & = & v_{t_{0}} ( \varphi_{t_{0}} ) -v_{0} (
    \varphi_{0} )\\
    & = & \int_{0}^{t_{0}} \dd q v_{q} ( ( b_{q} + \dot{X}_{q} ) . \nabla
    \varphi_{t_{0}} + ( \tmop{div}  b_{q} -c_{q} ) \varphi_{t_{0}} ) +v_{q} (
    \varphi_{q} ) -v_{0} ( \varphi_{q} )\\
    & = & 0.
  \end{eqnarray*}
  And this is true for all $\varphi \in C^{\infty}_{c} ( \mathbbm{R}^{d} )$,
  hence $u_{t_{0}} ( x ) =0$ for almost all $x$. As the equation is linear,
  the result is proved.
\end{proof}

\bibliographystyle{plain}
\bibliography{bibli_intro}

\end{document}